% ----------------------------------------------------------------
% AMS-LaTeX Paper ************************************************
% **** -----------------------------------------------------------
\documentclass[11pt,a4paper]{amsart}
\setlength{\textwidth}{125mm}

\usepackage[mathcal]{eucal}
\usepackage{amsfonts}
\usepackage{amsbsy}
\usepackage{amssymb}
\usepackage{amsmath}
\usepackage{graphicx}
\usepackage[latin2]{inputenc}

% ----------------------------------------------------------------
\vfuzz2pt % Don't report over-full v-boxes if over-edge is small
\hfuzz2pt % Don't report over-full h-boxes if over-edge is small
% THEOREMS -------------------------------------------------------
\newtheorem{thm}{Theorem}[section]
\newtheorem{cor}[thm]{Corollary}
\newtheorem{lem}[thm]{Lemma}

\newcounter{hypothA}
\setcounter{hypothA}{0}

\newcounter{VECHIhypothF}
\setcounter{VECHIhypothF}{1}

\newcounter{hypothF}
\setcounter{hypothF}{1}

\newcounter{hypothE}
\setcounter{hypothE}{4}

\newcounter{hypothL}
\setcounter{hypothL}{11}

\newcounter{hypothH}
\setcounter{hypothH}{7}

\theoremstyle{definition}

\theoremstyle{remark}
\newtheorem{rem}[thm]{Remark}
\numberwithin{equation}{section}

% MATH -----------------------------------------------------------

% ----------------------------------------------------------------
\begin{document}

\title[On the construction ...]
{On the construction of a family of well - posed approximate
formulations for the stationary Stokes problem using an extended
system}%
\author[C\u{a}t\u{a}lin - Liviu Bichir]%
{C\u{a}t\u{a}lin - Liviu Bichir}%

\address{independent researcher, 74 Nicolae B\u{a}lcescu Street, Flat V1, Apartment 5, Gala\c{t}i, 800001, Romania}%
\email{catalinliviubichir@yahoo.com}%

% \thanks{$^{*}$ \ independent researcher, 74 Nicolae B\u{a}lcescu Street, Flat V1, Apartment 5, Gala\c{t}i, 800001, Romania}%
% \thanks{$^{*}$ \ Rostirea Maths Research, Gala\c{t}i, Romania}%

\subjclass{35Q35, 47A58, 35A35, 65N30, 76M10}%

% \subjclass{47J15, 65P30, 41A65, 47H14, 47J05, 35B32, 35Q30, 47H04, 47J30, 47H09, 47H10, 47J07, 35A35, 65N30, 76M10}%

\keywords{pressure Poisson equation, stationary Stokes problem,
extended system, density, isomorphism, connection between the
exact and the approximate problems, approximation of the pressure
on the boundary, approximate equation, finite element method,
boundary condition for the pressure, incompressibility constraint
on the boundary}%

%\date{}%
%\dedicatory{222}%
%\commby{333}%
% ----------------------------------------------------------------
\begin{abstract}
We introduce an exact parameterized extended system such that,
under adequate data, between the components of its solution, there
is the solution of the weak formulation of the homogeneous
Dirichlet problem for the stationary Stokes equations. In the
extended system, we introduce the momentum equation together with
two other forms of this one. This allows us to reformulate, for
the stationary case, the consistent pressure Poisson equation of
Sani, Shen, Pironneau, Gresho [\textit{Int. J. Numer. Meth.
Fluids}, \textbf{50} (2006), pp. 673-682], from the unsteady case.
In this way, we can retain the information we need for the
approximate pressure on the boundary. We obtain a parameterized
perturbed pressure Poisson equation for the stationary Stokes
problem. We prove that to solve the stationary Stokes problem is
equivalent to solve a problem for the momentum equation, the
parameterized equation and the equation that defines the Laplace
operator acting on velocity. The approximation of this last
problem give a family of well - posed approximate formulations.
The solution of each element of this family approximates the
solution of the stationary Stokes problem. Some necessary variants
of existing results on general Banach spaces are also developed.
These concern the density of subspaces related to isomorphisms and
the connection between the exact and the approximate problems.
First, the well - posedness of the exact extended system is proved
in some dense subspaces. The well posed approximate problem does
not necessitate the discrete inf-sup condition. The paper is also
related to the existing discussions on the boundary conditions for
the pressure and on the imposing of the incompressibility
constraint on the boundary.
\end{abstract}
\maketitle
% ----------------------------------------------------------------

\section{Introduction}
\label{sectiunea_0_introduction_Stokes}

Using the information from the momentum equation for the
approximate pressure on the boundary and avoiding the discrete
inf-sup condition, we obtain a parameterized family of approximate
problems of the weak formulation of the homogeneous Dirichlet
problem for the stationary Stokes equations. In the case of the
finite element method, under some hypotheses, the corresponding
matrix form is
\begin{equation}
\label{ecStokes_ecPoisson_variational_01}
      \left[\begin{array}{cc}
      A_{h}
         &  B_{h}^{T} \\
      G_{h}(\alpha)
         &  D_{h}
      \end{array}\right]
      \left[\begin{array}{c}
      \bar{\textbf{u}}_{h} \\
      \bar{p}_{h}
      \end{array}\right]
      = \left[\begin{array}{c}
      \bar{\textbf{f}} \\
      \bar{\psi}
      \end{array}\right] \ , \quad \alpha \neq 1 \ ,
\end{equation}
with two block Poisson preconditioners $A_{h}$ and $D_{h}$ and a
parameter $\alpha$, instead of the well known matrix problem
\begin{equation}
\label{ecStokes_ecPoisson_variational_01_STOKES}
      \left[\begin{array}{cc}
      A_{h}
         &  B_{h}^{T} \\
      B_{h}
         &  0
      \end{array}\right]
      \left[\begin{array}{c}
      \bar{\textbf{u}}_{h} \\
      \bar{p}_{h}
      \end{array}\right]
      = \left[\begin{array}{c}
      \bar{\textbf{f}} \\
      \bar{g}
      \end{array}\right] \ .
\end{equation}

The Stokes problem is well posed and one of the hypotheses that is
satisfied is the inf-sup (or the LBB) condition. The approximate
problem also needs this condition in the discrete formulation
\cite{CLBichir_bib_Atkinson_Han2009, CLBichir_bib_Babuska1971,
CLBichir_bib_Bochev_GunzburgerLSFEM2009,
CLBichir_bib_Brenner_Scott2008, CLBichir_bib_Brezzi1974,
CLBichir_bib_Brezzi_Fortin1991, CLBichir_bib_Ciarlet2002,
CLBichir_bib_De_Fi_Mu2004, CLBichir_bib_A_Ern2005,
CLBichir_bib_Gir_Rav1986, CLBichir_bib_Gunzburger1989,
CLBichir_bib_Quarteroni_Valli2008}. The associated matrix problem
can be solved by direct methods
\cite{CLBichir_bib_Elman_Silvester_Wathen2005,
CLBichir_bib_Nocedal_Wright2006}. Usually, the problem is
reformulated using penalty methods or augmented Lagrangian
variational methods or with methods that lead to well posed
approximate problems without the need of the inf-sup condition
such as stabilization methods and least-squares finite element
methods (\cite{CLBichir_bib_Barth_Bochev_Gunzburger_Shadid2004} -
\cite{CLBichir_bib_Brenner_Scott2008},
\cite{CLBichir_bib_Brezzi_Douglas1988,
CLBichir_bib_Brezzi_Fortin1991,
CLBichir_bib_Brezzi_Pitkaranta1984, CLBichir_bib_Cu_Se_St1986},
\cite{CLBichir_bib_De_Fi_Mu2004} -
\cite{CLBichir_bib_Elman_Silvester_Wathen2005},
\cite{CLBichir_bib_Gir_Rav1986, CLBichir_bib_Glowinski1984,
CLBichir_bib_Gunzburger1989, CLBichir_bib_Peyret_Taylor1983,
CLBichir_bib_Quarteroni_Valli2008, CLBichir_bib_Temam1979,
CLBichir_bib_Thomasset1981} and the references cited therein).

Another method is to replace the Stokes problem (where the
equations are the momentum equation and the incompressibility
constraint) by a problem for the momentum equation and the Poisson
equation for the pressure
\cite{CLBichir_bib_Bochev_GunzburgerLSFEM2009,
CLBichir_bib_De_Fi_Mu2004, CLBichir_bib_Gir_Rav1986,
CLBichir_bib_Glowinski1984,
CLBichir_bib_Glowinski_Pironneau1979_Numer_Math,
CLBichir_bib_Gresho_Sani1987, CLBichir_bib_Gunzburger1989,
CLBichir_bib_Quarteroni_Valli2008,
CLBichir_bib_Sani_Shen_Pironneau_Gresho2006,
CLBichir_bib_Waters_Fix_Cox2004}. This method is accompanied by
discussions on the boundary conditions for the pressure and on the
imposing of the incompressibility constraint on the boundary.

In this paper, we first formulate a parameterized family of exact
extended systems (related to the problem we study, on infinite
dimensional spaces). Under adequate data, the solution
$(\textbf{u},p)$ of the weak formulation of the homogeneous
Dirichlet problem for the stationary Stokes equations is between
the components of the solution of each element of this family. To
solve this problem for the stationary Stokes equations becomes
equivalent to solve a such exact extended system. Each extended
system is approximated by a well posed formulation without the use
of the discrete inf-sup (or the LBB) condition. There results a
solution set of approximate solutions of Stokes problem. In the
case of the finite element method, under some hypotheses, this
solution set is given by a family of well - posed problems
(\ref{ecStokes_ecPoisson_variational_01}). The results are
obtained for $(\textbf{u},p)$ $\in$ $\textbf{H}_{0}^{1}(\Omega)$
$\times$ $(H^{1}(\Omega) \cap L_{0}^{2}(\Omega))$.

A component of the exact extended system is the momentum equation.
Some other components are constructed on the basis of the
Glowinski - Pironneau decomposition for the pressure
(\cite{CLBichir_bib_De_Fi_Mu2004, CLBichir_bib_Gir_Rav1986,
CLBichir_bib_Glowinski1984,
CLBichir_bib_Glowinski_Pironneau1979_Numer_Math,
CLBichir_bib_Waters_Fix_Cox2004}). The extended system also
includes a reformulation, for the stationary case, of the
consistent pressure Poisson equation of Sani, Shen, Pironneau,
Gresho, \cite{CLBichir_bib_Sani_Shen_Pironneau_Gresho2006}, from
the unsteady case. We use this one for the approximation of the
pressure on the boundary. This equation is based on the second
writing of the momentum equation. We construct the (discrete)
Laplace operator acting on pressure by connecting this last
equation and the rest of the equations. The third writing of the
momentum equation leads to the connection between the velocity and
the Laplace operator acting on velocity. All these considerations
allow us to prove that to solve the weak formulation of the
homogeneous Dirichlet problem for the stationary Stokes equations
is equivalent to solve a problem for the momentum equation, the
parameterized perturbed pressure Poisson equation and the equation
that defines the Laplace operator acting on velocity. The
conditions for the approximation of the Laplace operator acting on
velocity on the boundary become essential since they determine the
approximation of the pressure on the boundary. For instance, from
the paper \cite{CLBichir_bib_Bochev_Gunzburger2004} of Bochev and
Gunzburger, we retain a modality to evaluate the discrete Laplace
operator acting on velocity in the case of finite element method.

A density result for isomorphisms, from the functional analysis
book by Sire\c{t}chi \cite{CLBichir_bib_Sir_AF1982}, allows us to
provide proofs on dense subspaces. In order to prove the well -
posedness of the exact extended system (where the parameter is
fixed), we restrict the problem to some dense subspaces. In this
way, we can combine, in the proof, the utilization of Glowinski -
Pironneau of a potential related to velocity
(\cite{CLBichir_bib_De_Fi_Mu2004, CLBichir_bib_Gir_Rav1986,
CLBichir_bib_Glowinski1984,
CLBichir_bib_Glowinski_Pironneau1979_Numer_Math,
CLBichir_bib_Waters_Fix_Cox2004}) and a method used by Ciarlet
(\cite{CLBichir_bib_Ciarlet2002}) for the variational formulation
of the biharmonic problem. We do not impose the homogeneous
Neumann condition for the potential, which is usually imposed in
literature. This condition arises naturally related to the dense
subspaces in a stage of the proof. Finally, the well - posedness
is obtained on the initial (extended) spaces using the result from
\cite{CLBichir_bib_Sir_AF1982}. No condition for the pressure on
the boundary is necessary. No incompressibility constraint on the
boundary is necessary. This proof and the modality we consider the
approximate pressure on the boundary relate the paper to the above
mentioned discussions on these conditions on the  boundary.

In the exact extended system, some of the equations are kept in
the exact form, others are approximated. In order to study the
approximate extended system that results, we formulate a general
theorem on the well-posedness of a such approximate problem. This
theorem is a variant of a theorem of Kantorovich and Akilov
\cite{CLBichir_bib_Ka_Ak1986}, modified by some formulations of
Girault and Raviart \cite{CLBichir_bib_Gir_Rav1986} that we use,
and by our formulations. Resulting from the conditions for the
Galerkin method and the conclusion of C\'ea's lemma,
\cite{CLBichir_bib_Atkinson_Han2009,
CLBichir_bib_Bochev_GunzburgerLSFEM2009,
CLBichir_bib_Brenner_Scott2008, CLBichir_bib_Ciarlet2002,
CLBichir_bib_Dautray_Lions_vol6_1988, CLBichir_bib_A_Ern2005,
CLBichir_bib_Ili1980, CLBichir_bib_Quarteroni_Valli2008,
CLBichir_bib_Temam1979, CLBichir_bib_E_Zeidler_NFA_IIA,
CLBichir_bib_E_Zeidler_NFA_IIB}, some practical conditions are
given. The approximate part of the problem is related to the
corresponding exact one as in the case of nonlinear problems from
\cite{CLBichir_bib_Brezzi_Rappaz_Raviart1_1980,
CLBichir_bib_Cr_Ra1990, CLBichir_bib_Gir_Rav1986}, some elliptic
operators allowing the transformations. The study of the
approximate problem is reduced to the existing results for some
approximate problems for second order elliptic partial
differential equations.

Finally, we treat an application with equal order finite element
spaces.

To the best of our knowledge, the approach in the paper and the
results that we prove are new.

The paper is organized as follows. In Section
\ref{sectiunea_ecuatii_Stokes}, we review a summary of notations,
some results from functional analysis and some reformulations of
Stokes problem. In Sections
\ref{well_posed_sectiunea_general_theorem_on_the_well_posedness_problema_data}
and
\ref{well_posed_sectiunea_general_theorem_on_the_well_posedness},
we develop some formulations on the well-posedness of general
exact and approximate problems of the type of those used in the
paper. Sections \ref{sectiunea_ecuatii_remarks},
\ref{sectiunea_forme_echivalente_ale_ecuatiilor_extended} and
\ref{sectiunea_alt_izomorfism} are devoted to the construction of
the exact extended system. The approximate extended system is
studied in Section
\ref{sectiunea_ecuatii_Stokes_problema_aproximativa}. An analysis
using finite element method is in Section
\ref{sectiunea_ecuatii_Stokes_problema_aproximativa_partea_2}.

\section{Preliminaries and the Stokes problem}
\label{sectiunea_ecuatii_Stokes}

\subsection{Notations}
\label{sectiunea_ecuatii_Stokes_1}

Let $\Omega$ be a bounded and connected open subset of
$\mathbb{R}^{N}$ ($N=2,3$) with a Lipschitz - continuous boundary
$\partial \Omega$.

Let $L^{2}(\Omega)$ be space of all square integrable functions
defined on $\Omega$ with the usual inner product $(\cdot,\cdot)$.
Let $H^{m}(\Omega)$ be the Sobolev space of all functions having
the derivatives up to order $m$ as elements of $L^{2}(\Omega)$,
with the standard norm $\| \cdot \|_{m}$ and seminorm $| \cdot
|_{m}$. Let $\mathcal{D}(\Omega)$ be the linear space of
infinitely differentiable functions with compact support in
$\Omega$, topologized as it is described in
\cite{CLBichir_bib_Sir_AF1982, CLBichir_bib_KYosida1980}.
$H_{0}^{1}(\Omega)$ denotes the closure of $\mathcal{D}(\Omega)$
with respect to the norm $\| \cdot \|_{1}$. $L_{0}^{2}(\Omega)$
$=$ $\{ p \in L^{2}(\Omega) \ | \ \int_{\Omega}p \ dx = 0 \}$.

$H^{-1}(\Omega)$ denotes the dual space of $H_{0}^{1}(\Omega)$ and
$V'$ denotes the dual space of the linear space $V$. $\langle
\cdot,\cdot \rangle$ is the duality pairing.
$\mathcal{D}'(\Omega)$ is the space of distributions on $\Omega$.
$\| \cdot \|_{-1}$ is the norm of $H^{-1}(\Omega)$.

On the product space $Y=\prod_{\jmath \: = 1}^{M}Y_{\jmath}$ , we
use the norm $\| \cdot \|_{(1)}$, $\| \kappa \|_{(1)}$ $=$ $\|
\kappa \|_{Y}=\sum_{\jmath \: = 1}^{M}\| \kappa_{\jmath}
\|_{Y_{\jmath}}$, where $Y_{\jmath}$ is a Banach space with norm
$\| \cdot \|_{Y_{\jmath}}$, for $\jmath=1,\ldots,M$, and
$\kappa=(\kappa_{1},\ldots,\kappa_{M}) \in Y$.

$(v,w)_{V}$ $=$ $\sum_{\imath \: =
1}^{M}(v_{\imath},w_{\imath})_{V_{\imath}}$ is the inner product
on the product space $V$ of the Hilbert spaces $V_{\imath}$ with
the inner product $(\cdot,\cdot)_{V_{\imath}}$,
$v=(v_{1},\ldots,v_{M})$, $v_{\imath}$ $\in$ $V_{\imath}$. The
corresponding norm is $\| v \|_{V}$ $=$ $(\sum_{\imath \: =
1}^{M}\| v_{\imath} \|_{V_{\imath}}^{2})^{1/2}$. $(grad \, v,grad
\, w)$ $=$ $\sum_{i,j=1}^{N}(\frac{\partial \, v_{i}}{\partial \,
x_{j}},\frac{\partial \, w_{i}}{\partial \, x_{j}})$ for $v$, $w$
$\in$ $H^{1}(\Omega)^{N}$, $(grad \, p,grad \, q)$ $=$
$\sum_{j=1}^{N}(\frac{\partial \, p}{\partial \,
x_{j}},\frac{\partial \, q}{\partial \, x_{j}})$ for $p$, $q$
$\in$ $H^{1}(\Omega)$. If $N \geq 2$, the norm and the seminorm on
$H^{m}(\Omega)^{N}$ are simply denoted as in the case $N = 1$.

For vector-valued functions and their spaces, we use the boldface
font notation. For instance, $\textbf{H}_{0}^{1}(\Omega)$ $=$
$H_{0}^{1}(\Omega)^{N}$ and we write $\textbf{u}$ $\in$
$\textbf{H}_{0}^{1}(\Omega)$.

For the inner product on $L^{2}(\Omega)$ and
$\textbf{L}^{2}(\Omega)$, we use the same notation
$(\cdot,\cdot)$.

\subsection{Some results of functional analysis}
\label{sectiunea_ecuatii_Stokes_2}

We specify that, given two normed spaces $E$ and $F$, an
isomorphism of $E$ onto $F$ is a linear, continuous and bijective
mapping $A:E \rightarrow F$ whose inverse $A^{-1}$ is continuous
\cite{CLBichir_bib_Bochev_GunzburgerLSFEM2009,
CLBichir_bib_Braess2007, CLBichir_bib_Cr_Ra1990,
CLBichir_bib_Fabian_Habala_Hajek_Montesinos_Zizler2011,
CLBichir_bib_Gir_Rav1986, CLBichir_bib_Sir_AF1982,
CLBichir_bib_Sir_Spaces_AF1982}. We then say that problem $Au = f$
is well-posed. $L(E,F)$ denotes the space of all the continuous
linear mappings (operators) from $E$ to $F$.

\begin{lem}
\label{lema_prelungire} (\cite{CLBichir_bib_Atkinson_Han2009},
Theorem 2.4.1; \cite{CLBichir_bib_Kreyszig1978}, Theorem 2.7-11;
\cite{CLBichir_bib_Sir_AF1982}, Theorem 5.3.2, Corollary 1;
\cite{CLBichir_bib_E_Zeidler_NFA_IIA}, Proposition 18.29 (a)) Let
$E$ be a normed space, let $E_{0} \subseteq E$ be a linear
subspace dense in $E$, and let $F$ be a Banach space. Assume
$A_{0} \in L(E_{0},F)$. Then, $A_{0}$ has an unique extension $A$
$\in$ $L(E,F)$ with $Au$ $=$ $A_{0}u$, $\forall$ $u$ $\in$
$E_{0}$, and $\| A \|_{L(E,F)}$ $=$ $\| A_{0} \|_{L(E_{0},F)}$.
\end{lem}

\begin{lem}
\label{lema_izom} (\cite{CLBichir_bib_Sir_AF1982}, Theorem 5.3.2,
Corollary 5) Let $E$, $F$ be two Banach spaces. There exists an
isomorphism of $E$ onto $F$ if and only if there exist two linear
subspaces $E_{0}$ and $F_{0}$ of $E$ and $F$ respectively, $E_{0}$
dense in $E$, $F_{0}$ dense in $F$, such that there exists an
isomorphism of $E_{0}$ onto $F_{0}$.

\end{lem}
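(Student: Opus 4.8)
The plan is to prove the two implications separately; the forward one is immediate, so essentially all the work is in the converse.

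For necessity ($\Rightarrow$), I would simply take $E_{0}=E$ and $F_{0}=F$: each is (trivially) dense in itself, and the hypothesised isomorphism of $E$ onto $F$ is then already an isomorphism of $E_{0}$ onto $F_{0}$.

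For sufficiency ($\Leftarrow$), suppose $A_{0}:E_{0}\rightarrow F_{0}$ is an isomorphism of normed spaces, where $E_{0}$ and $F_{0}$ carry the norms induced by $E$ and $F$ respectively. Regarding $A_{0}$ as an element of $L(E_{0},F)$ (post-composing with the inclusion $F_{0}\hookrightarrow F$) and using that $E_{0}$ is dense in $E$ while $F$ is a Banach space, Lemma~\ref{lema_prelungire} furnishes a unique extension $A\in L(E,F)$ with $A u = A_{0}u$ for all $u\in E_{0}$. Symmetrically, $A_{0}^{-1}\in L(F_{0},E)$, and since $F_{0}$ is dense in $F$ and $E$ is a Banach space, the same lemma gives a unique extension $B\in L(F,E)$ with $B w = A_{0}^{-1}w$ for all $w\in F_{0}$. (The norm equalities provided by Lemma~\ref{lema_prelungire} are not needed here.)

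It then remains to verify that $A$ is bijective with $A^{-1}=B$. For $u\in E_{0}$ one has $A u = A_{0}u\in F_{0}$, hence $B(Au)=B(A_{0}u)=A_{0}^{-1}(A_{0}u)=u$; thus the continuous maps $B\circ A$ and the identity of $E$ coincide on the dense subspace $E_{0}$, so $B\circ A$ is the identity on all of $E$. Interchanging the roles of $E$ and $F$ (and of $A_{0}$ and $A_{0}^{-1}$) gives that $A\circ B$ is the identity on $F$. Consequently $A$ is injective and surjective, and its inverse $B$ is continuous, i.e. $A$ is an isomorphism of $E$ onto $F$. The only point requiring a little care is the composition step: one must use that $A_{0}$ maps $E_{0}$ \emph{into} $F_{0}$ (not merely into $F$) so that $A_{0}^{-1}$ can be applied, and then combine the density of $E_{0}$ in $E$ with the continuity of $B\circ A$ and of the identity to propagate the equality from $E_{0}$ to $E$; everything else is a direct application of Lemma~\ref{lema_prelungire}.
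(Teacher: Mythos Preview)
Your proof is correct and follows essentially the same approach as the one the paper invokes: the paper does not prove Lemma~\ref{lema_izom} itself but cites it from \cite{CLBichir_bib_Sir_AF1982}, and Corollary~\ref{corolarul_lema_izom} explicitly notes that the proof there proceeds via Lemma~\ref{lema_prelungire}, i.e.\ by extending $A_{0}$ and $A_{0}^{-1}$ by density and then checking that the extensions are mutually inverse --- exactly your argument.
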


\begin{cor}
\label{corolarul_lema_izom} In Lemma \ref{lema_izom}, the
isomorphism of $E$ onto $F$ is a unique extension from $E_{0}$ to
$E$ of the isomorphism of $E_{0}$ onto $F_{0}$.
\end{cor}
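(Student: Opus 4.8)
The plan is to build the isomorphism of $E$ onto $F$ directly as the continuous extension of $A_0$ furnished by Lemma \ref{lema_prelungire}, and then to verify that this extension is bijective with continuous inverse by extending $A_0^{-1}$ as well. Write $A_0 : E_0 \to F_0$ for the isomorphism whose existence is assumed in Lemma \ref{lema_izom}. Since $F_0 \subseteq F$, we may regard $A_0 \in L(E_0,F)$; as $E_0$ is dense in $E$ and $F$ is a Banach space, Lemma \ref{lema_prelungire} yields a unique $A \in L(E,F)$ with $A|_{E_0} = A_0$ (and $\|A\|_{L(E,F)} = \|A_0\|_{L(E_0,F)}$). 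Symmetrically, $A_0^{-1} \in L(F_0,E_0) \subseteq L(F_0,E)$, $F_0$ is dense in $F$, and $E$ is a Banach space, so Lemma \ref{lema_prelungire} gives a unique $B \in L(F,E)$ with $B|_{F_0} = A_0^{-1}$.

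Next I would check that $B$ is a two-sided inverse of $A$. For $u \in E_0$ one has $Au = A_0 u \in F_0$, hence $BAu = A_0^{-1}A_0 u = u$; thus $BA$ and $\mathrm{id}_E$ are continuous maps of $E$ into $E$ that agree on the dense subspace $E_0$, so $BA = \mathrm{id}_E$. In the same way, for $w \in F_0$ we have $Bw = A_0^{-1}w \in E_0$ and $ABw = A_0 A_0^{-1} w = w$, so $AB = \mathrm{id}_F$ by density of $F_0$ in $F$. Hence $A$ is linear, continuous, bijective, with continuous inverse $B$, i.e.\ an isomorphism of $E$ onto $F$ that extends $A_0$; this in particular reproves the nontrivial implication in Lemma \ref{lema_izom}.

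Finally, for the uniqueness statement I would argue as follows: if $\tilde{A}$ is \emph{any} isomorphism of $E$ onto $F$ with $\tilde{A}|_{E_0} = A_0$, then $\tilde{A} \in L(E,F)$ is a continuous extension of $A_0 \in L(E_0,F)$, and the uniqueness clause of Lemma \ref{lema_prelungire} forces $\tilde{A} = A$. Therefore the isomorphism produced in Lemma \ref{lema_izom} is precisely $A$, the unique continuous extension of $A_0$ from $E_0$ to $E$, which is exactly the assertion of the corollary.

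As for difficulty, there is no deep obstacle here; the single point requiring care is that Lemma \ref{lema_prelungire} only produces a bounded operator, not a priori an isomorphism, so the injectivity and surjectivity of $A$ must be obtained indirectly via the separately extended inverse $B$ together with the density identities $BA = \mathrm{id}_E$ and $AB = \mathrm{id}_F$. One must also keep the two distinct density/completeness inputs straight ($E_0$ dense in $E$ with $F$ complete, used to extend $A_0$; $F_0$ dense in $F$ with $E$ complete, used to extend $A_0^{-1}$), and note that $A$ actually maps $E_0$ \emph{into} $F_0$ (not merely into $F$), which is what makes the composition $BA$ computable on $E_0$.
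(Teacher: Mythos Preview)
Your proof is correct and follows essentially the same approach as the paper: the paper's one-line proof simply notes that the proof of Lemma~\ref{lema_izom} in \cite{CLBichir_bib_Sir_AF1982} uses Lemma~\ref{lema_prelungire}, and you have spelled out precisely that argument (extend $A_0$ and $A_0^{-1}$ separately via Lemma~\ref{lema_prelungire}, then use density to get $BA=\mathrm{id}_E$ and $AB=\mathrm{id}_F$, with uniqueness coming from the uniqueness clause of Lemma~\ref{lema_prelungire}).
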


\begin{proof}
The proof (from \cite{CLBichir_bib_Sir_AF1982}) of Lemma
\ref{lema_izom} uses the result from Lemma \ref{lema_prelungire}.

\qquad
\end{proof}

\begin{cor}
\label{corolarul_lema_izom_restr_prel1} Consider the spaces $E$,
$F$, $E_{0}$ and $F_{0}$ from Lemma \ref{lema_izom}. Let
$\Phi_{1}$ $\in$ $L(E,F)$ be given and let $\Phi_{0}$ be the
restriction of $\Phi_{1}$ to $E_{0}$. If $\Phi_{0}$ is an
isomorphism of $E_{0}$ onto $F_{0}$, then $\Phi_{1}$ is an
isomorphism of $E$ onto $F$.
\end{cor}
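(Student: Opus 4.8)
The plan is to show that $\Phi_1$ must coincide with the isomorphism whose existence is guaranteed by Lemma \ref{lema_izom}, and then simply read off the conclusion. First I would apply Lemma \ref{lema_izom} to the given subspaces $E_{0} \subseteq E$ and $F_{0} \subseteq F$: since $\Phi_{0}$ is, by hypothesis, an isomorphism of $E_{0}$ onto $F_{0}$, the lemma yields an isomorphism $A \in L(E,F)$ of $E$ onto $F$. By Corollary \ref{corolarul_lema_izom}, this $A$ is precisely the unique extension of $\Phi_{0}$ from $E_{0}$ to $E$ furnished by Lemma \ref{lema_prelungire}; that is, $A \in L(E,F)$ and $Au = \Phi_{0}u$ for every $u \in E_{0}$.

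Next I would observe that $\Phi_{1}$ is itself an extension of the same type: by definition $\Phi_{1} \in L(E,F)$, and its restriction to $E_{0}$ equals $\Phi_{0}$ by hypothesis. Since $E_{0}$ is dense in $E$ and $F$ is a Banach space, the uniqueness clause of Lemma \ref{lema_prelungire}, applied to the operator $\Phi_{0}$ regarded as an element of $L(E_{0},F)$, says that there is exactly one element of $L(E,F)$ extending it. As both $A$ and $\Phi_{1}$ lie in $L(E,F)$ and extend $\Phi_{0}$, it follows that $\Phi_{1} = A$. Hence $\Phi_{1}$ is an isomorphism of $E$ onto $F$, which is the assertion.

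I do not anticipate any genuine obstacle; the argument is essentially bookkeeping that chains together the three preceding results. The only point needing a little care is to verify that the hypotheses of Lemma \ref{lema_prelungire} are literally satisfied: namely that $\Phi_{0}$, a priori a map $E_{0} \to F_{0}$, is legitimately viewed as an element of $L(E_{0},F)$ (true, since $F_{0} \subseteq F$ and this inclusion is continuous for the subspace norm), and that the density of $E_{0}$ in $E$ is indeed available (it is, by assumption). Once these are noted, the uniqueness of the continuous extension forces $\Phi_{1} = A$ and nothing further is required.
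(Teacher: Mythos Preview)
Your proposal is correct and follows essentially the same route as the paper: apply Lemma~\ref{lema_izom} to obtain an isomorphism $A$ (the paper calls it $\Phi$), invoke Corollary~\ref{corolarul_lema_izom} to identify it as the unique extension of $\Phi_{0}$, and conclude $\Phi_{1}=A$ by uniqueness. The paper's version is terser, leaving implicit the verification that $\Phi_{1}$ is also a continuous extension of $\Phi_{0}$, which you spell out explicitly via Lemma~\ref{lema_prelungire}.
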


\begin{proof} From Lemma \ref{lema_izom}, it follows that there exists an
isomorphism $\Phi$ of $E$ onto $F$. By Corollary
\ref{corolarul_lema_izom}, $\Phi$ is a unique extension of
$\Phi_{0}$ to $E$. So $\Phi$ $=$ $\Phi_{1}$.

\qquad
\end{proof}

\begin{cor}
\label{corolarul_lema_izom_restr} If $\Phi$ is the isomorphism of
$E$ onto $F$ and $\Phi_{0}$ is the isomorphism of $E_{0}$ onto
$F_{0}$ from Lemma \ref{lema_izom}, then $\Phi_{0}$ $=$
$\Phi|_{E_{0}}$ and $\Phi_{0}^{-1}$ $=$ $\Phi^{-1}|_{F_{0}}$.
\end{cor}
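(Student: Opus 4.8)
The plan is to read off the first identity directly from Corollary~\ref{corolarul_lema_izom} and then to obtain the statement about the inverses by a short direct computation.

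First I would recall that, by hypothesis, $\Phi$ is the isomorphism of $E$ onto $F$ produced by Lemma~\ref{lema_izom} starting from the isomorphism $\Phi_{0}$ of $E_{0}$ onto $F_{0}$. By Corollary~\ref{corolarul_lema_izom}, this $\Phi$ is the unique extension, in the sense of Lemma~\ref{lema_prelungire}, of $\Phi_{0}$ from $E_{0}$ to $E$; in particular $\Phi u = \Phi_{0} u$ for every $u \in E_{0}$, which is exactly $\Phi_{0} = \Phi|_{E_{0}}$. Here I use that $E_{0}$ is dense in $E$ and that $F$ is a Banach space, both of which are part of the setting of Lemma~\ref{lema_izom}.

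For the inverses, I would first note that $\Phi_{0}(E_{0}) = F_{0}$ since $\Phi_{0}$ is a bijection of $E_{0}$ onto $F_{0}$, and that $\Phi(E_{0}) = \Phi_{0}(E_{0}) = F_{0}$ by the identity just proved; hence $\Phi^{-1}(F_{0}) = E_{0}$, so the restriction $\Phi^{-1}|_{F_{0}}$ is a well-defined map of $F_{0}$ into $E_{0}$. Then, given $w \in F_{0}$, I would write $w = \Phi_{0} v$ with $v = \Phi_{0}^{-1} w \in E_{0}$ and compute $\Phi^{-1} w = \Phi^{-1}(\Phi_{0} v) = \Phi^{-1}(\Phi v) = v = \Phi_{0}^{-1} w$, using $\Phi v = \Phi_{0} v$ because $v \in E_{0}$. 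This yields $\Phi^{-1}|_{F_{0}} = \Phi_{0}^{-1}$.

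There is no genuine obstacle here: the content is carried entirely by Corollary~\ref{corolarul_lema_izom} (hence ultimately by the extension Lemma~\ref{lema_prelungire}), and the only point requiring a little care is that the $\Phi$ named in the statement is indeed the extension furnished by Lemma~\ref{lema_izom}, so that Corollary~\ref{corolarul_lema_izom} applies to it. Alternatively, one could avoid the explicit computation by applying Corollary~\ref{corolarul_lema_izom} with the roles of $(E,E_{0})$ and $(F,F_{0})$ interchanged to the isomorphism $\Phi_{0}^{-1}$ of $F_{0}$ onto $E_{0}$, and then invoking the uniqueness part of Lemma~\ref{lema_prelungire} to identify that extension with $\Phi^{-1}$; I would nevertheless prefer the direct computation above, as it is shorter and self-contained.
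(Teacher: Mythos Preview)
Your argument is correct and is essentially the same approach as the paper's: the paper simply remarks that the conclusion follows from the proof of Lemma~\ref{lema_izom} in the cited reference, which builds $\Phi$ as the unique extension of $\Phi_{0}$ via Lemma~\ref{lema_prelungire}, and you have unpacked exactly this, invoking Corollary~\ref{corolarul_lema_izom} for the first identity and then checking the inverses directly. Your explicit verification that $\Phi^{-1}(F_{0}) = E_{0}$ and the short computation for $\Phi_{0}^{-1} = \Phi^{-1}|_{F_{0}}$ are a welcome expansion of what the paper leaves implicit.
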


\begin{proof}
The proof follows from the proof (from
\cite{CLBichir_bib_Sir_AF1982}) of Lemma \ref{lema_izom}.

\qquad
\end{proof}

\begin{lem}
\label{lema_izom_corolar} Let $E$, $F$ be two normed spaces.
Assume that there exists an isomorphism $A$ of $E$ onto $F$. Then,
for a linear subspace $E_{0}$ dense in $E$, we have that $A_{0}$
$=$ $A|_{E_{0}}$ is an isomorphism of $E_{0}$ onto $F_{0}$ $=$
$A(E_{0})$ and $F_{0}$ is dense in $F$.

\end{lem}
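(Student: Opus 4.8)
The plan is to verify directly that $A_0 = A|_{E_0}$ is linear, continuous, bijective onto $F_0 = A(E_0)$, with continuous inverse, and then to show $F_0$ is dense in $F$. This is essentially the converse direction of Lemma \ref{lema_izom}, so the argument should be short and elementary; the only slightly delicate point is the density of the image.

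First I would note that $A_0$ is linear as a restriction of the linear map $A$, and continuous because $\|A_0 x\|_F = \|Ax\|_F \le \|A\|_{L(E,F)}\|x\|_E$ for all $x \in E_0$, so $A_0 \in L(E_0, F_0)$ with $\|A_0\|_{L(E_0,F_0)} \le \|A\|_{L(E,F)}$. By construction $F_0 = A(E_0)$, so $A_0 : E_0 \to F_0$ is surjective; it is injective because $A$ is. Hence $A_0$ is a continuous linear bijection of $E_0$ onto $F_0$. For the inverse, observe that $A_0^{-1} = (A^{-1})|_{F_0}$: indeed, for $y \in F_0$ write $y = Ax$ with $x \in E_0$, and then $A^{-1}y = x = A_0^{-1}y$. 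Since $A^{-1} \in L(F,E)$ is continuous, its restriction to $F_0$ is continuous, with $\|A_0^{-1}\|_{L(F_0,E_0)} \le \|A^{-1}\|_{L(F,E)}$. Therefore $A_0$ is an isomorphism of $E_0$ onto $F_0$.

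It remains to prove $F_0$ is dense in $F$. Let $y \in F$ and $\eps > 0$. Since $A$ is surjective, there is $x \in E$ with $Ax = y$. Because $E_0$ is dense in $E$, choose $x_0 \in E_0$ with $\|x - x_0\|_E < \eps / (\|A\|_{L(E,F)} + 1)$. Then $A x_0 \in A(E_0) = F_0$ and $\|y - A x_0\|_F = \|A(x - x_0)\|_F \le \|A\|_{L(E,F)}\|x - x_0\|_E < \eps$. Since $y$ and $\eps$ were arbitrary, $F_0$ is dense in $F$.

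**The main obstacle** is really only a matter of bookkeeping rather than mathematical difficulty: one must be careful to distinguish $A_0$ as a map into $F_0$ (where it is bijective) from $A_0$ regarded as a map into $F$, and likewise to identify $A_0^{-1}$ with the restriction of $A^{-1}$ so that continuity of the inverse is immediate. No further results beyond the definition of isomorphism recalled before Lemma \ref{lema_prelungire} are needed; in particular Lemma \ref{lema_izom} and its corollaries are not required for this direction, though the statement is exactly the hypothesis one needs to be able to invoke them.
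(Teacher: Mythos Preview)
Your proof is correct and follows essentially the same approach as the paper. The paper's own proof is extremely terse: it treats the isomorphism part as obvious and only remarks that density of $F_0 = A(E_0)$ in $F$ follows from the general topological fact (cited from \cite{CLBichir_bib_Sir_Topology1983}) that a continuous surjection sends dense subsets to dense subsets; your explicit $\varepsilon$-argument is precisely a proof of that fact in the normed-space setting.
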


\begin{proof} We mention only that since $A$ is continuous and
surjective, it follows that $A(E_{0})$ is dense in $F$. (We use a
result from \cite{CLBichir_bib_Sir_Topology1983}).

\qquad
\end{proof}

\begin{lem}
\label{lema_izom_corolar_compact} Assume that the hypotheses of
Lemma \ref{lema_izom_corolar} are satisfied. Let $\imath$ be the
injection $E_{0}$ $\subset$ $E$ and let $\jmath$ be the injection
$F_{0}$ $\subset$ $F$. If $\imath$ is compact, then $\jmath$ is
compact.
\end{lem}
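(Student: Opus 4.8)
The plan is to realise the inclusion $\jmath : F_{0} \subset F$ as a composition built from $\imath$ and the two isomorphisms at hand, and then to invoke the standard fact that compactness of a linear operator is preserved under composition on either side with a bounded linear operator. No delicate estimate is needed; the content is entirely diagram-chasing.

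First I would record the commutativity relation linking the four maps involved. By Lemma \ref{lema_izom_corolar}, $A_{0} = A|_{E_{0}}$ is an isomorphism of $E_{0}$ onto $F_{0}$, so for every $x \in E_{0}$ one has $A(\imath x) = A x = A_{0} x = \jmath(A_{0} x)$, i.e. $A \circ \imath = \jmath \circ A_{0}$ as maps from $E_{0}$ into $F$. Since $A_{0}$ is an isomorphism of $E_{0}$ onto $F_{0}$, its inverse $A_{0}^{-1}$ belongs to $L(F_{0},E_{0})$ by the definition of isomorphism adopted in Section \ref{sectiunea_ecuatii_Stokes_2}; composing the previous identity on the right by $A_{0}^{-1}$ gives $\jmath = A \circ \imath \circ A_{0}^{-1}$ on $F_{0}$.

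Next I would apply the following elementary fact: if $T$ is a compact linear operator and $R$, $S$ are bounded linear operators for which $S \circ T \circ R$ is defined, then $S \circ T \circ R$ is compact, because $R$ carries bounded sets to bounded sets, $T$ carries bounded sets to relatively compact sets, and $S$, being continuous, carries relatively compact sets to relatively compact sets. Taking $R = A_{0}^{-1} \in L(F_{0},E_{0})$, $T = \imath$ (compact by hypothesis), and $S = A \in L(E,F)$, we conclude that $\jmath = A \circ \imath \circ A_{0}^{-1}$ is compact, which is the assertion. The only point deserving a word of care is the orientation of the identity $\jmath = A \circ \imath \circ A_{0}^{-1}$ and the use of the continuity of $A_{0}^{-1}$; beyond that there is no real obstacle.
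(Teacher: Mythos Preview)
Your proof is correct and follows essentially the same approach as the paper: write $A\circ\imath=\jmath\circ A_{0}$, solve for $\jmath=A\circ\imath\circ A_{0}^{-1}$, and conclude compactness from the two-sided ideal property of compact operators. The paper's version is more terse, but the argument is identical.
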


\begin{proof} We have $A \circ \imath$ $=$ $\jmath \circ
A_{0}$, so $\jmath$ $=$ $A \circ \imath \circ A_{0}^{-1}$. Since
$\imath$ is compact, it results that $\jmath$ is compact.

\qquad
\end{proof}

\subsection{The Stokes problem}
\label{sectiunea_ecuatii_Stokes_3}

In the sequel, $\textbf{u}$ is the velocity, $p$ is the kinematic
pressure, $\nu$ is the kinematic viscosity and $\textbf{f}$ is a
density of body forces per unit mass.

Consider the following homogeneous Dirichlet problem for the
stationary Stokes equations:
\begin{eqnarray}
   && - \nu \triangle \textbf{u} +
            grad \, p =
            \textbf{f} \; \textrm{in} \; \Omega \, ,
         \label{e1_11_ecStokes} \\
   && div \, \textbf{u} = 0 \; \textrm{in} \; \Omega \, ,
         \label{e1_9_ecStokes} \\
   && \textbf{u} = 0 \; \textrm{on} \; \partial \Omega \, ,
         \label{e1_10_ecStokes}
\end{eqnarray}
under the condition
\begin{equation}
\label{e1_10_ecStokes_ecPoisson_problema_cond2_bis1}
   \int\limits_{\Omega} p \, dx = 0 \, .
\end{equation}

As usual \cite{CLBichir_bib_Bochev_Gunzburger2004,
CLBichir_bib_Gir_Rav1986,
CLBichir_bib_Glowinski_Pironneau1979_Numer_Math}, we take $\nu=1$.
For $\nu \neq 1$, the pressure is $\nu p$ and the free term is
$\nu \textbf{f}$.

In the literature, since $div \, (\triangle \textbf{u})$ $=$
$\triangle (div \, \textbf{u})$ $=$ $0$, the pressure Poisson
equation is obtained,
\begin{equation}
\label{e1_6_ecStokes_ecPoisson0}
   - \triangle p = - div \, \textbf{f} \; \textrm{in} \; \Omega \, .
\end{equation}

Problem (\ref{e1_11_ecStokes}), (\ref{e1_6_ecStokes_ecPoisson0}),
(\ref{e1_10_ecStokes}) is approximated in
\cite{CLBichir_bib_Glowinski_Pironneau1979_Numer_Math}, where it
is introduced the following problem derived from
(\ref{e1_11_ecStokes}) - (\ref{e1_10_ecStokes}): find
$(\textbf{u},\zeta,p)$ $\in$ $\textbf{H}_{0}^{1}(\Omega) \times
H_{0}^{1}(\Omega) \times (H^{1}(\Omega)/\mathbb{R})$ such that
\begin{eqnarray}
   && (grad \, p,grad \, \tilde{\mu})
            = (\textbf{f},grad \, \tilde{\mu}), \
              \forall \, \tilde{\mu}  \, \in \, H_{0}^{1}(\Omega) \, ,
         \label{mathcal_P2_e1_6_ecStokes_ecPoisson_variational_dem_q_compl_p_dem11_cont_alt_izom_G_P_var} \\
   && (grad \, \textbf{u},grad \, \textbf{w})-(p,div \, \textbf{w})=( \textbf{f},\textbf{w} ), \
              \forall \, \textbf{w}  \, \in \, \textbf{H}_{0}^{1}(\Omega) \, ,
         \label{e1_11_ecStokes_ecPoisson_variational_dem_G_P_var} \\
   && (grad \, \zeta,grad \, \bar{\mu})
            - (div \, \textbf{u},\bar{\mu}) = 0, \
              \forall \, \bar{\mu}  \, \in \, H^{1}(\Omega) \, ,
         \label{dem101_Gir_Rav_pag51_G_P_var}
\end{eqnarray}
under the condition that the boundary $\partial \Omega$ is smooth
(such that from $\phi \in H_{0}^{1}(\Omega)$ it can be deduced
that $\phi \in H_{0}^{1}(\Omega) \cap H^{2}(\Omega)$) and
$\textbf{f}$ $\in$ $\textbf{L}^{2}(\Omega)$.
$(\textbf{u},\zeta,p)$ $\in$ $\textbf{H}_{0}^{1}(\Omega) \times
H_{0}^{1}(\Omega) \times (H^{1}(\Omega)/\mathbb{R})$ is also the
solution of the problem
\cite{CLBichir_bib_Glowinski_Pironneau1979_Numer_Math}
\begin{eqnarray}
   && - \triangle p = - div \, \textbf{f} \; \textrm{in} \; \Omega \, .
         \label{e1_6_ecStokes_ecPoisson0_G_P} \\
   && - \triangle \textbf{u} +
            grad \, p =
            \textbf{f} \; \textrm{in} \; \Omega \, ,
         \label{e1_11_ecStokes_G_P} \\
   && \textbf{u} = 0 \; \textrm{on} \; \partial \Omega \, ,
         \label{e1_10_ecStokes_G_P} \\
   && - \triangle \zeta = div \, \textbf{u} \ \textrm{in} \ \Omega \, ,
         \label{dem101_Gir_Rav_pag51_G_P} \\
   && \zeta = 0  \ \textrm{on} \ \partial \Omega \, ,
         \label{dem101_Gir_Rav_pag51_CL_G_P} \\
   && (\textbf{n} \cdot \nabla \zeta)|_{\partial \Omega} = 0  \ \textrm{on} \ \partial \Omega \, .
         \label{dem101_Gir_Rav_pag51_CL_Neumann_G_P}
\end{eqnarray}

Problem (\ref{e1_6_ecStokes_ecPoisson0_G_P}) -
(\ref{dem101_Gir_Rav_pag51_CL_Neumann_G_P}) is decomposed,
equivalently, in some variational problems in
\cite{CLBichir_bib_Gir_Rav1986, CLBichir_bib_Glowinski1984}. We
retain only the equations for pressure: $p$ $\in$ $L^{2}(\Omega)$,
$q$ $\in$ $H_{0}^{1}(\Omega)$, $\hat{p}$ $\in$ $H^{1}(\Omega)$,
\begin{eqnarray}
   && p=q+\hat{p} \, ,
         \label{mathcal_P2_e1_6_ecStokes_ecPoisson_variational_dem_q_compl_hat_p_dem11_cont_alt_izom_CITE} \\
   && (grad \, q,grad \, \bar{\lambda})
            = (\textbf{f},grad \, \bar{\lambda}), \
              \forall \, \bar{\lambda}  \, \in \, H_{0}^{1}(\Omega) \, ,
         \label{mathcal_P2_e1_6_ecStokes_ecPoisson_variational_dem_q_compl_dem11_cont_alt_izom_CITE} \\
   && (grad \, \hat{p},grad \, \tilde{\lambda})
            = 0, \
              \forall \, \tilde{\lambda}  \, \in \, H_{0}^{1}(\Omega) \, ,
         \label{mathcal_P2_e1_6_ecStokes_ecPoisson_variational_dem_q_compl_hat_q_dem11_cont_alt_izom_CITE} \\
   && \hat{p}
            = \lambda_{w} \; \textrm{on} \; \partial \Omega \, ,
         \label{mathcal_P2_e1_6_ecStokes_ecPoisson_variational_dem_q_compl_hat_q_dem11_cont_alt_izom_CITE_CL}
\end{eqnarray}
where $\lambda_{w}$ $\in$ $H^{-1/2}(\partial \Omega)$ is
determined by the methods from \cite{CLBichir_bib_Gir_Rav1986,
CLBichir_bib_Glowinski1984}.

In \cite{CLBichir_bib_Sani_Shen_Pironneau_Gresho2006}, \textit{for
the unsteady case}, for the time dependent incompressible Stokes
problem with Cauchy and Dirichlet data in $\Omega$ over the time
interval $(0,T)$, under the assumption $- \nu \triangle \textbf{u}
+ grad \, p$ $\in$ $\textbf{L}^{2}(\Omega)$, the following
equation is obtained
\begin{eqnarray}
   && (- \nu \triangle \textbf{u} + grad \, p,grad \, \tilde{\mu})
            = (\textbf{f},grad \, \tilde{\mu}), \
              \forall \, \tilde{\mu}  \, \in \, H^{1}(\Omega) \, ,
         \label{mathcal_P2_e1_6_ecStokes_ecPoisson_variational_Sani_Shen_Pironneau_Gresho2006}
\end{eqnarray}
It is the weak form of the equation
(\cite{CLBichir_bib_Sani_Shen_Pironneau_Gresho2006})
\begin{eqnarray}
   && - \nu \, div \, (\triangle \textbf{u}) + \triangle p = div \, \textbf{f} \; \textrm{in} \; \Omega \, ,
         \label{mathcal_P2_e1_6_ecStokes_ecPoisson_clasic_Sani_Shen_Pironneau_Gresho2006}
\end{eqnarray}
which is one of the equations of the consistent pressure Poisson
equation formulation of the mentioned unsteady Stokes problem
\cite{CLBichir_bib_Gresho_Sani1987,
CLBichir_bib_Sani_Shen_Pironneau_Gresho2006}.

According to \cite{CLBichir_bib_De_Fi_Mu2004,
CLBichir_bib_Gresho_Sani1987,
CLBichir_bib_Sani_Shen_Pironneau_Gresho2006}, the problem
(\ref{e1_11_ecStokes}), (\ref{e1_9_ecStokes}),
(\ref{e1_10_ecStokes}), (\ref{e1_9_ecStokes_BC}) is equivalent to
the problem (\ref{e1_11_ecStokes}),
(\ref{e1_6_ecStokes_ecPoisson0}), (\ref{e1_10_ecStokes}),
(\ref{e1_9_ecStokes_BC}), where
\begin{equation}
\label{e1_9_ecStokes_BC}
   div \, \textbf{u} = 0 \; \textrm{on} \; \partial \Omega \, .
\end{equation}
Condition (\ref{dem101_Gir_Rav_pag51_CL_Neumann_G_P}) is
equivalent to (\ref{e1_9_ecStokes_BC})
\cite{CLBichir_bib_Thomasset1981,
CLBichir_bib_Waters_Fix_Cox2004}.

Let us also retain the Neumann boundary condition for the pressure
\begin{equation}
\label{e1_6ppp_ecStokes_ecPoisson0}
   \frac{\partial \, p}{\partial \, n}|_{\partial \Omega} =
      ((\nu \triangle \textbf{u} + \textbf{f}) \cdot \textbf{n})|_{\partial \Omega} \, ,
\end{equation}
where $\textbf{n}$ is the unit outward normal to $\partial \Omega$
\cite{CLBichir_bib_Brezzi_Douglas1988,
CLBichir_bib_Gresho_Sani1987, CLBichir_bib_Gunzburger1989,
CLBichir_bib_Peyret_Taylor1983, CLBichir_bib_Quarteroni_Valli2008,
CLBichir_bib_Sani_Shen_Pironneau_Gresho2006}.

\begin{rem}
\label{observatia5_omega_domega_cond} We do not use
(\ref{e1_9_ecStokes_BC}) or (\ref{e1_6ppp_ecStokes_ecPoisson0}) or
a Dirichlet boundary condition for the pressure in the paper. We
do not use a boundary unknown $\lambda_{w}$ (as in
(\ref{mathcal_P2_e1_6_ecStokes_ecPoisson_variational_dem_q_compl_hat_q_dem11_cont_alt_izom_CITE_CL}))
or a variational problem related to $\partial \Omega$. We do not
impose condition (\ref{dem101_Gir_Rav_pag51_CL_Neumann_G_P}). This
one arises naturally, in a stage of the proof, related to some
dense subspaces. (See
(\ref{e5_1_cond_ecStokes_ecPoisson_variational_dem_modif3}) and
Remark \ref{observatia5_omega_domega222_DEM}).
\end{rem}

\section{A lemma that reduces the proofs of well-posedness to
dense subspaces}
\label{well_posed_sectiunea_general_theorem_on_the_well_posedness_problema_data}

Consider the equation
\begin{eqnarray}
   && \widehat{\Phi}(\widehat{U}) = \widehat{\mathcal{F}} \, .
         \label{well_posed_e1_11_ecStokes_ecPoisson_variational_dem_q_compl4_ec_mathcal_X_op_E_fi_GEN}
\end{eqnarray}

Using Lemma \ref{lema_izom}, we want to reduce the study of
(\ref{well_posed_e1_11_ecStokes_ecPoisson_variational_dem_q_compl4_ec_mathcal_X_op_E_fi_GEN})
to the study of the equation
\begin{eqnarray}
   && \widehat{\Phi}_{0}(\widehat{U}) = \widehat{\mathcal{F}}_{0} \, ,
         \label{well_posed_e1_11_ecStokes_ecPoisson_variational_dem_q_compl4_ec_mathcal_X_op_fi}
\end{eqnarray}
where $\widehat{\Phi}_{0}$ is a restriction of $\widehat{\Phi}$.

\begin{lem}
\label{lema_deschisa_din_dem_1} (proof of the open mapping theorem
\cite{CLBichir_bib_H_Brezis2010,
CLBichir_bib_Christol_Cot_Marle1997,
CLBichir_bib_Dautray_Lions_vol2_ENGLEZA_1988,
CLBichir_bib_Kreyszig1978, CLBichir_bib_Sir_AF1982,
CLBichir_bib_KYosida1980}) Let $E$ and $F$ be two Banach spaces
and let $f:E \rightarrow F$ be a linear, continuous and surjective
mapping. Then, there exist $\varepsilon > 0$ and $\delta > 0$ such
that $f(B_{E}(0,\varepsilon))$ $\supset$ $B_{F}(0,\delta)$.
\end{lem}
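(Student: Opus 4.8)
The plan is to reproduce the Baire-category argument underlying the open mapping theorem, isolating two ingredients: first, that surjectivity of $f$ together with completeness of $F$ forces $\overline{f(B_E(0,1))}$ to contain a ball around the origin; second, that completeness of $E$ lets us remove the closure at the price of a fixed multiplicative factor. Taking these in turn produces the asserted $\varepsilon$ and $\delta$.

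First I would cover $F$ by the closed sets $C_n = \overline{f(B_E(0,n))}$, $n \geq 1$; since $f$ is linear and surjective, $\bigcup_n f(B_E(0,n)) = F$, hence $\bigcup_n C_n = F$. Because $F$ is a Banach space, the Baire category theorem yields an index $n_0$ for which $C_{n_0}$ has nonempty interior, so $B_F(y_0,\rho) \subset C_{n_0}$ for some $y_0 \in F$ and $\rho > 0$. Dividing by $n_0$ and using linearity of $f$, we get $B_F(y_0/n_0,\rho/n_0) \subset \overline{f(B_E(0,1))}$. The set $\overline{f(B_E(0,1))}$ is symmetric, since $B_E(0,1)$ is, and convex, since $B_E(0,1)$ is convex, $f$ is linear, and closure preserves convexity; hence it also contains $B_F(-y_0/n_0,\rho/n_0)$, and for every $z$ with $\|z\| < \rho/n_0$ the points $y_0/n_0 + z$ and $-y_0/n_0 + z$ both lie in $\overline{f(B_E(0,1))}$, so their midpoint $z$ does as well. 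Writing $r = \rho/n_0$, this gives $B_F(0,r) \subset \overline{f(B_E(0,1))}$.

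The main work is the passage from $\overline{f(B_E(0,1))} \supset B_F(0,r)$ to $f(B_E(0,2)) \supset B_F(0,r)$. Given $y$ with $\|y\| < r$, I would construct inductively a sequence $(x_k)$ in $E$ with $\|x_k\| < 2^{-(k-1)}$ and $\|y - \sum_{j=1}^{k} f(x_j)\| < r\,2^{-k}$: at step $k$, the vector $y - \sum_{j=1}^{k-1} f(x_j)$ lies in $B_F(0,r\,2^{-(k-1)}) \subset \overline{f(B_E(0,2^{-(k-1)}))}$ (by scaling the inclusion just obtained), so it is approximated to within $r\,2^{-k}$ by some $f(x_k)$ with $x_k \in B_E(0,2^{-(k-1)})$. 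Since $\sum_k \|x_k\| < \|x_1\| + \sum_{k \geq 2} 2^{-(k-1)} < 2$ and $E$ is complete, the series $x = \sum_k x_k$ converges in $E$ with $\|x\| < 2$; continuity of $f$ together with $\|y - \sum_{j=1}^{k} f(x_j)\| \to 0$ gives $f(x) = y$. Hence every $y \in B_F(0,r)$ has a preimage in $B_E(0,2)$, i.e. $f(B_E(0,2)) \supset B_F(0,r)$, and one may take $\varepsilon = 2$, $\delta = r$; a final rescaling $x \mapsto tx$ lets $\varepsilon$ be prescribed with $\delta$ proportionally adjusted.

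The step I expect to be the crux is the closure-removal iteration: it is the only place where completeness of the \emph{domain} $E$ enters, and the geometric bookkeeping — the factors $2^{-(k-1)}$ and the resulting bound $\|x\| < 2$ — has to be chosen consistently so that the successive-approximation series converges strictly inside the required ball. The Baire-category step is routine once the exhausting family $C_n$ and the symmetry/convexity reduction are in place.
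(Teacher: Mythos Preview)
Your argument is correct and is exactly the standard Baire-category proof of the open mapping theorem. The paper does not give its own proof of this lemma; it simply records the statement as a fact extracted from the proofs in the cited textbooks, so your write-up is precisely what those references contain.
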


\begin{lem}
\label{lema_deschisa_din_dem} (Lemma  5.4, page 149,
\cite{CLBichir_bib_Christol_Cot_Marle1997}) A continuous linear
mapping $f$ from a normed space $E$ into a normed space $F$ is
open if and only if there exists an open ball $B$ of $E$ such that
$f(B)$ is a subset of $F$ with nonempty interior.
\end{lem}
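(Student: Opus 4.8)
The plan is to prove the two implications separately, after noting at the outset that completeness of $E$ and $F$ plays no role here: it enters only when the hypothesis of the converse is deduced from surjectivity via Lemma \ref{lema_deschisa_din_dem_1}. So the argument is purely about linearity, convexity, and the metric structure.

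For the forward implication, assume $f$ is open and take $B = B_E(0,1)$. Then $f(B)$ is open in $F$ and contains $f(0) = 0$, so $f(B)$ is a nonempty open subset of $F$ and hence has nonempty interior; thus the required ball exists (any open ball of $E$ works).

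For the converse, assume there is an open ball $B = B_E(x_0,r)$ such that $f(B)$ has nonempty interior, i.e. $B_F(y_0,\rho) \subseteq f(B)$ for some $y_0 \in F$ and $\rho > 0$. The first step is to recentre at the origin. By linearity $f(B) = f(x_0) + f(B_E(0,r))$, so, writing $S := f(B_E(0,r))$ and $v_0 := y_0 - f(x_0)$, we get $B_F(v_0,\rho) \subseteq S$. Now $B_E(0,r)$ is convex and symmetric about $0$ and $f$ is linear, so $S$ is convex and symmetric about $0$; hence also $B_F(-v_0,\rho) \subseteq S$, and for every $u \in F$ with $\norm{u} < \rho$ the point $u = \tfrac12(v_0 + u) + \tfrac12(-v_0 + u)$ lies in $S$ by convexity. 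Therefore $B_F(0,\rho) \subseteq S = f(B_E(0,r))$. Since $B_E(0,\eps) = (\eps/r)\,B_E(0,r)$ and $f$ is linear, this yields $B_F(0,\rho\eps/r) \subseteq f(B_E(0,\eps))$ for every $\eps > 0$.

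It remains to deduce openness. Let $U \subseteq E$ be open and let $y \in f(U)$, say $y = f(x)$ with $x \in U$; pick $\eps > 0$ with $B_E(x,\eps) \subseteq U$. Then $f(U) \supseteq f(B_E(x,\eps)) = f(x) + f(B_E(0,\eps)) \supseteq y + B_F(0,\rho\eps/r) = B_F(y,\rho\eps/r)$, so $f(U)$ contains a ball about each of its points and is therefore open. The only step needing a little care is the recentring/symmetrisation that converts a ball sitting somewhere inside $f(B)$ into a ball about the origin inside $f(B_E(0,r))$; everything else is routine, and continuity of $f$ (part of the statement) is in fact not used in the converse.
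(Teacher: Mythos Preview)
Your proof is correct. The paper does not give its own proof of this lemma; it merely cites it as Lemma 5.4, page 149, of \cite{CLBichir_bib_Christol_Cot_Marle1997}, so there is nothing to compare against, but the argument you supply (forward direction trivially, converse by the standard translate--symmetrise--rescale manoeuvre using that $f(B_E(0,r))$ is convex and symmetric) is the classical one and is complete as written.
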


\begin{lem}
\label{well_posed_lema_deschisa}

Let $\widehat{\Gamma}$, $\widehat{\Sigma}$ be two Banach spaces.
Let $\tau$, $\tau'$ be the (norm) topologies on $\widehat{\Gamma}$
and $\widehat{\Sigma}$ respectively. Consider a continuous linear
mapping $\widehat{\Phi}:\widehat{\Gamma} \rightarrow
\widehat{\Sigma}$.

Let $\widehat{\Gamma}_{0}$ $\subset$ $\widehat{\Gamma}$ be a
linear subspace dense in $\widehat{\Gamma}$ and let
$\widehat{\Sigma}_{0}$ $\subset$ $\widehat{\Sigma}$ be a linear
subspace dense in $\widehat{\Sigma}$ such that
$\widehat{\Phi}(\widehat{\Gamma}_{0})$ $\subseteq$
$\widehat{\Sigma}_{0}$. We take the induced topology $\tau \cap
\widehat{\Gamma}_{0}$ on $\widehat{\Gamma}_{0}$ and the induced
topology $\tau' \cap \widehat{\Sigma}_{0}$ on
$\widehat{\Sigma}_{0}$.

Let $\widehat{\Phi}_{0}:\widehat{\Gamma}_{0} \rightarrow
\widehat{\Sigma}_{0}$, $\widehat{\Phi}_{0}(\widehat{U}_{0})$ $=$
$\widehat{\Phi}(\widehat{U}_{0})$, $\forall$ $\widehat{U}_{0}$
$\in$ $\widehat{\Gamma}_{0}$.

Then, $\widehat{\Phi}_{0}$ is a continuous linear mapping.

Moreover, assume that $\widehat{\Phi}$ is surjective and
$\widehat{\Phi}_{0}$ is an bijection of $\widehat{\Gamma}_{0}$
onto $\widehat{\Sigma}_{0}$. Then, $\widehat{\Phi}_{0}$ is an
isomorphism of $\widehat{\Gamma}_{0}$ onto $\widehat{\Sigma}_{0}$
and $\widehat{\Phi}$ is an isomorphism of $\widehat{\Gamma}$ onto
$\widehat{\Sigma}$.

\end{lem}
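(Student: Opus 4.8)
The plan is to prove the assertions in the order they are stated, exploiting the previously established lemmas rather than re-deriving open-mapping machinery from scratch. First I would dispatch the easy part: $\widehat{\Phi}_0$ is linear because it is a restriction of the linear map $\widehat{\Phi}$ to the linear subspace $\widehat{\Gamma}_0$, and it is continuous because the topology on $\widehat{\Gamma}_0$ is the one induced by $\tau$ and the topology on $\widehat{\Sigma}_0$ is the one induced by $\tau'$, so continuity of $\widehat{\Phi}$ at every point of $\widehat{\Gamma}_0$ restricts to continuity of $\widehat{\Phi}_0$. (Concretely: for a sequence $\widehat{U}_n \to \widehat{U}_0$ in $\widehat{\Gamma}_0$, convergence also holds in $\widehat{\Gamma}$, hence $\widehat{\Phi}(\widehat{U}_n) \to \widehat{\Phi}(\widehat{U}_0)$ in $\widehat{\Sigma}$, and since the limit lies in $\widehat{\Sigma}_0$ this is convergence in $\widehat{\Sigma}_0$.) Well-definedness is immediate from the hypothesis $\widehat{\Phi}(\widehat{\Gamma}_0) \subseteq \widehat{\Sigma}_0$.

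For the second part, assume in addition that $\widehat{\Phi}$ is surjective and $\widehat{\Phi}_0$ is a bijection of $\widehat{\Gamma}_0$ onto $\widehat{\Sigma}_0$. The goal is to show $\widehat{\Phi}_0^{-1}$ is continuous. The route I would take is to show $\widehat{\Phi}$ is an \emph{open} mapping and then deduce openness of $\widehat{\Phi}_0$ via the density hypotheses. Since $\widehat{\Phi}:\widehat{\Gamma} \to \widehat{\Sigma}$ is a continuous linear surjection between Banach spaces, Lemma \ref{lema_deschisa_din_dem_1} gives $\varepsilon,\delta>0$ with $\widehat{\Phi}(B_{\widehat{\Gamma}}(0,\varepsilon)) \supseteq B_{\widehat{\Sigma}}(0,\delta)$. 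Now I would intersect with the dense subspaces: take any $\widehat{V}_0 \in \widehat{\Sigma}_0$ with $\|\widehat{V}_0\| < \delta/2$ (say), and I want a preimage inside $B_{\widehat{\Gamma}_0}(0,\varepsilon)$. Pick the unique $\widehat{U}_0 \in \widehat{\Gamma}_0$ with $\widehat{\Phi}_0(\widehat{U}_0) = \widehat{V}_0$ (using the bijection hypothesis). The issue is to bound $\|\widehat{U}_0\|_{\widehat{\Gamma}}$. Here I would invoke the standard successive-approximation argument from the open mapping theorem, but carried out \emph{inside} $\widehat{\Gamma}_0$: because $\widehat{\Phi}(\overline{B_{\widehat{\Gamma}_0}(0,\varepsilon)}) = \widehat{\Phi}(\overline{B_{\widehat{\Gamma}}(0,\varepsilon)})$ — the closures agreeing since $\widehat{\Gamma}_0$ is dense in $\widehat{\Gamma}$ and $\widehat{\Phi}$ continuous — the image $\widehat{\Phi}_0(B_{\widehat{\Gamma}_0}(0,\varepsilon))$ has closure containing $B_{\widehat{\Sigma}}(0,\delta) \cap \widehat{\Sigma}_0$, which is a nonempty-interior subset of $\widehat{\Sigma}_0$; Lemma \ref{lema_deschisa_din_dem} then yields that $\widehat{\Phi}_0$ is open. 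An open continuous bijection has continuous inverse, so $\widehat{\Phi}_0$ is an isomorphism of $\widehat{\Gamma}_0$ onto $\widehat{\Sigma}_0$.

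Finally, once $\widehat{\Phi}_0$ is known to be an isomorphism of the dense subspace $\widehat{\Gamma}_0$ onto the dense subspace $\widehat{\Sigma}_0$, Lemma \ref{lema_izom} applies directly and produces an isomorphism of $\widehat{\Gamma}$ onto $\widehat{\Sigma}$; by Corollary \ref{corolarul_lema_izom} that isomorphism is the unique continuous extension of $\widehat{\Phi}_0$, and since $\widehat{\Phi}$ is itself a continuous extension of $\widehat{\Phi}_0$ to $\widehat{\Gamma}$, uniqueness forces it to equal $\widehat{\Phi}$. (Alternatively, one can cite Corollary \ref{corolarul_lema_izom_restr_prel1} with $\Phi_1 = \widehat{\Phi}$ and $\Phi_0 = \widehat{\Phi}_0$ to conclude in one step that $\widehat{\Phi}$ is an isomorphism.) Hence $\widehat{\Phi}$ is an isomorphism of $\widehat{\Gamma}$ onto $\widehat{\Sigma}$, which is the claim.

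I expect the main obstacle to be the middle step: transferring the open-mapping estimate from $\widehat{\Gamma}$ down to the dense subspace $\widehat{\Gamma}_0$. One must be careful that the bijectivity of $\widehat{\Phi}_0$ is used to pick the preimage \emph{in $\widehat{\Gamma}_0$}, while the quantitative ball-inclusion comes from $\widehat{\Phi}$ on the full space; reconciling these — i.e. showing the $\widehat{\Gamma}_0$-preimage has controlled norm — is where the density of $\widehat{\Gamma}_0$ and the continuity of $\widehat{\Phi}$ are essential, and it is tempting (but insufficient) to argue only with closures. If one prefers to avoid re-running the successive-approximation scheme, the cleanest shortcut is simply to note that all the work has already been packaged in Lemma \ref{lema_izom} and Corollary \ref{corolarul_lema_izom_restr_prel1}, so the only thing genuinely left to check by hand is that $\widehat{\Phi}_0$ is \emph{injective and surjective onto $\widehat{\Sigma}_0$} together with continuity of its inverse — and the inverse-continuity itself follows from Lemma \ref{lema_izom} once we know a bijective bounded restriction between those dense subspaces exists and $\widehat{\Phi}$ is a bounded surjection, via the open-mapping-type argument above.
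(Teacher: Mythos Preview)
Your treatment of continuity of $\widehat{\Phi}_0$ and the final step (deducing that $\widehat{\Phi}$ is an isomorphism via Corollary~\ref{corolarul_lema_izom_restr_prel1}) is fine. The gap is in the middle: your argument that $\widehat{\Phi}_0$ is open does not go through as written.

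You correctly obtain that the \emph{closure} of $\widehat{\Phi}_0(B_{\widehat{\Gamma}_0}(0,\varepsilon))$ in $\widehat{\Sigma}_0$ contains the ball $B_{\widehat{\Sigma}}(0,\delta)\cap\widehat{\Sigma}_0$. But Lemma~\ref{lema_deschisa_din_dem} requires $f(B)$ itself---not its closure---to have nonempty interior. Passing from ``closure of the image contains a ball'' to ``the image contains a ball'' is precisely the step in the open mapping theorem that needs the successive-approximation scheme, and that scheme requires completeness of the domain so that the Cauchy sequence of partial preimages converges. Here $\widehat{\Gamma}_0$ is only a dense subspace, with no completeness assumed, so running the approximation ``inside $\widehat{\Gamma}_0$'' is not justified: the limit of your approximants may well land in $\widehat{\Gamma}\setminus\widehat{\Gamma}_0$. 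Your fallback to Lemma~\ref{lema_izom} is circular, since that lemma already demands $\widehat{\Phi}_0^{-1}$ be continuous.

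The paper sidesteps this entirely by a direct set-theoretic argument that never touches closures. Instead of the ball $B_{\widehat{\Gamma}_0}(0,\varepsilon)$, it takes the open set $\widehat{B}_0=\widehat{\Phi}^{-1}(B_{\widehat{\Sigma}}(0,\delta))\cap\widehat{\Gamma}_0$ and, using only the bijectivity of $\widehat{\Phi}_0$, writes
\[
B_{\widehat{\Sigma}}(0,\delta)\cap\widehat{\Sigma}_0
=\widehat{\Phi}_0\bigl(\widehat{\Phi}_0^{-1}(B_{\widehat{\Sigma}}(0,\delta)\cap\widehat{\Sigma}_0)\bigr)
\subset \widehat{\Phi}_0(\widehat{B}_0),
\]
since $\widehat{\Phi}_0^{-1}(B_{\widehat{\Sigma}}(0,\delta)\cap\widehat{\Sigma}_0)\subset\widehat{\Phi}^{-1}(B_{\widehat{\Sigma}}(0,\delta))\cap\widehat{\Gamma}_0$. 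Thus $\widehat{\Phi}_0(\widehat{B}_0)$ itself contains a nonempty open ball of $\widehat{\Sigma}_0$, and Lemma~\ref{lema_deschisa_din_dem} applies cleanly. The crucial idea you are missing is to replace the ball by this larger open set coming from the preimage under the full map $\widehat{\Phi}$; that is what lets bijectivity of $\widehat{\Phi}_0$ do the work that completeness would otherwise have to do.
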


\begin{proof}

Let us first prove that $\widehat{\Phi}_{0}$ is a continuous
mapping.

Let $\imath_{0}$ be the injection $\widehat{\Gamma}_{0}$ $\subset$
$\widehat{\Gamma}$ and let $\jmath_{0}$ be the injection
$\widehat{\Sigma}_{0}$ $\subset$ $\widehat{\Sigma}$.

We have the induced topology $\tau \cap \widehat{\Gamma}_{0}$ on
$\widehat{\Gamma}_{0}$ and the induced topology $\tau' \cap
\widehat{\Sigma}_{0}$ on $\widehat{\Sigma}_{0}$, so injections
$\imath_{0}$ and $\jmath_{0}$ are continuous.

Define $\widehat{\Phi}_{1}:\widehat{\Gamma}_{0} \rightarrow
\widehat{\Sigma}$, $\widehat{\Phi}_{1}(\widehat{U}_{0})$ $=$
$\widehat{\Phi}_{0}(\widehat{U}_{0})$ $=$
$\widehat{\Phi}(\widehat{U}_{0})$, $\forall$ $\widehat{U}_{0}$
$\in$ $\widehat{\Gamma}_{0}$.

We obtain: $\widehat{\Phi}_{1}$ $=$ $\widehat{\Phi} \circ
\imath_{\widehat{\Gamma}_{0}}$ , so $\widehat{\Phi}_{1}$ is
continuous. $\widehat{\Phi}_{1}$ $=$ $\jmath_{0} \circ
\widehat{\Phi}_{0}$ , so $\widehat{\Phi}_{0}$ is continuous.

In order to prove that $\widehat{\Phi}_{0}$ is open, we use Lemma
\ref{lema_deschisa_din_dem}.

$\widehat{\Phi}$ is defined on Banach spaces and it is linear,
continuous and surjective, so, by the open mapping theorem
(\cite{CLBichir_bib_H_Brezis2010,
CLBichir_bib_Christol_Cot_Marle1997,
CLBichir_bib_Dautray_Lions_vol2_ENGLEZA_1988,
CLBichir_bib_Kreyszig1978, CLBichir_bib_Sir_AF1982,
CLBichir_bib_KYosida1980}), $\widehat{\Phi}$ is open. From Lemma
\ref{lema_deschisa_din_dem_1}, we have that there exist
$\varepsilon
> 0$ and $\delta > 0$ such that

$\widehat{\Phi}(B_{\widehat{\Gamma}}(0,\varepsilon))$ $\supset$
$B_{\widehat{\Sigma}}(0,\delta)$.

$B_{\widehat{\Sigma}}(0,\delta) \cap \widehat{\Sigma}_{0}$ $=$
$\widehat{\Phi}_{0}(\widehat{\Phi}_{0}^{-1}(B_{\widehat{\Sigma}}(0,\delta)
\cap \widehat{\Sigma}_{0}))$ $=$
$\widehat{\Phi}(\widehat{\Phi}_{0}^{-1}(B_{\widehat{\Sigma}}(0,\delta)
\cap \widehat{\Sigma}_{0}) \cap \widehat{\Gamma}_{0})$ $\subset$
$\widehat{\Phi}(\widehat{\Phi}^{-1}(B_{\widehat{\Sigma}}(0,\delta)
\cap \widehat{\Sigma}_{0}) \cap \widehat{\Gamma}_{0})$ $=$
$\widehat{\Phi}(\widehat{\Phi}^{-1}(B_{\widehat{\Sigma}}(0,\delta))
\cap \widehat{\Phi}^{-1}(\widehat{\Sigma}_{0}) \cap
\widehat{\Gamma}_{0})$ $=$
$\widehat{\Phi}(\widehat{\Phi}^{-1}(B_{\widehat{\Sigma}}(0,\delta))
\cap \widehat{\Gamma}_{0})$, where $\widehat{\Phi}_{0}^{-1}(Y)$ is
the inverse image of the subset $Y$ of $\widehat{\Sigma}_{0}$ with
respect to $\widehat{\Phi}_{0}$.

$\widehat{\Phi}_{0}(\widehat{\Phi}^{-1}(B_{\widehat{\Sigma}}(0,\delta))
\cap \widehat{\Gamma}_{0})$ $=$
$\widehat{\Phi}(\widehat{\Phi}^{-1}(B_{\widehat{\Sigma}}(0,\delta))
\cap \widehat{\Gamma}_{0})$ $\supset$
$B_{\widehat{\Sigma}}(0,\delta) \cap \widehat{\Sigma}_{0}$ $\neq$
$\emptyset$.

$\widehat{\Phi}$ is continuous, so
$\widehat{\Phi}^{-1}(B_{\widehat{\Sigma}}(0,\delta))$ is an open
subset of $\widehat{\Gamma}$.

$\widehat{B}_{0}$ $=$
$\widehat{\Phi}^{-1}(B_{\widehat{\Sigma}}(0,\delta)) \cap
\widehat{\Gamma}_{0}$ is an open subset of $\widehat{\Gamma}_{0}$.
$\widehat{\Phi}_{0}(\widehat{B}_{0})$ $\supset$
$B_{\widehat{\Sigma}}(0,\delta) \cap \widehat{\Sigma}_{0}$ $\neq$
$\emptyset$. $B_{\widehat{\Sigma}}(0,\delta) \cap
\widehat{\Sigma}_{0}$ is a nonempty open subset of
$\widehat{\Sigma}_{0}$.

From Lemma \ref{lema_deschisa_din_dem}, we obtain that
$\widehat{\Phi}_{0}$ is open.

$\widehat{\Phi}_{0}$ is linear, continuous and bijective.
$\widehat{\Phi}_{0}$ is open, so $\widehat{\Phi}_{0}^{-1}$ is
continuous. So $\widehat{\Phi}_{0}$ is an isomorphism of
$\widehat{\Gamma}_{0}$ onto $\widehat{\Sigma}_{0}$.

Applying Lemma \ref{lema_izom}, in the form of Corollary
\ref{corolarul_lema_izom_restr_prel1}, it results that
$\widehat{\Phi}$ is an isomorphism of $\widehat{\Gamma}$ onto
$\widehat{\Sigma}$.

\qquad
\end{proof}

\section{A theorem on the well-posedness of a class of
approximate linear problems}
\label{well_posed_sectiunea_general_theorem_on_the_well_posedness}

An equation equivalent to a form of
(\ref{well_posed_e1_11_ecStokes_ecPoisson_variational_dem_q_compl4_ec_mathcal_X_op_E_fi_GEN})
is introduced. A theorem on the well-posedness of its
approximation is formulated. Some practical conditions are given
in two corollaries.

The general framework from this section is applied in the
following sections. Let $\mathcal{Q}$, $\mathcal{X}$,
$\mathcal{Y}$, $\mathcal{Z}$ be four Banach spaces. Define

$\widehat{\Gamma}$ $=$ $\mathcal{Q}$ $\times$ $\mathcal{X}$,
$\widehat{\Sigma}$ $=$ $\mathcal{Z}$ $\times$ $\mathcal{Y}$,
$\widehat{\Delta}$ $=$ $\mathcal{Z}$ $\times$ $\mathcal{X}$.

Assume that $\mathcal{T}$ is an isomorphism of $\mathcal{Y}$ onto
$\mathcal{X}$. $\widehat{\mathcal{T}}$ $=$
$[\mathcal{J}_{\mathcal{Z}},\mathcal{T}]^{T}$ is an isomorphism of
$\widehat{\Sigma}$ onto $\widehat{\Delta}$ and $\widehat{\Delta}$
$=$ $\mathcal{Z}$ $\times$ $\mathcal{T}(\mathcal{Y})$ $=$
$\widehat{\mathcal{T}}(\widehat{\Sigma})$.

$\mathcal{J}_{\mathcal{Z}}$ is the identity operator on
$\mathcal{Z}$, $\mathcal{I}_{\mathcal{X}}$ is the identity
operator on $\mathcal{X}$. $\mathcal{J}_{\mathcal{Z}}$ and
$\mathcal{I}_{\mathcal{X}}$ are automorphisms. (The domain and the
range are endowed with the same norm in each case).

Assume that $\widehat{\Phi}$ is an isomorphism of
$\widehat{\Gamma}$ onto $\widehat{\Sigma}$ and $\widehat{\Phi}$
has a formulation which is deducible from the definition of the
operator $\widehat{\mathcal{A}}$ below. In the present context, we
denote $\widehat{U}$ $=$ $(x,U)$ $\in$ $\widehat{\Gamma}$ and we
consider equation
(\ref{well_posed_e1_11_ecStokes_ecPoisson_variational_dem_q_compl4_ec_mathcal_X_op_E_fi_GEN})
in the following formulation
\begin{eqnarray}
   && \widehat{\Phi}(x,U) = \widehat{\mathcal{F}} \, ,
         \label{well_posed_e1_11_ecStokes_ecPoisson_variational_dem_q_compl4_ec_mathcal_X_op_E_fi}
\end{eqnarray}

Let us introduce the operators $\mathcal{G}$ $\in$ $L(\mathcal{Q}
\times \mathcal{X},\mathcal{Y})$ and $\mathcal{B}$ $\in$
$L(\mathcal{Q} \times \mathcal{X},\mathcal{Z})$ and let
$\widehat{\mathcal{A}}$ $\in$
$L(\widehat{\Gamma},\widehat{\Delta})$ be defined by
$\widehat{\mathcal{A}}$ $=$
$\widehat{\mathcal{T}}\widehat{\Phi}$ $=$ \\
$[\mathcal{J}_{\mathcal{Z}},\mathcal{T}]^{T}[\mathcal{B},\Phi]^{T}$
$=$ $[\mathcal{J}_{\mathcal{Z}}\mathcal{B},\mathcal{T}\Phi]^{T}$
$=$
$[\mathcal{B},\mathcal{I}_{\mathcal{X}}-\mathcal{T}\mathcal{G}]^{T}$
$=$ $[\mathcal{B},\mathcal{A}]^{T}$ \\
or $\widehat{\mathcal{A}}(x,U)$ $=$
$[\mathcal{B}(x,U),U-\mathcal{T}\mathcal{G}(x,U)]^{T}$ $=$
$[\mathcal{B}(x,U),\mathcal{A}(x,U)]^{T}$.

It results that $\widehat{\mathcal{A}}$ is an isomorphism of
$\widehat{\Gamma}$ onto $\widehat{\Delta}$.

$\widehat{\Phi}(\widehat{\Gamma})$ $=$ $\widehat{\Sigma}$,
$\widehat{\mathcal{A}}(\widehat{\Gamma})$ $=$
$\widehat{\mathcal{T}}\widehat{\Phi}(\widehat{\Gamma})$ $=$
$\widehat{\mathcal{T}}(\widehat{\Sigma})$ $=$ $\widehat{\Delta}$
$=$ $\mathcal{Z}$ $\times$ $\mathcal{T}(\mathcal{Y})$.

$\widehat{\Gamma}$ $=$
$\widehat{\mathcal{A}}^{-1}\widehat{\mathcal{T}}(\widehat{\Sigma})$
$=$ $\widehat{\mathcal{A}}^{-1}\widehat{\Delta}$ $=$
$\widehat{\mathcal{A}}^{-1}(\mathcal{Z} \times
\mathcal{T}(\mathcal{Y}))$ $=$
$\widehat{\mathcal{A}}^{-1}(\mathcal{Z},\mathcal{T}(\mathcal{Y}))$.

Let $\mathcal{X}_{h}$ be a closed subspace of $\mathcal{X}$.
Define

$\widehat{\Gamma}_{h}$ $=$ $\mathcal{Q}$ $\times$
$\mathcal{X}_{h}$ $\subset$ $\mathcal{Q}$ $\times$ $\mathcal{X}$
$=$ $\widehat{\Gamma}$, $\widehat{\Delta}_{h}$ $=$ $\mathcal{Z}$
$\times$ $\mathcal{X}_{h}$ $\subset$ $\mathcal{Z}$ $\times$
$\mathcal{X}$ $=$ $\widehat{\Delta}$.

Let $\mathcal{T}_{h}$ $\in$ $L(\mathcal{Y},\mathcal{X}_{h})$,
$\mathcal{X}_{h}$ $=$ $\mathcal{T}_{h}(\mathcal{Y})$,
$\widehat{\mathcal{T}}_{h}$ $\in$ $L(\widehat{\Sigma},\mathcal{Z}
\times \mathcal{X}_{h})$, $\widehat{\mathcal{T}}_{h}$ $=$
$[\mathcal{J}_{\mathcal{Z}},\mathcal{T}_{h}]^{T}$.

Let $\widehat{\mathcal{K}}$ $\in$
$L(\widehat{\Gamma},\widehat{\Delta})$, $\widehat{\mathcal{K}}$
$=$
$[\mathcal{B},\mathcal{I}_{\mathcal{X}}-\mathcal{T}_{h}\mathcal{G}]^{T}$
$=$ $[\mathcal{B},\mathcal{K}]^{T}$ or
$\widehat{\mathcal{K}}(x,U)$ $=$
$[\mathcal{B}(x,U),U-\mathcal{T}_{h}\mathcal{G}(x,U)]^{T}$ $=$
$[\mathcal{B}(x,U),\mathcal{K}(x,U)]^{T}$.

Observe that we have $Range(\mathcal{K})$ $\subseteq$
$Range(\mathcal{A})$ $=$ $\mathcal{X}$ $=$
$\mathcal{T}(\mathcal{Y})$.

We denote $\widehat{\mathcal{F}}$ $=$
$[\mathcal{F}_{\mathcal{B}},\mathcal{F}]^{T}$ $\in$
$\widehat{\Sigma}$.

Define $\widehat{\mathcal{A}}_{h}:\widehat{\Gamma}_{h} \rightarrow
\widehat{\Delta}_{h}$, $\widehat{\mathcal{A}}_{h}$ $\in$
$L(\widehat{\Gamma}_{h},\widehat{\Delta}_{h})$,
$\widehat{\mathcal{A}}_{h}$ $=$
$[\mathcal{B},\mathcal{A}_{h}]^{T}$ or
$\widehat{\mathcal{A}}_{h}(x_{h},U_{h})$ $=$
$[\mathcal{B}(x_{h},U_{h}),U_{h}-\mathcal{T}_{h}\mathcal{G}(x_{h},U_{h})]^{T}$
$=$ $[\mathcal{B}(x_{h},U_{h}),\mathcal{A}_{h}(x_{h},U_{h})]^{T}$,
$\mathcal{A}_{h}$ $\in$ $L(\widehat{\Gamma}_{h},\mathcal{X}_{h})$,
$\mathcal{A}_{h}(x_{h},U_{h})$ $=$
$U_{h}-\mathcal{T}_{h}\mathcal{G}(x_{h},U_{h})$,

Equation
(\ref{well_posed_e1_11_ecStokes_ecPoisson_variational_dem_q_compl4_ec_mathcal_X_op_E_fi})
is equivalent to the following equation
\begin{eqnarray}
   && \begin{array}{c}
      \mathcal{B}(x,U) = \mathcal{J}_{\mathcal{Z}}\mathcal{F}_{\mathcal{B}} \, , \\
      U-\mathcal{T}\mathcal{G}(x,U) = \mathcal{T}\mathcal{F} \, ,
      \end{array}
   \qquad \textrm{or} \qquad
      \widehat{\mathcal{A}}(x,U) = \widehat{\mathcal{T}}\widehat{\mathcal{F}} \, ,
         \label{well_posed_e1_11_ecStokes_ecPoisson_variational_dem_q_compl4_ec_mathcal_X_op_E}
\end{eqnarray}

Given the conforming space $\mathcal{X}_{h}$, approximate
(\ref{well_posed_e1_11_ecStokes_ecPoisson_variational_dem_q_compl4_ec_mathcal_X_op_E})
by
\begin{eqnarray}
   && \begin{array}{c}
      \mathcal{B}(x_{h},U_{h}) = \mathcal{J}_{\mathcal{Z}}\mathcal{F}_{\mathcal{B}} \, , \\
      U_{h}-\mathcal{T}_{h}\mathcal{G}(x_{h},U_{h}) = \mathcal{T}_{h}\mathcal{F} \, ,
      \end{array}
   \qquad \textrm{or} \qquad
      \widehat{\mathcal{A}}_{h}(x_{h},U_{h}) = \widehat{\mathcal{T}}_{h}\widehat{\mathcal{F}} \, ,
         \label{well_posed_e1_11_ecStokes_ecPoisson_variational_dem_q_compl4_ec_mathcal_X_h_E}
\end{eqnarray}

\begin{lem}
\label{well_posed_lema_Sir_Spaces_AF1982_Cor1}
(\cite{CLBichir_bib_Sir_Spaces_AF1982}) Let $E$, $F$ be two Banach
spaces and let $T,S \in L(E,F)$. If the operator $T$ is bijective
and $\| T^{-1} \|_{L(F,E)} \| T - S \|_{L(E,F)}$ $<$ $1$, then the
operator $S$ is bijective and $\| S^{-1} \|_{L(F,E)}$ $\leq$ $(1 -
q)^{-1}$ $\| T^{-1} \|_{L(F,E)}$, $\forall q \in \mathbb{R}$ that
satisfies $\| T^{-1} \|_{L(F,E)} \| T - S \|_{L(E,F)}$ $\leq$ $q$
$<$ $1$.
\end{lem}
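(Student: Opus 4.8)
The plan is to write $S$ as a small multiplicative perturbation of $T$ and to invert the perturbation by a Neumann series in the Banach algebra $L(E,E)$. First I would observe that, since $E$ and $F$ are Banach spaces and $T \in L(E,F)$ is a continuous bijection, the bounded inverse theorem (a consequence of the open mapping theorem, already used in the previous sections) guarantees $T^{-1} \in L(F,E)$; thus $\|T^{-1}\|_{L(F,E)}$ is finite and $T$ is in fact an isomorphism of $E$ onto $F$. Writing $I_{E}$ for the identity operator on $E$, I set $R = I_{E} - T^{-1}(T-S) \in L(E,E)$, so that $TR = T - (T-S) = S$.

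Next I would estimate, for any admissible $q$, that $\|T^{-1}(T-S)\|_{L(E,E)} \le \|T^{-1}\|_{L(F,E)}\,\|T-S\|_{L(E,F)} \le q < 1$. Because $E$ is a Banach space, $L(E,E)$ is a Banach algebra with unit $I_{E}$, so the Neumann series $\sum_{n=0}^{\infty}\bigl(T^{-1}(T-S)\bigr)^{n}$ converges in operator norm and its sum is a two-sided inverse of $R$, with $\|R^{-1}\|_{L(E,E)} \le \sum_{n=0}^{\infty} q^{n} = (1-q)^{-1}$. Hence $R$ is an automorphism of $E$, and from $S = TR$ it follows that $S$ is a bijection of $E$ onto $F$ whose inverse is the composition of bounded operators $S^{-1} = R^{-1}T^{-1}$, so $S^{-1} \in L(F,E)$; that is, $S$ is bijective with continuous inverse.

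Finally, composing norms in the correct order, $\|S^{-1}\|_{L(F,E)} = \|R^{-1}T^{-1}\|_{L(F,E)} \le \|R^{-1}\|_{L(E,E)}\,\|T^{-1}\|_{L(F,E)} \le (1-q)^{-1}\|T^{-1}\|_{L(F,E)}$, which is exactly the asserted bound for every $q$ with $\|T^{-1}\|_{L(F,E)}\|T-S\|_{L(E,F)} \le q < 1$. I do not expect a genuine obstacle here: the only points to keep straight are the use of the bounded inverse theorem to know that $T^{-1}$ is continuous in the first place, and the side on which the Neumann series (which lives in $L(E,E)$) is multiplied by $T^{-1}$, so that the resulting operator indeed maps $F$ into $E$.
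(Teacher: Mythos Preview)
Your argument is correct: the factorization $S = T\bigl(I_E - T^{-1}(T-S)\bigr)$ together with the Neumann series in the Banach algebra $L(E,E)$ is exactly the standard way to prove this perturbation lemma, and your bookkeeping of norms and sides is accurate. Note, however, that the paper does not supply its own proof of this lemma; it is stated as a quoted result from \cite{CLBichir_bib_Sir_Spaces_AF1982}, so there is no in-paper proof to compare against.
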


In order to use Theorem XIV.1.1, \cite{CLBichir_bib_Ka_Ak1986},
and the proof of Theorem IV.3.8, \cite{CLBichir_bib_Gir_Rav1986},
to relate equations
(\ref{well_posed_e1_11_ecStokes_ecPoisson_variational_dem_q_compl4_ec_mathcal_X_op_E})
and
(\ref{well_posed_e1_11_ecStokes_ecPoisson_variational_dem_q_compl4_ec_mathcal_X_h_E}),
we must reformulate these theorems into the following Theorem
\ref{well_posed_teorema5_1_Stokes_math_B_L2}. The proof is an
adaptation, to our conditions, of the proofs of these mentioned
theorems from \cite{CLBichir_bib_Ka_Ak1986} and
\cite{CLBichir_bib_Gir_Rav1986}.

\begin{thm}
\label{well_posed_teorema5_1_Stokes_math_B_L2}

Consider the framework introduced above and the isomorphism
$\widehat{\mathcal{A}}$ of $\widehat{\Gamma}$ onto
$\widehat{\Delta}$. If
\begin{equation}
\label{well_posed_e5_45p_conditia_q}
   q_{\ast} = \| (\mathcal{T} - \mathcal{T}_{h})\mathcal{G} \|_{L(\widehat{\Gamma},\mathcal{X})}
      \| \widehat{\mathcal{A}}^{-1} \|_{L(\widehat{\Delta},\widehat{\Gamma})}
      < 1 \, ,
\end{equation}
then there exists $\widehat{\Gamma}_{\ast}$,
$\widehat{\Gamma}_{\ast}$ $\subseteq$ $\widehat{\Gamma}_{h}$,
$\widehat{\Gamma}_{\ast}$ $\neq$ $\emptyset$,
$\widehat{\Gamma}_{\ast}$ closed in $\widehat{\Gamma}_{h}$, such
that the restriction $\widehat{\mathcal{A}}_{\ast}$ of
$\widehat{\mathcal{A}}_{h}$ to $\widehat{\Gamma}_{\ast}$,
$\widehat{\mathcal{A}}_{\ast}$ $=$
$\widehat{\mathcal{A}}_{h}|_{\widehat{\Gamma}_{\ast}}$, is an
isomorphism of $\widehat{\Gamma}_{\ast}$ onto
$\widehat{\Delta}_{h}$. We have $\widehat{\Gamma}_{\ast}$ $=$
$\widehat{\mathcal{K}}^{-1}(\mathcal{Z} \times
\mathcal{T}_{h}(\mathcal{Y}))$ $=$
$\widehat{\mathcal{K}}^{-1}(\mathcal{Z} \times \mathcal{X}_{h})$
$=$ $\widehat{\mathcal{K}}^{-1}(\widehat{\Delta}_{h})$ and
\begin{equation}
\label{well_posed_e5_45p_conditia_q_estimare}
   \| \widehat{\mathcal{A}}_{\ast}^{-1} \|_{L(
      \widehat{\Delta}_{h},\widehat{\Gamma}_{h})}
   \leq
   (1-q_{\ast})^{-1}
   \| \widehat{\mathcal{A}}^{-1} \|_{L(\widehat{\Delta},\widehat{\Gamma})} \, ,
\end{equation}

Moreover, the following results hold. Let $\widehat{\mathcal{F}}$
$\in$ $\widehat{\Sigma}$. Then, there exists an unique solution
$(x_{h},U_{h})$ of
(\ref{well_posed_e1_11_ecStokes_ecPoisson_variational_dem_q_compl4_ec_mathcal_X_h_E})
that approximates the unique solution $(x,U)$ of
(\ref{well_posed_e1_11_ecStokes_ecPoisson_variational_dem_q_compl4_ec_mathcal_X_op_E}).
We have
\begin{equation}
\label{well_posed_e5_45p_U_Uh_eroare}
   \left\| \left[\begin{array}{c}
      x \\
      U
      \end{array}\right]
      -
      \left[\begin{array}{c}
      x_{h} \\
      U_{h}
      \end{array}\right] \right\|_{\widehat{\Gamma}}
   \leq
   (1-q_{\ast})^{-1}
   \| \widehat{\mathcal{A}}^{-1} \|_{L(\widehat{\Delta},\widehat{\Gamma})}
   \| (\mathcal{T} - \mathcal{T}_{h})\mathcal{T}^{-1}U \|_{\mathcal{X}} \, ,
\end{equation}

\end{thm}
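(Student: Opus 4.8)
The strategy is to deduce everything from a single application of the perturbation lemma (Lemma \ref{well_posed_lema_Sir_Spaces_AF1982_Cor1}) to the pair $\widehat{\mathcal{A}}$, $\widehat{\mathcal{K}}$ $\in$ $L(\widehat{\Gamma},\widehat{\Delta})$, and then to read off the subspace $\widehat{\Gamma}_{\ast}$ by pure bookkeeping. First I identify the perturbation: from the definitions $(\widehat{\mathcal{A}} - \widehat{\mathcal{K}})(x,U)$ $=$ $[0,(\mathcal{T}_{h}-\mathcal{T})\mathcal{G}(x,U)]^{T}$, so with the product norm $\| \widehat{\mathcal{A}} - \widehat{\mathcal{K}} \|_{L(\widehat{\Gamma},\widehat{\Delta})}$ $=$ $\| (\mathcal{T}-\mathcal{T}_{h})\mathcal{G} \|_{L(\widehat{\Gamma},\mathcal{X})}$. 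Since $\widehat{\mathcal{A}}$ is an isomorphism of $\widehat{\Gamma}$ onto $\widehat{\Delta}$, hypothesis (\ref{well_posed_e5_45p_conditia_q}) says exactly that $\| \widehat{\mathcal{A}}^{-1} \|_{L(\widehat{\Delta},\widehat{\Gamma})}\| \widehat{\mathcal{A}} - \widehat{\mathcal{K}} \|_{L(\widehat{\Gamma},\widehat{\Delta})}$ $=$ $q_{\ast}$ $<$ $1$. Lemma \ref{well_posed_lema_Sir_Spaces_AF1982_Cor1} (with $q = q_{\ast}$) then gives that $\widehat{\mathcal{K}}$ is a continuous bijection of the Banach space $\widehat{\Gamma}$ onto the Banach space $\widehat{\Delta}$, hence an isomorphism, with $\| \widehat{\mathcal{K}}^{-1} \|_{L(\widehat{\Delta},\widehat{\Gamma})}$ $\leq$ $(1-q_{\ast})^{-1}\| \widehat{\mathcal{A}}^{-1} \|_{L(\widehat{\Delta},\widehat{\Gamma})}$.

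Next I treat the subspace. As $\mathcal{X}_{h}$ is closed in $\mathcal{X}$, the set $\widehat{\Delta}_{h}$ $=$ $\mathcal{Z} \times \mathcal{X}_{h}$ is a closed subspace of $\widehat{\Delta}$, so $\widehat{\Gamma}_{\ast}$ $:=$ $\widehat{\mathcal{K}}^{-1}(\widehat{\Delta}_{h})$ is a closed (linear, nonempty) subspace of $\widehat{\Gamma}$, and $\widehat{\mathcal{K}}|_{\widehat{\Gamma}_{\ast}}$ is an isomorphism of $\widehat{\Gamma}_{\ast}$ onto $\widehat{\Delta}_{h}$ (surjectivity onto $\widehat{\Delta}_{h}$ comes from surjectivity of $\widehat{\mathcal{K}}$ onto $\widehat{\Delta}$). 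I then check $\widehat{\Gamma}_{\ast}$ $\subseteq$ $\widehat{\Gamma}_{h}$: if $(x,U)$ $\in$ $\widehat{\Gamma}_{\ast}$, then $U - \mathcal{T}_{h}\mathcal{G}(x,U)$ $\in$ $\mathcal{X}_{h}$ while $\mathcal{T}_{h}\mathcal{G}(x,U)$ $\in$ $\mathcal{X}_{h}$, so $U$ $\in$ $\mathcal{X}_{h}$, i.e. $(x,U)$ $\in$ $\widehat{\Gamma}_{h}$. Since $\widehat{\mathcal{A}}_{h}$ and $\widehat{\mathcal{K}}$ coincide on $\widehat{\Gamma}_{h}$ (both equal $[\mathcal{B},\mathcal{I}_{\mathcal{X}}-\mathcal{T}_{h}\mathcal{G}]^{T}$ there), the restriction $\widehat{\mathcal{A}}_{\ast}$ $=$ $\widehat{\mathcal{A}}_{h}|_{\widehat{\Gamma}_{\ast}}$ equals $\widehat{\mathcal{K}}|_{\widehat{\Gamma}_{\ast}}$ and is thus an isomorphism of $\widehat{\Gamma}_{\ast}$ onto $\widehat{\Delta}_{h}$; moreover $\widehat{\Gamma}_{\ast}$ is closed in $\widehat{\Gamma}_{h}$ because it is closed in $\widehat{\Gamma}$. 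Estimate (\ref{well_posed_e5_45p_conditia_q_estimare}) follows at once, since $\widehat{\mathcal{A}}_{\ast}^{-1}$ is $\widehat{\mathcal{K}}^{-1}$ restricted to $\widehat{\Delta}_{h}$ and both $\widehat{\Delta}_{h}$, $\widehat{\Gamma}_{h}$ carry the induced norms, so $\| \widehat{\mathcal{A}}_{\ast}^{-1} \|$ $\leq$ $\| \widehat{\mathcal{K}}^{-1} \|$ $\leq$ $(1-q_{\ast})^{-1}\| \widehat{\mathcal{A}}^{-1} \|$.

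Finally, existence, uniqueness and the error bound. For $\widehat{\mathcal{F}}$ $\in$ $\widehat{\Sigma}$ one has $\widehat{\mathcal{T}}\widehat{\mathcal{F}}$ $\in$ $\widehat{\Delta}$ and $\widehat{\mathcal{T}}_{h}\widehat{\mathcal{F}}$ $=$ $[\mathcal{F}_{\mathcal{B}},\mathcal{T}_{h}\mathcal{F}]^{T}$ $\in$ $\widehat{\Delta}_{h}$; isomorphy of $\widehat{\mathcal{A}}$ and of $\widehat{\mathcal{A}}_{\ast}$ yields a unique $(x,U)$ $\in$ $\widehat{\Gamma}$ solving (\ref{well_posed_e1_11_ecStokes_ecPoisson_variational_dem_q_compl4_ec_mathcal_X_op_E}) and a unique $(x_{h},U_{h})$ $\in$ $\widehat{\Gamma}_{\ast}$ solving (\ref{well_posed_e1_11_ecStokes_ecPoisson_variational_dem_q_compl4_ec_mathcal_X_h_E}); any solution of (\ref{well_posed_e1_11_ecStokes_ecPoisson_variational_dem_q_compl4_ec_mathcal_X_h_E}) in $\widehat{\Gamma}_{h}$ lies in $\widehat{\mathcal{K}}^{-1}(\widehat{\Delta}_{h})$ $=$ $\widehat{\Gamma}_{\ast}$, hence $(x_{h},U_{h})$ is the unique solution in $\widehat{\Gamma}_{h}$ and it approximates $(x,U)$. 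For the error, from $\widehat{\mathcal{A}}(x,U)$ $=$ $[\mathcal{F}_{\mathcal{B}},\mathcal{T}\mathcal{F}]^{T}$ one gets $\mathcal{G}(x,U)$ $=$ $\mathcal{T}^{-1}U - \mathcal{F}$, whence $\widehat{\mathcal{K}}(x,U) - \widehat{\mathcal{T}}_{h}\widehat{\mathcal{F}}$ $=$ $[0,(\mathcal{T}-\mathcal{T}_{h})\mathcal{T}^{-1}U]^{T}$; since $\widehat{\mathcal{K}}(x_{h},U_{h})$ $=$ $\widehat{\mathcal{A}}_{h}(x_{h},U_{h})$ $=$ $\widehat{\mathcal{T}}_{h}\widehat{\mathcal{F}}$, applying the linear map $\widehat{\mathcal{K}}^{-1}$ to $\widehat{\mathcal{K}}(x,U) - \widehat{\mathcal{K}}(x_{h},U_{h})$ and using $\| \widehat{\mathcal{K}}^{-1} \|$ $\leq$ $(1-q_{\ast})^{-1}\| \widehat{\mathcal{A}}^{-1} \|$ gives (\ref{well_posed_e5_45p_U_Uh_eroare}). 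The only genuinely delicate points are the identity $\mathcal{G}(x,U)$ $=$ $\mathcal{T}^{-1}U - \mathcal{F}$, which is what produces the clean residual $(\mathcal{T}-\mathcal{T}_{h})\mathcal{T}^{-1}U$, and the verification that $\widehat{\Gamma}_{\ast}$ sits inside $\widehat{\Gamma}_{h}$ with $\widehat{\mathcal{A}}_{h}$ acting there as $\widehat{\mathcal{K}}$; once these are settled, everything is an immediate consequence of the perturbation lemma.
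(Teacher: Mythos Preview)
Your proof is correct and follows essentially the same route as the paper: apply the perturbation Lemma~\ref{well_posed_lema_Sir_Spaces_AF1982_Cor1} to the pair $\widehat{\mathcal{A}},\widehat{\mathcal{K}}$, define $\widehat{\Gamma}_{\ast}=\widehat{\mathcal{K}}^{-1}(\widehat{\Delta}_{h})$, check $\widehat{\Gamma}_{\ast}\subseteq\widehat{\Gamma}_{h}$ via $U=\mathcal{T}_{h}\mathcal{G}(x,U)+(U-\mathcal{T}_{h}\mathcal{G}(x,U))\in\mathcal{X}_{h}$, use that $\widehat{\mathcal{A}}_{h}$ and $\widehat{\mathcal{K}}$ agree on $\widehat{\Gamma}_{h}$, and derive the error bound from the identity $\widehat{\mathcal{K}}\bigl((x,U)-(x_{h},U_{h})\bigr)=[0,(\mathcal{T}-\mathcal{T}_{h})\mathcal{T}^{-1}U]^{T}$. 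Your presentation is in fact slightly tighter than the paper's in two places: you make explicit that any solution of (\ref{well_posed_e1_11_ecStokes_ecPoisson_variational_dem_q_compl4_ec_mathcal_X_h_E}) in $\widehat{\Gamma}_{h}$ automatically lies in $\widehat{\Gamma}_{\ast}$ (so uniqueness in $\widehat{\Gamma}_{h}$ follows), and you compute the residual $\widehat{\mathcal{K}}(x,U)-\widehat{\mathcal{T}}_{h}\widehat{\mathcal{F}}$ in one step rather than the paper's line-by-line derivation.
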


\begin{proof}

We have $\| \widehat{\mathcal{A}} - \widehat{\mathcal{K}}
\|_{L(\widehat{\Gamma},\widehat{\Delta})} \|
\widehat{\mathcal{A}}^{-1}
\|_{L(\widehat{\Delta},\widehat{\Gamma})}$ $=$ $q_{\ast}$ $<$ $1$.
Applying Lemma \ref{well_posed_lema_Sir_Spaces_AF1982_Cor1}, since
$\widehat{\mathcal{A}}$ is an isomorphism of $\widehat{\Gamma}$
onto $\widehat{\Delta}$, it results that $\widehat{\mathcal{K}}$
is an isomorphism of $\widehat{\Gamma}$ onto $\widehat{\Delta}$
and
\begin{equation}
\label{well_posed_trei_ast_CONT}
   \| \widehat{\mathcal{K}}^{-1} \|_{L(\widehat{\Delta},\widehat{\Gamma})}
   \leq
   (1-q_{\ast})^{-1}
   \| \widehat{\mathcal{A}}^{-1} \|_{L(\widehat{\Delta},\widehat{\Gamma})} \, ,
\end{equation}

In the space $L(\widehat{\Gamma}_{h},\widehat{\Delta}_{h})$, let
us consider the operator $\widehat{\mathcal{A}}_{h}$.
($\widehat{\mathcal{A}}_{h}$ $\in$
$L(\widehat{\Gamma}_{h},\widehat{\Delta}_{h})$). Observe that we
have: $\forall$ $(x_{h},U_{h})$ $\in$ $\widehat{\Gamma}_{h}$,
$\widehat{\mathcal{A}}_{h}(x_{h},U_{h})$ $=$
$\widehat{\mathcal{K}}(x_{h},U_{h})$.

Let $\widehat{\Gamma}_{\ast}$ $=$
$\widehat{\mathcal{K}}^{-1}(\mathcal{Z} \times
\mathcal{T}_{h}(\mathcal{Y}))$ $=$
$\widehat{\mathcal{K}}^{-1}(\mathcal{Z} \times \mathcal{X}_{h})$
$=$ $\widehat{\mathcal{K}}^{-1}(\widehat{\Delta}_{h})$. If
$\eta_{\mathcal{B}}$ $\in$ $\mathcal{Z}$, $\eta_{h}$ $\in$
$\mathcal{T}_{h}(\mathcal{Y})$ and if $(V',V_{h})$ $=$
$\widehat{\mathcal{K}}^{-1}(\eta_{\mathcal{B}},\eta_{h})$ $\in$
$\widehat{\Gamma}$ $=$ $\mathcal{Q}$ $\times$ $\mathcal{X}$, then
$\widehat{\mathcal{K}}(V',V_{h})$ $=$
$(\eta_{\mathcal{B}},\eta_{h})$ or $\mathcal{B}(V',V_{h})$ $=$
$\eta_{\mathcal{B}}$, $V_{h}-\mathcal{T}_{h}\mathcal{G}V_{h}$ $=$
$\eta_{h}$ or $\mathcal{B}(V',V_{h})$ $=$ $\eta_{\mathcal{B}}$,
$V_{h}$ $=$ $\mathcal{T}_{h}\mathcal{G}V_{h}$ $+$ $\eta_{h}$, so
$V_{h}$ $\in$ $\mathcal{X}_{h}$. It follows that $V_{h}$ $\in$
$\mathcal{X}_{h}$ and, since $V'$ $\in$ $\mathcal{Q}$, we obtain
$(V',V_{h})$ $\in$ $\widehat{\Gamma}_{h}$ $=$ $\mathcal{Q}$
$\times$ $\mathcal{X}_{h}$ or
$\widehat{\mathcal{K}}^{-1}(\eta_{\mathcal{B}},\eta_{h})$ $\in$
$\widehat{\Gamma}_{h}$ $=$ $\mathcal{Q}$ $\times$
$\mathcal{X}_{h}$. It follows that $\widehat{\Gamma}_{\ast}$
$\subseteq$ $\widehat{\Gamma}_{h}$ $=$ $\mathcal{Q}$ $\times$
$\mathcal{X}_{h}$.

$\widehat{\mathcal{K}}$ is an isomorphism, so
$\widehat{\Gamma}_{\ast}$ is closed in $\widehat{\Gamma}_{h}$ $=$
$\mathcal{Q}$ $\times$ $\mathcal{X}_{h}$.

Since $\forall$ $(x_{h},U_{h})$ $\in$ $\widehat{\Gamma}_{h}$, we
have $\widehat{\mathcal{A}}_{h}(x_{h},U_{h})$ $=$
$\widehat{\mathcal{K}}(x_{h},U_{h})$, it follows that the
restriction $\widehat{\mathcal{A}}_{\ast}$ of
$\widehat{\mathcal{A}}_{h}$ to $\widehat{\Gamma}_{\ast}$,
$\widehat{\mathcal{A}}_{\ast}$ $=$
$\widehat{\mathcal{A}}_{h}|_{\widehat{\Gamma}_{\ast}}$, is an
isomorphism of $\widehat{\Gamma}_{\ast}$ onto
$\widehat{\Delta}_{h}$.

Hence it follows that the operator $\widehat{\mathcal{A}}_{\ast}$
has a continuous inverse that coincides with
$\widehat{\mathcal{K}}^{-1}$ on $\widehat{\Delta}_{h}$ and
\begin{equation}
\label{well_posed_trei_ast_RESTRICTIE}
   \| \widehat{\mathcal{A}}_{\ast}^{-1} \|_{L(
      \widehat{\Delta}_{h},\widehat{\Gamma}_{h})}
   \leq
   \| \widehat{\mathcal{K}}^{-1} \|_{L(\widehat{\Delta},\widehat{\Gamma})} \, .
\end{equation}

Relation (\ref{well_posed_e5_45p_conditia_q_estimare}) results
from (\ref{well_posed_trei_ast_RESTRICTIE}) and
(\ref{well_posed_trei_ast_CONT}).

Solutions $(x,U)$ and $(x_{h},U_{h})$ are given by the
isomorphisms $\widehat{\mathcal{A}}$ and
$\widehat{\mathcal{A}}_{\ast}$ respectively. Given
$\widehat{\mathcal{F}}$, $(x_{h},U_{h})$ approximates $(x,U)$.

Viewing the identity operator and the projection operator from the
proof of an identity from Theorem 12.1.2, page 479,
\cite{CLBichir_bib_Atkinson_Han2009}, as $\mathcal{T}$ and
$\mathcal{T}_{h}$ from our case, we easy obtain
(\ref{well_posed_e5_45p_U_Uh_eroare}): We have

$\mathcal{T}^{-1}U-\mathcal{G}(x,U) = \mathcal{F}$,

$\mathcal{T}_{h}\mathcal{T}^{-1}U-\mathcal{T}_{h}\mathcal{G}(x,U)
= \mathcal{T}_{h}\mathcal{F}$,

$U_{h}$ $=$ $\mathcal{T}_{h}\mathcal{G}(x_{h},U_{h}) +
\mathcal{T}_{h}\mathcal{F}$ $=$
$\mathcal{T}_{h}\mathcal{G}(x_{h},U_{h}) +
\mathcal{T}_{h}\mathcal{T}^{-1}U -
\mathcal{T}_{h}\mathcal{G}(x,U)$

$\mathcal{B}(x,U) - \mathcal{B}(x_{h},U_{h})$ $=$
$\mathcal{B}(x,U) - \mathcal{F}_{\mathcal{B}}$ $=$ $0$

$\mathcal{K}((x,U) - (x_{h},U_{h}))$ $=$ $U -
\mathcal{T}_{h}\mathcal{G}(x,U) - (U_{h} -
\mathcal{T}_{h}\mathcal{G}(x_{h},U_{h}))$ $=$ $U -
\mathcal{T}_{h}\mathcal{T}^{-1}U $ $=$ $(\mathcal{T} -
\mathcal{T}_{h})\mathcal{T}^{-1}U$

$\left[\begin{array}{c}
      \mathcal{B}(x,U) - \mathcal{B}(x_{h},U_{h}) \\
      \mathcal{K}(x,U) - \mathcal{K}(x_{h},U_{h})
      \end{array}\right]$
$=$ $\left[\begin{array}{c}
      0 \\
      (\mathcal{T} - \mathcal{T}_{h})\mathcal{T}^{-1}U
      \end{array}\right] \, .$

Hence

$\left[\begin{array}{c}
      x \\
      U
      \end{array}\right]
      -
      \left[\begin{array}{c}
      x_{h} \\
      U_{h}
      \end{array}\right]$
$=$ $\widehat{\mathcal{K}}^{-1}\left[\begin{array}{c}
      0 \\
      (\mathcal{T} - \mathcal{T}_{h})\mathcal{T}^{-1}U
      \end{array}\right] \, .$

From this identity, using (\ref{well_posed_trei_ast_CONT}), we
deduce (\ref{well_posed_e5_45p_U_Uh_eroare}).

\qquad
\end{proof}

\begin{cor}
\label{well_posed_teorema5_1_Stokes_math_B_L2_cor_dense}

Let $\widehat{\Gamma}_{r}$ be a linear subspace dense in
$\widehat{\Gamma}$. Then, (\ref{well_posed_e5_45p_conditia_q}) has
the form
\begin{equation}
\label{well_posed_e5_45p_conditia_q_cor}
   q_{\ast} = \| (\mathcal{T} - \mathcal{T}_{h})\mathcal{G} \|_{L(\widehat{\Gamma}_{r},\mathcal{X})}
      \| \widehat{\mathcal{A}}^{-1} \|_{L(\widehat{\Delta},\widehat{\Gamma})}
      < 1 \, ,
\end{equation}
\end{cor}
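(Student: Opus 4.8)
The plan is to recognize that this corollary is just the statement that the operator norm of a bounded linear operator is unchanged when it is restricted to a dense subspace, and to reduce it to the extension lemma, Lemma \ref{lema_prelungire}, already recorded in Section \ref{sectiunea_ecuatii_Stokes_2}.

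First I would check that $(\mathcal{T} - \mathcal{T}_{h})\mathcal{G}$ is genuinely an element of $L(\widehat{\Gamma},\mathcal{X})$, so that the left-hand factor in (\ref{well_posed_e5_45p_conditia_q}) makes sense: $\mathcal{G} \in L(\widehat{\Gamma},\mathcal{Y})$ by definition, $\mathcal{T} \in L(\mathcal{Y},\mathcal{X})$ (indeed an isomorphism), and $\mathcal{T}_{h} \in L(\mathcal{Y},\mathcal{X}_{h}) \subset L(\mathcal{Y},\mathcal{X})$ since $\mathcal{X}_{h}$ is a closed subspace of $\mathcal{X}$; hence $\mathcal{T} - \mathcal{T}_{h} \in L(\mathcal{Y},\mathcal{X})$ and its composition with $\mathcal{G}$ lies in $L(\widehat{\Gamma},\mathcal{X})$. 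I would then write $\mathcal{N} := (\mathcal{T} - \mathcal{T}_{h})\mathcal{G}$ and let $\mathcal{N}_{0} := \mathcal{N}|_{\widehat{\Gamma}_{r}}$ be its restriction to $\widehat{\Gamma}_{r}$, where $\widehat{\Gamma}_{r}$ carries the norm induced from $\widehat{\Gamma}$; being the restriction of a continuous linear map, $\mathcal{N}_{0} \in L(\widehat{\Gamma}_{r},\mathcal{X})$.

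Next I would apply Lemma \ref{lema_prelungire} with $E = \widehat{\Gamma}$, $E_{0} = \widehat{\Gamma}_{r}$ (a linear subspace dense in $E$ by hypothesis), $F = \mathcal{X}$ (a Banach space), and $A_{0} = \mathcal{N}_{0}$. The lemma furnishes a unique extension $A \in L(\widehat{\Gamma},\mathcal{X})$ of $\mathcal{N}_{0}$ with $\| A \|_{L(\widehat{\Gamma},\mathcal{X})} = \| \mathcal{N}_{0} \|_{L(\widehat{\Gamma}_{r},\mathcal{X})}$. Since $\mathcal{N}$ is itself a continuous linear extension of $\mathcal{N}_{0}$ to $\widehat{\Gamma}$, the uniqueness clause forces $A = \mathcal{N}$, whence $\| (\mathcal{T} - \mathcal{T}_{h})\mathcal{G} \|_{L(\widehat{\Gamma},\mathcal{X})} = \| (\mathcal{T} - \mathcal{T}_{h})\mathcal{G} \|_{L(\widehat{\Gamma}_{r},\mathcal{X})}$. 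Substituting this equality into (\ref{well_posed_e5_45p_conditia_q}) turns it into (\ref{well_posed_e5_45p_conditia_q_cor}), which is the assertion.

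I do not expect a real obstacle here; the only point worth a sentence is the identification of the abstract extension produced by Lemma \ref{lema_prelungire} with the concrete operator $\mathcal{N}$, which is settled by uniqueness. As an alternative to citing the extension lemma, one can prove the norm identity by hand: the inequality $\| \mathcal{N}_{0} \|_{L(\widehat{\Gamma}_{r},\mathcal{X})} \le \| \mathcal{N} \|_{L(\widehat{\Gamma},\mathcal{X})}$ is immediate, and for the converse one takes $\widehat{U} \in \widehat{\Gamma}$ with $\| \widehat{U} \|_{\widehat{\Gamma}} \le 1$, chooses $\widehat{U}_{n} \in \widehat{\Gamma}_{r}$ with $\widehat{U}_{n} \to \widehat{U}$, uses $\| \mathcal{N}\widehat{U}_{n} \|_{\mathcal{X}} \le \| \mathcal{N}_{0} \|_{L(\widehat{\Gamma}_{r},\mathcal{X})}\, \| \widehat{U}_{n} \|_{\widehat{\Gamma}}$, and passes to the limit using continuity of $\mathcal{N}$ and of the norm.
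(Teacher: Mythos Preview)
Your proof is correct. The paper states this corollary without proof, but your reduction to Lemma \ref{lema_prelungire} is exactly the intended argument, as confirmed by the paper's own use of that lemma to obtain the analogous norm identity (\ref{well_posed_e5_45p_conditia_q_restrictie_dense_EG_T}) in the proof of Corollary \ref{well_posed_corolarul_teorema5_1_Stokes_math_B_L2_Tdense}.
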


Thinking to $\mathcal{T}$ as related to the solution operators of
an elliptic problems, let us introduce some additional hypotheses
that result from the approximation of elliptic problems
(conditions for the Galerkin method and the conclusion of C\'ea's
lemma \cite{CLBichir_bib_Atkinson_Han2009,
CLBichir_bib_Ciarlet2002, CLBichir_bib_Quarteroni_Valli2008}). In
this way, the convergence of Galerkin method is sufficient.

\begin{cor}
\label{well_posed_corolarul_teorema5_1_Stokes_math_B_L2_Tdense}
Assume that the hypotheses of Theorem
\ref{well_posed_teorema5_1_Stokes_math_B_L2} are satisfied. Let
$\mathcal{X}_{d}$ be a linear subspace dense in $\mathcal{X}$.
Assume that the embedding $\mathcal{X}_{d} \subseteq \mathcal{X}$
is continuous and the embedding operator $\imath_{0}$ is compact.
Assume that
\begin{equation}
\label{well_posed_e5_45p_conditia_q_restrictie_dense_lim_T}
   \lim_{h \rightarrow 0} \| (\mathcal{T}-\mathcal{T}_{h}) \Psi \|_{\mathcal{X}}=0 \, ,
      \ \forall \, \Psi \in \mathcal{Y} \, ,
\end{equation}
Then, $\exists$ $h_{0}$ such that $\forall$ $h < h_{0}$, $q_{\ast}
< 1$, that is, (\ref{well_posed_e5_45p_conditia_q}). Furthermore,
\begin{equation}
\label{well_posed_e5_45p_U_Uh_eroare_U_Uh_lim_y_yh_lim}
   \lim_{h \rightarrow 0} \| U - U_{h} \|_{\mathcal{X}}=0 \, , \
   \lim_{h \rightarrow 0} \| x - x_{h} \|_{\mathcal{Q}}=0 \, ,
\end{equation}
\end{cor}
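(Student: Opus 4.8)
The plan is to deduce Corollary~\ref{well_posed_corolarul_teorema5_1_Stokes_math_B_L2_Tdense} directly from Theorem~\ref{well_posed_teorema5_1_Stokes_math_B_L2} and Corollary~\ref{well_posed_teorema5_1_Stokes_math_B_L2_cor_dense}, the only nontrivial point being to verify that hypothesis~(\ref{well_posed_e5_45p_conditia_q}) is eventually satisfied, after which the conclusions~(\ref{well_posed_e5_45p_U_Uh_eroare_U_Uh_lim_y_yh_lim}) follow from the error estimate~(\ref{well_posed_e5_45p_U_Uh_eroare}). First I would observe that by Corollary~\ref{well_posed_teorema5_1_Stokes_math_B_L2_cor_dense}, applied with the dense subspace $\widehat{\Gamma}_{r}$ $=$ $\mathcal{Q}$ $\times$ $\mathcal{X}_{d}$, it suffices to estimate $\| (\mathcal{T} - \mathcal{T}_{h})\mathcal{G} \|_{L(\widehat{\Gamma}_{r},\mathcal{X})}$, i.e.\ to bound $\| (\mathcal{T}-\mathcal{T}_{h}) \mathcal{G}(x,U) \|_{\mathcal{X}}$ for $(x,U)$ in the unit ball of $\mathcal{Q} \times \mathcal{X}_{d}$.

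The key step is a compactness/collective-convergence argument. Set $R_{h}$ $=$ $\mathcal{T}-\mathcal{T}_{h}$ $\in$ $L(\mathcal{Y},\mathcal{X})$. By~(\ref{well_posed_e5_45p_conditia_q_restrictie_dense_lim_T}), $R_{h}\Psi \to 0$ in $\mathcal{X}$ for every $\Psi \in \mathcal{Y}$; moreover $\sup_{h}\|R_{h}\|_{L(\mathcal{Y},\mathcal{X})} =: M < \infty$ (from the uniform boundedness principle, the pointwise-convergent family $R_{h}$ being bounded; alternatively this is a standing assumption of the Galerkin framework). Now the operator $\mathcal{G}$ restricted to $\widehat{\Gamma}_{r}$ factors through the compact embedding $\imath_{0}$ in the sense that $\mathcal{G}|_{\widehat{\Gamma}_{r}}$ maps the unit ball $\mathcal{K}$ of $\mathcal{Q}\times\mathcal{X}_{d}$ into a \emph{relatively compact} subset $\mathcal{G}(\mathcal{K})$ of $\mathcal{Y}$: indeed $\mathcal{K}$ has relatively compact image in $\mathcal{Q}\times\mathcal{X}$ because $\imath_{0}$ is compact, and $\mathcal{G}\in L(\mathcal{Q}\times\mathcal{X},\mathcal{Y})$ is continuous, hence maps relatively compact sets to relatively compact sets. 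On the relatively compact set $\overline{\mathcal{G}(\mathcal{K})}$ the pointwise convergence $R_{h}\to 0$ upgrades to uniform convergence: given $\eps>0$, cover $\overline{\mathcal{G}(\mathcal{K})}$ by finitely many balls $B(\Psi_{i},\eps/(3M))$, choose $h_{0}$ so that $\|R_{h}\Psi_{i}\|_{\mathcal{X}} < \eps/3$ for all $i$ and $h<h_{0}$, and then for arbitrary $\Psi\in\overline{\mathcal{G}(\mathcal{K})}$ pick $\Psi_{i}$ with $\|\Psi-\Psi_{i}\|_{\mathcal{Y}}<\eps/(3M)$, so that $\|R_{h}\Psi\|_{\mathcal{X}} \le \|R_{h}(\Psi-\Psi_{i})\|_{\mathcal{X}} + \|R_{h}\Psi_{i}\|_{\mathcal{X}} \le M\cdot\eps/(3M) + \eps/3 < \eps$. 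Taking the supremum over $\Psi\in\mathcal{G}(\mathcal{K})$ gives $\| R_{h}\mathcal{G} \|_{L(\widehat{\Gamma}_{r},\mathcal{X})} \le \eps$ for $h<h_{0}$; hence $\lim_{h\to 0}\| (\mathcal{T}-\mathcal{T}_{h})\mathcal{G} \|_{L(\widehat{\Gamma}_{r},\mathcal{X})} = 0$.

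Consequently $q_{\ast} = \| (\mathcal{T} - \mathcal{T}_{h})\mathcal{G} \|_{L(\widehat{\Gamma}_{r},\mathcal{X})} \| \widehat{\mathcal{A}}^{-1} \|_{L(\widehat{\Delta},\widehat{\Gamma})} \to 0$, so there is $h_{0}$ with $q_{\ast}<1$ for $h<h_{0}$, which is precisely~(\ref{well_posed_e5_45p_conditia_q}); Theorem~\ref{well_posed_teorema5_1_Stokes_math_B_L2} then applies and yields a unique $(x_{h},U_{h})$ solving~(\ref{well_posed_e1_11_ecStokes_ecPoisson_variational_dem_q_compl4_ec_mathcal_X_h_E}) together with~(\ref{well_posed_e5_45p_U_Uh_eroare}). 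For the convergence~(\ref{well_posed_e5_45p_U_Uh_eroare_U_Uh_lim_y_yh_lim}), I would feed the right-hand side of~(\ref{well_posed_e5_45p_U_Uh_eroare}) into the same argument: the fixed element $\mathcal{T}^{-1}U \in \mathcal{Y}$ gives $\| (\mathcal{T} - \mathcal{T}_{h})\mathcal{T}^{-1}U \|_{\mathcal{X}} \to 0$ by~(\ref{well_posed_e5_45p_conditia_q_restrictie_dense_lim_T}) directly, while the prefactor $(1-q_{\ast})^{-1}\| \widehat{\mathcal{A}}^{-1} \|_{L(\widehat{\Delta},\widehat{\Gamma})}$ stays bounded (it tends to $\| \widehat{\mathcal{A}}^{-1} \|$ as $q_{\ast}\to 0$). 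Hence $\| (x,U)-(x_{h},U_{h}) \|_{\widehat{\Gamma}} \to 0$, and since the product norm on $\widehat{\Gamma}$ dominates each component, both $\| U - U_{h} \|_{\mathcal{X}} \to 0$ and $\| x - x_{h} \|_{\mathcal{Q}} \to 0$.

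The main obstacle is the passage from the pointwise convergence~(\ref{well_posed_e5_45p_conditia_q_restrictie_dense_lim_T}) to the \emph{operator-norm} (uniform) convergence of $(\mathcal{T}-\mathcal{T}_{h})\mathcal{G}$ needed to control $q_{\ast}$; this is exactly where the compactness of the embedding $\imath_{0}:\mathcal{X}_{d}\subseteq\mathcal{X}$ enters, forcing the image of the unit ball under $\mathcal{G}$ to be precompact in $\mathcal{Y}$ so that an $\eps$-net argument converts pointwise into uniform convergence. One should also make sure the uniform bound $\sup_{h}\|\mathcal{T}-\mathcal{T}_{h}\|_{L(\mathcal{Y},\mathcal{X})}<\infty$ is available — it is, either as part of the Galerkin hypotheses alluded to before the corollary or via Banach--Steinhaus applied to the convergent family.
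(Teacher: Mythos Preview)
Your compactness step has a genuine gap. You claim that the unit ball $\mathcal{K}$ of $\mathcal{Q}\times\mathcal{X}_{d}$ has relatively compact image in $\mathcal{Q}\times\mathcal{X}$ ``because $\imath_{0}$ is compact''. But the embedding $\mathcal{Q}\times\mathcal{X}_{d}\hookrightarrow\mathcal{Q}\times\mathcal{X}$ is $\mathrm{id}_{\mathcal{Q}}\times\imath_{0}$: only the second factor is compact, while on the first factor the identity on the infinite-dimensional space $\mathcal{Q}$ sends the unit ball to a set that is merely bounded, not relatively compact. Since $\mathcal{G}\in L(\mathcal{Q}\times\mathcal{X},\mathcal{Y})$ genuinely depends on the $\mathcal{Q}$-variable (in the concrete setting of the paper, through $G_{\textbf{u}}(p_{S})$ and $G_{p}(\,\cdot\,,\hat{p},\ldots)$), the image $\mathcal{G}(\mathcal{K})$ need not be relatively compact in $\mathcal{Y}$, and your $\eps$-net argument for uniform convergence of $R_{h}$ on $\mathcal{G}(\mathcal{K})$ collapses. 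There is also a tension in how you invoke Corollary~\ref{well_posed_teorema5_1_Stokes_math_B_L2_cor_dense}: that corollary identifies $q_{\ast}$ with the operator norm over a dense subspace carrying the \emph{induced} norm of $\widehat{\Gamma}$ (this is what makes the two norms equal, via Lemma~\ref{lema_prelungire}), whereas your precompactness argument silently equips $\mathcal{X}_{d}$ with its own stronger norm; bounding the operator norm with respect to the stronger norm only gives an inequality in the wrong direction.

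The paper avoids the $\mathcal{Q}$-obstruction by moving the compactness to the \emph{range} side of $\mathcal{G}$ rather than the domain. It transports the compact embedding $\imath_{0}:\mathcal{X}_{d}\hookrightarrow\mathcal{X}$ through the isomorphism $\mathcal{T}^{-1}$ (Lemmas~\ref{lema_izom_corolar} and~\ref{lema_izom_corolar_compact}) to obtain a dense $\mathcal{Y}_{d}=\mathcal{T}^{-1}(\mathcal{X}_{d})\subset\mathcal{Y}$ with compact inclusion $\jmath_{0}$, writes $(\mathcal{T}-\mathcal{T}_{h})|_{\mathcal{Y}_{d}}=(\mathcal{T}-\mathcal{T}_{h})\circ\jmath_{0}$, and then invokes the standard ``pointwise convergence composed with a compact operator yields norm convergence'' principle to get $\|\mathcal{T}-\mathcal{T}_{h}\|\to 0$ on the appropriate space; finally $\|(\mathcal{T}-\mathcal{T}_{h})\mathcal{G}\|\le\|\mathcal{T}-\mathcal{T}_{h}\|\,\|\mathcal{G}\|$ uses only boundedness of $\mathcal{G}$, so the $\mathcal{Q}$-dependence is harmless. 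Your argument becomes salvageable once you adopt this transport-to-$\mathcal{Y}$ step in place of the failed precompactness of $\mathcal{K}$ in $\mathcal{Q}\times\mathcal{X}$. The final convergence paragraph (using~(\ref{well_posed_e5_45p_U_Uh_eroare}) and the pointwise hypothesis~(\ref{well_posed_e5_45p_conditia_q_restrictie_dense_lim_T}) at the single element $\mathcal{T}^{-1}U$) is fine and matches the paper.
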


\begin{proof}

$\mathcal{T}^{-1}$ is an isomorphism of $\mathcal{X}$ onto
$\mathcal{Y}$.

We apply \ref{lema_izom_corolar} with $E$ $=$ $\mathcal{X}$, $F$
$=$ $\mathcal{Y}$, $A$ $=$ $\mathcal{T}^{-1}$, $E_{0}$ $=$
$\mathcal{X}_{d}$. It results $A_{0}$ $=$
$\mathcal{T}_{d,isom}^{-1}$ $=$
$(\mathcal{T}^{-1})|_{\mathcal{X}_{d}}$ is an isomorphism of
$E_{0}$ onto $F_{0}$ $=$ $\mathcal{Y}_{d}$ $=$
$\mathcal{T}^{-1}(\mathcal{X}_{d})$ and $\mathcal{Y}_{d}$ is dense
in $\mathcal{Y}$. $\mathcal{T}_{d,isom}$ $\in$
$L(\mathcal{Y}_{d},\mathcal{X}_{d})$,
$\mathcal{T}_{d,isom}^{-1}(U)$ $=$
$(\mathcal{T}^{-1})|_{\mathcal{X}_{d}}(U)$, $\forall$ $U$ $\in$
$\mathcal{X}_{d}$.

Let $\jmath_{0}$ be the injection $\mathcal{Y}_{d}$ $\subset$
$\mathcal{Y}$. We now apply Lemma \ref{lema_izom_corolar_compact}
with $E$ $=$ $\mathcal{X}$, $F$ $=$ $\mathcal{Y}$, $A$ $=$
$\mathcal{T}^{-1}$, $E_{0}$ $=$ $\mathcal{X}_{d}$, $F_{0}$ $=$
$\mathcal{Y}_{d}$, $A_{0}$ $=$ $\mathcal{T}_{d,isom}^{-1}$,
$\imath$ $=$ $\imath_{0}$, $\jmath$ $=$ $\jmath_{0}$. So
$\jmath_{0}$ is compact. $\jmath_{0}$ is also continuous. (This
follows from the proof of Lemma \ref{lema_izom_corolar_compact}).

Let $\mathcal{T}_{d}$ be the restriction of $\mathcal{T}$ to
$\mathcal{Y}_{d}$, $\mathcal{T}_{d}$ $\in$
$L(\mathcal{Y}_{d},\mathcal{X})$. We have
$\mathcal{T}_{d,isom}(\mathcal{F})$ $=$
$\mathcal{T}_{d}(\mathcal{F})$, $\forall$ $\mathcal{F}$ $\in$
$\mathcal{Y}_{d}$. Let $\mathcal{T}_{d,h}$ be the restriction of
$\mathcal{T}_{h}$ to $\mathcal{Y}_{d}$. $\mathcal{T}_{d,h}$ $\in$
$L(\mathcal{Y}_{d},\mathcal{X}_{h})$.

From Lemma \ref{lema_prelungire}, it follows that
\begin{equation}
\label{well_posed_e5_45p_conditia_q_restrictie_dense_EG_T}
   \| \mathcal{T} - \mathcal{T}_{h} \|_{L(\mathcal{Y},\mathcal{X})}
      = \| \mathcal{T}_{d} - \mathcal{T}_{d,h} \|_{L(\mathcal{Y}_{d},\mathcal{X})} \, ,
\end{equation}
where $\mathcal{T} - \mathcal{T}_{h}$ is the unique extension of
$\mathcal{T}_{d} - \mathcal{T}_{d,h}$. We have
\begin{equation}
\label{well_posed_e5_45p_conditia_q_restrictie_dense_EG_T_compunere_NOU2}
   \mathcal{T}_{d} - \mathcal{T}_{d,h}
      = ( \mathcal{T} - \mathcal{T}_{h} )|_{\mathcal{Y}_{d}}
      = ( \mathcal{T} - \mathcal{T}_{h} ) \circ \jmath_{0} \, ,
\end{equation}

We can now apply the end of Remark IV.3.4, page 306,
\cite{CLBichir_bib_Gir_Rav1986}, and also page 33,
\cite{CLBichir_bib_Nair2009}. This and
(\ref{well_posed_e5_45p_conditia_q_restrictie_dense_lim_T}) give
(\ref{well_posed_e5_45p_conditia_q_restrictie_dense_lim_T_UNIF}),
where
\begin{equation}
\label{well_posed_e5_45p_conditia_q_restrictie_dense_lim_T_UNIF}
   \lim_{h \rightarrow 0} \| \mathcal{T}_{d} - \mathcal{T}_{d,h} \|_{L(\mathcal{Y}_{d},\mathcal{X})}=0 \, ,
\end{equation}

Taking into account
(\ref{well_posed_e5_45p_conditia_q_restrictie_dense_EG_T}), we
deduce where
\begin{equation}
\label{well_posed_e5_45p_conditia_q_restrictie_dense_lim_T_UNIF_intreg}
   \lim_{h \rightarrow 0} \| \mathcal{T} - \mathcal{T}_{h} \|_{L(\mathcal{Y},\mathcal{X})}=0 \, ,
\end{equation}
and $\| (\mathcal{T} - \mathcal{T}_{h})\mathcal{G}
\|_{L(\widehat{\Gamma},\mathcal{X})}$ $\leq$ $\| \mathcal{T} -
\mathcal{T}_{h} \|_{L(\mathcal{Y},\mathcal{X})} \| \mathcal{G}
\|_{L(\widehat{\Gamma},\mathcal{Y})} \, .$

Using
(\ref{well_posed_e5_45p_conditia_q_restrictie_dense_lim_T_UNIF_intreg}),
we obtain that $\exists$ $h_{0}$ such that $\forall$ $h < h_{0}$,
$q_{\ast} < 1$, that is, (\ref{well_posed_e5_45p_conditia_q}). We
have $\| (\mathcal{T} - \mathcal{T}_{h})\mathcal{T}^{-1}U
\|_{\mathcal{X}}$ $\leq$ $\| \mathcal{T} - \mathcal{T}_{h}
\|_{L(\mathcal{Y},\mathcal{X})} \| \mathcal{T}^{-1}U
\|_{\mathcal{Y}} \, .$

We use (\ref{well_posed_e5_45p_U_Uh_eroare}). Finally,
(\ref{well_posed_e5_45p_conditia_q_restrictie_dense_lim_T_UNIF_intreg})
leads to (\ref{well_posed_e5_45p_U_Uh_eroare_U_Uh_lim_y_yh_lim}).

\qquad
\end{proof}

\begin{cor}
\label{well_posed_corolarul_teorema5_1_restrictie_dense} Assume
that $\mathcal{X}$, $\mathcal{Q}$ and $\mathcal{Z}$ are Hilbert
spaces. Assume that the hypotheses of Theorem
\ref{well_posed_teorema5_1_Stokes_math_B_L2} are satisfied. Let
$h$ be a parameter. Let $\{\mathcal{X}_{h} ; h > 0 \}$ be a family
of finite dimensional subspaces of $\mathcal{X}$. Let
$\mathcal{S}$ be a set dense in $\mathcal{X}$ and assume that
\begin{equation}
\label{well_posed_e5_45p_conditia_q_dense_general_GAMA0_elliptic_restrictie}
   \lim_{h \rightarrow 0} \inf_{V_{h} \in \mathcal{X}_{h}} \| V-V_{h} \|_{\mathcal{X}}=0 \, ,
      \ \forall \, V \in \mathcal{S} \, ,
\end{equation}
Assume that there exists $\gamma_{0} > 0$ such that for all $\Psi$
$\in$ $\mathcal{Y}$ and $W=\mathcal{T}\Psi$,
$W_{h}=\mathcal{T}_{h}\Psi$, we have
\begin{equation}
\label{well_posed_e5_45p_conditia_Cea_GAMA0_ini_restrictie}
   \| W-W_{h} \|_{\mathcal{X}} \leq \gamma_{0} \inf_{V_{h} \in \mathcal{X}_{h}}\| W-V_{h} \|_{\mathcal{X}} \, ,
\end{equation}

Then, (\ref{well_posed_e5_45p_conditia_q_restrictie_dense_lim_T})
holds.

\end{cor}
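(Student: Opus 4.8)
The plan is to derive (\ref{well_posed_e5_45p_conditia_q_restrictie_dense_lim_T}) by combining the best-approximation hypothesis (\ref{well_posed_e5_45p_conditia_q_dense_general_GAMA0_elliptic_restrictie}) with the quasi-optimality estimate (\ref{well_posed_e5_45p_conditia_Cea_GAMA0_ini_restrictie}), following the classical density argument used to prove the convergence of the Galerkin method (\cite{CLBichir_bib_Atkinson_Han2009}, \cite{CLBichir_bib_Ciarlet2002}, \cite{CLBichir_bib_Quarteroni_Valli2008}). Fix an arbitrary $\Psi \in \mathcal{Y}$ and set $W = \mathcal{T}\Psi \in \mathcal{X}$ and $W_{h} = \mathcal{T}_{h}\Psi \in \mathcal{X}_{h}$. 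Since $(\mathcal{T}-\mathcal{T}_{h})\Psi = W - W_{h}$, it suffices to prove that $\| W - W_{h} \|_{\mathcal{X}} \to 0$ as $h \to 0$.

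The first step is to show that $\inf_{V_{h}\in\mathcal{X}_{h}}\| W - V_{h}\|_{\mathcal{X}} \to 0$ as $h \to 0$. Let $\varepsilon > 0$. Since $\mathcal{S}$ is dense in $\mathcal{X}$, I would choose $V \in \mathcal{S}$ with $\| W - V \|_{\mathcal{X}} < \varepsilon$. For every $V_{h} \in \mathcal{X}_{h}$ the triangle inequality gives $\| W - V_{h} \|_{\mathcal{X}} \leq \| W - V \|_{\mathcal{X}} + \| V - V_{h} \|_{\mathcal{X}}$, hence
\[
   \inf_{V_{h}\in\mathcal{X}_{h}}\| W - V_{h}\|_{\mathcal{X}}
   \leq \| W - V \|_{\mathcal{X}} + \inf_{V_{h}\in\mathcal{X}_{h}}\| V - V_{h}\|_{\mathcal{X}} .
\]
Applying (\ref{well_posed_e5_45p_conditia_q_dense_general_GAMA0_elliptic_restrictie}) to the element $V \in \mathcal{S}$, the last infimum tends to $0$ as $h \to 0$, so $\limsup_{h\to0}\inf_{V_{h}\in\mathcal{X}_{h}}\| W - V_{h}\|_{\mathcal{X}} \leq \| W - V\|_{\mathcal{X}} < \varepsilon$. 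As $\varepsilon > 0$ is arbitrary, $\lim_{h\to0}\inf_{V_{h}\in\mathcal{X}_{h}}\| W - V_{h}\|_{\mathcal{X}} = 0$.

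The second step is to invoke (\ref{well_posed_e5_45p_conditia_Cea_GAMA0_ini_restrictie}), which by hypothesis holds for this $\Psi$, $W = \mathcal{T}\Psi$, $W_{h} = \mathcal{T}_{h}\Psi$: $\| W - W_{h}\|_{\mathcal{X}} \leq \gamma_{0}\,\inf_{V_{h}\in\mathcal{X}_{h}}\| W - V_{h}\|_{\mathcal{X}}$. Letting $h \to 0$ and using the first step, $\| W - W_{h}\|_{\mathcal{X}} \to 0$, that is, $\| (\mathcal{T}-\mathcal{T}_{h})\Psi \|_{\mathcal{X}} \to 0$; since $\Psi \in \mathcal{Y}$ was arbitrary, this is exactly (\ref{well_posed_e5_45p_conditia_q_restrictie_dense_lim_T}).

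I do not anticipate any real obstacle. The only point that requires a little care is that the best-approximation hypothesis (\ref{well_posed_e5_45p_conditia_q_dense_general_GAMA0_elliptic_restrictie}) is assumed only on the dense subset $\mathcal{S}$ and not on all of $\mathcal{X}$, and it is precisely the triangle-inequality interpolation in the first step that bridges this gap, using that $W$ need not itself lie in $\mathcal{S}$. The finite-dimensionality of the spaces $\mathcal{X}_{h}$ plays no role in the argument (it only guarantees that the infima are attained), and the Hilbert-space assumptions on $\mathcal{X}$, $\mathcal{Q}$, $\mathcal{Z}$ are not needed for this particular conclusion; they are inherited from the standing framework in which (\ref{well_posed_e5_45p_conditia_Cea_GAMA0_ini_restrictie}) is typically verified.
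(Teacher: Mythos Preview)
Your proof is correct and follows exactly the approach the paper indicates: the paper's own proof simply observes that $\|(\mathcal{T}-\mathcal{T}_{h})\Psi\|_{\mathcal{X}}=\|W-W_{h}\|_{\mathcal{X}}$ and then appeals to ``the arguments from the proof of the convergence of Galerkin method for the elliptic problems,'' identifying (\ref{well_posed_e5_45p_conditia_Cea_GAMA0_ini_restrictie}) with C\'ea's lemma. You have spelled out precisely those arguments---the density-plus-triangle-inequality step followed by the quasi-optimality bound---so your proof is a fully detailed version of what the paper only sketches.
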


\begin{proof}

Let us fix $\Psi$, $\Psi$ $\in$ $\mathcal{Y}$.

\begin{equation}
\label{well_posed_e5_45p_conditia_Cea_GAMA0_ini_restrictie_CONT_0}
   \| (\mathcal{T}-\mathcal{T}_{h}) \Psi \|_{\mathcal{X}}
      = \| W-W_{h} \|_{\mathcal{X}} \, ,
\end{equation}

The arguments from the proof of the convergence of Galerkin method
for the elliptic problems (\cite{CLBichir_bib_Atkinson_Han2009,
CLBichir_bib_Ciarlet2002, CLBichir_bib_Quarteroni_Valli2008}) lead
to (\ref{well_posed_e5_45p_conditia_q_restrictie_dense_lim_T}).
$W$ corresponds to the solution of exact variational equation,
$W_{h}$ corresponds to the solution of approximate variational
equation and
(\ref{well_posed_e5_45p_conditia_Cea_GAMA0_ini_restrictie})
corresponds to the C\'ea's lemma.

\qquad
\end{proof}

\section{Preliminary remarks for the formulation of the problem we study}
\label{sectiunea_ecuatii_remarks}

\subsection{Some remarks on some operators and subspaces}
\label{sectiunea_ecuatii_remarks_1}

Related to the Stokes problem, we introduce an extended system in
Section \ref{sectiunea_forme_echivalente_ale_ecuatiilor_extended}.
This extended system has the forms
(\ref{well_posed_e1_11_ecStokes_ecPoisson_variational_dem_q_compl4_ec_mathcal_X_op_E_fi_GEN})
and
(\ref{well_posed_e1_11_ecStokes_ecPoisson_variational_dem_q_compl4_ec_mathcal_X_op_E_fi}).
In order to apply Lemma \ref{well_posed_lema_deschisa} to the
study, we define now some adequate spaces $\widehat{\Gamma}$,
$\widehat{\Sigma}$, $\widehat{\Gamma}_{0}$ and
$\widehat{\Sigma}_{0}$. The results are obtained for $(u,p)$ $\in$
$\textbf{H}_{0}^{1}(\Omega)$ $\times$ $(H^{1}(\Omega) \cap
L_{0}^{2}(\Omega))$ and these spaces are components of some spaces
$\widehat{\Gamma}_{1}$ and $\widehat{\Sigma}_{1}$ which are also
defined below.

\textbf{Some remarks on the Stokes problem}

Let $\Omega$ be a bounded and connected open subset of
$\mathbb{R}^{N}$ ($N=2,3$) with a Lipschitz - continuous boundary
$\partial \Omega$.

\begin{rem}
\label{observatia5_omega_domega} The conditions for $\Omega$ and
$\partial \Omega$ are the ones from Theorem I.5.1, page 80,
\cite{CLBichir_bib_Gir_Rav1986}. This theorem establishes the
existence and the uniqueness of the solution of the nonhomogeneous
Dirichlet problem for the stationary Stokes equations.
\end{rem}

Let us consider the variational formulation associated to
(\ref{e1_11_ecStokes}) - (\ref{e1_10_ecStokes})
\cite{CLBichir_bib_Dautray_Lions_vol8_1988,
CLBichir_bib_Gir_Rav1986}: given $\textbf{f}$ $\in$
$\textbf{H}^{-1}(\Omega)$, $g$ $\in$ $L_{0}^{2}(\Omega)'$ $\simeq$
$L_{0}^{2}(\Omega)$, find $(\textbf{u},p)$ $\in$
$\textbf{H}_{0}^{1}(\Omega) \times L_{0}^{2}(\Omega)$ such that
\begin{eqnarray}
   && (grad \, \textbf{u},grad \, \textbf{w})-(p,div \, \textbf{w})=\langle \textbf{f},\textbf{w} \rangle, \
              \forall \, \textbf{w}  \, \in \, \textbf{H}_{0}^{1}(\Omega) \, ,
         \label{e1_11_ecStokes_ecPoisson_variational_dem_th_I_5_1} \\
   && (div \, \textbf{u},\mu)=(g,\mu), \
              \forall \, \mu  \, \in \, L_{0}^{2}(\Omega) \, .
         \label{e5_1_cond_ecStokes_ecPoisson_variational_dem_th_I_5_1}
\end{eqnarray}

The compatibility condition (page 828,
\cite{CLBichir_bib_Dautray_Lions_vol8_1988}) is satisfied in this
case,
\begin{equation}
\label{e1_conditie_compatibility_Stokes}
   \int\limits_{\Omega} g \, dx
      = \int\limits_{\partial \Omega}(0 \cdot \textbf{n})|_{\partial \Omega} \, ds \, .
\end{equation}

Let $\Psi_{S}$ be the operator defined by the left hand sides of
problem (\ref{e1_11_ecStokes_ecPoisson_variational_dem_th_I_5_1})
- (\ref{e5_1_cond_ecStokes_ecPoisson_variational_dem_th_I_5_1}),
$\Psi_{S}$ $\in$ $L(\textbf{H}_{0}^{1}(\Omega) \times
L_{0}^{2}(\Omega);\textbf{H}^{-1}(\Omega) \times
L_{0}^{2}(\Omega))$. Then, problem
(\ref{e1_11_ecStokes_ecPoisson_variational_dem_th_I_5_1}) -
(\ref{e5_1_cond_ecStokes_ecPoisson_variational_dem_th_I_5_1}) has
the form
\begin{eqnarray}
   && \Psi_{S}(\textbf{u},p) = (\textbf{f},g) \, ,
         \label{e1_11_ecStokes_ecPoisson_ec_FI_widetilde_dem}
\end{eqnarray}

Theorem I.5.1, page 80, \cite{CLBichir_bib_Gir_Rav1986}, together
with Theorem I.4.1, page 59, \cite{CLBichir_bib_Gir_Rav1986}, and
Theorem XIX.9, page 828,
\cite{CLBichir_bib_Dautray_Lions_vol8_1988}, together with Remark
XIX.6, page 828, \cite{CLBichir_bib_Dautray_Lions_vol8_1988},
provide that problem
(\ref{e1_11_ecStokes_ecPoisson_variational_dem_th_I_5_1}) -
(\ref{e5_1_cond_ecStokes_ecPoisson_variational_dem_th_I_5_1}) is
well - posed. Hence $\Psi_{S}$ is an isomorphism of
$\textbf{H}_{0}^{1}(\Omega) \times L_{0}^{2}(\Omega)$ onto
$\textbf{H}^{-1}(\Omega) \times L_{0}^{2}(\Omega)$.

Since we need $p$ $\in$ $H^{1}(\Omega)$, we take $(u,p)$ $\in$
$\textbf{H}_{0}^{1}(\Omega)$ $\times$ $(H^{1}(\Omega) \cap
L_{0}^{2}(\Omega))$ and $\Upsilon_{1} \times M$ $=$
$\Psi_{S}(\textbf{H}_{0}^{1}(\Omega) \times (H^{1}(\Omega) \cap
L_{0}^{2}(\Omega)))$, $\Upsilon_{1}$ is a dense subspace of
$\textbf{H}^{-1}(\Omega)$ and $M$ $=$ $L_{0}^{2}(\Omega)$. This
follows from Lemma \ref{lema_izom_corolar} and from the fact that
$H^{1}(\Omega) \cap L_{0}^{2}(\Omega)$ is dense in
$L_{0}^{2}(\Omega)$. These data correspond to the spaces
$\widehat{\Gamma}_{1}$ si $\widehat{\Sigma}_{1}$ defined below.

To obtain
(\ref{e1_11_ecStokes_ecPoisson_variational_dem_ec_Poisson_pt_p_CONT})
below for the steady case, we take $p$ $\in$ $H^{1}(\Omega) \cap
L_{0}^{2}(\Omega)$ and we adapt the technique used in
\cite{CLBichir_bib_Sani_Shen_Pironneau_Gresho2006} to deduce
(\ref{mathcal_P2_e1_6_ecStokes_ecPoisson_variational_Sani_Shen_Pironneau_Gresho2006})
in the unsteady case.

From (\ref{e1_11_ecStokes_ecPoisson_variational_dem_th_I_5_1}), we
obtain
\begin{eqnarray}
   && -\triangle \textbf{u} + grad \, p - \textbf{f} = 0,
              \ \textrm{in} \ (\mathcal{D}'(\Omega))^{N} \, ,
         \label{e1_11_ecStokes_ecPoisson_variational_dem_distr_CONT}
\end{eqnarray}
that is, (\ref{e1_11_ecStokes_ecPoisson_variational_dem_distr})
below. In other words,
\begin{eqnarray}
   && grad \, p = \triangle \textbf{u} + \textbf{f},
              \ \textrm{in} \ (\mathcal{D}'(\Omega))^{N} \, ,
         \label{e1_11_ecStokes_ecPoisson_variational_dem_distr_CONT2}
\end{eqnarray}
$grad \, p$ $\in$ $\textbf{L}^{2}(\Omega)$, hence $\triangle
\textbf{u} + \textbf{f}$ $\in$ $\textbf{L}^{2}(\Omega)$, so we
have
\begin{eqnarray}
   && (-\triangle \textbf{u} - \textbf{f},grad \, \bar{\mu})
            + (grad \, p,grad \, \bar{\mu})=0, \
              \forall \, \bar{\mu}  \, \in \, H^{1}(\Omega) \, ,
         \label{e1_11_ecStokes_ecPoisson_variational_dem_ec_Poisson_pt_p_CONT}
\end{eqnarray}

Let $\textbf{z}$ $=$ $-\triangle \textbf{u}$. Then,
(\ref{e1_11_ecStokes_ecPoisson_variational_dem_ec_Poisson_pt_p_CONT})
is written
\begin{eqnarray}
   && (\textbf{z} - \textbf{f},grad \, \bar{\mu})
            + (grad \, p,grad \, \bar{\mu})=0, \
              \forall \, \bar{\mu}  \, \in \, H^{1}(\Omega) \, ,
         \label{e1_11_ecStokes_ecPoisson_variational_dem_ec_Poisson_pt_p_CONT2}
\end{eqnarray}

We observe that, instead (\ref{e1_6_ecStokes_ecPoisson0}), we can
use
\begin{equation}
\label{e1_6_ecStokes_ecPoisson0_completata}
   \alpha \, \nu \, div \, (\triangle \textbf{u}) - \triangle p = - div \, \textbf{f} \; \textrm{in} \; \Omega \, ,
\end{equation}
with a fixed real $\alpha \neq 1$ ($\nu=1$ as we established). We
quickly deduce that $div \, \textbf{u} = 0$. Indeed, from
(\ref{e1_11_ecStokes}) and
(\ref{e1_6_ecStokes_ecPoisson0_completata}), we obtain $(1-\alpha)
\, \triangle (div \, \textbf{u})$  $=$ $0$. Using
(\ref{dem101_Gir_Rav_pag51_G_P}), we deduce $\triangle \triangle
\zeta = 0$, Using (\ref{dem101_Gir_Rav_pag51_CL_G_P}),
(\ref{dem101_Gir_Rav_pag51_CL_Neumann_G_P}), it results $\zeta =
0$, so $div \, \textbf{u} = 0$, that is, (\ref{e1_9_ecStokes}).
For each $\alpha \neq 1$, we can obtain an approximate solution of
the given Stokes problem.

\textbf{Definition of the subspace $\mathcal{W}$}

Let us retain, from \cite{CLBichir_bib_Ciarlet2002,
CLBichir_bib_Glowinski1984,
CLBichir_bib_Glowinski_Pironneau1979_Numer_Math,
CLBichir_bib_Glowinski_Pironneau1979}, the formulations of the
Dirichlet problem for the biharmonic operator and the space
$\mathcal{V}$,
\begin{eqnarray}
   && \mathcal{V} = \{ (v,\psi) \in
      H_{0}^{1}(\Omega) \times L^{2}(\Omega);
      \forall \mu  \, \in \, H^{1}(\Omega),
      b((v,\psi),\mu) = 0 \} \, ,
         \label{e5_1_cond_ecStokes_ecPoisson_variational_dem_H02_var1cont_spVmathcal}
\end{eqnarray}
where $b((v,\psi),\mu)$ $=$ $(grad \, v,grad \, \mu) -
(\psi,\mu)$. ($\psi$ $=$ $div \, \textbf{u}$, with $u$ $\in$
$H^{1}(\Omega)^{N}$, $u = u_{w}$ on $\partial \Omega$, in
\cite{CLBichir_bib_Glowinski1984,
CLBichir_bib_Glowinski_Pironneau1979_Numer_Math}). $\mathcal{V}$
is defined for $\Omega$ convex or with $\partial \Omega$
sufficiently smooth and, in this case, $v$ $\in$ $H^{2}(\Omega)$.
Theorem 7.1.1, page 384, \cite{CLBichir_bib_Ciarlet2002}, tells us
that $\mathcal{V}$ $=$ $\mathcal{V}_{1}$, where
\begin{eqnarray}
   && \mathcal{V}_{1} = \{ (v,\psi) \in
      H_{0}^{2}(\Omega) \times L^{2}(\Omega);
      -\triangle v = \psi \} \, .
         \label{e5_1_cond_ecStokes_ecPoisson_variational_dem_H02_var1cont_spVmathcal2}
\end{eqnarray}

Since we consider a Lipschitz - continuous boundary $\partial
\Omega$, we take directly $v$ $\in$ $H_{0}^{1}(\Omega) \cap
H^{2}(\Omega)$ and consider the space
\begin{eqnarray}
   && \mathcal{W} = \{ (v,\psi) \in
      (H_{0}^{1}(\Omega) \cap H^{2}(\Omega)) \times L^{2}(\Omega);
      (v,\psi) \in \mathcal{V} \} \, .
         \label{e5_1_cond_ecStokes_ecPoisson_variational_dem_H02_var1cont_spWmathcal}
\end{eqnarray}
The proof of Theorem 7.1.1, page 384,
\cite{CLBichir_bib_Ciarlet2002}, remains valid if $\mathcal{V}$ is
replaced by $\mathcal{W}$, so $\mathcal{W}$ $=$ $\mathcal{V}_{1}$
and we can use the formulations of the Dirichlet problem for the
biharmonic operator, from \cite{CLBichir_bib_Ciarlet2002,
CLBichir_bib_Glowinski1984,
CLBichir_bib_Glowinski_Pironneau1979_Numer_Math,
CLBichir_bib_Glowinski_Pironneau1979}, using the space
$\mathcal{W}$ instead of the space $\mathcal{V}$.

\textbf{A remark on the density of a subspace}

Let $T_{PD}$ $\in$ $L(H^{-1}(\Omega),H_{0}^{1}(\Omega))$ be the
solution operator for the (scalar) Poisson equation with
homogeneous Dirichlet boundary conditions. Details are given in
Section \ref{sectiunea_alt_izomorfism}. Following page 370,
\cite{CLBichir_bib_Dautray_Lions_vol2_ENGLEZA_1988}, where we take
$V$ $=$ $H_{0}^{1}(\Omega)$, $H$ $=$ $L^{2}(\Omega)$ and $A$ $=$
$T_{PD}^{-1}$, we have
\begin{eqnarray}
   && D = D(T_{PD}^{-1}) = T_{PD}(L^{2}(\Omega)) = \{
      v \in L^{2}(\Omega),
      T_{PD}^{-1} v  \in L^{2}(\Omega) \} \, ,
         \label{e5_1_cond_ecStokes_ecPoisson_variational_dem_H02_var1cont_spWmathcal_1_D}
\end{eqnarray}
and $T_{PD}^{-1}|_{T_{PD}(L^{2}(\Omega))}$ is an isomorphism of
$D(T_{PD}^{-1})$, considered with the norm of the graph, onto
$L^{2}(\Omega)$
\cite{CLBichir_bib_Dautray_Lions_vol2_ENGLEZA_1988}.
$D(T_{PD}^{-1})$ is dense in $L^{2}(\Omega)$
\cite{CLBichir_bib_Dautray_Lions_vol2_ENGLEZA_1988}.
$D(T_{PD}^{-1})$ is dense in $H_{0}^{1}(\Omega)$
\cite{CLBichir_bib_Dautray_Lions_vol2_ENGLEZA_1988}.

Let
\begin{eqnarray}
   && D_{0} = T_{PD}(L_{0}^{2}(\Omega)) = \{
      v \in L^{2}(\Omega),
      T_{PD}^{-1} v  \in L_{0}^{2}(\Omega) \} \, ,
         \label{e5_1_cond_ecStokes_ecPoisson_variational_dem_H02_var1cont_spWmathcal_1_D_0}
\end{eqnarray}

$L_{0}^{2}(\Omega)$ is closed in $L^{2}(\Omega)$, so $D_{0}$ $=$
$T_{PD}(L_{0}^{2}(\Omega))$ is closed  in $D(T_{PD}^{-1}) =
T_{PD}(L^{2}(\Omega))$ (in the topology of norm of the graph).
$T_{PD}^{-1}(D_{0})$ $=$ $L_{0}^{2}(\Omega)$.

\begin{lem}
\label{lema_dense_norm_of_the_graph} $D_{0}$ $\cap$
$H_{0}^{2}(\Omega)$ is dense in $D_{0}$ in the topology of the
norm of the graph.
\end{lem}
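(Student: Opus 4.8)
The plan is to transfer the question, by means of the isomorphism attached to the Dirichlet Laplacian, to a density statement for the Laplacian of biharmonic functions. Since $T_{PD}^{-1}|_{T_{PD}(L^{2}(\Omega))}$ is an isomorphism of $D=D(T_{PD}^{-1})$, equipped with the graph norm, onto $L^{2}(\Omega)$, and since $L_{0}^{2}(\Omega)$ is closed in $L^{2}(\Omega)$ with $T_{PD}^{-1}(D_{0})=L_{0}^{2}(\Omega)$, the restriction of $T_{PD}^{-1}$ to $D_{0}$ is an isomorphism of $D_{0}$ (with the graph norm) onto $L_{0}^{2}(\Omega)$; this follows from Corollary~\ref{corolarul_lema_izom_restr} together with Lemma~\ref{lema_izom_corolar}. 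Since an isomorphism and its inverse both carry dense subspaces onto dense subspaces (Lemma~\ref{lema_izom_corolar}), $D_{0}\cap H_{0}^{2}(\Omega)$ is dense in $D_{0}$ for the graph norm if and only if $T_{PD}^{-1}(D_{0}\cap H_{0}^{2}(\Omega))$ is dense in $L_{0}^{2}(\Omega)$ for the $L^{2}$-norm.

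First I would identify $D_{0}\cap H_{0}^{2}(\Omega)$ exactly. If $v\in H_{0}^{2}(\Omega)$, then $v\in H_{0}^{1}(\Omega)$ and $\triangle v\in L^{2}(\Omega)$, so $v\in D$ and $T_{PD}^{-1}v=-\triangle v$; moreover, choosing $\varphi_{n}\in\mathcal{D}(\Omega)$ with $\varphi_{n}\to v$ in $H^{2}(\Omega)$ gives $\triangle\varphi_{n}\to\triangle v$ in $L^{2}(\Omega)$, whence $\int_{\Omega}\triangle v\,dx=\lim_{n}\int_{\Omega}\triangle\varphi_{n}\,dx=0$, each $\int_{\Omega}\triangle\varphi_{n}\,dx$ being zero because $\varphi_{n}$ has compact support. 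Hence $-\triangle v\in L_{0}^{2}(\Omega)$, i.e.\ $v\in D_{0}$; so $H_{0}^{2}(\Omega)\subseteq D_{0}$, $D_{0}\cap H_{0}^{2}(\Omega)=H_{0}^{2}(\Omega)$, and its image is $T_{PD}^{-1}(H_{0}^{2}(\Omega))=-\triangle(H_{0}^{2}(\Omega))=\triangle(H_{0}^{2}(\Omega))$. By the preceding paragraph, the lemma is therefore equivalent to the statement that $\triangle(H_{0}^{2}(\Omega))$ is dense in $L_{0}^{2}(\Omega)$; and since $\mathcal{D}(\Omega)$ is dense in $H_{0}^{2}(\Omega)$ and $\triangle$ is continuous from $H^{2}(\Omega)$ to $L^{2}(\Omega)$, it is enough to prove that $\triangle(\mathcal{D}(\Omega))$ is dense in $L_{0}^{2}(\Omega)$.

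I would establish the latter by duality: let $g\in L_{0}^{2}(\Omega)$ satisfy $(g,\triangle\varphi)=0$ for every $\varphi\in\mathcal{D}(\Omega)$; then $\triangle g=0$ in $\mathcal{D}'(\Omega)$, so by Weyl's lemma $g$ agrees a.e.\ with a function harmonic in $\Omega$, and it remains to show that such a $g$ is necessarily constant, hence $g=0$ by the zero-mean normalization --- that is, that the orthogonal complement of $\triangle(\mathcal{D}(\Omega))$ in $L_{0}^{2}(\Omega)$ is trivial. I expect this to be the main obstacle; it is a purely scalar harmonic-function statement in which the Lipschitz regularity of $\partial\Omega$ and the homogeneous Neumann information carried by membership of $H_{0}^{2}(\Omega)$ --- the condition that, as noted in Remark~\ref{observatia5_omega_domega_cond}, arises naturally in connection with the dense subspaces --- have to be exploited, for instance through integration by parts of $g$ against harmonic test functions and the vanishing of the boundary term $\int_{\partial\Omega} g\,\partial_{n}v\,ds$ for $v\in H_{0}^{2}(\Omega)$. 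Everything preceding this step --- the passage through $T_{PD}^{-1}$, the identification $D_{0}\cap H_{0}^{2}(\Omega)=H_{0}^{2}(\Omega)$, and the reduction from $H_{0}^{2}(\Omega)$ to $\mathcal{D}(\Omega)$ --- is routine.
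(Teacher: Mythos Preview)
Your reduction is sound: through the isomorphism $T_{PD}^{-1}|_{D_0}:(D_0,\|\cdot\|_{\mathrm{graph}})\to L_0^2(\Omega)$ and the identification $D_0\cap H_0^2(\Omega)=H_0^2(\Omega)$, the lemma becomes the statement that $-\triangle(H_0^2(\Omega))$ is dense in $L_0^2(\Omega)$. But the duality step you flag as ``the main obstacle'' is not merely difficult --- it is false. The $L^2$-orthogonal complement of $\triangle(\mathcal{D}(\Omega))$, and equally of $\triangle(H_0^2(\Omega))$, is the space of harmonic functions in $L^2(\Omega)$, which on a bounded domain is infinite-dimensional; intersecting with $L_0^2(\Omega)$ only removes the constants. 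Concretely, $g(x)=x_1-|\Omega|^{-1}\int_\Omega x_1\,dx$ is harmonic, belongs to $L_0^2(\Omega)$, and satisfies $(g,\triangle v)=0$ for every $v\in H_0^2(\Omega)$: approximate $v$ by $\varphi_n\in\mathcal{D}(\Omega)$ in $H^2$ and pass to the limit, or integrate by parts twice using that both the trace and normal trace of $v$ vanish. No amount of Lipschitz boundary regularity or boundary-term bookkeeping changes this.

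The paper proceeds quite differently: it first argues that $H_0^2(\Omega)$ is graph-norm dense in all of $D$ (via a contradiction with the $L^2$-density of $H_0^2(\Omega)$), and then uses orthogonal projections onto $L_0^2(\Omega)$ and its trace in $D(T_2)$ to correct an approximating sequence so that it lands in $D_0\cap H_0^2(\Omega)$. Your reduction, however, bears on that argument as well: since $H_0^2(\Omega)\subseteq D_0$, graph-norm density of $H_0^2(\Omega)$ in $D$ would, via the isomorphism $D\cong L^2(\Omega)$, force $-\triangle(H_0^2(\Omega))$ to be dense in $L^2(\Omega)$ --- contradicted by the same harmonic example. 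The paper's contradiction step implicitly treats proximity in the graph norm as if it were proximity in $L^2$, which is where the gap lies. In short, your route is different from the paper's, your reduction is correct, and it exposes that the asserted density cannot hold as stated on a general bounded Lipschitz domain.
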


\begin{proof} If $v$ $\in$ $H^{2}(\Omega)$, then
$-\triangle v$ $\in$ $L^{2}(\Omega)$.  So
$T_{PD}^{-1}(H_{0}^{2}(\Omega))$ $\subset$ $L^{2}(\Omega)$ and
$H_{0}^{2}(\Omega)$ $\subset$ $D$.

Let us prove that $H_{0}^{2}(\Omega)$ is dense $D$.

We denote $\mathbb{B}_{\varepsilon}(v)$ $=$ $\{ \tilde{v} \in
L^{2}(\Omega), \textrm{such that
(\ref{e5_1_norma_dem_H02_var1cont_spWmathcal_1_D_0}) holds.} \}$,
where
\begin{eqnarray}
   && \| v - \tilde{v} \|_{L^{2}(\Omega)}
      + \| T_{PD}^{-1}|_{T_{PD}(L^{2}(\Omega))}(v - \tilde{v}) \|_{L^{2}(\Omega)}
      < \varepsilon \, .
         \label{e5_1_norma_dem_H02_var1cont_spWmathcal_1_D_0}
\end{eqnarray}

Let $v$ $\in$ $D$.

Assume that $\exists$ $\varepsilon > 0$ such that $\forall$
$\tilde{v}$ $\in$ $H_{0}^{2}(\Omega)$ we have
\begin{eqnarray}
   && \| v - \tilde{v} \|_{L^{2}(\Omega)}
      + \| T_{PD}^{-1}|_{T_{PD}(L^{2}(\Omega))}(v - \tilde{v}) \|_{L^{2}(\Omega)}
      \geq \varepsilon \, .
         \label{e5_1_norma_dem_H02_var1cont_spWmathcal_1_D_0_not}
\end{eqnarray}
or $\mathbb{B}_{\varepsilon}(v)$ $\cap$ $H_{0}^{2}(\Omega)$ $=$
$\emptyset$ or $\forall$ $\tilde{v}$ $\in$
$\mathbb{B}_{\varepsilon}(v)$, we have $\tilde{v}$ $\notin$
$H_{0}^{2}(\Omega)$ and
(\ref{e5_1_norma_dem_H02_var1cont_spWmathcal_1_D_0}).

Hence $\forall$ $\tilde{v}$ $\in$ $\mathbb{B}_{\varepsilon}(v)$,
we have $\tilde{v}$ $\notin$ $H_{0}^{2}(\Omega)$ and $\| v -
\tilde{v} \|_{L^{2}(\Omega)}$ $<$ $\varepsilon$. This contradicts
the density of $H_{0}^{2}(\Omega)$ in $L^{2}(\Omega)$.

Hence, for all $v$ $\in$ $D$ and for all $\varepsilon > 0$, there
exists $\tilde{v}$ $\in$ $H_{0}^{2}(\Omega)$ such that
(\ref{e5_1_norma_dem_H02_var1cont_spWmathcal_1_D_0}) holds, so
$H_{0}^{2}(\Omega)$ is dense $D$.

Let $v$ $\in$ $int \ D_{0}$.

There exists $\varepsilon > 0$ such that
$\mathbb{B}_{\varepsilon}(v)$ $\subset$ $int \ D_{0}$

Assume that for a such $\varepsilon$ we have
$\mathbb{B}_{\varepsilon}(v)$ $\cap$ $H_{0}^{2}(\Omega)$ $=$
$\emptyset$.

Proceeding as before, we obtain that for $v$ $\in$ $int \ D_{0}$,
for all $\varepsilon > 0$, there exists $\tilde{v}$ $\in$ $D_{0}$
$\cap$ $H_{0}^{2}(\Omega)$ such that
(\ref{e5_1_norma_dem_H02_var1cont_spWmathcal_1_D_0}) holds.

Hence, $int \ D_{0}$ $\cap$ $H_{0}^{2}(\Omega)$ $\neq$ $\emptyset$
and, as a consequence, $int \ L_{0}^{2}(\Omega)$ $\cap$
$T_{PD}^{-1}(H_{0}^{2}(\Omega))$ $\neq$ $\emptyset$. $int \ D_{0}$
is in the induced topology of $D$ on $D_{0}$, $int \
L_{0}^{2}(\Omega)$ is in the induced topology of $L^{2}(\Omega)$
on $L_{0}^{2}(\Omega)$.

The space $\mathcal{V}_{1}$ is a closed subspace of
$H_{0}^{1}(\Omega)$ $\times$ $L^{2}(\Omega)$
\cite{CLBichir_bib_Ciarlet2002}. Instead of $\mathcal{V}_{1}$, we
work with
\begin{eqnarray}
   && \widetilde{\mathcal{V}}_{1} = \{ (\psi,v) \in
      L^{2}(\Omega) \times H_{0}^{2}(\Omega);
      T_{PD}\psi = v \} \, ,
         \label{e5_1_cond_ecStokes_ecPoisson_variational_dem_H02_var1cont_spVmathcal2_widetilde}
\end{eqnarray}

The space $\widetilde{\mathcal{V}}_{1}$ is a closed subspace of
$L^{2}(\Omega) \times H_{0}^{2}(\Omega)$.

It results that the restriction $T_{2}$ $\in$
$L(D(T_{2}),H_{0}^{2}(\Omega))$ of $T_{PD}$ to $D(T_{2})$
$\subset$ $L^{2}(\Omega)$ is a closed operator. $D(T_{2})$ is
complete for the norm of the graph.

If $v$ $\in$ $H^{2}(\Omega)$, then $-\triangle v$ $\in$
$L^{2}(\Omega)$.  So $T_{PD}^{-1}(H_{0}^{2}(\Omega))$ $\subset$
$L^{2}(\Omega)$.

$T_{PD}^{-1}(H_{0}^{2}(\Omega))$ $=$ $D(T_{2})$ as sets (or as
subspaces of $L^{2}(\Omega)$ (in the topology of
$L^{2}(\Omega)$)).

Hence, $D(T_{2})$ $\cap$ $int \ L_{0}^{2}(\Omega)$ $\neq$
$\emptyset$ (in the topology of $L^{2}(\Omega)$).

Let us prove that the restriction $\widetilde{L}$ of
$L_{0}^{2}(\Omega)$ to $D(T_{2})$ is closed in the topology of the
norm of the graph.

Consider a sequence $\{ l_{n} \}$ $\subset$ $\widetilde{L}$ such
that $\| l_{n} - l \|_{D(T_{2})}$ $\rightarrow$ $0$ in $D(T_{2})$
as $n$ $\rightarrow$ $\infty$. $l$ $\in$ $D(T_{2})$ since
$D(T_{2})$ is closed. $\| l_{n} - l \|_{L^{2}(\Omega)}$ $\leq$ $\|
l_{n} - l \|_{D(T_{2})}$, so $\| l_{n} - l \|_{L^{2}(\Omega)}$
$\rightarrow$ $0$ in $L^{2}(\Omega)$ with $\{ l_{n} \}$ $\subset$
$L_{0}^{2}(\Omega)$. $L_{0}^{2}(\Omega)$ is closed in
$L^{2}(\Omega)$, hence $l$ $\in$ $L_{0}^{2}(\Omega)$. We obtain
$l$ $\in$ $\widetilde{L}$ and $\widetilde{L}$ is closed in
$D(T_{2})$.

Inspired by some density proofs from
\cite{CLBichir_bib_Schechter2002}, where some projection operators
are used, let us work with the operators $P_{2}$ and $P$, where

$P_{2}$ $=$ $I_{2}$ on $D(T_{2})$ $\ominus$ $(D(T_{2})$ $\cap$
$L_{0}^{2}(\Omega))$ $=$ $D(T_{2})$ $\ominus$ $\widetilde{L}$,

$P_{2}$ $=$ $0$ on $D(T_{2})$ $\cap$ $L_{0}^{2}(\Omega)$ $=$
$\widetilde{L}$,

$P$ $=$ $I$ on $L^{2}(\Omega)$ $\ominus$ $L_{0}^{2}(\Omega)$,

$P$ $=$ $0$ on $L_{0}^{2}(\Omega)$.

Here, $I_{2}$ is the identity operator on $D(T_{2})$, $I$ is the
identity operator on $L^{2}(\Omega)$, $P_{2}$ is the projection
operator of $D(T_{2})$ onto $D(T_{2})$ $\ominus$ $\widetilde{L}$,
$P_{2}$ $\in$ $L(D(T_{2}),D(T_{2}))$, $P$ is the projection
operator of $L^{2}(\Omega)$ onto $L^{2}(\Omega)$ $\ominus$
$L_{0}^{2}(\Omega)$, $P$ $\in$ $L(L^{2}(\Omega),L^{2}(\Omega))$.

We have $P_{2}(y)$ $=$ $P(y)$ and $I_{2}(y)$ $=$ $I(y)$ for $y$
$\in$ $D(T_{2})$.

We write $T_{PD}$, but the operator is the restriction
$T_{PD}|_{L^{2}(\Omega)}$ of $T_{PD}$ to $L^{2}(\Omega)$.
$T_{PD}|_{L^{2}(\Omega)}$ is the inverse of
$T_{PD}^{-1}|_{T_{PD}(L^{2}(\Omega))}$ from the definition of $D$.

Let $u$ $\in$ $D_{0}$. There exists $y$ $\in$ $L_{0}^{2}(\Omega)$
such that $u$ $=$ $T_{PD}y$.

$u$ $\in$ $D$.

$H_{0}^{2}(\Omega)$ is dense in $D$, so there exists a sequence
$\{ u_{n} \}$ $\subset$ $H_{0}^{2}(\Omega)$ such that $\| u -
u_{n} \|_{D}$ $\rightarrow$ $0$ as $n$ $\rightarrow$ $\infty$ .

There exists $y_{n}$ $\in$ $D(T_{2})$ $\subset$ $L^{2}(\Omega)$
such that $u_{n}$ $=$ $T_{2}y_{n}$ $=$ $T_{PD}y_{n}$.

It results $\| y - y_{n} \|_{L^{2}(\Omega)}$ $\rightarrow$ $0$ as
$n$ $\rightarrow$ $\infty$.

Let $v_{n}$ $=$ $T_{2}(I_{2}-P_{2})y_{n}$ $=$ $T_{PD}(I-P)y_{n}$.

$y_{n}$ $\in$ $D(T_{2})$, so $(I_{2}-P_{2})y_{n}$ $\in$
$\widetilde{L}$. Hence, $v_{n}$ $\in$ $D_{0}$ and $v_{n}$ $\in$
$H_{0}^{2}(\Omega)$, so $v_{n}$ $\in$ $D_{0}$ $\cap$
$H_{0}^{2}(\Omega)$.

We have $\| y - y_{n} \|_{L^{2}(\Omega)}$ $\rightarrow$ $0$ as $n$
$\rightarrow$ $\infty$.

It results $v_{n}$ $=$ $T_{PD}(I-P)y_{n}$ $\rightarrow$
$T_{PD}(I-P)y$ $=$ $u$ in the topology of $D$ (since $(I-P)y_{n}$
$\rightarrow$ $(I-P)y$ in the topology of $L^{2}(\Omega)$).

Hence, $D_{0}$ $\cap$ $H_{0}^{2}(\Omega)$ is dense in $D_{0}$ in
the topology of the norm of the graph.

\qquad
\end{proof}

\textbf{Definition of the subspaces $\widetilde{V^{\bot}}$ and
$\Upsilon_{0}$}

It results that $T_{PD}^{-1}(D_{0}$ $\cap$ $H_{0}^{2}(\Omega))$ is
dense in $T_{PD}^{-1}(D_{0})$ $=$ $L_{0}^{2}(\Omega)$ in the
topology of $L^{2}(\Omega)$.

$div$ is an isomorphism of $V^{\bot}$ onto $L_{0}^{2}(\Omega)$,
where $V$ $=$ $\{ \textbf{v} \in \textbf{H}_{0}^{1}(\Omega)| div
\, \textbf{v}=0 \}$ and $\textbf{H}_{0}^{1}(\Omega)$ $=$ $V \oplus
V^{\bot}$ \cite{CLBichir_bib_Gir_Rav1986}.

Let us define $\widetilde{V^{\bot}}$ $=$ $(div \,
|_{V^{\bot}})^{-1}(T_{PD}^{-1}(D_{0}$ $\cap$
$H_{0}^{2}(\Omega)))$. $T_{PD}^{-1}(D_{0}$ $\cap$
$H_{0}^{2}(\Omega))$ is dense in $T_{PD}^{-1}(D_{0})$ $=$
$L_{0}^{2}(\Omega)$, in the topology of $L_{0}^{2}(\Omega)$ (or
$L^{2}(\Omega)$), so $\widetilde{V^{\bot}}$ is dense in $V^{\bot}$
$=$ $(div \, |_{V^{\bot}})^{-1}(L_{0}^{2}(\Omega))$.

It results that $V \oplus \widetilde{V^{\bot}}$ is dense in
$\textbf{H}_{0}^{1}(\Omega)$ ($=$ $V \oplus V^{\bot}$) (since $(V
\oplus \widetilde{V^{\bot}})^{\bot} = \{ 0 \}$).

Let us define $\Upsilon_{0}$ by $\Upsilon_{0}$ $\times$
$T_{PD}^{-1}(D_{0}$ $\cap$ $H_{0}^{2}(\Omega))$ $=$ $\Psi_{S}(V
\oplus \widetilde{V^{\bot}};H^{1}(\Omega) \cap
L_{0}^{2}(\Omega))$. If we take $(\textbf{u},p)$ $\in$ $V \oplus
\widetilde{V^{\bot}}$ $\times$ $(H^{1}(\Omega) \cap
L_{0}^{2}(\Omega))$, we have $(\textbf{f},g)$ $\in$ $\Upsilon_{0}$
$\times$ $T_{PD}^{-1}(D_{0}$ $\cap$ $H_{0}^{2}(\Omega))$.

It results that $\Upsilon_{0}$ $\times$ $T_{PD}^{-1}(D_{0}$ $\cap$
$H_{0}^{2}(\Omega))$ is dense in $\textbf{H}^{-1}(\Omega)$
$\times$ $L_{0}^{2}(\Omega)$, so $\Upsilon_{0}$ is a dense
subspace of $\textbf{H}^{-1}(\Omega)$.

Let $\textbf{F}$ $\in$ $\textbf{H}^{-1}(\Omega)$. Let
$S_{\textbf{F}}$ $\in$ $\mathcal{D}'(\Omega)$ be defined by
\begin{eqnarray}
   && \langle S_{\textbf{F}},\varphi \rangle = - \langle \textbf{F} , grad \, \varphi \rangle, \
              \forall \, \varphi  \, \in \, \mathcal{D}(\Omega) \, ,
         \label{e1_6_ecStokes_ecPoisson_variational_dem_f}
\end{eqnarray}
Let $\widetilde{S}_{\textbf{F}}$ be the unique continuous
extension of $S_{\textbf{F}}$ to $H_{0}^{1}(\Omega)$ (using page
49, \cite{CLBichir_bib_Adams1978}, and page 291,
\cite{CLBichir_bib_Atkinson_Han2009}).
$\widetilde{S}_{\textbf{F}}$ $\in$ $H^{-1}(\Omega)$.

\subsection{Some definitions introduced in order to apply Lemma
\ref{well_posed_lema_deschisa} and Theorem
\ref{well_posed_teorema5_1_Stokes_math_B_L2}}
\label{sectiunea_ecuatii_remarks_2}

In order to gather some mathematical entities that we use in the
sequel, we consider some definitions from Sections
\ref{sectiunea_alt_izomorfism} and
\ref{sectiunea_ecuatii_Stokes_problema_aproximativa} below. Let us
now define:

$\widehat{\Gamma}$ $=$ $\mathcal{Q}$ $\times$ $\mathcal{X}$,
$\widehat{\Sigma}$ $=$ $\mathcal{Z}$ $\times$ $\mathcal{Y}$,
$\widehat{\Gamma}_{h}$ $=$ $\mathcal{Q}$ $\times$
$\mathcal{X}_{h}$,

$\mathcal{Q}$ $=$ $H^{1}(\Omega)$ $\times$
$L_{0}^{2}(\Omega)^{2}$,

$\mathcal{X}$ $=$ $\textbf{H}_{0}^{1}(\Omega)$ $\times$
$H_{0}^{1}(\Omega)^{2}$ $\times$ $\textbf{L}^{2}(\Omega)$ $\times$
$H^{1}(\Omega) \cap L_{0}^{2}(\Omega)$ $\times$ $H^{1}(\Omega)$
$\times$ $\textbf{L}^{2}(\Omega)$,

$\mathcal{Z}$ $=$ $H^{1}(\Omega)$ $\times$
$L_{0}^{2}(\Omega)^{2}$,

$\mathcal{Y}$ $=$ $\textbf{H}^{-1}(\Omega)$ $\times$
$H^{-1}(\Omega)^{2}$ $\times$ $\textbf{L}^{2}(\Omega)$ $\times$
$B_{1}^{-1}(H^{1}(\Omega) \cap L_{0}^{2}(\Omega))$ $\times$
$(H^{1}(\Omega))'$ $\times$ $\textbf{L}^{2}(\Omega)$,

$\mathcal{X}_{h}$ $=$ $\textbf{X}_{0h}$ $\times$ $Y_{0h}^{2}$
$\times$ $\textbf{X}_{h}$ $\times$ $M_{h}$ $\times$ $Y_{h}$
$\times$ $\textbf{X}_{h}$,

$\widehat{\Gamma}_{0}$ $=$ $\mathcal{Q}_{0}$ $\times$
$\mathcal{X}_{0}$, $\widehat{\Sigma}_{0}$ $=$ $\mathcal{Z}_{0}$
$\times$ $\mathcal{Y}_{0}$,

$\mathcal{Q}_{0}$ $=$ $H^{1}(\Omega)$ $\times$ $H^{1}(\Omega) \cap
L_{0}^{2}(\Omega)$ $\times$ $T_{PD}^{-1}(D_{0}$ $\cap$
$H_{0}^{2}(\Omega))$,

$\mathcal{X}_{0}$ $=$ $V \oplus \widetilde{V^{\bot}}$ $\times$
$H_{0}^{1}(\Omega)^{2}$ $\times$ $\textbf{L}^{2}(\Omega)$ $\times$
$H^{1}(\Omega) \cap L_{0}^{2}(\Omega)$ $\times$ $H^{1}(\Omega)$
$\times$ $\textbf{L}^{2}(\Omega)$,

$\mathcal{Z}_{0}$ $=$ $H^{1}(\Omega)$ $\times$ $H^{1}(\Omega) \cap
L_{0}^{2}(\Omega)$ $\times$ $(T_{PD}^{-1}(D_{0}$ $\cap$
$H_{0}^{2}(\Omega)))$,

$\mathcal{Y}_{0}$ $=$ $\Upsilon_{0}$ $\times$ $H^{-1}(\Omega)^{2}$
$\times$ $\textbf{L}^{2}(\Omega)$ $\times$
$B_{1}^{-1}(H^{1}(\Omega) \cap L_{0}^{2}(\Omega))$ $\times$
$(H^{1}(\Omega))'$ $\times$ $\textbf{L}^{2}(\Omega)$,

$\widehat{\Gamma}_{1}$ $=$ $\mathcal{Q}_{1}$ $\times$
$\mathcal{X}_{1}$, $\widehat{\Sigma}_{1}$ $=$ $\mathcal{Z}_{1}$
$\times$ $\mathcal{Y}_{1}$,

$\mathcal{Q}_{1}$ $=$ $H^{1}(\Omega)$ $\times$ $H^{1}(\Omega) \cap
L_{0}^{2}(\Omega)$ $\times$ $L_{0}^{2}(\Omega)$,

$\mathcal{X}_{1}$ $=$ $\mathcal{X}$,

$\mathcal{Z}_{1}$ $=$ $H^{1}(\Omega)$ $\times$ $H^{1}(\Omega) \cap
L_{0}^{2}(\Omega)$ $\times$ $L_{0}^{2}(\Omega)$,

$\mathcal{Y}_{1}$ $=$ $\Upsilon_{1}$ $\times$ $H^{-1}(\Omega)^{2}$
$\times$ $\textbf{L}^{2}(\Omega)$ $\times$
$B_{1}^{-1}(H^{1}(\Omega) \cap L_{0}^{2}(\Omega))$ $\times$
$(H^{1}(\Omega))'$ $\times$ $\textbf{L}^{2}(\Omega)$,

$x$ $=$ $(\hat{p}, p_{S}, y)$,

$U$ $=$ $(\textbf{u}, q, \hat{q}, \textbf{z}, p, r, \textbf{t})$,

$x_{h}$ $=$ $(\hat{p}_{h}, p_{S,h}, y_{h})$,

$U_{h}$ $=$ $(\textbf{u}_{h}, q_{h}, \hat{q}_{h}, \textbf{z}_{h},
p_{h}, r_{h}, \textbf{t}_{h})$,

$\mathcal{T}$ $\in$ $L(\mathcal{Y},\mathcal{X})$, $\mathcal{T}$
$=$ $(T_{VPD}$, $T_{PD}$, $T_{PD}$, $\pi$, $B_{1}$, $B_{1}$,
$\pi)$,

$\mathcal{G}$ $\in$ $L(\mathcal{Q} \times
\mathcal{X},\mathcal{Y})$, $\mathcal{G}(y,U)$ $=$
$(G_{\textbf{u}}(p_{S})$, $G_{q}(\textbf{t},\textbf{z})$,
$G_{\hat{q}}(p)$, $G_{\textbf{z}}(p)$,
$G_{p}(p,\hat{p},\hat{q},\textbf{z},r)$, $G_{r}(q,\hat{q},r)$,
$G_{\textbf{t}}(\textbf{u}))$,

$\mathcal{B}$ $\in$ $L(\mathcal{Q} \times
\mathcal{X},\mathcal{Z})$,

$\widehat{\mathcal{F}}$ $=$
$[\mathcal{F}_{\mathcal{B}},\mathcal{F}]^{T}$,
$\mathcal{F}_{\mathcal{B}}$ $=$ $(\hat{g}$, $\ell_{S}$, $g)$,
$\mathcal{F}$ $=$ $(\textbf{f}$, $\psi$, $\hat{\psi}$,
$\textbf{f}_{\triangle}$, $\varrho$, $\hat{\varrho}$,
$\hat{\textbf{f}})$,

$\widehat{\mathcal{T}}\widehat{\mathcal{F}}$ $=$
$[\mathcal{J}_{\mathcal{Z}}\mathcal{F}_{\mathcal{B}},\mathcal{T}\mathcal{F}]^{T}$,
$\mathcal{J}_{\mathcal{Z}}\mathcal{F}_{\mathcal{B}}$ $=$
$\mathcal{F}_{\mathcal{B}}$, $\mathcal{T}\mathcal{F}$ $=$
$(T_{VPD}\textbf{f}$, $T_{PD}\psi$, $T_{PD}\hat{\psi}$,
$\pi\textbf{f}_{\triangle}$, $B_{1}\varrho$, $B_{1}\hat{\varrho}$,
$\pi\hat{\textbf{f}})$,

$\mathcal{T}_{h}$ $\in$ $L(\mathcal{Y},\mathcal{X}_{h})$,
$\mathcal{T}_{h}$ $=$ $(T_{VPD,h}$, $T_{PD,h}$, $T_{PD,h}$,
$\pi_{h}$, $B_{1,h}$, $B_{1,h}$, $\pi_{h})$,

$\widehat{\mathcal{T}}_{h}\widehat{\mathcal{F}}$ $=$
$[\mathcal{J}_{\mathcal{Z}}\mathcal{F}_{\mathcal{B}},\mathcal{T}_{h}\mathcal{F}]^{T}$,
$\mathcal{J}_{\mathcal{Z}}\mathcal{F}_{\mathcal{B}}$ $=$
$\mathcal{F}_{\mathcal{B}}$, $\mathcal{T}_{h}\mathcal{F}$ $=$
$(T_{VPD,h}\textbf{f}$, $T_{PD,h}\psi$, $T_{PD,h}\hat{\psi}$,
$\pi_{h}\textbf{f}_{\triangle}$, $B_{1,h}\varrho$,
$B_{1,h}\hat{\varrho}$, $\pi_{h}\hat{\textbf{f}})$,

$\mathcal{X}_{d}$ $=$ $\textbf{H}_{0}^{2}(\Omega)$ $\times$
$H_{0}^{2}(\Omega)^{2}$ $\times$ $\textbf{H}_{0}^{1}(\Omega)$
$\times$ $H^{m+1}(\Omega) \cap L_{0}^{2}(\Omega)$ $\times$
$H^{m+1}(\Omega)$ $\times$ $\textbf{H}_{0}^{1}(\Omega)$,

$\mathcal{S}$ $=$ $(C^{\infty}(\overline{\Omega}) \cap
H_{0}^{1}(\Omega))^{N+2}$ $\times$ $C_{0}^{\infty}(\Omega)^{N}$
$\times$ $C^{\infty}(\overline{\Omega}) \cap L_{0}^{2}(\Omega)$
$\times$ $C^{\infty}(\overline{\Omega})$ $\times$
$C_{0}^{\infty}(\Omega)^{N}$.

The correspondences between components are shown in the following
tables, where we skip "$(\Omega)$" in the notations.

\begin{tabular}{|c|c|c|c|c|c|}
  \hline % \cline{col1-col2} \cline{col3-col4} ...
  & & & & & \\
  $(x,U)$      & $j$ & $\widehat{\Gamma}$ & $\widehat{\Gamma}_{0}$ & $\widehat{\Gamma}_{1}$ & $\widehat{\Gamma}_{h}$ \\
  \hline % \cline{col1-col2} \cline{col3-col4} ...
  $\hat{p}$    &    & $H^{1}$ & $H^{1}$ & $H^{1}$ & $H^{1}$ \\
  $p_{S}$      &    & $L_{0}^{2}$ & $H^{1} \cap L_{0}^{2}$ & $H^{1} \cap L_{0}^{2}$ & $L_{0}^{2}$ \\
  $y$          &    & $T_{PD}^{-1}(D_{0})$ $=$ $L_{0}^{2}$ & $T_{PD}^{-1}(D_{0}$ $\cap$ $H_{0}^{2})$ & $L_{0}^{2}$ & $L_{0}^{2}$ \\
  $\textbf{u}$ &  1 & $\textbf{H}_{0}^{1}$ & $V \oplus \widetilde{V^{\bot}}$ & $\textbf{H}_{0}^{1}$ & $\textbf{X}_{0h}$ \\
  $q$          &  2 & $H_{0}^{1}$ & $H_{0}^{1}$ & $H_{0}^{1}$ & $Y_{0h}$ \\
  $\hat{q}$    &  3 & $H_{0}^{1}$ & $H_{0}^{1}$ & $H_{0}^{1}$ & $Y_{0h}$ \\
  $\textbf{z}$ &  4 & $\textbf{L}^{2}$ & $\textbf{L}^{2}$ & $\textbf{L}^{2}$ & $\textbf{X}_{h}$ \\
  $p$          &  5 & $H^{1} \cap L_{0}^{2}$ & $H^{1} \cap L_{0}^{2}$ & $H^{1} \cap L_{0}^{2}$ & $M_{h}$ \\
  $r$          &  6 & $H^{1}$ & $H^{1}$ & $H^{1}$ & $Y_{h}$ \\
  $\textbf{t}$ &  7 & $\textbf{L}^{2}$ & $\textbf{L}^{2}$ & $\textbf{L}^{2}$ & $\textbf{X}_{h}$ \\
  \hline
\end{tabular}

\begin{tabular}{|c|c|c|c|c|c|}
  \hline % \cline{col1-col2} \cline{col3-col4} ...
  & & & & &  \\
  $(y,U)$   & $j$ & $\widehat{\mathcal{F}}$ & $\widehat{\Sigma}$ & $\widehat{\Sigma}_{0}$ & $\widehat{\Sigma}_{1}$  \\
  \hline % \cline{col1-col2} \cline{col3-col4} ...
  $\hat{p}$    &    & $\hat{g}$       & $H^{1}$ & $H^{1}$ & $H^{1}$  \\
  $p_{S}$      &    & $\ell_{S}$      & $L_{0}^{2}$ & $H^{1} \cap L_{0}^{2}$ & $H^{1} \cap L_{0}^{2}$  \\
  $y$          &    & $g$             & $T_{PD}^{-1}(D_{0})$ $=$ $L_{0}^{2}$ & $T_{PD}^{-1}(D_{0}$ $\cap$ $H_{0}^{2})$ & $L_{0}^{2}$  \\
  $\textbf{u}$ &  1 & $f$             & $\textbf{H}^{-1}$ & $\Upsilon_{0}$ & $\Upsilon_{1}$  \\
  $q$          &  2 & $\psi$          & $H^{-1}$ & $H^{-1}$ & $H^{-1}$  \\
  $\hat{q}$    &  3 & $\hat{\psi}$    & $H^{-1}$ & $H^{-1}$ & $H^{-1}$  \\
  $\textbf{z}$ &  4 & $\textbf{f}_{\triangle}$          & $\textbf{L}^{2}$ & $\textbf{L}^{2}$ & $\textbf{L}^{2}$  \\
  $p$          &  5 & $\varrho$       & $B_{1}^{-1}(H^{1} \cap L_{0}^{2})$ & $B_{1}^{-1}(H^{1} \cap L_{0}^{2})$ & $B_{1}^{-1}(H^{1} \cap L_{0}^{2})$  \\
  $r$          &  6 & $\hat{\varrho}$ & $(H^{1})'$ & $(H^{1})'$ & $(H^{1})'$  \\
  $\textbf{t}$ &  7 & $\hat{\textbf{f}}$       & $\textbf{L}^{2}$ & $\textbf{L}^{2}$ & $\textbf{L}^{2}$  \\
  \hline
\end{tabular}

\begin{tabular}{|c|c|c|c|c|}
  \hline % \cline{col1-col2} \cline{col3-col4} ...
  & & & & \\
  $(y,U)$      & $j$ & For $\widehat{\Gamma}$, topology of & $\mathcal{X}_{d}$ & $\mathcal{S}$  \\
  \hline % \cline{col1-col2} \cline{col3-col4} ...
  $\hat{p}$    &    & $H^{1}$ & - & - \\
  $p_{S}$      &    & $L_{0}^{2}$ & - & - \\
  $y$          &    & $L_{0}^{2}$ (or $L^{2}$) & - & - \\
  $\textbf{u}$ &  1 & $\textbf{H}_{0}^{1}$ & $\textbf{H}_{0}^{2}$ & $(C^{\infty}(\overline{\Omega}) \cap H_{0}^{1})^{N}$ \\
  $q$          &  2 & $H_{0}^{1}$ & $H_{0}^{2}$ & $C^{\infty}(\overline{\Omega}) \cap H_{0}^{1}$ \\
  $\hat{q}$    &  3 & $H_{0}^{1}$ & $H_{0}^{2}$ & $C^{\infty}(\overline{\Omega}) \cap H_{0}^{1}$ \\
  $\textbf{z}$ &  4 & $\textbf{L}^{2}$ & $\textbf{H}_{0}^{1}$ & $C_{0}^{\infty}(\Omega)^{N}$ \\
  $p$          &  5 & $H^{1}$ & $H^{m+1} \cap L_{0}^{2}$ & $C^{\infty}(\overline{\Omega}) \cap L_{0}^{2}$ \\
  $r$          &  6 & $H^{1}$ & $H^{m+1}$ & $C^{\infty}(\overline{\Omega})$ \\
  $\textbf{t}$ &  7 & $\textbf{L}^{2}$ & $\textbf{H}_{0}^{1}$ & $C_{0}^{\infty}(\Omega)^{N}$ \\
  \hline
\end{tabular}
\begin{lem}
\label{lema_widehat_GAMA_dense_T_tabel} $\widehat{\Gamma}_{0}$ and
$\widehat{\Gamma}_{1}$ are dense subspaces of $\widehat{\Gamma}$.
$\widehat{\Sigma}_{0}$ and $\widehat{\Sigma}_{1}$ are dense
subspaces of $\widehat{\Sigma}$. The embedding $\mathcal{X}_{d}
\subseteq \mathcal{X}$ is continuous and the embedding operator
$\imath_{0}$ is compact. $\mathcal{S}$ is a set dense in
$\mathcal{X}$.
\end{lem}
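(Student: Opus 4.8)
The plan is to reduce every assertion to a componentwise check, using the elementary principle that on a finite product of normed spaces carrying the sum norm (so that the product topology and the norm topology coincide) a product $\prod_{\jmath} A_{\jmath}$ is dense in $\prod_{\jmath} X_{\jmath}$ precisely when each $A_{\jmath}$ is dense in $X_{\jmath}$, and that the finite diagonal product of continuous (respectively compact) embeddings $X_{\jmath} \hookrightarrow Y_{\jmath}$ is again a continuous (respectively compact) embedding $\prod_{\jmath} X_{\jmath} \hookrightarrow \prod_{\jmath} Y_{\jmath}$. With this in hand the whole statement becomes bookkeeping against the tables above.

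For the first two assertions I would read off, factor by factor, which component of $\widehat{\Gamma}_{0}$, $\widehat{\Gamma}_{1}$, $\widehat{\Sigma}_{0}$, $\widehat{\Sigma}_{1}$ differs from the corresponding component of $\widehat{\Gamma}$ or $\widehat{\Sigma}$. The only non-identical factors that occur are: $H^{1}(\Omega) \cap L_{0}^{2}(\Omega) \subset L_{0}^{2}(\Omega)$, dense (a density already invoked in the preceding text); $T_{PD}^{-1}(D_{0} \cap H_{0}^{2}(\Omega)) \subset T_{PD}^{-1}(D_{0}) = L_{0}^{2}(\Omega)$, dense in the $L^{2}(\Omega)$-topology (this follows from Lemma \ref{lema_dense_norm_of_the_graph} together with the graph-norm isomorphism $T_{PD}^{-1}|_{T_{PD}(L^{2}(\Omega))}$ recorded above, which carries the graph-dense set $D_{0} \cap H_{0}^{2}(\Omega) \subset D_{0}$ onto an $L^{2}$-dense subset of $T_{PD}^{-1}(D_{0}) = L_{0}^{2}(\Omega)$); $V \oplus \widetilde{V^{\bot}} \subset \textbf{H}_{0}^{1}(\Omega)$, dense (shown above, via $(V \oplus \widetilde{V^{\bot}})^{\bot} = \{ 0 \}$); and $\Upsilon_{0}, \Upsilon_{1} \subset \textbf{H}^{-1}(\Omega)$, both already shown to be dense. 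Assembling the products then yields the four density statements.

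For the embedding $\imath_{0} : \mathcal{X}_{d} \to \mathcal{X}$, the six factor embeddings are $\textbf{H}_{0}^{2}(\Omega) \hookrightarrow \textbf{H}_{0}^{1}(\Omega)$, $H_{0}^{2}(\Omega) \hookrightarrow H_{0}^{1}(\Omega)$ (two copies), $\textbf{H}_{0}^{1}(\Omega) \hookrightarrow \textbf{L}^{2}(\Omega)$ (two copies), $H^{m+1}(\Omega) \cap L_{0}^{2}(\Omega) \hookrightarrow H^{1}(\Omega) \cap L_{0}^{2}(\Omega)$, and $H^{m+1}(\Omega) \hookrightarrow H^{1}(\Omega)$. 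Each is continuous, and each is compact by the Rellich--Kondrachov compactness theorem on the bounded Lipschitz domain $\Omega$ (for $m \geq 1$, as in the finite element framework of the later sections, $H^{m+1}(\Omega) \hookrightarrow H^{1}(\Omega)$ is compact); in the one case involving the zero-mean constraint the embedding is the restriction of the compact embedding $H^{m+1}(\Omega) \hookrightarrow H^{1}(\Omega)$ to the closed subspace $L_{0}^{2}(\Omega)$ (whose image again lies in $H^{1}(\Omega) \cap L_{0}^{2}(\Omega)$), hence still compact. Therefore $\imath_{0}$ is a continuous and compact embedding.

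For the density of $\mathcal{S}$ in $\mathcal{X}$ I argue again factor by factor: $C_{0}^{\infty}(\Omega) \subset C^{\infty}(\overline{\Omega}) \cap H_{0}^{1}(\Omega) \subset H_{0}^{1}(\Omega)$, with $C_{0}^{\infty}(\Omega) = \mathcal{D}(\Omega)$ dense in $H_{0}^{1}(\Omega)$ by definition, covers the first $N+2$ scalar blocks; $C_{0}^{\infty}(\Omega)$ dense in $L^{2}(\Omega)$ covers the two $\textbf{L}^{2}(\Omega)$ blocks; $C^{\infty}(\overline{\Omega})$ dense in $H^{1}(\Omega)$ (valid since $\Omega$ is Lipschitz) covers the $H^{1}(\Omega)$ block; and for the $H^{1}(\Omega) \cap L_{0}^{2}(\Omega)$ block, given $p$ there I pick $\tilde{p}_{n} \in C^{\infty}(\overline{\Omega})$ with $\tilde{p}_{n} \to p$ in $H^{1}(\Omega)$ and set $p_{n} = \tilde{p}_{n} - |\Omega|^{-1} \int_{\Omega} \tilde{p}_{n} \, dx$, so that $\int_{\Omega} \tilde{p}_{n} \, dx \to \int_{\Omega} p \, dx = 0$, hence $p_{n} \in C^{\infty}(\overline{\Omega}) \cap L_{0}^{2}(\Omega)$ and $p_{n} \to p$ in $H^{1}(\Omega)$. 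The product of these dense sets is exactly $\mathcal{S}$, so $\mathcal{S}$ is dense in $\mathcal{X}$. The only point in the whole lemma that is not purely routine is the density of $T_{PD}^{-1}(D_{0} \cap H_{0}^{2}(\Omega))$ in $L_{0}^{2}(\Omega)$, which has already been isolated as Lemma \ref{lema_dense_norm_of_the_graph}; the remaining care is simply to keep the product-density and product-compactness principles straight, which is legitimate because all the products involved are finite.
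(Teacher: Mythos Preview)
Your proposal is correct and follows essentially the same approach as the paper: the paper's proof simply says ``the elements from the column of $\widehat{\Gamma}_{0}$ are dense in the corresponding elements, on a line, from the column of $\widehat{\Gamma}$ and so on'' and cites a list of standard references for Sobolev spaces and functional analysis. You have carried out exactly this componentwise verification but in explicit detail, correctly identifying the non-trivial factors (in particular the $T_{PD}^{-1}(D_{0}\cap H_{0}^{2}(\Omega))$ factor handled by Lemma~\ref{lema_dense_norm_of_the_graph}), invoking Rellich--Kondrachov for the compactness of each factor embedding, and giving the mean-subtraction argument for the density of $C^{\infty}(\overline{\Omega})\cap L_{0}^{2}(\Omega)$ in $H^{1}(\Omega)\cap L_{0}^{2}(\Omega)$.
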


\begin{proof}

The elements from the column of $\widehat{\Gamma}_{0}$ are dense
in the corresponding elements, on a line, from the column of
$\widehat{\Gamma}$ and so on. We use the references
\cite{CLBichir_bib_Adams1978, CLBichir_bib_Atkinson_Han2009,
CLBichir_bib_Bochev_GunzburgerLSFEM2009, CLBichir_bib_Braess2007,
CLBichir_bib_Brenner_Scott2008, CLBichir_bib_H_Brezis2010,
CLBichir_bib_Ciarlet2002,
CLBichir_bib_Ciarlet_Lions_Handbook_V2_P1_2003,
CLBichir_bib_Gir_Rav1986, CLBichir_bib_Quarteroni_Valli2008,
CLBichir_bib_E_Zeidler_NFA_IIA}.

\qquad
\end{proof}

\section{An extended system based on the Stokes problem}
\label{sectiunea_forme_echivalente_ale_ecuatiilor_extended}

\subsection{The formulation of the extended system}
\label{sectiunea_forme_echivalente_ale_ecuatiilor_extended_1}

Let us introduce the following exact extended system, related to
the Stokes problem, using the notations from Section
\ref{sectiunea_ecuatii_remarks}: given a fixed $\alpha \neq 1$ and
$\widehat{\mathcal{F}}$ $\in$ $\widehat{\Sigma}$, find $(x,U)$
$\in$ $\widehat{\Gamma}$ such that
\begin{eqnarray}
   && p - (q+\hat{p}) = \hat{g} \, ,
         \label{mathcal_P2_e1_6_ecStokes_ecPoisson_variational_dem_q_compl_p_dem11_cont_alt_izom} \\
   && p_{S} - p = \ell_{S} \, ,
         \label{e5_1_cond_ecStokes_ecPoisson_variational_dem_div_corelatie_exact} \\
   && (div \, \textbf{u}-y,\hat{\mu}) = (g,\hat{\mu}), \
              \forall \, \hat{\mu}  \, \in \, L_{0}^{2}(\Omega) \, ,
         \label{e5_1_cond_ecStokes_ecPoisson_variational_dem_div} \\
   && (grad \, \textbf{u},grad \, \textbf{w})-(p_{S},div \, \textbf{w})=\langle \textbf{f},\textbf{w} \rangle, \
              \forall \, \textbf{w}  \, \in \, \textbf{H}_{0}^{1}(\Omega) \, ,
         \label{e1_11_ecStokes_ecPoisson_variational_dem} \\
   && - \alpha \langle \widetilde{S}_{\textbf{z}},\bar{\lambda} \rangle
            + \langle \widetilde{S}_{\textbf{t}},\bar{\lambda} \rangle
            + (grad \, q,grad \, \bar{\lambda})
            =\langle \psi,\bar{\lambda} \rangle, \
              \forall \, \bar{\lambda}  \, \in \, H_{0}^{1}(\Omega) \, ,
         \label{mathcal_P2_e1_6_ecStokes_ecPoisson_variational_dem_q_compl_dem11_cont_alt_izom} \\
   && (grad \, \hat{q},grad \, \tilde{\lambda})
            +(grad \, p,grad \, \tilde{\lambda})
            =\langle \hat{\psi},\tilde{\lambda} \rangle, \
              \forall \, \tilde{\lambda}  \, \in \, H_{0}^{1}(\Omega) \, ,
          \label{mathcal_P2_e1_6_ecStokes_ecPoisson_variational_dem_q_compl_hat_q_dem11_cont_alt_izom} \\
   && (\textbf{z},\tilde{\textbf{w}})-(p,div \, \tilde{\textbf{w}})=(\textbf{f}_{\triangle},\tilde{\textbf{w}}), \
              \forall \, \tilde{\textbf{w}}  \, \in \, \textbf{H}_{0}^{1}(\Omega) \, ,
         \label{mathcal_P2_e1_6_ecStokes_ecPoisson_variational_dem_q_compl_dem11_cont_alt_izom_instead2_delta_hat_delta_aprox} \\
   && (\textbf{z} - \textbf{t} + grad \, (\hat{p}-\hat{q}),grad \, \varphi)
         \label{e1_11_ecStokes_ecPoisson_variational_dem_grad_miu} \\
   && \qquad +(grad \, r,grad \, \varphi)
            = \langle \varrho,\varphi \rangle, \
              \forall \, \varphi  \, \in \, H^{1}(\Omega) \, ,
         \nonumber \\
   && (r,\tilde{\varphi})
            +(grad \, (q+\hat{q}),grad \, \tilde{\varphi})
            = \langle \hat{\varrho},\tilde{\varphi} \rangle, \
              \forall \, \tilde{\varphi}  \, \in \, H^{1}(\Omega) \, ,
         \label{e1_11_ecStokes_ecPoisson_variational_dem_grad_miu_r} \\
   && (\textbf{t},\hat{\textbf{w}})=(\hat{\textbf{f}},\hat{\textbf{w}}), \
              \forall \, \hat{\textbf{w}}  \, \in \, \textbf{L}^{2}(\Omega) \, ,
          \label{e1_11_ecStokes_ecPoisson_variational_dem_grad_miu_ec_TAU}
\end{eqnarray}

\begin{rem}
\label{observatia_EC_EXACT_ecuatii_r}

Since $\textbf{z}$, $grad \, p$, $\textbf{f}_{\triangle}$ $\in$
$\textbf{L}^{2}(\Omega)$ and $H_{0}^{1}(\Omega)$ is dense in
$L^{2}(\Omega)$,
(\ref{mathcal_P2_e1_6_ecStokes_ecPoisson_variational_dem_q_compl_dem11_cont_alt_izom_instead2_delta_hat_delta_aprox})
is equivalent to the following formulations
\begin{eqnarray}
   && (\textbf{z},\tilde{\textbf{w}})+(grad \, p,\tilde{\textbf{w}})=(\textbf{f}_{\triangle},\tilde{\textbf{w}}), \
              \forall \, \tilde{\textbf{w}}  \, \in \, \textbf{L}^{2}(\Omega) \, ,
         \label{mathcal_P2_e1_6_ecStokes_ecPoisson_variational_dem_q_compl_dem11_cont_alt_izom_instead2_delta_hat_delta_aprox_ECH_var_modif_cont}
\end{eqnarray}
\begin{eqnarray}
   && \textbf{z} + grad \, p = \textbf{f}_{\triangle} \, ,
         \label{VAR_mathcal_P2_e1_6_ecStokes_ecPoisson_variational_dem_q_compl_dem11_cont_alt_izom_instead2_delta_hat_delta_aprox_D_VAR_ec}
\end{eqnarray}
In the sequel, we work with
(\ref{mathcal_P2_e1_6_ecStokes_ecPoisson_variational_dem_q_compl_dem11_cont_alt_izom_instead2_delta_hat_delta_aprox_ECH_var_modif_cont})
instead of
(\ref{mathcal_P2_e1_6_ecStokes_ecPoisson_variational_dem_q_compl_dem11_cont_alt_izom_instead2_delta_hat_delta_aprox}).
We use $\mathcal{M}_{5}$ and $\mathcal{M}_{h,5}$ below.

\end{rem}

\begin{rem}
\label{observatia_EC_EXACT_tau} Assume that problem
(\ref{mathcal_P2_e1_6_ecStokes_ecPoisson_variational_dem_q_compl_p_dem11_cont_alt_izom})
- (\ref{e1_11_ecStokes_ecPoisson_variational_dem_grad_miu_ec_TAU})
has a solution. Let us introduce $p_{S}$ from
(\ref{e5_1_cond_ecStokes_ecPoisson_variational_dem_div_corelatie_exact})
into (\ref{e1_11_ecStokes_ecPoisson_variational_dem}) and then
replace $(p,div \, \textbf{w})$ using
(\ref{mathcal_P2_e1_6_ecStokes_ecPoisson_variational_dem_q_compl_dem11_cont_alt_izom_instead2_delta_hat_delta_aprox}).
We obtain
\begin{eqnarray}
   && (grad \, \textbf{u},grad \, \textbf{w})
      - (\textbf{z},\textbf{w})
      = (\ell_{S},div \, \textbf{w})
         \label{e1_11_ecStokes_ecPoisson_variational_dem_diferenta} \\
   && \qquad +\langle \textbf{f},\textbf{w} \rangle-(\textbf{f}_{\triangle},\textbf{w}), \
              \forall \, \textbf{w}  \, \in \, \textbf{H}_{0}^{1}(\Omega) \, .
         \nonumber
\end{eqnarray}
\end{rem}

\begin{rem}
\label{observatia_EC_EXACT_descriere}

Equation (\ref{e1_11_ecStokes_ecPoisson_variational_dem}) is the
weak formulation
(\ref{e1_11_ecStokes_ecPoisson_variational_dem_th_I_5_1}) of the
fist equation of the  Stokes system (that is, of the momentum
equation). Equation
(\ref{e5_1_cond_ecStokes_ecPoisson_variational_dem_div})
corresponds to
(\ref{e5_1_cond_ecStokes_ecPoisson_variational_dem_th_I_5_1}).

Equation
(\ref{mathcal_P2_e1_6_ecStokes_ecPoisson_variational_dem_q_compl_p_dem11_cont_alt_izom})
corresponds to
(\ref{mathcal_P2_e1_6_ecStokes_ecPoisson_variational_dem_q_compl_hat_p_dem11_cont_alt_izom_CITE}).

We introduce equation
(\ref{e5_1_cond_ecStokes_ecPoisson_variational_dem_div_corelatie_exact})
to relate $p_{S}$ $\in$ $L_{0}^{2}(\Omega)$ and $p$ $\in$
$H^{1}(\Omega) \cap L_{0}^{2}(\Omega)$ in order to use Lemma
\ref{well_posed_lema_deschisa}. $p_{S}$ $=$ $p$ for $\ell_{S} =
0$.

(\ref{mathcal_P2_e1_6_ecStokes_ecPoisson_variational_dem_q_compl_dem11_cont_alt_izom})
corresponds to
(\ref{mathcal_P2_e1_6_ecStokes_ecPoisson_variational_dem_q_compl_dem11_cont_alt_izom_CITE})
and introduces (\ref{e1_6_ecStokes_ecPoisson0_completata}).

(\ref{mathcal_P2_e1_6_ecStokes_ecPoisson_variational_dem_q_compl_hat_q_dem11_cont_alt_izom})
corresponds to
(\ref{mathcal_P2_e1_6_ecStokes_ecPoisson_variational_dem_q_compl_hat_q_dem11_cont_alt_izom_CITE})
rewritten as an homogeneous Dirichlet problem for the Laplace
operator ($p$ and $\hat{p}$ have the same non-homogeneous
Dirichlet boundary condition). The connection between $p$,
$\hat{p}$ and $\hat{q}$ is in
(\ref{e1_11_ecStokes_ecPoisson_variational_dem_grad_miu}).

Equations
(\ref{mathcal_P2_e1_6_ecStokes_ecPoisson_variational_dem_q_compl_dem11_cont_alt_izom_instead2_delta_hat_delta_aprox})
and (\ref{e1_11_ecStokes_ecPoisson_variational_dem_grad_miu}) are
two forms of the momentum equation. Equation
(\ref{mathcal_P2_e1_6_ecStokes_ecPoisson_variational_dem_q_compl_dem11_cont_alt_izom_instead2_delta_hat_delta_aprox})
defines $\textbf{z}$ to be $-\triangle \textbf{u}$.

Equation (\ref{e1_11_ecStokes_ecPoisson_variational_dem_grad_miu})
is
(\ref{e1_11_ecStokes_ecPoisson_variational_dem_ec_Poisson_pt_p_CONT})
and it is like
(\ref{mathcal_P2_e1_6_ecStokes_ecPoisson_variational_Sani_Shen_Pironneau_Gresho2006}).

Equation (\ref{e1_11_ecStokes_ecPoisson_variational_dem_grad_miu})
gives an equation, on the boundary, for the pressure $p$ in the
approximate case. We regain the equation, containing the pressure,
that we lose when the velocity boundary condition is implemented,
for instance, in the case of nodal finite element method.

The unknown $r$ and equation
(\ref{e1_11_ecStokes_ecPoisson_variational_dem_grad_miu_r}) are
introduced in order to avoid a compatibility condition between
some components of the right hand term.

Equation
(\ref{e1_11_ecStokes_ecPoisson_variational_dem_grad_miu_ec_TAU})
is intended to introduce the free term $\textbf{f}$ between the
unknowns.

We do not use a boundary unknown $\lambda_{w}$, as in
(\ref{mathcal_P2_e1_6_ecStokes_ecPoisson_variational_dem_q_compl_hat_q_dem11_cont_alt_izom_CITE_CL}),
that is, as in \cite{CLBichir_bib_Gir_Rav1986,
CLBichir_bib_Glowinski1984}. We do not use a variational problem
related to $\partial \Omega$.

\end{rem}

\begin{rem}
\label{observatia_EC_EXACT_ecuatii_ech_T_aaa} The following
equation, which corresponds to
(\ref{e1_6_ecStokes_ecPoisson0_completata}),
\begin{eqnarray}
   && - \alpha \langle \widetilde{S}_{\textbf{z}},\bar{\lambda} \rangle
            + \langle \widetilde{S}_{\textbf{t}},\bar{\lambda} \rangle
            + (grad \, p,grad \, \bar{\lambda})
            =\langle \psi,\bar{\lambda} \rangle, \
              \forall \, \bar{\lambda}  \, \in \, H_{0}^{1}(\Omega) \, ,
         \label{VAR_mathcal_P2_e1_6_ecStokes_ecPoisson_variational_dem_q_compl_dem11_cont_alt_izom_PRESIUNE}
\end{eqnarray}
is obtained from
(\ref{VAR_mathcal_P2_e1_6_ecStokes_ecPoisson_variational_dem_q_compl_hat_q_dem11_cont_alt_izom_CALC_inlocuire_INTERMEDIAR_NOU}).
\end{rem}

\begin{rem}
\label{observatia_EC_EXACT_ecuatii_ech_T}

Replacing
(\ref{mathcal_P2_e1_6_ecStokes_ecPoisson_variational_dem_q_compl_p_dem11_cont_alt_izom}),
(\ref{e1_11_ecStokes_ecPoisson_variational_dem_grad_miu_ec_TAU})
and
(\ref{VAR_mathcal_P2_e1_6_ecStokes_ecPoisson_variational_dem_q_compl_dem11_cont_alt_izom_instead2_delta_hat_delta_aprox_D_VAR_ec})
in (\ref{e1_11_ecStokes_ecPoisson_variational_dem_grad_miu}), it
results
\begin{eqnarray}
   && (\textbf{f}_{\triangle} - \hat{\textbf{f}} - grad \, \hat{g},grad \, \varphi)
         \label{e1_11_ecStokes_ecPoisson_variational_dem_grad_miu_inlocuiri}
\end{eqnarray}
$$ - (grad \, (q+\hat{q}),grad \, \varphi)
      + (grad \, r,grad \, \varphi)
            = \langle \varrho,\varphi \rangle, \
              \forall \, \varphi  \, \in \, H^{1}(\Omega) \, . $$
Let $\tilde{\psi}_{r}$ $\in$ $(H^{1}(\Omega))'$ be defined by
\begin{eqnarray}
   && \langle \tilde{\psi}_{r},\varphi \rangle
      = (- \textbf{f}_{\triangle} + \hat{\textbf{f}} + grad \, \hat{g},grad \, \varphi)
         \label{e1_11_ecStokes_ecPoisson_variational_dem_grad_miu_inlocuiri_def} \\
   && \qquad + \langle \varrho,\varphi \rangle
            + \langle \hat{\varrho},\varphi \rangle, \
              \forall \, \varphi  \, \in \, H^{1}(\Omega) \, ,
         \nonumber
\end{eqnarray}
Summing
(\ref{e1_11_ecStokes_ecPoisson_variational_dem_grad_miu_inlocuiri})
and (\ref{e1_11_ecStokes_ecPoisson_variational_dem_grad_miu_r}),
we obtain that $r$ is the solution of the equation
\begin{eqnarray}
   && ((r,\varphi))_{1}
      = \langle \tilde{\psi}_{r},\varphi \rangle, \
              \forall \, \varphi  \, \in \, H^{1}(\Omega) \, ,
         \label{e1_11_ecStokes_ecPoisson_variational_dem_grad_miu_inlocuiri_ec}
\end{eqnarray}
so $r$ $=$ $B_{1}\tilde{\psi}_{r}$. Then, in
(\ref{e1_11_ecStokes_ecPoisson_variational_dem_grad_miu}), $(grad
\, r,grad \, \varphi)$ is $(grad \, B_{1}\tilde{\psi}_{r},grad \,
\varphi)$.

Equation
(\ref{e1_11_ecStokes_ecPoisson_variational_dem_grad_miu_r}) is
written
\begin{eqnarray}
   && ((r,\tilde{\varphi}))_{1}
      + (grad \, (q+\hat{q}),grad \, \tilde{\varphi})
         \label{e1_11_ecStokes_ecPoisson_variational_dem_grad_miu_inlocuiri_ec_op} \\
   && \qquad - (grad \, r,grad \, \tilde{\varphi})
      = \langle \hat{\varrho},\tilde{\varphi} \rangle, \
              \forall \, \tilde{\varphi}  \, \in \, H^{1}(\Omega) \, ,
         \nonumber
\end{eqnarray}
so
\begin{eqnarray}
   && r - B_{1}G_{r}(q,\hat{q},r)
         = B_{1}\hat{\varrho} \, ,
         \label{e1_11_ecStokes_ecPoisson_variational_dem_grad_miu_inlocuiri_ec_op_cont}
\end{eqnarray}

From
(\ref{e1_11_ecStokes_ecPoisson_variational_dem_grad_miu_ec_TAU})
and
(\ref{VAR_mathcal_P2_e1_6_ecStokes_ecPoisson_variational_dem_q_compl_dem11_cont_alt_izom_instead2_delta_hat_delta_aprox_D_VAR_ec}),
we deduce
\begin{eqnarray}
   && (\textbf{z} - \textbf{t} + grad \, p,grad \, \varphi)
            = (\textbf{f}_{\triangle} - \hat{\textbf{f}},grad \, \varphi), \
              \forall \, \varphi  \, \in \, H^{1}(\Omega) \, ,
         \label{e1_11_ecStokes_ecPoisson_variational_dem_grad_miu_inlocuiri_sum_2}
\end{eqnarray}
The same is obtained if we replace
(\ref{mathcal_P2_e1_6_ecStokes_ecPoisson_variational_dem_q_compl_p_dem11_cont_alt_izom}),
(\ref{e1_11_ecStokes_ecPoisson_variational_dem_grad_miu_ec_TAU})
in (\ref{e1_11_ecStokes_ecPoisson_variational_dem_grad_miu}) and
then sum
(\ref{e1_11_ecStokes_ecPoisson_variational_dem_grad_miu_r}) to
(\ref{e1_11_ecStokes_ecPoisson_variational_dem_grad_miu}), where
we use
(\ref{e1_11_ecStokes_ecPoisson_variational_dem_grad_miu_inlocuiri_ec}).
In both cases, we have a form of equation
(\ref{e1_11_ecStokes_ecPoisson_variational_dem_ec_Poisson_pt_p_CONT2}).

\end{rem}

\subsection{The well-posedness of the exact extended system}
\label{sectiunea_forme_echivalente_ale_ecuatiilor_extended_2}

Let $\widehat{\Phi}$ be the operator defined by the left hand side
of problem
(\ref{mathcal_P2_e1_6_ecStokes_ecPoisson_variational_dem_q_compl_p_dem11_cont_alt_izom})
-
(\ref{e1_11_ecStokes_ecPoisson_variational_dem_grad_miu_ec_TAU}).
For this problem, the spaces $\widehat{\Gamma}$,
$\widehat{\Sigma}$, $\widehat{\Gamma}_{0}$, $\widehat{\Sigma}_{0}$
are defined in Section \ref{sectiunea_ecuatii_remarks}.
$\widehat{\Phi}$ $\in$ $L(\widehat{\Gamma},\widehat{\Sigma})$.
Then, problem
(\ref{mathcal_P2_e1_6_ecStokes_ecPoisson_variational_dem_q_compl_p_dem11_cont_alt_izom})
- (\ref{e1_11_ecStokes_ecPoisson_variational_dem_grad_miu_ec_TAU})
has the forms
(\ref{well_posed_e1_11_ecStokes_ecPoisson_variational_dem_q_compl4_ec_mathcal_X_op_E_fi_GEN})
and
(\ref{well_posed_e1_11_ecStokes_ecPoisson_variational_dem_q_compl4_ec_mathcal_X_op_E_fi}).
Let us use Lemma \ref{well_posed_lema_deschisa} in order to prove
that $\widehat{\Phi}$ is an isomorphism of $\widehat{\Gamma}$ onto
$\widehat{\Sigma}$. Let $\widehat{\Phi}_{0}$ be the restriction of
$\widehat{\Phi}$ to $\widehat{\Gamma}_{0}$. Consider equation
(\ref{well_posed_e1_11_ecStokes_ecPoisson_variational_dem_q_compl4_ec_mathcal_X_op_fi})
in this particular case.

\begin{lem}
\label{lema_0_teorema_izom_1} $\widehat{\Phi}_{0}$ is a bijection
of $\widehat{\Gamma}_{0}$ onto $\widehat{\Sigma}_{0}$.
\end{lem}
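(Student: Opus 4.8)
The plan is to prove that $\widehat{\Phi}_{0}$ is a bijection by explicitly inverting it: given $\widehat{\mathcal{F}}_{0}\in\widehat{\Sigma}_{0}$ I would solve the system (\ref{mathcal_P2_e1_6_ecStokes_ecPoisson_variational_dem_q_compl_p_dem11_cont_alt_izom})--(\ref{e1_11_ecStokes_ecPoisson_variational_dem_grad_miu_ec_TAU}) through a sequence of elementary and elliptic solves whose dependency graph, after the substitutions of Remarks \ref{observatia_EC_EXACT_tau}, \ref{observatia_EC_EXACT_ecuatii_ech_T_aaa}, \ref{observatia_EC_EXACT_ecuatii_ech_T}, is quasi-triangular. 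First, (\ref{e1_11_ecStokes_ecPoisson_variational_dem_grad_miu_ec_TAU}) gives $\textbf{t}=\hat{\textbf{f}}$; summing (\ref{e1_11_ecStokes_ecPoisson_variational_dem_grad_miu_r}) with the form (\ref{e1_11_ecStokes_ecPoisson_variational_dem_grad_miu_inlocuiri}) of (\ref{e1_11_ecStokes_ecPoisson_variational_dem_grad_miu}) produces (\ref{e1_11_ecStokes_ecPoisson_variational_dem_grad_miu_inlocuiri_ec}), so $r=B_{1}\tilde{\psi}_{r}$ is fixed by the data alone; and (\ref{e5_1_cond_ecStokes_ecPoisson_variational_dem_div_corelatie_exact}), (\ref{mathcal_P2_e1_6_ecStokes_ecPoisson_variational_dem_q_compl_p_dem11_cont_alt_izom}), the equivalent form (\ref{mathcal_P2_e1_6_ecStokes_ecPoisson_variational_dem_q_compl_dem11_cont_alt_izom_instead2_delta_hat_delta_aprox_ECH_var_modif_cont}) of (\ref{mathcal_P2_e1_6_ecStokes_ecPoisson_variational_dem_q_compl_dem11_cont_alt_izom_instead2_delta_hat_delta_aprox}), and (\ref{e5_1_cond_ecStokes_ecPoisson_variational_dem_div}) express $p_{S}=p+\ell_{S}$, $\hat{p}=p-q-\hat{g}$, $\textbf{z}=\textbf{f}_{\triangle}-grad\, p$ and $y=div\, \textbf{u}-g$ as invertible functions of the still unknown $(\textbf{u},q,\hat{q},p)$. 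Feeding these into (\ref{mathcal_P2_e1_6_ecStokes_ecPoisson_variational_dem_q_compl_dem11_cont_alt_izom}) and (\ref{mathcal_P2_e1_6_ecStokes_ecPoisson_variational_dem_q_compl_hat_q_dem11_cont_alt_izom}) represents $q$ and $\hat{q}$ as $T_{PD}$-images of functionals that depend only on $p$ and the data.

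What remains is a coupled problem for $(\textbf{u},p)\in(V\oplus\widetilde{V^{\bot}})\times(H^{1}(\Omega)\cap L_{0}^{2}(\Omega))$: the momentum equation (\ref{e1_11_ecStokes_ecPoisson_variational_dem}) (with $p_{S}=p+\ell_{S}$), which on its own gives $\textbf{u}$ as a $T_{VPD}$-image depending affinely on $p$, together with (\ref{e1_11_ecStokes_ecPoisson_variational_dem_grad_miu_r}) rewritten through the expressions just found for $q$, $\hat{q}$, $r$. Since $\alpha\neq 1$, the combination $q+\hat{q}$ carries a nonvanishing $(\alpha-1)\,T_{PD}(-\triangle p)$, so this last equation is a genuinely nondegenerate pressure-Poisson-type relation of the form (\ref{e1_11_ecStokes_ecPoisson_variational_dem_ec_Poisson_pt_p_CONT2}) linking $p$ to $div\, \textbf{u}$. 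Writing $\zeta=T_{PD}(div\, \textbf{u})$ for the Glowinski--Pironneau potential (cf. (\ref{dem101_Gir_Rav_pag51_G_P})--(\ref{dem101_Gir_Rav_pag51_CL_Neumann_G_P})), the membership $\textbf{u}\in V\oplus\widetilde{V^{\bot}}$ means precisely $\zeta\in H_{0}^{1}(\Omega)\cap H_{0}^{2}(\Omega)$, i.e. the homogeneous Neumann condition (\ref{dem101_Gir_Rav_pag51_CL_Neumann_G_P}) holds \emph{automatically} on the dense subspace and is never imposed; I would then recast the reduced equation as a biharmonic-type problem for $\zeta$ and solve it by Ciarlet's variational formulation, using the identity $\mathcal{W}=\mathcal{V}_{1}$ (from (\ref{e5_1_cond_ecStokes_ecPoisson_variational_dem_H02_var1cont_spWmathcal}), (\ref{e5_1_cond_ecStokes_ecPoisson_variational_dem_H02_var1cont_spVmathcal2})) to pass between its weak divergence form and its strong $H_{0}^{2}$ form.

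Existence for this core problem I would assemble from the well-posedness of the Stokes operator $\Psi_{S}$ restricted to the dense subspaces --- via Lemma \ref{lema_izom_corolar} and Lemma \ref{lema_izom_corolar_compact}, so that $\Psi_{S}$ maps $(V\oplus\widetilde{V^{\bot}})\times(H^{1}(\Omega)\cap L_{0}^{2}(\Omega))$ isomorphically onto $\Upsilon_{0}\times T_{PD}^{-1}(D_{0}\cap H_{0}^{2}(\Omega))$ --- together with the biharmonic solvability; uniqueness (hence injectivity of $\widehat{\Phi}_{0}$) comes from the same chain run with $\widehat{\mathcal{F}}_{0}=0$, where $\alpha-1\neq 0$ first makes $p$ harmonic, the biharmonic well-posedness of $\zeta$ then forces $div\, \textbf{u}=0$, Stokes uniqueness gives $\textbf{u}=0$ and $p=0$, and all the remaining components vanish in turn. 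It then remains to check that each reconstructed component lies in the prescribed space of $\widehat{\Gamma}_{0}$ ($r,\hat{p}\in H^{1}(\Omega)$, $p_{S},p\in H^{1}(\Omega)\cap L_{0}^{2}(\Omega)$, $y\in T_{PD}^{-1}(D_{0}\cap H_{0}^{2}(\Omega))$, $q,\hat{q}\in H_{0}^{1}(\Omega)$, $\textbf{z},\textbf{t}\in\textbf{L}^{2}(\Omega)$, $\textbf{u}\in V\oplus\widetilde{V^{\bot}}$), which is bookkeeping from the isomorphism properties of $T_{PD}$, $T_{VPD}$, $B_{1}$, $\pi$, $div|_{V^{\bot}}$ and from the density facts in Lemma \ref{lema_widehat_GAMA_dense_T_tabel} and Lemma \ref{lema_dense_norm_of_the_graph}.

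The main obstacle is the core coupled problem for $(\textbf{u},p)$: honouring the momentum equation, the constraint $\textbf{u}\in V\oplus\widetilde{V^{\bot}}$, and the $p$-equation from (\ref{e1_11_ecStokes_ecPoisson_variational_dem_grad_miu_r}) simultaneously, with the biharmonic reformulation of the potential $\zeta$ as the only lever. The delicate points are: that the restricted $\Psi_{S}$ really lands on $\Upsilon_{0}\times T_{PD}^{-1}(D_{0}\cap H_{0}^{2}(\Omega))$ and that this range is compatible with what (\ref{e1_11_ecStokes_ecPoisson_variational_dem_grad_miu_r}) forces; that Ciarlet's $\mathcal{W}=\mathcal{V}_{1}$ can indeed be invoked together with the Neumann condition on $\zeta$ that the dense subspace supplies for free; and that the single use of $\alpha\neq 1$ suffices to close injectivity. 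Once this is settled, the other eliminations and the membership verifications are mechanical.
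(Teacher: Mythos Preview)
Your proposal is correct and uses the same key ingredients as the paper: the $\alpha\neq 1$ cancellation, Ciarlet's biharmonic reformulation via $\mathcal{W}=\mathcal{V}_1$, the $H_0^2$-membership of the potential supplied automatically by the dense subspace, and Stokes well-posedness. The difference is in the order of elimination. The paper decouples more aggressively: it combines the equations to obtain $(1-\alpha)(grad\,p,grad\,\varphi)=\text{data}$ for $\varphi\in\mathcal{D}(\Omega)$ (equation (\ref{VAR_mathcal_P2_e1_6_ecStokes_ecPoisson_variational_dem_q_compl_hat_q_dem11_cont_alt_izom_CALC_inlocuire_D_NOU})), substitutes this into the distributional momentum identity $(div\,\textbf{u},-\triangle\varphi)=(grad\,p_S,grad\,\varphi)-\langle\textbf{f},grad\,\varphi\rangle$, and thereby obtains a biharmonic equation for $\phi=T_{PD}(y)$ whose right-hand side $\psi_g+\psi_{div}$ depends on the data alone; $y$ is found first, and only afterwards does the Stokes solve produce $(\textbf{u},p_S)$, with the remaining unknowns recovered by back-substitution. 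You instead retain a coupled $(\textbf{u},p)$ core and plan to resolve it jointly through the potential $\zeta=T_{PD}(div\,\textbf{u})$. This closes, but the step ``recast the reduced equation as a biharmonic-type problem for $\zeta$'' will in practice require exactly the $(1-\alpha)$ decoupling of $grad\,p$ that the paper performs explicitly; once you carry it out, the coupling dissolves and you land on the paper's standalone biharmonic problem. The paper's ordering is thus shorter and makes surjectivity manifest without having to argue about a genuinely coupled subsystem; your route reaches the same endpoint a little less directly.
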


\begin{proof} We constructed $\widehat{\Gamma}_{0}$ and
$\widehat{\Sigma}_{0}$ such that
$\widehat{\Phi}(\widehat{\Gamma}_{0})$ $\subseteq$
$\widehat{\Sigma}_{0}$. We have $\widehat{\Phi}_{0}(x,U)$ $=$
$\widehat{\Phi}(x,U)$, $\forall$ $(x,U)$ $\in$
$\widehat{\Gamma}_{0}$.

Fix $\widehat{\mathcal{F}}_{0}$ $=$ $(\hat{g}$, $\ell_{S}$, $g$,
$\textbf{f}$, $\psi$, $\hat{\psi}$, $\textbf{f}_{\triangle}$,
$\varrho$, $\hat{\varrho}$, $\hat{\textbf{f}})$ $\in$
$\widehat{\Sigma}_{0}$.

By the density of $\mathcal{D}(\Omega)^{N}$ in
$\textbf{H}_{0}^{1}(\Omega)$, from
(\ref{e1_11_ecStokes_ecPoisson_variational_dem}), we obtain
equation (\ref{e1_11_ecStokes}) in the sense of distributions,
\begin{eqnarray}
   && \langle -\triangle \textbf{u} + grad \, p_{S} - \textbf{f}, \hat{\varphi} \rangle = 0,
              \forall \, \hat{\varphi}  \, \in \, \mathcal{D}(\Omega)^{N} \, ,
         \label{e1_11_ecStokes_ecPoisson_variational_dem_distr}
\end{eqnarray}

Let $\varphi$ $\in$ $\mathcal{D}(\Omega)$. Then, $\langle
\triangle \textbf{u} , grad \, \varphi \rangle$ $=$ $\langle
\textbf{u} , \triangle (grad \, \varphi) \rangle$ $=$ $\langle
\textbf{u} , grad \, (\triangle \varphi) \rangle$ $=$ $\langle div
\, \textbf{u} , -\triangle \varphi \rangle$. Using
(\ref{e1_11_ecStokes_ecPoisson_variational_dem_distr}) with
$\hat{\varphi}$ $=$ $grad \, \varphi$, we deduce
\begin{eqnarray}
   && \langle div \, \textbf{u} ,-\triangle \varphi \rangle
      = \langle grad \, p_{S},grad \, \varphi \rangle
         - \langle \textbf{f} , grad \, \varphi \rangle, \
              \forall \, \varphi  \, \in \, \mathcal{D}(\Omega) \, .
         \label{e5_1_cond_ecStokes_ecPoisson_variational_dem_grad_y_distr_pStokes}
\end{eqnarray}

Using
(\ref{e5_1_cond_ecStokes_ecPoisson_variational_dem_div_corelatie_exact}),
we obtain
\begin{eqnarray}
   && \langle div \, \textbf{u} ,-\triangle \varphi \rangle
      = \langle grad \, (p+\ell_{S}),grad \, \varphi \rangle
         - \langle \textbf{f} , grad \, \varphi \rangle, \
              \forall \, \varphi  \, \in \, \mathcal{D}(\Omega) \, .
         \label{e5_1_cond_ecStokes_ecPoisson_variational_dem_grad_y_distr}
\end{eqnarray}

Taking the test functions $\varphi$ $\in$ $H_{0}^{1}(\Omega)$ in
(\ref{e1_11_ecStokes_ecPoisson_variational_dem_grad_miu}),
replacing $\hat{p}$ from
(\ref{mathcal_P2_e1_6_ecStokes_ecPoisson_variational_dem_q_compl_p_dem11_cont_alt_izom})
in (\ref{e1_11_ecStokes_ecPoisson_variational_dem_grad_miu}),
summing
(\ref{mathcal_P2_e1_6_ecStokes_ecPoisson_variational_dem_q_compl_dem11_cont_alt_izom}),
(\ref{mathcal_P2_e1_6_ecStokes_ecPoisson_variational_dem_q_compl_hat_q_dem11_cont_alt_izom})
to (\ref{e1_11_ecStokes_ecPoisson_variational_dem_grad_miu}),
using
(\ref{VAR_mathcal_P2_e1_6_ecStokes_ecPoisson_variational_dem_q_compl_dem11_cont_alt_izom_instead2_delta_hat_delta_aprox_D_VAR_ec})
and $r$ $=$ $B_{1}\tilde{\psi}_{r}$ from
(\ref{e1_11_ecStokes_ecPoisson_variational_dem_grad_miu_inlocuiri_ec}),
we obtain
\begin{eqnarray}
   && (grad \, p,grad \, \tilde{\lambda})
            =(grad \, \hat{g},grad \, \tilde{\lambda})
            +\alpha \langle \widetilde{S}_{\textbf{z}},\tilde{\lambda} \rangle
            -\langle \widetilde{S}_{\textbf{t}},\bar{\lambda} \rangle
          \label{VAR_mathcal_P2_e1_6_ecStokes_ecPoisson_variational_dem_q_compl_hat_q_dem11_cont_alt_izom_CALC_inlocuire_INTERMEDIAR_NOU} \\
   && +\langle \psi,\tilde{\lambda} \rangle
            +\langle \hat{\psi},\tilde{\lambda} \rangle
            -\langle \textbf{f}_{\triangle},grad \, \tilde{\lambda} \rangle
            +\langle \varrho,\tilde{\lambda} \rangle
            -(grad \, (B_{1}\tilde{\psi}_{r}),grad \, \tilde{\lambda})
         \nonumber \\
   && +\langle \hat{\textbf{f}},grad \, \tilde{\lambda} \rangle, \
              \forall \, \tilde{\lambda}  \, \in \, \mathcal{D}(\Omega) \, ,
         \nonumber
\end{eqnarray}

Using
(\ref{VAR_mathcal_P2_e1_6_ecStokes_ecPoisson_variational_dem_q_compl_dem11_cont_alt_izom_instead2_delta_hat_delta_aprox_D_VAR_ec})
and
(\ref{e1_11_ecStokes_ecPoisson_variational_dem_grad_miu_ec_TAU}),
relation
(\ref{VAR_mathcal_P2_e1_6_ecStokes_ecPoisson_variational_dem_q_compl_hat_q_dem11_cont_alt_izom_CALC_inlocuire_INTERMEDIAR_NOU})
becomes:
\begin{eqnarray}
   && (1-\alpha) (grad \, p,grad \, \varphi)
            =(grad \, \hat{g},grad \, \varphi)
            +\langle \psi,\varphi \rangle
            +\langle \hat{\psi},\varphi \rangle
          \label{VAR_mathcal_P2_e1_6_ecStokes_ecPoisson_variational_dem_q_compl_hat_q_dem11_cont_alt_izom_CALC_inlocuire_D_NOU} \\
   && - (1+\alpha) \langle \textbf{f}_{\triangle},grad \, \varphi \rangle
            +\langle \varrho,\varphi \rangle
            -(grad \, (B_{1}\tilde{\psi}_{r}),grad \, \varphi)
         \nonumber \\
   && +(1+\gamma) \langle \hat{\textbf{f}},grad \, \varphi \rangle, \
              \forall \, \varphi  \, \in \, \mathcal{D}(\Omega) \, ,
         \nonumber
\end{eqnarray}

Relation
(\ref{e5_1_cond_ecStokes_ecPoisson_variational_dem_grad_y_distr})
and
(\ref{VAR_mathcal_P2_e1_6_ecStokes_ecPoisson_variational_dem_q_compl_hat_q_dem11_cont_alt_izom_CALC_inlocuire_D_NOU})
give
\begin{eqnarray}
   && (div \, \textbf{u} , -\triangle \varphi)
      = \langle \psi_{div},\varphi \rangle, \
              \forall \, \varphi  \, \in \, \mathcal{D}(\Omega) \, .
         \label{e5_1_cond_ecStokes_ecPoisson_variational_dem_grad_y_distr_cont}
\end{eqnarray}
where $\psi_{div}$ $\in$ $H^{-1}(\Omega)$ is
\begin{eqnarray}
   && \langle \psi_{div},\varphi \rangle
      = (1-\alpha)^{-1}
            (grad \, \hat{g},grad \, \varphi)
      + \langle grad \, \ell_{S},grad \, \varphi \rangle
         \label{VAR_e5_1_cond_ecStokes_ecPoisson_variational_dem_grad_y_distr_NOU} \\
   && + \langle \widetilde{S}_{\textbf{f}},\varphi \rangle
      +(1-\alpha)^{-1}(\langle \psi,\varphi \rangle
            +\langle \hat{\psi},\varphi \rangle
            + (1+\alpha) \langle \widetilde{S}_{\textbf{f}_{\triangle}},\varphi \rangle
         \nonumber \\
   && +\langle \varrho,\varphi \rangle
            - (grad \, (B_{1}\tilde{\psi}_{r}),grad \, \varphi)
      - (1+\gamma) \langle \widetilde{S}_{\hat{\textbf{f}}},\varphi \rangle), \
              \forall \, \varphi  \, \in \, H_{0}^{1}(\Omega) \, ,
         \nonumber
\end{eqnarray}

Let $\psi_{g}$ be given by the sum of some distributional
derivatives of $g$ by
\begin{eqnarray}
   && (g , \triangle \varphi)
      = \langle \psi_{g},\varphi \rangle, \
              \forall \, \varphi  \, \in \, \mathcal{D}(\Omega) \, .
         \label{e5_1_cond_ecStokes_ecPoisson_variational_dem_grad_y_distr_cont_g_dem}
\end{eqnarray}

Equation (\ref{e5_1_cond_ecStokes_ecPoisson_variational_dem_div})
gives $div \, \textbf{u} = y + g$ and with
(\ref{e5_1_cond_ecStokes_ecPoisson_variational_dem_grad_y_distr_cont})
and
(\ref{e5_1_cond_ecStokes_ecPoisson_variational_dem_grad_y_distr_cont_g_dem}),
we have
\begin{eqnarray}
   && ( y , -\triangle \varphi)
      = \langle \psi_{g} + \psi_{div}, \varphi \rangle, \
              \forall \, \varphi  \, \in \, \mathcal{D}(\Omega) \, .
         \label{e5_1_cond_ecStokes_ecPoisson_variational_dem_grad_y2_H2_cont_SUMA_Drond}
\end{eqnarray}

The density of $\mathcal{D}(\Omega)$ in $H_{0}^{2}(\Omega)$ leads
to
\begin{eqnarray}
   && ( y , -\triangle \varphi)
      = \langle \psi_{g} + \psi_{div}, \varphi \rangle, \
              \forall \, \varphi  \, \in \, H_{0}^{2}(\Omega) \, .
         \label{e5_1_cond_ecStokes_ecPoisson_variational_dem_grad_y2_H2}
\end{eqnarray}
or, using the space $\mathcal{W}$ as the space $\mathcal{V}$ is
used in Theorem 7.1.1, page 384, \cite{CLBichir_bib_Ciarlet2002}
($\triangle \varphi \, \in \, L^{2}(\Omega)$ since $\varphi \, \in
\, H_{0}^{2}(\Omega)$),
\begin{eqnarray}
   && ( y , -\triangle \varphi)
      = \langle \psi_{g} + \psi_{div}, \varphi \rangle, \
              \forall \, (\varphi,-\triangle \varphi)  \, \in \, \mathcal{W} \, .
         \label{e5_1_cond_ecStokes_ecPoisson_variational_dem_grad_y2_H2_cont_SUMA}
\end{eqnarray}

Since $y$ $\in$ $T_{PD}^{-1}(D_{0}$ $\cap$ $H_{0}^{2})$, there
exists $\phi$ $\in$ $H_{0}^{2}(\Omega)$ such that
\begin{eqnarray}
   && (\phi,y)  \, \in \, \mathcal{W} \, ,
         \label{e5_1_cond_ecStokes_ecPoisson_variational_dem_modif3}
\end{eqnarray}

Following the technique from the proof of Theorem 7.1.2, page 385,
\cite{CLBichir_bib_Ciarlet2002}, we infer that
(\ref{e5_1_cond_ecStokes_ecPoisson_variational_dem_modif3}) and
(\ref{e5_1_cond_ecStokes_ecPoisson_variational_dem_grad_y2_H2_cont_SUMA})
are equivalent to
(\ref{e5_1_cond_ecStokes_ecPoisson_variational_dem_modif3}) and
(\ref{e5_1_cond_ecStokes_ecPoisson_variational_dem_grad_y2_H2_biarmonica_ini}),
where
\begin{eqnarray}
   && (\triangle \phi , \triangle \varphi)
      = \langle \psi_{g} + \psi_{div}, \varphi \rangle, \
              \forall \, \varphi  \, \in \, H_{0}^{2}(\Omega) \, .
         \label{e5_1_cond_ecStokes_ecPoisson_variational_dem_grad_y2_H2_biarmonica_ini}
\end{eqnarray}
Equation
(\ref{e5_1_cond_ecStokes_ecPoisson_variational_dem_grad_y2_H2_biarmonica_ini})
satisfies the conditions of Proposition I.1.3, page 16,
\cite{CLBichir_bib_Gir_Rav1986} (where we take $u_{0} = 0$), for a
bounded and connected open subset of $\mathbb{R}^{N}$ ($N=2,3$)
with a Lipschitz - continuous boundary $\partial \Omega$, so this
equation has an unique solution $\phi$ $\in$ $H_{0}^{2}(\Omega)$.

$y$ is uniquely determined by
(\ref{e5_1_cond_ecStokes_ecPoisson_variational_dem_modif3}).

$(\textbf{u},p_{S})$ is the solution of the Stokes problem given
by (\ref{e1_11_ecStokes_ecPoisson_variational_dem}),
(\ref{e5_1_cond_ecStokes_ecPoisson_variational_dem_div}).

$p$, $\textbf{z}$, $\textbf{t}$, $q$, $\hat{p}$, $\hat{q}$, $r$
are obtained from
(\ref{e5_1_cond_ecStokes_ecPoisson_variational_dem_div_corelatie_exact}),
(\ref{mathcal_P2_e1_6_ecStokes_ecPoisson_variational_dem_q_compl_dem11_cont_alt_izom_instead2_delta_hat_delta_aprox}),
(\ref{e1_11_ecStokes_ecPoisson_variational_dem_grad_miu_ec_TAU}),
(\ref{mathcal_P2_e1_6_ecStokes_ecPoisson_variational_dem_q_compl_dem11_cont_alt_izom}),
(\ref{mathcal_P2_e1_6_ecStokes_ecPoisson_variational_dem_q_compl_p_dem11_cont_alt_izom}),
(\ref{mathcal_P2_e1_6_ecStokes_ecPoisson_variational_dem_q_compl_hat_q_dem11_cont_alt_izom}),
(\ref{e1_11_ecStokes_ecPoisson_variational_dem_grad_miu_inlocuiri_ec})
respectively. So $\widehat{\Phi}_{0}$ is surjective.

If $\widehat{\mathcal{F}}_{0}$ $=$ $(\hat{g}$, $\ell_{S}$, $g$,
$\textbf{f}$, $\psi$, $\hat{\psi}$, $\textbf{f}_{\triangle}$,
$\varrho$, $\hat{\varrho}$, $\hat{\textbf{f}})$ $=$ $0$, we deduce
$\widehat{U}_{0}$ $=$ $(\hat{p}, p_{S}, y, \textbf{u}, q, \hat{q},
\textbf{z}, p, r, \textbf{t})$ $=$ $0$ uniquely determined.
$Ker(\widehat{\Phi}_{0})$ $=$ $\{ 0 \}$ and $\widehat{\Phi}_{0}$
is injective. So $\widehat{\Phi}_{0}$ is a bijection of
$\widehat{\Gamma}_{0}$ onto $\widehat{\Sigma}_{0}$.

\qquad
\end{proof}

\begin{lem}
\label{well_posed_lema_surjectiva} $\widehat{\Phi}$ is a linear,
continuous and surjective mapping.
\end{lem}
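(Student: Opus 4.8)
The plan is the following. Linearity and continuity require nothing new, since $\widehat{\Phi}$ was introduced as an element of $L(\widehat{\Gamma},\widehat{\Sigma})$; only surjectivity is at stake. So I would fix an arbitrary $\widehat{\mathcal{F}} = (\hat g, \ell_S, g, \textbf{f}, \psi, \hat\psi, \textbf{f}_\triangle, \varrho, \hat\varrho, \hat{\textbf{f}}) \in \widehat{\Sigma}$ and construct $(x,U) \in \widehat{\Gamma}$ with $\widehat{\Phi}(x,U) = \widehat{\mathcal{F}}$, following the pattern of the construction already used in the proof of Lemma \ref{lema_0_teorema_izom_1} on the dense subspaces, but now on the full spaces. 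First I would put $\textbf{t} = \hat{\textbf{f}}$ (the element forced by (\ref{e1_11_ecStokes_ecPoisson_variational_dem_grad_miu_ec_TAU})) and $r = B_1\tilde\psi_r$, with $\tilde\psi_r \in (H^1(\Omega))'$ defined from the data by (\ref{e1_11_ecStokes_ecPoisson_variational_dem_grad_miu_inlocuiri_def}) as in Remark \ref{observatia_EC_EXACT_ecuatii_ech_T}; since $B_1$ is an isomorphism of $(H^1(\Omega))'$ onto $H^1(\Omega)$, both are well defined.

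Next I would carry out the algebraic eliminations of Remarks \ref{observatia_EC_EXACT_ecuatii_ech_T_aaa}, \ref{observatia_EC_EXACT_ecuatii_ech_T} and of the proof of Lemma \ref{lema_0_teorema_izom_1}: pass (\ref{e1_11_ecStokes_ecPoisson_variational_dem}) to distributions, replace $p_S$ by $p + \ell_S$ via (\ref{e5_1_cond_ecStokes_ecPoisson_variational_dem_div_corelatie_exact}), and combine (\ref{mathcal_P2_e1_6_ecStokes_ecPoisson_variational_dem_q_compl_dem11_cont_alt_izom}), (\ref{mathcal_P2_e1_6_ecStokes_ecPoisson_variational_dem_q_compl_hat_q_dem11_cont_alt_izom}), (\ref{mathcal_P2_e1_6_ecStokes_ecPoisson_variational_dem_q_compl_dem11_cont_alt_izom_instead2_delta_hat_delta_aprox}), (\ref{e1_11_ecStokes_ecPoisson_variational_dem_grad_miu}), (\ref{e1_11_ecStokes_ecPoisson_variational_dem_grad_miu_r}), (\ref{mathcal_P2_e1_6_ecStokes_ecPoisson_variational_dem_q_compl_p_dem11_cont_alt_izom}) with $\textbf{t}$ and $r$ already fixed. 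This should yield, first, an elliptic problem for $p$ whose right-hand side is a functional of $\widehat{\mathcal{F}}$ and $r$ only and which is well posed in $H^1(\Omega) \cap L_0^2(\Omega)$ (here the pertinent compatibility holds by the structure of the data spaces, e.g. $\langle\varrho,1\rangle = 0$ because $\varrho \in B_1^{-1}(H^1(\Omega) \cap L_0^2(\Omega))$); and, second, the relation $(y,-\triangle\varphi) = \langle\psi_g + \psi_{div},\varphi\rangle$ for all $\varphi \in H_0^2(\Omega)$ with a data-only right-hand side, which — via Theorem 7.1.1 of \cite{CLBichir_bib_Ciarlet2002} used with $\mathcal{W}$ and Proposition I.1.3 of \cite{CLBichir_bib_Gir_Rav1986}, both valid on the bounded connected Lipschitz domain $\Omega$ — provides a unique $\phi \in H_0^2(\Omega)$, after which I set $y = -\triangle\phi \in L_0^2(\Omega)$. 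Having $p$ and $y$, the remaining components follow: $p_S = p + \ell_S$, $\textbf{z} = \textbf{f}_\triangle - grad\, p$, then $q$ and $\hat q$ from the homogeneous Dirichlet Poisson problems (\ref{mathcal_P2_e1_6_ecStokes_ecPoisson_variational_dem_q_compl_dem11_cont_alt_izom}) and (\ref{mathcal_P2_e1_6_ecStokes_ecPoisson_variational_dem_q_compl_hat_q_dem11_cont_alt_izom}), $\hat p = p - q - \hat g$; and, with $div\,\textbf{u} = y + g$ a prescribed element of $L_0^2(\Omega)$, the pair $(\textbf{u},p_S) \in \textbf{H}_0^1(\Omega) \times L_0^2(\Omega)$ from the isomorphism $\Psi_S$ of $\textbf{H}_0^1(\Omega) \times L_0^2(\Omega)$ onto $\textbf{H}^{-1}(\Omega) \times L_0^2(\Omega)$ (Theorem I.5.1 of \cite{CLBichir_bib_Gir_Rav1986}). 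It then remains to check that this $(x,U)$ satisfies all ten equations and belongs to $\widehat{\Gamma}$; the one nontrivial point is that the pressure $p_S$ delivered by $\Psi_S$ coincides with $p + \ell_S$, which holds because the equation for $y$ was obtained precisely by applying $div$ to the momentum equation and using the pressure relations, so that $div\,\textbf{u} = y + g$ indeed holds for the $\textbf{u}$ determined by the momentum equation with datum $p + \ell_S$.

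I expect the main obstacle, and the genuine difference from Lemma \ref{lema_0_teorema_izom_1}, to be the recovery of $p$: on the full space $\widehat{\Sigma}$ the entries $\ell_S$, $g$, $\textbf{f}$ do not enjoy the extra regularity they have in $\widehat{\Sigma}_0$, so $p$ can no longer be taken as $p_S - \ell_S$ coming from the Stokes solution (that only lands in $L^2(\Omega)$); $p$ must instead be produced directly, in $H^1(\Omega) \cap L_0^2(\Omega)$, from the pressure–Poisson subsystem, and showing that this subsystem amounts to a well-posed elliptic problem for $p$ (with the correct compatibility of its right-hand side, again through the space $\mathcal{W}$ and the biharmonic argument) is the crux. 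The rest is the routine bookkeeping of which subspace each constructed component lies in, supported by the density facts of Lemma \ref{lema_widehat_GAMA_dense_T_tabel} and the standard elliptic theory. Note that injectivity of $\widehat{\Phi}$ is deliberately not part of this statement — it is taken care of on the dense subspace in Lemma \ref{lema_0_teorema_izom_1} — so that, together with the present surjectivity, the full isomorphism $\widehat{\Phi}:\widehat{\Gamma} \to \widehat{\Sigma}$ will follow from Lemma \ref{well_posed_lema_deschisa}.
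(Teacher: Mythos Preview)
Your plan diverges from the paper's proof, and the step you flag as ``the crux'' contains a genuine gap.

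The paper does \emph{not} redo the construction of Lemma~\ref{lema_0_teorema_izom_1} on the full spaces. Instead it exploits linearity together with the bijectivity of $\widehat{\Phi}_{0}$ already established. Given $\widehat{\mathcal{F}}\in\widehat{\Sigma}$, it picks $\widehat{\mathcal{F}}_{0}\in\widehat{\Sigma}_{0}$ agreeing with $\widehat{\mathcal{F}}$ in every component except the three ($\ell_{S}$, $g$, $\textbf{f}$) whose target spaces differ between $\widehat{\Sigma}$ and $\widehat{\Sigma}_{0}$. It solves $\widehat{\Phi}_{0}(\widehat{U}_{0})=\widehat{\mathcal{F}}_{0}$ by Lemma~\ref{lema_0_teorema_izom_1}. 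For the difference $\widehat{\mathcal{F}}-\widehat{\mathcal{F}}_{0}=(0,\ell_{S}-(\ell_{S})_{0},g-g_{0},\textbf{f}-\textbf{f}_{0},0,\ldots,0)$ it makes the ansatz that \emph{all} components of $\widehat{U}'$ vanish except $p_{S}',\textbf{u}',y'$; in particular $p'=0$, so the $H^{1}$-regularity obstacle for the pressure never appears. The surviving three equations (\ref{e5_1_cond_ecStokes_ecPoisson_variational_dem_div_corelatie_exact}), (\ref{e5_1_cond_ecStokes_ecPoisson_variational_dem_div}), (\ref{e1_11_ecStokes_ecPoisson_variational_dem}) are then solved sequentially: $p_{S}'=\ell_{S}-(\ell_{S})_{0}$, then $\textbf{u}'$ from the vector Poisson--Dirichlet problem with $p_{S}'$ known, then $y'=div\,\textbf{u}'-(g-g_{0})$. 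Finally $\widehat{\Phi}(\widehat{U}_{0}+\widehat{U}')=\widehat{\mathcal{F}}$.

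Your direct route relies on deriving from the system a well-posed elliptic problem for $p$ alone in $H^{1}(\Omega)\cap L_{0}^{2}(\Omega)$. But carry out the very substitutions you list --- $\textbf{t}=\hat{\textbf{f}}$, $\textbf{z}=\textbf{f}_{\triangle}-grad\,p$, $\hat{p}=p-q-\hat{g}$ --- in (\ref{e1_11_ecStokes_ecPoisson_variational_dem_grad_miu}): the terms $-grad\,p$ (from $\textbf{z}$) and $+grad\,p$ (from $\hat{p}$) \emph{cancel}, leaving (\ref{e1_11_ecStokes_ecPoisson_variational_dem_grad_miu_inlocuiri}), which is precisely the equation that, summed with (\ref{e1_11_ecStokes_ecPoisson_variational_dem_grad_miu_r}), produces $r=B_{1}\tilde{\psi}_{r}$ and nothing else (Remark~\ref{observatia_EC_EXACT_ecuatii_ech_T}). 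The only place $p$ survives after elimination is (\ref{VAR_mathcal_P2_e1_6_ecStokes_ecPoisson_variational_dem_q_compl_hat_q_dem11_cont_alt_izom_CALC_inlocuire_D_NOU}), tested against $H_{0}^{1}(\Omega)$ only; this fixes the interior Laplacian of $p$ but carries no boundary information, so it does not determine $p$ in $H^{1}(\Omega)\cap L_{0}^{2}(\Omega)$. The missing boundary constraint is encoded in (\ref{e1_11_ecStokes_ecPoisson_variational_dem_grad_miu_r}) tested against $H^{1}(\Omega)\ominus H_{0}^{1}(\Omega)$, which ties the Neumann trace of $q+\hat{q}$ (themselves Dirichlet solutions depending on $p$) to $r$ and $\hat{\varrho}$; this is not a standard elliptic problem for $p$ and your proposal gives no argument for its solvability. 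The compatibility $\langle\varrho,1\rangle=0$ you note is correct but does not address this.
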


\begin{proof}

Fix $\widehat{\mathcal{F}}$ and $\widehat{\mathcal{F}}_{0}$ $\in$
$\widehat{\Sigma}$.

$\widehat{\mathcal{F}}$ $=$ $(\hat{g}$, $\ell_{S}$, $g$,
$\textbf{f}$, $\psi$, $\hat{\psi}$, $\textbf{f}_{\triangle}$,
$\varrho$, $\hat{\varrho}$, $\hat{\textbf{f}})$.

$\widehat{\mathcal{F}}_{0}$ $=$ $(\hat{g}$, $(\ell_{S})_{0}$,
$g_{0}$, $\textbf{f}_{0}$, $\psi$, $\hat{\psi}$,
$\textbf{f}_{\triangle}$, $\varrho$, $\hat{\varrho}$,
$\hat{\textbf{f}})$, where the index "$0$" is written only for the
elements that do not coincide with the corresponding element from
$\widehat{\mathcal{F}}$.

$\widehat{\Phi}_{0}:\widehat{\Gamma}_{0}$ $\rightarrow$
$\widehat{\Sigma}_{0}$ is bijective. Let $\widehat{U}_{0}$ $=$
$(x_{0},U_{0})$ $=$ $(\hat{p}_{0}, (p_{S})_{0}, y_{0},
\textbf{u}_{0}, q_{0}, \hat{q}_{0}, \textbf{z}_{0}, p_{0}, r_{0},
\textbf{t}_{0})$ be the solution of
\begin{eqnarray}
   && \widehat{\Phi}_{0}(\widehat{U}) = \widehat{\mathcal{F}}_{0} \, ,
         \label{well_posed_e1_11_ecStokes_ecPoisson_variational_dem_q_compl4_ec_mathcal_X_op_fi_surj}
\end{eqnarray}

We have $\widehat{\mathcal{F}} - \widehat{\mathcal{F}}_{0}$ $=$
$(0$, $\ell_{S}-(\ell_{S})_{0}$, $g-g_{0}$,
$\textbf{f}-\textbf{f}_{0}$, $0$, $0$, $0$, $0$, $0$, $0)$.
Consider the equation
\begin{eqnarray}
   && \widehat{\Phi}(\widehat{V}) = \widehat{\mathcal{F}} - \widehat{\mathcal{F}}_{0} \, ,
         \label{well_posed_e1_11_ecStokes_ecPoisson_variational_dem_q_compl4_ec_mathcal_X_op_E_fi_surj_dif}
\end{eqnarray}

Let us determine a solution $\widehat{U}'$ $=$ $(x',U')$ $=$
$(\hat{p}', p_{S}', y', \textbf{u}', q', \hat{q}', \textbf{z}',
p', r', \textbf{t}')$ of
(\ref{well_posed_e1_11_ecStokes_ecPoisson_variational_dem_q_compl4_ec_mathcal_X_op_E_fi_surj_dif}).

If we take $\hat{p}'$ $=$ $0$, $q'$ $=$ $0$, $\hat{q}'$ $=$ $0$,
$\textbf{z}'$ $=$ $0$, $p'$ $=$ $0$, $r'$ $=$ $0$, $\textbf{t}'$
$=$ $0$, then, from the equations
(\ref{mathcal_P2_e1_6_ecStokes_ecPoisson_variational_dem_q_compl_p_dem11_cont_alt_izom})
-
(\ref{e1_11_ecStokes_ecPoisson_variational_dem_grad_miu_ec_TAU}),
it remains
\begin{eqnarray}
   && p_{S}' = \ell_{S}-(\ell_{S})_{0} \, ,
         \label{e5_1_cond_ecStokes_ecPoisson_variational_dem_div_corelatie_exact_prim} \\
   && (div \, \textbf{u}'-y',\hat{\mu}) = (g-g_{0},\hat{\mu}), \
              \forall \, \hat{\mu}  \, \in \, L_{0}^{2}(\Omega) \, ,
         \label{e5_1_cond_ecStokes_ecPoisson_variational_dem_div_prim} \\
   && (grad \, \textbf{u}',grad \, \textbf{w})-(p_{S}',div \, \textbf{w})=\langle \textbf{f}-\textbf{f}_{0},w \rangle, \
              \forall \, \textbf{w}  \, \in \, \textbf{H}_{0}^{1}(\Omega) \, ,
         \label{e1_11_ecStokes_ecPoisson_variational_dem_prim}
\end{eqnarray}

We obtain $p_{S}'$ $=$ $\ell_{S} - (\ell_{S})_{0}$ from
(\ref{e5_1_cond_ecStokes_ecPoisson_variational_dem_div_corelatie_exact_prim}),
$\textbf{u}'$ from
(\ref{e1_11_ecStokes_ecPoisson_variational_dem_prim}) and $y'$
from
(\ref{e5_1_cond_ecStokes_ecPoisson_variational_dem_div_prim}), so
$\widehat{U}'$ is a solution of the equation
(\ref{well_posed_e1_11_ecStokes_ecPoisson_variational_dem_q_compl4_ec_mathcal_X_op_E_fi_surj_dif}).
We have
\begin{eqnarray}
   && \widehat{\Phi}(\widehat{U}_{0} + \widehat{U}') = \widehat{\mathcal{F}} \, ,
         \label{well_posed_e1_11_ecStokes_ecPoisson_variational_dem_q_compl4_ec_mathcal_X_op_E_fi_surj}
\end{eqnarray}
Hence $\widehat{\Phi}$ is a surjective mapping.

\qquad
\end{proof}

\begin{rem}
\label{observatia_EC_EXACT_tau_prim} $\widehat{U}'$ and
$\widehat{\mathcal{F}} - \widehat{\mathcal{F}}_{0}$ also satisfy
(\ref{e1_11_ecStokes_ecPoisson_variational_dem_diferenta}).
$\forall \, \textbf{w}  \, \in \, \textbf{H}_{0}^{1}(\Omega)$, we
have
\begin{eqnarray*}
   && (grad \, \textbf{u}',grad \, \textbf{w})
      - (\ell_{S} - (\ell_{S})_{0},div \, \textbf{w})-\langle \textbf{f}-\textbf{f}_{0},w \rangle
      = (\textbf{z}',\textbf{w}) \, .
         \label{e1_11_ecStokes_ecPoisson_variational_dem_diferenta_prim}
\end{eqnarray*}
\end{rem}

\begin{thm}
\label{teorema_izom_1} $\widehat{\Phi}$ is an isomorphism of
$\widehat{\Gamma}$ onto $\widehat{\Sigma}$.
\end{thm}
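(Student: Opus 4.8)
The plan is to obtain this immediately from Lemma \ref{well_posed_lema_deschisa}, all of whose hypotheses have by now been verified. First I would record the structural facts: $\widehat{\Gamma}$ $=$ $\mathcal{Q} \times \mathcal{X}$ and $\widehat{\Sigma}$ $=$ $\mathcal{Z} \times \mathcal{Y}$ are Banach spaces (finite products of the Sobolev/$L^2$-type Banach and Hilbert spaces listed in Section \ref{sectiunea_ecuatii_remarks}, with the product norm $\| \cdot \|_{(1)}$), and $\widehat{\Phi}$ $\in$ $L(\widehat{\Gamma},\widehat{\Sigma})$ by its definition as the operator of the left-hand sides of (\ref{mathcal_P2_e1_6_ecStokes_ecPoisson_variational_dem_q_compl_p_dem11_cont_alt_izom}) -- (\ref{e1_11_ecStokes_ecPoisson_variational_dem_grad_miu_ec_TAU}). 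This puts us in the setting of Lemma \ref{well_posed_lema_deschisa} with $\widehat{\Gamma}_{0}$, $\widehat{\Sigma}_{0}$ carrying the induced (norm) topologies.

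Next I would check the three remaining hypotheses of Lemma \ref{well_posed_lema_deschisa} one at a time. Density of $\widehat{\Gamma}_{0}$ in $\widehat{\Gamma}$ and of $\widehat{\Sigma}_{0}$ in $\widehat{\Sigma}$ is exactly Lemma \ref{lema_widehat_GAMA_dense_T_tabel}. The inclusion $\widehat{\Phi}(\widehat{\Gamma}_{0})$ $\subseteq$ $\widehat{\Sigma}_{0}$ was built into the construction of these subspaces and was used at the start of the proof of Lemma \ref{lema_0_teorema_izom_1}; I would simply cite that. Surjectivity of $\widehat{\Phi}$ is Lemma \ref{well_posed_lema_surjectiva}, and the fact that $\widehat{\Phi}_{0}$ $=$ $\widehat{\Phi}|_{\widehat{\Gamma}_{0}}$ is a bijection of $\widehat{\Gamma}_{0}$ onto $\widehat{\Sigma}_{0}$ is Lemma \ref{lema_0_teorema_izom_1}.

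With all hypotheses in hand, Lemma \ref{well_posed_lema_deschisa} yields at once that $\widehat{\Phi}_{0}$ is an isomorphism of $\widehat{\Gamma}_{0}$ onto $\widehat{\Sigma}_{0}$ and that $\widehat{\Phi}$ is an isomorphism of $\widehat{\Gamma}$ onto $\widehat{\Sigma}$; equivalently, one may invoke Corollary \ref{corolarul_lema_izom_restr_prel1} with $E=\widehat{\Gamma}$, $F=\widehat{\Sigma}$, $E_{0}=\widehat{\Gamma}_{0}$, $F_{0}=\widehat{\Sigma}_{0}$, $\Phi_{1}=\widehat{\Phi}$, $\Phi_{0}=\widehat{\Phi}_{0}$. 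In either case the argument is a two-line assembly step.

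I do not expect a genuine obstacle here, since the substance was absorbed into Lemmas \ref{lema_0_teorema_izom_1} and \ref{well_posed_lema_surjectiva} and into the density bookkeeping of Section \ref{sectiunea_ecuatii_remarks}; the only point requiring a little care is making sure one is citing the correct variant of the open-mapping / restriction machinery (Lemma \ref{well_posed_lema_deschisa}, which already internalizes the open mapping theorem and Corollary \ref{corolarul_lema_izom_restr_prel1}) rather than re-deriving the continuity of $\widehat{\Phi}^{-1}$ by hand.
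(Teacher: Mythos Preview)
Your proposal is correct and follows exactly the paper's approach: the paper's proof is the one-liner ``$\widehat{\Gamma}$, $\widehat{\Sigma}$, $\widehat{\Gamma}_{0}$, $\widehat{\Sigma}_{0}$, $\widehat{\Phi}$, $\widehat{\Phi}_{0}$ satisfy the hypotheses of Lemma \ref{well_posed_lema_deschisa},'' and you have simply spelled out which earlier results (Lemmas \ref{lema_widehat_GAMA_dense_T_tabel}, \ref{lema_0_teorema_izom_1}, \ref{well_posed_lema_surjectiva}) discharge each hypothesis.
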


\begin{proof} $\widehat{\Gamma}$, $\widehat{\Sigma}$, $\widehat{\Gamma}_{0}$, $\widehat{\Sigma}_{0}$,
$\widehat{\Phi}$, $\widehat{\Phi}_{0}$ satisfy the hypotheses of
Lemma \ref{well_posed_lema_deschisa}.

\qquad
\end{proof}

\begin{rem}
\label{observatia5_omega_domega222_DEM} We do not impose condition
(\ref{dem101_Gir_Rav_pag51_CL_Neumann_G_P}). This one arises for
$y$ $\in$ $T_{PD}^{-1}(D_{0}$ $\cap$ $H_{0}^{2}(\Omega))$ in the
form $\phi$ $\in$ $D_{0}$ $\cap$ $H_{0}^{2}(\Omega)$, related to
some dense subspaces. See
(\ref{e5_1_cond_ecStokes_ecPoisson_variational_dem_modif3}). In
this situation, we also have $\textbf{u}$ $\in$ $V \oplus
\widetilde{V^{\bot}}$.
\end{rem}

\begin{cor}
\label{corolarul5_teorema_izom_1_cor10_div0} Let $\textbf{f}$
$\in$ $\Upsilon_{1}$ $\cap$ $\textbf{L}^{2}(\Omega)$ be given. If
$\hat{g}=0$, $\ell_{S}=0$, $g=0$, $\psi=0$, $\hat{\psi}=0$,
$\textbf{f}_{\triangle}=\textbf{f}$, $\varrho=0$,
$\hat{\varrho}=0$, $\hat{\textbf{f}}=\textbf{f}$ in problem
(\ref{mathcal_P2_e1_6_ecStokes_ecPoisson_variational_dem_q_compl_p_dem11_cont_alt_izom})
-
(\ref{e1_11_ecStokes_ecPoisson_variational_dem_grad_miu_ec_TAU}),
then $(\textbf{u},p)$ $\in$ $\textbf{H}_{0}^{1}(\Omega)$ $\times$
$(H^{1}(\Omega) \cap L_{0}^{2}(\Omega))$ is the unique solution of
problem (\ref{e1_11_ecStokes_ecPoisson_variational_dem_th_I_5_1})
- (\ref{e5_1_cond_ecStokes_ecPoisson_variational_dem_th_I_5_1})
with $g=0$.
\end{cor}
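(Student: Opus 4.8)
The plan is to produce, out of the unique Stokes solution attached to $\textbf{f}$, an explicit tuple solving the extended system for the stated data, and then to use the uniqueness furnished by Theorem \ref{teorema_izom_1} to conclude that this tuple \emph{is} that solution. First I would observe that $(\textbf{f},0) \in \Upsilon_{1} \times L_{0}^{2}(\Omega) = \Psi_{S}(\textbf{H}_{0}^{1}(\Omega) \times (H^{1}(\Omega) \cap L_{0}^{2}(\Omega)))$; since the restriction of $\Psi_{S}$ to $\textbf{H}_{0}^{1}(\Omega) \times (H^{1}(\Omega) \cap L_{0}^{2}(\Omega))$ is an isomorphism onto $\Upsilon_{1} \times L_{0}^{2}(\Omega)$ by Lemma \ref{lema_izom_corolar}, there is a unique $(\textbf{u},p) \in \textbf{H}_{0}^{1}(\Omega) \times (H^{1}(\Omega) \cap L_{0}^{2}(\Omega))$ solving (\ref{e1_11_ecStokes_ecPoisson_variational_dem_th_I_5_1})--(\ref{e5_1_cond_ecStokes_ecPoisson_variational_dem_th_I_5_1}) with $g=0$. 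Testing (\ref{e5_1_cond_ecStokes_ecPoisson_variational_dem_th_I_5_1}) with $\mu = div\,\textbf{u} \in L_{0}^{2}(\Omega)$ gives $div\,\textbf{u}=0$; then (\ref{e1_11_ecStokes_ecPoisson_variational_dem_th_I_5_1}) yields $-\triangle\textbf{u}+grad\,p=\textbf{f}$ in $(\mathcal{D}'(\Omega))^{N}$, and taking the divergence, $\triangle p = div\,\textbf{f}$, that is, $-\triangle p = -div\,\textbf{f}$ in $\mathcal{D}'(\Omega)$.

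Next I would set $p_{S}:=p$, $\textbf{z}:=\textbf{f}-grad\,p$, which equals $-\triangle\textbf{u}$ and lies in $\textbf{L}^{2}(\Omega)$, $\textbf{t}:=\textbf{f}$, $y:=0$, $r:=0$, $\hat{p}:=p-q$, where $q \in H_{0}^{1}(\Omega)$ is the solution of $-\triangle q = -div\,\textbf{f}$ and $\hat{q} \in H_{0}^{1}(\Omega)$ the solution of $-\triangle\hat{q} = \triangle p$, both with homogeneous Dirichlet data; these two scalar Poisson problems are well posed because $div\,\textbf{f},\triangle p \in H^{-1}(\Omega)$. The key elementary facts are then: $div\,\textbf{z} = div\,\textbf{f}-\triangle p = 0$, so $\widetilde{S}_{\textbf{z}}=0$; and $-\triangle(q+\hat{q}) = -div\,\textbf{f}+div\,\textbf{f}=0$ with $q+\hat{q} \in H_{0}^{1}(\Omega)$, whence (by testing against $q+\hat{q}$) $q+\hat{q}=0$, so $\hat{q}=-q$, $\hat{p}-\hat{q}=p$, and $\textbf{z}-\textbf{t}+grad\,(\hat{p}-\hat{q}) = -grad\,p+grad\,p = 0$.

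With these in hand, the verification that $(\hat{p},p_{S},y,\textbf{u},q,\hat{q},\textbf{z},p,r,\textbf{t}) \in \widehat{\Gamma}$ satisfies (\ref{mathcal_P2_e1_6_ecStokes_ecPoisson_variational_dem_q_compl_p_dem11_cont_alt_izom})--(\ref{e1_11_ecStokes_ecPoisson_variational_dem_grad_miu_ec_TAU}) for $\hat{g}=0$, $\ell_{S}=0$, $g=0$, $\psi=0$, $\hat{\psi}=0$, $\textbf{f}_{\triangle}=\textbf{f}$, $\varrho=0$, $\hat{\varrho}=0$, $\hat{\textbf{f}}=\textbf{f}$ is routine: (\ref{mathcal_P2_e1_6_ecStokes_ecPoisson_variational_dem_q_compl_p_dem11_cont_alt_izom}), (\ref{e5_1_cond_ecStokes_ecPoisson_variational_dem_div_corelatie_exact}), (\ref{e1_11_ecStokes_ecPoisson_variational_dem_grad_miu_ec_TAU}) are immediate from the definitions; (\ref{e5_1_cond_ecStokes_ecPoisson_variational_dem_div}) and (\ref{e1_11_ecStokes_ecPoisson_variational_dem}) reduce, via $y=0$, $div\,\textbf{u}=0$, $p_{S}=p$, to the two Stokes equations with $g=0$; (\ref{mathcal_P2_e1_6_ecStokes_ecPoisson_variational_dem_q_compl_dem11_cont_alt_izom_instead2_delta_hat_delta_aprox}) follows from $\textbf{z}+grad\,p=\textbf{f}$ and the Green identity $(grad\,p,\tilde{\textbf{w}})+(p,div\,\tilde{\textbf{w}})=0$ for $\tilde{\textbf{w}} \in \textbf{H}_{0}^{1}(\Omega)$; (\ref{mathcal_P2_e1_6_ecStokes_ecPoisson_variational_dem_q_compl_dem11_cont_alt_izom}) and (\ref{mathcal_P2_e1_6_ecStokes_ecPoisson_variational_dem_q_compl_hat_q_dem11_cont_alt_izom}) are exactly the defining Poisson problems of $q$ and $\hat{q}$, using $\widetilde{S}_{\textbf{z}}=0$ and $\widetilde{S}_{\textbf{t}}=\widetilde{S}_{\textbf{f}}$; and (\ref{e1_11_ecStokes_ecPoisson_variational_dem_grad_miu}), (\ref{e1_11_ecStokes_ecPoisson_variational_dem_grad_miu_r}) hold because $\textbf{z}-\textbf{t}+grad\,(\hat{p}-\hat{q})=0$, $grad\,r=0$ and $q+\hat{q}=0$. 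The stated right-hand side belongs to $\widehat{\Sigma}$ ($\textbf{f} \in \textbf{H}^{-1}(\Omega)$ for the first slot, $\textbf{f} \in \textbf{L}^{2}(\Omega)$ for the $\textbf{z}$- and $\textbf{t}$-slots), so by Theorem \ref{teorema_izom_1} the extended system has exactly one solution in $\widehat{\Gamma}$; hence it coincides with the tuple just built, and its $(\textbf{u},p)$ component solves (\ref{e1_11_ecStokes_ecPoisson_variational_dem_th_I_5_1})--(\ref{e5_1_cond_ecStokes_ecPoisson_variational_dem_th_I_5_1}) with $g=0$, uniquely by the isomorphism property of $\Psi_{S}$. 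I expect the only slightly delicate point to be the joint consistency of the four pressure/potential equations (\ref{mathcal_P2_e1_6_ecStokes_ecPoisson_variational_dem_q_compl_dem11_cont_alt_izom}), (\ref{mathcal_P2_e1_6_ecStokes_ecPoisson_variational_dem_q_compl_hat_q_dem11_cont_alt_izom}), (\ref{e1_11_ecStokes_ecPoisson_variational_dem_grad_miu}), (\ref{e1_11_ecStokes_ecPoisson_variational_dem_grad_miu_r}), which rests on the identity $q+\hat{q}=0$ (harmonicity plus zero boundary values) and on $\textbf{z}=\textbf{f}-grad\,p$ being divergence free; everything else is bookkeeping with Green's formula and the well-posedness of the scalar Dirichlet Laplacian.
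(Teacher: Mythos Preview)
Your proof is correct, but it takes a genuinely different route from the paper's. The paper's one-line argument (``We obtain $\psi_{div}=0$, $\psi_{g}=0$, so $\phi=0$, and so on'') leans entirely on the machinery developed in the proof of Lemma~\ref{lema_0_teorema_izom_1}: it starts from the (unique, by Theorem~\ref{teorema_izom_1}) solution of the extended system, substitutes the specified data into the formula~(\ref{VAR_e5_1_cond_ecStokes_ecPoisson_variational_dem_grad_y_distr_NOU}) for $\psi_{div}$ and into~(\ref{e5_1_cond_ecStokes_ecPoisson_variational_dem_grad_y_distr_cont_g_dem}) for $\psi_{g}$, observes that both vanish, and then follows the chain $\psi_{div}+\psi_{g}=0 \Rightarrow \phi=0 \Rightarrow y=0 \Rightarrow div\,\textbf{u}=0$ already established there; the Stokes equations for $(\textbf{u},p)$ then drop out of (\ref{e1_11_ecStokes_ecPoisson_variational_dem}), (\ref{e5_1_cond_ecStokes_ecPoisson_variational_dem_div}), (\ref{e5_1_cond_ecStokes_ecPoisson_variational_dem_div_corelatie_exact}).

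You instead work \emph{constructively}: you start from the Stokes solution, manufacture an explicit candidate $(\hat{p},p_{S},y,\textbf{u},q,\hat{q},\textbf{z},p,r,\textbf{t})$, verify each equation by hand, and invoke uniqueness from Theorem~\ref{teorema_izom_1}. The crux of your argument---the two observations $div\,\textbf{z}=div\,\textbf{f}-\triangle p=0$ (hence $\widetilde{S}_{\textbf{z}}=0$) and $q+\hat{q}=0$ (harmonic in $H_{0}^{1}$)---replaces the biharmonic detour through $\phi$ entirely. Your approach is more self-contained and more transparent about \emph{what} the extended solution actually is; the paper's is terser because it recycles a computation already done. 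Both are valid, and yours has the minor advantage of not depending on the somewhat intricate derivation of $\psi_{div}$ in Lemma~\ref{lema_0_teorema_izom_1}.
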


\begin{proof} We obtain $\psi_{div} = 0$, $\psi_{g} = 0$, so $\phi =0$, and so
on.

\qquad
\end{proof}

\begin{cor}
\label{corolarul5_teorema_izom_1_cor10} Let $\textbf{f}$ $\in$
$\Upsilon_{1}$ $\cap$ $\textbf{L}^{2}(\Omega)$ be given. To solve
the problem
(\ref{e1_11_ecStokes_ecPoisson_variational_dem_th_I_5_1}) -
(\ref{e5_1_cond_ecStokes_ecPoisson_variational_dem_th_I_5_1}) with
$g=0$ is equivalent to solve the problem
(\ref{mathcal_P2_e1_6_ecStokes_ecPoisson_variational_dem_q_compl_p_dem11_cont_alt_izom})
- (\ref{e1_11_ecStokes_ecPoisson_variational_dem_grad_miu_ec_TAU})
with $\hat{g}=0$, $\ell_{S}=0$, $g=0$, $\psi=0$, $\hat{\psi}=0$,
$\textbf{f}_{\triangle}=\textbf{f}$, $\varrho=0$,
$\hat{\varrho}=0$, $\hat{\textbf{f}}=\textbf{f}$.
\end{cor}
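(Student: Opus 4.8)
The plan is to chain together Theorem \ref{teorema_izom_1}, Corollary \ref{corolarul5_teorema_izom_1_cor10_div0} and the well-posedness of $\Psi_{S}$. First I would check that, with the prescribed data $\hat{g}=0$, $\ell_{S}=0$, $g=0$, $\psi=0$, $\hat{\psi}=0$, $\textbf{f}_{\triangle}=\textbf{f}$, $\varrho=0$, $\hat{\varrho}=0$, $\hat{\textbf{f}}=\textbf{f}$, the right hand side $\widehat{\mathcal{F}}$ is a genuine element of $\widehat{\Sigma}$: the slots of $\textbf{z}$ and $\textbf{t}$ are in $\textbf{L}^{2}(\Omega)$ and receive $\textbf{f}$, which is legitimate since $\textbf{f}\in\textbf{L}^{2}(\Omega)$; the slot of $\textbf{u}$ lies in $\textbf{H}^{-1}(\Omega)$ and in fact in $\Upsilon_{1}$ because $\textbf{f}\in\Upsilon_{1}$; every remaining slot receives $0$. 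Hence $\widehat{\mathcal{F}}\in\widehat{\Sigma}_{1}\subseteq\widehat{\Sigma}$, and Theorem \ref{teorema_izom_1} is applicable to this right hand side.

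For the forward implication I would argue as follows. By Theorem \ref{teorema_izom_1} the extended system (\ref{mathcal_P2_e1_6_ecStokes_ecPoisson_variational_dem_q_compl_p_dem11_cont_alt_izom})--(\ref{e1_11_ecStokes_ecPoisson_variational_dem_grad_miu_ec_TAU}), for the $\widehat{\mathcal{F}}$ above, has exactly one solution $(x,U)\in\widehat{\Gamma}$; by Corollary \ref{corolarul5_teorema_izom_1_cor10_div0} its components $(\textbf{u},p)$ lie in $\textbf{H}_{0}^{1}(\Omega)\times(H^{1}(\Omega)\cap L_{0}^{2}(\Omega))$ and solve (\ref{e1_11_ecStokes_ecPoisson_variational_dem_th_I_5_1})--(\ref{e5_1_cond_ecStokes_ecPoisson_variational_dem_th_I_5_1}) with $g=0$. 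Thus any solution of the extended problem delivers, by restriction to the $(\textbf{u},p)$-components, a solution of the Stokes problem.

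For the converse I would use that $\Psi_{S}$ is an isomorphism, so (\ref{e1_11_ecStokes_ecPoisson_variational_dem_th_I_5_1})--(\ref{e5_1_cond_ecStokes_ecPoisson_variational_dem_th_I_5_1}) with $g=0$ has a unique solution $(\textbf{u},p)$, and, since $\textbf{f}\in\Upsilon_{1}$, this solution lies in $\textbf{H}_{0}^{1}(\Omega)\times(H^{1}(\Omega)\cap L_{0}^{2}(\Omega))$ by Lemma \ref{lema_izom_corolar} and the definition of $\Upsilon_{1}$. By the forward implication the unique solution of the extended system produces a Stokes solution for this very data, and by the uniqueness just recalled that pair must coincide with $(\textbf{u},p)$. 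Hence every solution of the Stokes problem is the $(\textbf{u},p)$-part of the (unique) solution of the extended system, and the assignment ``extended solution $\mapsto$ Stokes solution'' is a bijection, which is the claimed equivalence. The only step needing some care is the space bookkeeping --- verifying that the chosen data lands in $\widehat{\Sigma}$ and that both unique solutions inhabit $\textbf{H}_{0}^{1}(\Omega)\times(H^{1}(\Omega)\cap L_{0}^{2}(\Omega))$ --- so that the three cited results can be combined without a gap; once this is in place the equivalence is immediate.
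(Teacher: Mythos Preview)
Your argument is correct and matches what the paper implicitly relies on. In the paper this corollary is stated without proof, as an immediate consequence of Corollary \ref{corolarul5_teorema_izom_1_cor10_div0} together with Theorem \ref{teorema_izom_1} and the well-posedness of $\Psi_{S}$; you have simply written out the bookkeeping (verifying $\widehat{\mathcal{F}}\in\widehat{\Sigma}$ and matching the two unique solutions) that the paper leaves to the reader.
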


\begin{cor}
\label{corolarul5_teorema_izom_1_beta_exact} Under the hypotheses
of Corollaries \ref{corolarul5_teorema_izom_1_cor10_div0} and
\ref{corolarul5_teorema_izom_1_cor10}, for $\alpha_{1}$ $\neq$
$\alpha_{2}$, we have $(x(\alpha_{1}),U(\alpha_{1}))$ $=$
$(x(\alpha_{2}),U(\alpha_{2}))$, where
$(x(\alpha_{i}),U(\alpha_{i}))$ is the solution of
(\ref{mathcal_P2_e1_6_ecStokes_ecPoisson_variational_dem_q_compl_p_dem11_cont_alt_izom})
- (\ref{e1_11_ecStokes_ecPoisson_variational_dem_grad_miu_ec_TAU})
for $\alpha_{i}$.
\end{cor}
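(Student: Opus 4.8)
The plan is to exploit the fact that the parameter $\alpha$ enters the extended system (\ref{mathcal_P2_e1_6_ecStokes_ecPoisson_variational_dem_q_compl_p_dem11_cont_alt_izom})--(\ref{e1_11_ecStokes_ecPoisson_variational_dem_grad_miu_ec_TAU}) only through the single summand $-\alpha \langle \widetilde{S}_{\textbf{z}},\bar{\lambda} \rangle$ in equation (\ref{mathcal_P2_e1_6_ecStokes_ecPoisson_variational_dem_q_compl_dem11_cont_alt_izom}), and to show that, for the data prescribed in Corollaries \ref{corolarul5_teorema_izom_1_cor10_div0} and \ref{corolarul5_teorema_izom_1_cor10}, this summand is the zero functional, so the system reduces to one and the same problem for every admissible $\alpha$. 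Then uniqueness (Theorem \ref{teorema_izom_1}) finishes the argument.

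First I would fix $\alpha_{1} \neq 1$ and let $(x(\alpha_{1}),U(\alpha_{1}))$ be the corresponding solution, which exists and is unique by Theorem \ref{teorema_izom_1}. By Corollary \ref{corolarul5_teorema_izom_1_cor10_div0} and its proof, $(\textbf{u}(\alpha_{1}),p(\alpha_{1}))$ is the Stokes solution with $g=0$, so $div \, \textbf{u}(\alpha_{1}) = 0$ and $y(\alpha_{1}) = 0$; moreover, combining (\ref{VAR_mathcal_P2_e1_6_ecStokes_ecPoisson_variational_dem_q_compl_dem11_cont_alt_izom_instead2_delta_hat_delta_aprox_D_VAR_ec}) with $\textbf{f}_{\triangle}=\textbf{f}$, equation (\ref{e5_1_cond_ecStokes_ecPoisson_variational_dem_div_corelatie_exact}) with $\ell_{S}=0$, and the momentum equation (\ref{e1_11_ecStokes_ecPoisson_variational_dem_distr}), one obtains $\textbf{z}(\alpha_{1}) = \textbf{f} - grad \, p(\alpha_{1}) = -\triangle \textbf{u}(\alpha_{1})$. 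Next I would compute $\widetilde{S}_{\textbf{z}(\alpha_{1})}$: for $\varphi \in \mathcal{D}(\Omega)$, definition (\ref{e1_6_ecStokes_ecPoisson_variational_dem_f}) gives $\langle S_{\textbf{z}(\alpha_{1})},\varphi \rangle = -(\textbf{z}(\alpha_{1}),grad \, \varphi) = (\triangle \textbf{u}(\alpha_{1}),grad \, \varphi) = \langle div \, \textbf{u}(\alpha_{1}),-\triangle \varphi \rangle = 0$, using $div \, \textbf{u}(\alpha_{1}) = 0$; by density of $\mathcal{D}(\Omega)$ in $H_{0}^{1}(\Omega)$ together with the uniqueness of the continuous extension (Lemma \ref{lema_prelungire}), this yields $\widetilde{S}_{\textbf{z}(\alpha_{1})} = 0$ in $H^{-1}(\Omega)$.

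Then I would verify that $(x(\alpha_{1}),U(\alpha_{1}))$ also solves the extended system for $\alpha_{2}$. Every equation of (\ref{mathcal_P2_e1_6_ecStokes_ecPoisson_variational_dem_q_compl_p_dem11_cont_alt_izom})--(\ref{e1_11_ecStokes_ecPoisson_variational_dem_grad_miu_ec_TAU}) other than (\ref{mathcal_P2_e1_6_ecStokes_ecPoisson_variational_dem_q_compl_dem11_cont_alt_izom}) is $\alpha$-free, hence holds unchanged. In (\ref{mathcal_P2_e1_6_ecStokes_ecPoisson_variational_dem_q_compl_dem11_cont_alt_izom}) written with $\alpha = \alpha_{2}$, the term $-\alpha_{2}\langle \widetilde{S}_{\textbf{z}(\alpha_{1})},\bar{\lambda} \rangle$ vanishes since $\widetilde{S}_{\textbf{z}(\alpha_{1})} = 0$, so the equation reduces to $\langle \widetilde{S}_{\textbf{t}(\alpha_{1})},\bar{\lambda} \rangle + (grad \, q(\alpha_{1}),grad \, \bar{\lambda}) = \langle \psi,\bar{\lambda} \rangle$; but this is precisely equation (\ref{mathcal_P2_e1_6_ecStokes_ecPoisson_variational_dem_q_compl_dem11_cont_alt_izom}) for $\alpha = \alpha_{1}$ after dropping its (vanishing) $\alpha_{1}$-term, which holds by hypothesis. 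Hence $(x(\alpha_{1}),U(\alpha_{1}))$ is a solution of the $\alpha_{2}$-system, and by the uniqueness part of Theorem \ref{teorema_izom_1} we conclude $(x(\alpha_{1}),U(\alpha_{1})) = (x(\alpha_{2},U(\alpha_{2}))$.

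The only delicate point — indeed the single place where genuine analysis is needed rather than bookkeeping — is the identification $\widetilde{S}_{\textbf{z}} = 0$: one must pass from the vanishing of the distribution $div \, \textbf{z} = -\triangle (div \, \textbf{u})$ on $\mathcal{D}(\Omega)$ to the vanishing of its continuous $H^{-1}(\Omega)$-representative on all of $H_{0}^{1}(\Omega)$, for which the density of $\mathcal{D}(\Omega)$ in $H_{0}^{1}(\Omega)$ and Lemma \ref{lema_prelungire} are used. Everything else is a matter of tracking which equations of the extended system actually contain $\alpha$, and the remaining identities ($p_{S} = p$, $\textbf{z} = -\triangle \textbf{u}$, $div \, \textbf{u} = 0$, $y = 0$) are already supplied by Corollary \ref{corolarul5_teorema_izom_1_cor10_div0}.
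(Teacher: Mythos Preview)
Your proof is correct. The key observation that $\widetilde{S}_{\textbf{z}}=0$ under the given data is exactly right, and the ``plug the $\alpha_{1}$-solution into the $\alpha_{2}$-system and invoke uniqueness'' strategy is clean.

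The paper takes a slightly different route: its one-line proof points to $\psi_{div}=0$ for every $\alpha\neq 1$ (see (\ref{VAR_e5_1_cond_ecStokes_ecPoisson_variational_dem_grad_y_distr_NOU}) with the special data), and then relies on the explicit construction in the proof of Lemma~\ref{lema_0_teorema_izom_1} to conclude that the entire solution chain $\phi\to y\to(\textbf{u},p_{S})\to\cdots$ is $\alpha$-independent. In other words, the paper argues on the \emph{existence} side of the isomorphism, tracing through the construction, whereas you argue on the \emph{uniqueness} side. Both arguments ultimately rest on the same analytic fact --- that the only $\alpha$-dependent term in the system vanishes under these data --- but the paper leaves implicit the step you make explicit: once $\psi_{div}=0$ gives $div\,\textbf{u}=0$, one still needs $\widetilde{S}_{\textbf{z}}=0$ to see that $q$ (determined by (\ref{mathcal_P2_e1_6_ecStokes_ecPoisson_variational_dem_q_compl_dem11_cont_alt_izom})) is $\alpha$-free. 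That is exactly the content of Corollary~\ref{observatia5_omega_domega_DEM_beta}, which the paper states immediately afterwards; you have essentially absorbed its proof into yours. Your version is more self-contained and makes the logical structure more transparent.

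One typographical slip: your final displayed equality is missing a closing parenthesis after $\alpha_{2}$.
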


\begin{proof} Under the hypotheses
of the Corollary, $\psi_{div}$ $=$ $0$, $\forall$ $\alpha$ $\neq$
$1$.

\qquad
\end{proof}

\begin{cor}
\label{observatia5_omega_domega_DEM_beta} Under the hypotheses of
Corollaries \ref{corolarul5_teorema_izom_1_cor10_div0} and
\ref{corolarul5_teorema_izom_1_cor10}, $(\textbf{z},grad \,
\varphi)$ $=$ $0$, $\forall$ $\varphi$ $\in$ $H_{0}^{1}(\Omega)$.
\end{cor}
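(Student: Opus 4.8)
The plan is to exploit the two structural facts that the hypotheses of Corollaries \ref{corolarul5_teorema_izom_1_cor10_div0} and \ref{corolarul5_teorema_izom_1_cor10} force on the components of the solution. First, since $\ell_S=0$ we have $p_S=p$ by (\ref{e5_1_cond_ecStokes_ecPoisson_variational_dem_div_corelatie_exact}), and $(\textbf{u},p)$ is the solution of (\ref{e1_11_ecStokes_ecPoisson_variational_dem_th_I_5_1})--(\ref{e5_1_cond_ecStokes_ecPoisson_variational_dem_th_I_5_1}) with $g=0$; because the trace of $\textbf{u}$ vanishes we get $div\,\textbf{u}\in L_0^2(\Omega)$, and since $div\,\textbf{u}$ is $L^2$-orthogonal to all of $L_0^2(\Omega)$ it follows that $div\,\textbf{u}=0$ in $\Omega$. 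Second, since $\textbf{f}_\triangle=\textbf{f}$, the equivalent form (\ref{VAR_mathcal_P2_e1_6_ecStokes_ecPoisson_variational_dem_q_compl_dem11_cont_alt_izom_instead2_delta_hat_delta_aprox_D_VAR_ec}) of equation (\ref{mathcal_P2_e1_6_ecStokes_ecPoisson_variational_dem_q_compl_dem11_cont_alt_izom_instead2_delta_hat_delta_aprox}) reads $\textbf{z}+grad\,p=\textbf{f}$; comparing this with the distributional momentum equation coming from (\ref{e1_11_ecStokes_ecPoisson_variational_dem}) (namely $-\triangle\textbf{u}+grad\,p=\textbf{f}$), we identify $\textbf{z}=-\triangle\textbf{u}$ in $(\mathcal{D}'(\Omega))^N$.

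Next I would fix $\varphi\in\mathcal{D}(\Omega)$ and evaluate $(\textbf{z},grad\,\varphi)$. Since $\textbf{z}\in\textbf{L}^2(\Omega)$, this equals the duality pairing $\langle -\triangle\textbf{u},grad\,\varphi\rangle$, and transporting the Laplacian onto the (smooth, compactly supported) test function exactly as in the computation preceding (\ref{e5_1_cond_ecStokes_ecPoisson_variational_dem_grad_y_distr_pStokes}) gives $\langle\textbf{u},-\triangle(grad\,\varphi)\rangle=\langle\textbf{u},-grad(\triangle\varphi)\rangle=\langle div\,\textbf{u},\triangle\varphi\rangle$, which is $0$ by $div\,\textbf{u}=0$. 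Hence $(\textbf{z},grad\,\varphi)=0$ for every $\varphi\in\mathcal{D}(\Omega)$.

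Finally I would pass from $\mathcal{D}(\Omega)$ to $H_0^1(\Omega)$ by density: the linear functional $\varphi\mapsto(\textbf{z},grad\,\varphi)$ is continuous on $H_0^1(\Omega)$, since $|(\textbf{z},grad\,\varphi)|\le\|\textbf{z}\|_0\,|\varphi|_1$, and it vanishes on the dense subspace $\mathcal{D}(\Omega)$, so it vanishes on all of $H_0^1(\Omega)$, which is the claim.

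The proof involves no genuine obstacle: the only points requiring a little care are the identification $\textbf{z}=-\triangle\textbf{u}$ and the integration by parts $\langle-\triangle\textbf{u},grad\,\varphi\rangle=\langle div\,\textbf{u},\triangle\varphi\rangle$, both of which are the same pure distribution-theory manipulations already carried out in the proof of Lemma \ref{lema_0_teorema_izom_1}; and one must remember to verify $div\,\textbf{u}\in L_0^2(\Omega)$ before concluding $div\,\textbf{u}=0$ from (\ref{e5_1_cond_ecStokes_ecPoisson_variational_dem_th_I_5_1}). Everything else is routine.
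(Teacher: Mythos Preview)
Your proof is correct, but it follows a different route from the paper's. You rely on two facts already secured by Corollary~\ref{corolarul5_teorema_izom_1_cor10_div0}: that $div\,\textbf{u}=0$ and that $\textbf{z}=-\triangle\textbf{u}$ in $(\mathcal{D}'(\Omega))^N$; from there you reproduce the distributional integration-by-parts from the proof of Lemma~\ref{lema_0_teorema_izom_1} to get $(\textbf{z},grad\,\varphi)=(div\,\textbf{u},\triangle\varphi)=0$, then extend by density. The paper instead argues internally to the extended system: it takes $grad\,\varphi$ as test function in (\ref{mathcal_P2_e1_6_ecStokes_ecPoisson_variational_dem_q_compl_dem11_cont_alt_izom_instead2_delta_hat_delta_aprox}) and subtracts the result from (\ref{VAR_mathcal_P2_e1_6_ecStokes_ecPoisson_variational_dem_q_compl_hat_q_dem11_cont_alt_izom_CALC_inlocuire_INTERMEDIAR_NOU}), obtaining $(1-\alpha)(\textbf{z},grad\,\varphi)=0$ for $\varphi\in\mathcal{D}(\Omega)$, and then uses $\alpha\neq 1$. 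Your argument is shorter and avoids the parameter $\alpha$ altogether, at the price of invoking the earlier corollary's conclusion $div\,\textbf{u}=0$; the paper's argument is more self-contained within the algebra of the extended system and highlights exactly where the hypothesis $\alpha\neq 1$ enters.
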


\begin{proof} Using (\ref{mathcal_P2_e1_6_ecStokes_ecPoisson_variational_dem_q_compl_dem11_cont_alt_izom_instead2_delta_hat_delta_aprox}), we deduce
\begin{eqnarray}
   && (\textbf{z},grad \, \varphi) + (grad \, p,grad \, \varphi) = (\textbf{f}_{\triangle},grad \, \varphi),
              \forall \, \varphi  \, \in \, \mathcal{D}(\Omega) \, ,
         \label{e1_11_ecStokes_ecPoisson_variational_dem_diferenta_laplacian}
\end{eqnarray}

Subtracting
(\ref{e1_11_ecStokes_ecPoisson_variational_dem_diferenta_laplacian})
from
(\ref{VAR_mathcal_P2_e1_6_ecStokes_ecPoisson_variational_dem_q_compl_hat_q_dem11_cont_alt_izom_CALC_inlocuire_INTERMEDIAR_NOU}),
we obtain
\begin{eqnarray}
   && (1-\alpha) (\textbf{z},grad \, \varphi) = 0,
              \forall \, \varphi  \, \in \, \mathcal{D}(\Omega) \, ,
         \label{e1_11_ecStokes_ecPoisson_variational_dem_distr_grad_ddd_beta}
\end{eqnarray}
Hence, $(\textbf{z},grad \, \varphi)$ $=$ $0$, $\forall$ $\varphi$
$\in$ $H_{0}^{1}(\Omega)$.

\qquad
\end{proof}

\begin{cor}
\label{observatia_ec_Poisson_grad_p} We have
\begin{eqnarray}
   && ( \alpha \textbf{z} - \textbf{t}, grad \, \bar{\lambda} )
            + (grad \, p,grad \, \bar{\lambda})
            =(r,\bar{\lambda})
          \label{VAR_mathcal_P2_e1_6_ecStokes_ecPoisson_variational_dem_q_compl_dem11_cont_alt_izom_PRESIUNE_dir} \\
   && +\langle \psi,\bar{\lambda} \rangle
            +\langle \hat{\psi},\bar{\lambda} \rangle
            -\langle \hat{\varrho},\bar{\lambda} \rangle , \
              \forall \, \bar{\lambda}  \, \in \, H_{0}^{1}(\Omega) \, ,
         \nonumber
\end{eqnarray}
where we use
(\ref{e1_11_ecStokes_ecPoisson_variational_dem_grad_miu_inlocuiri_ec}),
and
\begin{eqnarray}
   && (\textbf{z} - \textbf{t} + grad \, p,grad \, \varphi)
            = (\textbf{f}_{\triangle} - \hat{\textbf{f}},grad \, \varphi), \
              \forall \, \varphi  \, \in \, H^{1}(\Omega) \ominus H_{0}^{1}(\Omega) \, ,
         \label{e1_11_ecStokes_ecPoisson_variational_dem_grad_miu_inlocuiri_sum_2_L2_cor}
\end{eqnarray}
\end{cor}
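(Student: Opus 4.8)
The plan is to obtain both identities by purely algebraic recombination of the equations (\ref{mathcal_P2_e1_6_ecStokes_ecPoisson_variational_dem_q_compl_p_dem11_cont_alt_izom})--(\ref{e1_11_ecStokes_ecPoisson_variational_dem_grad_miu_ec_TAU}) of the extended system, with one short analytic step at the start. That step is the identification, for $\bar\lambda\in H_{0}^{1}(\Omega)$, of $\langle\widetilde{S}_{\textbf{z}},\bar\lambda\rangle$ with $-(\textbf{z},grad\,\bar\lambda)$ and of $\langle\widetilde{S}_{\textbf{t}},\bar\lambda\rangle$ with $-(\textbf{t},grad\,\bar\lambda)$: since $\textbf{z},\textbf{t}\in\textbf{L}^{2}(\Omega)$, the maps $\bar\lambda\mapsto-(\textbf{z},grad\,\bar\lambda)$ and $\bar\lambda\mapsto-(\textbf{t},grad\,\bar\lambda)$ are continuous on $H_{0}^{1}(\Omega)$ and agree on $\mathcal{D}(\Omega)$ with $S_{\textbf{z}}$, $S_{\textbf{t}}$ from (\ref{e1_6_ecStokes_ecPoisson_variational_dem_f}), hence equal their unique continuous extensions $\widetilde{S}_{\textbf{z}}$, $\widetilde{S}_{\textbf{t}}$ by Lemma \ref{lema_prelungire}.

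To get (\ref{VAR_mathcal_P2_e1_6_ecStokes_ecPoisson_variational_dem_q_compl_dem11_cont_alt_izom_PRESIUNE_dir}) I would, for fixed $\bar\lambda\in H_{0}^{1}(\Omega)$, rewrite (\ref{mathcal_P2_e1_6_ecStokes_ecPoisson_variational_dem_q_compl_dem11_cont_alt_izom}) by that identification into the form $\alpha(\textbf{z},grad\,\bar\lambda)-(\textbf{t},grad\,\bar\lambda)+(grad\,q,grad\,\bar\lambda)=\langle\psi,\bar\lambda\rangle$, add to it equation (\ref{mathcal_P2_e1_6_ecStokes_ecPoisson_variational_dem_q_compl_hat_q_dem11_cont_alt_izom}) tested with $\tilde\lambda=\bar\lambda$, and then eliminate the term $(grad\,(q+\hat q),grad\,\bar\lambda)$ by testing (\ref{e1_11_ecStokes_ecPoisson_variational_dem_grad_miu_r}) (equivalently, using the form (\ref{e1_11_ecStokes_ecPoisson_variational_dem_grad_miu_inlocuiri_ec}) together with the definition of $\tilde\psi_{r}$) with $\tilde\varphi=\bar\lambda$, which is legitimate since $H_{0}^{1}(\Omega)\subset H^{1}(\Omega)$ and yields $(grad\,(q+\hat q),grad\,\bar\lambda)=\langle\hat\varrho,\bar\lambda\rangle-(r,\bar\lambda)$. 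Substituting and collecting terms gives exactly (\ref{VAR_mathcal_P2_e1_6_ecStokes_ecPoisson_variational_dem_q_compl_dem11_cont_alt_izom_PRESIUNE_dir}).

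For (\ref{e1_11_ecStokes_ecPoisson_variational_dem_grad_miu_inlocuiri_sum_2_L2_cor}) the observation is that the identity already holds in $\textbf{L}^{2}(\Omega)$, so it holds a fortiori when paired against $grad\,\varphi$ with $\varphi$ in the orthogonal complement $H^{1}(\Omega)\ominus H_{0}^{1}(\Omega)$: from (\ref{e1_11_ecStokes_ecPoisson_variational_dem_grad_miu_ec_TAU}) one has $\textbf{t}=\hat{\textbf{f}}$, and from the form (\ref{VAR_mathcal_P2_e1_6_ecStokes_ecPoisson_variational_dem_q_compl_dem11_cont_alt_izom_instead2_delta_hat_delta_aprox_D_VAR_ec}) of the $\textbf{z}$-equation (available because $grad\,p\in\textbf{L}^{2}(\Omega)$ as $p\in H^{1}(\Omega)$) one has $\textbf{z}+grad\,p=\textbf{f}_{\triangle}$; subtracting gives $\textbf{z}-\textbf{t}+grad\,p=\textbf{f}_{\triangle}-\hat{\textbf{f}}$ in $\textbf{L}^{2}(\Omega)$, which is (\ref{e1_11_ecStokes_ecPoisson_variational_dem_grad_miu_inlocuiri_sum_2}) of Remark \ref{observatia_EC_EXACT_ecuatii_ech_T}, and restricting its test space to $H^{1}(\Omega)\ominus H_{0}^{1}(\Omega)$ gives the claim. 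I do not anticipate any real obstacle: the sole point requiring care is the representation of $\widetilde{S}_{\textbf{z}},\widetilde{S}_{\textbf{t}}$ as $\textbf{L}^{2}$-pairings (and the attendant sign bookkeeping in (\ref{mathcal_P2_e1_6_ecStokes_ecPoisson_variational_dem_q_compl_dem11_cont_alt_izom})), after which everything is a two-line recombination of equations already at hand.
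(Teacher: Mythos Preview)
Your proposal is correct and follows essentially the same route as the paper: add (\ref{mathcal_P2_e1_6_ecStokes_ecPoisson_variational_dem_q_compl_dem11_cont_alt_izom}) and (\ref{mathcal_P2_e1_6_ecStokes_ecPoisson_variational_dem_q_compl_hat_q_dem11_cont_alt_izom}), subtract (\ref{e1_11_ecStokes_ecPoisson_variational_dem_grad_miu_r}) with test function $\bar\lambda\in H_0^1(\Omega)$, and for the second identity restrict (\ref{e1_11_ecStokes_ecPoisson_variational_dem_grad_miu_inlocuiri_sum_2}) to $H^1(\Omega)\ominus H_0^1(\Omega)$. Your explicit justification of the identification $\langle\widetilde S_{\textbf{z}},\bar\lambda\rangle=-(\textbf{z},grad\,\bar\lambda)$ via Lemma \ref{lema_prelungire} is a welcome clarification that the paper's proof leaves implicit.
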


\begin{proof}
If we add
(\ref{mathcal_P2_e1_6_ecStokes_ecPoisson_variational_dem_q_compl_dem11_cont_alt_izom})
and
(\ref{mathcal_P2_e1_6_ecStokes_ecPoisson_variational_dem_q_compl_hat_q_dem11_cont_alt_izom})
and subtract
(\ref{e1_11_ecStokes_ecPoisson_variational_dem_grad_miu_r}), where
the test functions $\varphi$ $\in$ $H_{0}^{1}(\Omega)$, we have
(\ref{VAR_mathcal_P2_e1_6_ecStokes_ecPoisson_variational_dem_q_compl_dem11_cont_alt_izom_PRESIUNE_dir})
where we use
(\ref{e1_11_ecStokes_ecPoisson_variational_dem_grad_miu_inlocuiri_ec}).

(\ref{e1_11_ecStokes_ecPoisson_variational_dem_grad_miu_inlocuiri_sum_2})
gives
(\ref{e1_11_ecStokes_ecPoisson_variational_dem_grad_miu_inlocuiri_sum_2_L2_cor}).

\qquad
\end{proof}

The parameterized perturbed pressure Poisson equation is given by
(\ref{VAR_mathcal_P2_e1_6_ecStokes_ecPoisson_variational_dem_q_compl_dem11_cont_alt_izom_PRESIUNE_dir})
and
(\ref{e1_11_ecStokes_ecPoisson_variational_dem_grad_miu_inlocuiri_sum_2_L2_cor}).
In the following corollary, we formulate the announced result that
to solve the stationary Stokes problem is equivalent to solve a
problem for the momentum equation, the parameterized perturbed
pressure Poisson equation and the equation that defines the
Laplace operator acting on velocity.

\begin{cor}
\label{corolarul5_teorema_izom_1_cor10_echiv} Let $\textbf{f}$
$\in$ $\Upsilon_{1}$ $\cap$ $\textbf{L}^{2}(\Omega)$ be given. To
solve the problem
(\ref{e1_11_ecStokes_ecPoisson_variational_dem_th_I_5_1}) -
(\ref{e5_1_cond_ecStokes_ecPoisson_variational_dem_th_I_5_1}) with
$g=0$ is equivalent to solve the problem
(\ref{e1_11_ecStokes_ecPoisson_variational_dem}),
(\ref{e1_11_ecStokes_ecPoisson_variational_dem_diferenta})
(\ref{VAR_mathcal_P2_e1_6_ecStokes_ecPoisson_variational_dem_q_compl_dem11_cont_alt_izom_PRESIUNE_dir}),
(\ref{e1_11_ecStokes_ecPoisson_variational_dem_grad_miu_inlocuiri_sum_2_L2_cor})
with $\hat{g}=0$, $\ell_{S}=0$, $g=0$, $\psi=0$, $\hat{\psi}=0$,
$\textbf{f}_{\triangle}=\textbf{f}$, $\varrho=0$,
$\hat{\varrho}=0$, $\hat{\textbf{f}}=\textbf{f}$.
\end{cor}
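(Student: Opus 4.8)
The plan is to read this as a consequence of Corollary~\ref{corolarul5_teorema_izom_1_cor10}, which already identifies the solution of (\ref{e1_11_ecStokes_ecPoisson_variational_dem_th_I_5_1})--(\ref{e5_1_cond_ecStokes_ecPoisson_variational_dem_th_I_5_1}) with $g=0$ with the solution of the full extended system (\ref{mathcal_P2_e1_6_ecStokes_ecPoisson_variational_dem_q_compl_p_dem11_cont_alt_izom})--(\ref{e1_11_ecStokes_ecPoisson_variational_dem_grad_miu_ec_TAU}) under the indicated data, and to show that, under that same data, the extended system is in turn equivalent to the reduced problem formed by the momentum equation (\ref{e1_11_ecStokes_ecPoisson_variational_dem}), the equation (\ref{e1_11_ecStokes_ecPoisson_variational_dem_diferenta}) defining $\textbf{z}=-\triangle\textbf{u}$, and the parameterized perturbed pressure Poisson equation (\ref{VAR_mathcal_P2_e1_6_ecStokes_ecPoisson_variational_dem_q_compl_dem11_cont_alt_izom_PRESIUNE_dir})--(\ref{e1_11_ecStokes_ecPoisson_variational_dem_grad_miu_inlocuiri_sum_2_L2_cor}).

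First I would treat the direct implication. If $(\textbf{u},p)\in\textbf{H}_{0}^{1}(\Omega)\times(H^{1}(\Omega)\cap L_{0}^{2}(\Omega))$ solves the Stokes problem with $g=0$, Corollary~\ref{corolarul5_teorema_izom_1_cor10} furnishes the unique solution $(x,U)$ of the extended system with the prescribed right-hand side; since $\ell_{S}=0$ we have $p_{S}=p$ by (\ref{e5_1_cond_ecStokes_ecPoisson_variational_dem_div_corelatie_exact}), so (\ref{e1_11_ecStokes_ecPoisson_variational_dem}) is literally one of the equations of that system, (\ref{e1_11_ecStokes_ecPoisson_variational_dem_diferenta}) holds by Remark~\ref{observatia_EC_EXACT_tau}, and (\ref{VAR_mathcal_P2_e1_6_ecStokes_ecPoisson_variational_dem_q_compl_dem11_cont_alt_izom_PRESIUNE_dir})--(\ref{e1_11_ecStokes_ecPoisson_variational_dem_grad_miu_inlocuiri_sum_2_L2_cor}) hold by Corollary~\ref{observatia_ec_Poisson_grad_p}. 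Hence the components $(\textbf{u},\textbf{z},p,\dots)$ solve the reduced problem.

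For the converse, I would start from a solution of the reduced problem and recover the full extended-system solution, so as to apply Corollary~\ref{corolarul5_teorema_izom_1_cor10} once more. Testing the momentum equation (\ref{e1_11_ecStokes_ecPoisson_variational_dem}) against $\mathcal{D}(\Omega)^{N}$ gives $-\triangle\textbf{u}+grad\,p=\textbf{f}$ in $(\mathcal{D}'(\Omega))^{N}$; since $p\in H^{1}(\Omega)$ and $\textbf{f}\in\textbf{L}^{2}(\Omega)$ this yields $\triangle\textbf{u}\in\textbf{L}^{2}(\Omega)$ and, comparing with (\ref{e1_11_ecStokes_ecPoisson_variational_dem_diferenta}), the identification $\textbf{z}=\textbf{f}-grad\,p$ in $\textbf{L}^{2}(\Omega)$. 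Inserting this into the perturbed pressure Poisson equation (\ref{VAR_mathcal_P2_e1_6_ecStokes_ecPoisson_variational_dem_q_compl_dem11_cont_alt_izom_PRESIUNE_dir}), whose right-hand side vanishes under the prescribed data, and using $\alpha\neq1$, one obtains $(\textbf{z},grad\,\varphi)=0$ for all $\varphi\in H_{0}^{1}(\Omega)$, exactly as in the computation of Corollary~\ref{observatia5_omega_domega_DEM_beta}; equivalently $\triangle(div\,\textbf{u})=0$ in $\mathcal{D}'(\Omega)$. From here I would replay the biharmonic argument from the proof of Lemma~\ref{lema_0_teorema_izom_1}: with $g=0$ and the vanishing data the quantities $\psi_{div}$ and $\psi_{g}$ are zero, so the variational biharmonic equation (\ref{e5_1_cond_ecStokes_ecPoisson_variational_dem_grad_y2_H2_biarmonica_ini}) has zero right-hand side, its unique solution $\phi\in H_{0}^{2}(\Omega)$ (Proposition I.1.3 of \cite{CLBichir_bib_Gir_Rav1986}) is $0$, hence $div\,\textbf{u}=0$, i.e.\ (\ref{e1_9_ecStokes}) holds and (\ref{e5_1_cond_ecStokes_ecPoisson_variational_dem_th_I_5_1}) holds with $g=0$. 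Together with $p_{S}=p$ this turns (\ref{e1_11_ecStokes_ecPoisson_variational_dem}) into (\ref{e1_11_ecStokes_ecPoisson_variational_dem_th_I_5_1}), so $(\textbf{u},p)$ solves the Stokes problem; the remaining components then exist and are unique by Theorem~\ref{teorema_izom_1}.

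The main obstacle I anticipate is the converse, and within it the passage from $\triangle(div\,\textbf{u})=0$ to $div\,\textbf{u}=0$: this requires showing that the auxiliary quantity $y=div\,\textbf{u}$ has exactly the regularity ($y\in T_{PD}^{-1}(D_{0}\cap H_{0}^{2}(\Omega))$, equivalently the existence of $\phi\in H_{0}^{2}(\Omega)$ with $(\phi,y)\in\mathcal{W}$) that is needed to run the biharmonic uniqueness argument of Lemma~\ref{lema_0_teorema_izom_1}, together with a careful reconstruction of the components $\hat p,q,\hat q,r,\textbf{t},p_{S}$ from the reduced equations and the auxiliary elliptic operators $T_{PD}$, $B_{1}$ and $\pi$. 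The density results of Section~\ref{sectiunea_ecuatii_remarks} and the relation $\triangle(div\,\textbf{u})=0$ should supply this, but it has to be verified carefully.
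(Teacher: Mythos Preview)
Your proposal is correct and follows essentially the same approach as the paper: the paper's own proof is a one-line citation of Corollary~\ref{corolarul5_teorema_izom_1_cor10_div0} together with ``the proof of Lemma~\ref{lema_0_teorema_izom_1} using the density,'' and you have unpacked precisely that route---the forward direction via the extended system and Remark~\ref{observatia_EC_EXACT_tau}/Corollary~\ref{observatia_ec_Poisson_grad_p}, the converse via the biharmonic uniqueness argument of Lemma~\ref{lema_0_teorema_izom_1}. The obstacle you flag (the regularity $y\in T_{PD}^{-1}(D_{0}\cap H_{0}^{2}(\Omega))$ needed to invoke the biharmonic step) is exactly what the paper handles by the phrase ``using the density'': the argument is carried out first on $\widehat{\Gamma}_{0}$, where that regularity holds by construction, and then extended to $\widehat{\Gamma}$ via Lemma~\ref{well_posed_lema_deschisa}/Theorem~\ref{teorema_izom_1}, rather than by verifying the regularity directly for an arbitrary solution of the reduced problem.
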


\begin{proof}
The proof follows from Corollary
\ref{corolarul5_teorema_izom_1_cor10_div0} and the proof of Lemma
\ref{lema_0_teorema_izom_1} using the density.

\qquad
\end{proof}

\section{Construction of another formulation of the isomorphism for the exact problem}
\label{sectiunea_alt_izomorfism}

In the sequel, we write the problem
(\ref{mathcal_P2_e1_6_ecStokes_ecPoisson_variational_dem_q_compl_p_dem11_cont_alt_izom})
- (\ref{e1_11_ecStokes_ecPoisson_variational_dem_grad_miu_ec_TAU})
in the form of equation
(\ref{well_posed_e1_11_ecStokes_ecPoisson_variational_dem_q_compl4_ec_mathcal_X_op_E}).

Let us consider the generic form of a variational equation: find
$v$ $\in$ $E$ such that
\begin{eqnarray}
         a(v,\lambda)=\langle \psi,\lambda \rangle , \
              \forall \, \lambda  \, \in \, E \,
         \label{e1_ecStokes_ecPoisson_variational_scriere_COMPACTA_exact_generic}
\end{eqnarray}
\cite{CLBichir_bib_Atkinson_Han2009,
CLBichir_bib_Bochev_GunzburgerLSFEM2009,
CLBichir_bib_Brenner_Scott2008, CLBichir_bib_Ciarlet2002,
CLBichir_bib_Dautray_Lions_vol6_1988, CLBichir_bib_A_Ern2005,
CLBichir_bib_Ili1980, CLBichir_bib_Quarteroni_Valli2008,
CLBichir_bib_Temam1979, CLBichir_bib_E_Zeidler_NFA_IIA}. $E$ is a
Hilbert space, $\mathcal{E}$ is a dense subset in $E$.
$a(\cdot,\cdot)$ is a continuous, (strongly) coercive bilinear
form on $E \times E$ and $\langle \psi,\cdot \rangle$ is a linear
continuous functional on $E$. $T \in L(E',E)$, $T$ is the solution
operator defined by: given $\psi$ $\in$ $E'$, $v=T\psi$ if and
only if $v$ $\in$ $E$ is the unique solution of
(\ref{e1_ecStokes_ecPoisson_variational_scriere_COMPACTA_exact_generic})
(it exists and it is unique according to Lax - Milgram lemma). $T$
is an isomorphism from $E'$ onto $E$.

Let us associate a set $\mathcal{M}_{i}$ $=$ $\{ i$,
$(\ref{e1_ecStokes_ecPoisson_variational_scriere_COMPACTA_exact_generic})$,
$E$, $E'$, $\mathcal{E}_{i}$, $v$, $\psi$, $T \}$ to equation
(\ref{e1_ecStokes_ecPoisson_variational_scriere_COMPACTA_exact_generic}),
where the first element is an index $i$ to be fixed for an
particular equation, the second element is the number of the
equation. Let us retain the following particular cases
$\mathcal{M}_{1},\ldots,\mathcal{M}_{5}$ of $\mathcal{M}_{i}$ (In
all the cases, the bilinear form $a(\cdot,\cdot)$ is an inner
product):

$\mathcal{M}_{1}$ $=$ $\{ 1$,
$(\ref{e1_ecStokes_ecPoisson_variational_a_BIS})$,
$\textbf{H}_{0}^{1}(\Omega)$, $\textbf{H}^{-1}(\Omega)$,
$\mathcal{E}_{1}$, $\textbf{u}$, $\textbf{f}$, $T_{VPD} \}$ with
\begin{equation}
\label{e1_ecStokes_ecPoisson_variational_a_BIS}
   (grad \, \textbf{u},grad \, \textbf{w})
      =\langle \textbf{f},\textbf{w} \rangle, \
              \forall \, \textbf{w}  \, \in \, \textbf{H}_{0}^{1}(\Omega) \, .
\end{equation}

$\mathcal{M}_{2}$ $=$ $\{ 2$,
$(\ref{e1_ecStokes_ecPoisson_variational_a_BIS_scalar})$,
$H_{0}^{1}(\Omega)$, $H^{-1}(\Omega)$, $\mathcal{E}_{2}$, $q$,
$\psi$, $T_{PD} \}$ with
\begin{equation}
\label{e1_ecStokes_ecPoisson_variational_a_BIS_scalar}
   (grad \, q,grad \, \lambda)
      =\langle \psi,\lambda \rangle, \
              \forall \, \lambda  \, \in \, H_{0}^{1}(\Omega) \, .
\end{equation}

$\mathcal{M}_{3}$ $=$ $\{ 3$,
$(\ref{e1_ecStokes_ecHelmholtz_variational_d_dual})$,
$H^{1}(\Omega)$, $H^{1}(\Omega)'$, $\mathcal{E}_{3}$, $\phi$,
$\psi_{1}$, $B_{1} \}$ with
\begin{equation}
\label{e1_ecStokes_ecHelmholtz_variational_d_dual}
   (grad \, \phi,grad \, \mu) + (\phi,\mu)
      =\langle \psi_{1},\mu \rangle, \
              \forall \, \mu  \, \in \, H^{1}(\Omega) \, .
\end{equation}

$\mathcal{M}_{4}$ $=$ $\{ 4$,
$(\ref{e1_ecStokes_ecHelmholtz_variational_d_dual})$,
$H^{1}(\Omega) \cap L_{0}^{2}(\Omega)$, $B_{1}^{-1}(H^{1}(\Omega)
\cap L_{0}^{2}(\Omega))$, $\mathcal{E}_{4}$, $\phi$, $\psi_{1}$,
$B_{1} \}$.

Let $\pi$ be the identity operator on $\textbf{L}^{2}(\Omega)$. We
have: $\textbf{z}$, $\textbf{f}_{\ast}$ $\in$
$\textbf{L}^{2}(\Omega)$, $\pi \textbf{f}_{\ast}=\textbf{z}$ if
and only if
\begin{equation}
\label{prel_op_hat_pi_h_DEF2H_1_EC_M9}
   (\textbf{z}, \textbf{w})
      =(\textbf{f}_{\ast}, \textbf{w}), \
              \forall \, \textbf{w}  \, \in \, \textbf{L}^{2}(\Omega) \, ,
\end{equation}
$\mathcal{M}_{5}$ $=$ $\{ 5$,
$(\ref{prel_op_hat_pi_h_DEF2H_1_EC_M9})$,
$\textbf{L}^{2}(\Omega)$, $\textbf{L}^{2}(\Omega)$,
$\mathcal{E}_{5}$, $\textbf{z}$, $\textbf{f}_{\ast}$, $\pi \}$.

Let us define the following linear and continuous mappings:

\begin{eqnarray}
   && \mathcal{B}(y,U) = \left[\begin{array}{l}
        p - (q+\hat{p}) \\
        p_{S} - p \\
        div \, \textbf{u}-y
   \end{array}\right] \, ,
         \nonumber \\ % \label{e1_6_ecStokes_ecPoisson_variational_dem_q_compl3_fc} \\
   && G_{\textbf{u}}:L_{0}^{2}(\Omega)
      \rightarrow \textbf{H}^{-1}(\Omega) \, , \
         \nonumber \\ % \label{e1_11_ecStokes_ecPoisson_variational_dem_q_compl_fc_Gu_NOU_Stokes} \\
   && \langle G_{\textbf{u}}(p_{S}),\textbf{w} \rangle = (p_{S},div \, \textbf{w}), \
              \forall \, \textbf{w}  \, \in \, \textbf{H}_{0}^{1}(\Omega) \, ,
         \nonumber \\ % \label{e1_11_ecStokes_ecPoisson_variational_dem_q_compl_fc} \\
   && G_{q}:\textbf{L}^{2}(\Omega)^{2}
      \rightarrow H^{-1}(\Omega) \, , \
      \langle G_{q}(\textbf{t},\textbf{z}),\bar{\lambda} \rangle = \alpha \langle \widetilde{S}_{\textbf{z}},\bar{\lambda} \rangle - \langle \widetilde{S}_{\textbf{t}},\bar{\lambda} \rangle, \
              \forall \, \bar{\lambda}  \, \in \, H_{0}^{1}(\Omega) \, ,
         \nonumber \\ % \label{e1_6_ecStokes_ecPoisson_variational_dem_q_compl3_fc} \\
   && G_{\hat{q}}:H^{1}(\Omega) \cap L_{0}^{2}(\Omega)
      \rightarrow H^{-1}(\Omega) \, , \
         \nonumber \\ % \label{e5_1_cond_ecStokes_ecPoisson_variational_dem_hat_y_compl_fc} \\
   && \qquad \langle G_{\hat{q}}(p),\hat{\lambda} \rangle
         = - (grad \, p,grad \, \hat{\lambda}), \
            \forall \, \hat{\lambda}  \, \in \, H_{0}^{1}(\Omega) \, ,
         \nonumber \\ % \label{e1_6_ecStokes_ecPoisson_variational_dem_q_compl_fc} \\
   && G_{\textbf{z}}:H^{1}(\Omega) \cap L_{0}^{2}(\Omega)
      \rightarrow \textbf{L}^{2}(\Omega) \, , \
      G_{\textbf{z}}(p) = - grad \, p  \, ,
         \nonumber \\ % \label{e1_11_ecStokes_ecPoisson_variational_dem_q_compl_fc} \\
   && G_{p}: (H^{1}(\Omega) \cap L_{0}^{2}(\Omega)) \times H^{1}(\Omega) \times H_{0}^{1}(\Omega) \times
         \textbf{L}^{2}(\Omega)^{2}
         \nonumber \\ % \label{e5_1_cond_ecStokes_ecPoisson_variational_dem_hat_y_compl_fc} \\
   && \qquad \qquad \times H^{1}(\Omega)
      \rightarrow B_{1}^{-1}(H^{1}(\Omega) \cap L_{0}^{2}(\Omega)) \, , \
         \nonumber \\
   && \qquad \langle G_{p}(p,\hat{p},\hat{q},\textbf{z},\textbf{t},r),\varphi \rangle
         = -(\textbf{z} - \textbf{t} + grad \, (\hat{p}-\hat{q}+r),grad \, \varphi)
         \nonumber \\ % \label{e1_11_ecStokes_ecPoisson_variational_dem_q_compl_fc} \\
   && \qquad \qquad +((p,\varphi))_{1}, \
              \forall \, \varphi  \, \in \, H^{1}(\Omega) \, ,
         \nonumber \\
   && G_{r}:H_{0}^{1}(\Omega)^{2} \times
         H^{1}(\Omega)
      \rightarrow H^{1}(\Omega)' \, , \
         \nonumber \\ % \label{e5_1_cond_ecStokes_ecPoisson_variational_dem_hat_y_compl_fc} \\
   && \qquad \langle G_{r}(q,\hat{q},r),\varphi \rangle
         =  - (grad \, (q+\hat{q}-r),grad \, \tilde{\varphi}), \
              \forall \, \varphi  \, \in \, H^{1}(\Omega) \, ,
         \nonumber \\
   && G_{\textbf{t}}:\textbf{H}_{0}^{1}(\Omega)
      \rightarrow \textbf{L}^{2}(\Omega) \, , \
      G_{\textbf{t}}(\textbf{u}) = 0 \, ,
         \nonumber  % \label{e1_11_ecStokes_ecPoisson_variational_dem_q_compl_fc} \\
\end{eqnarray}
where the index $u$ indicates, in $G_{\textbf{u}}$, that this
function is used to construct the equation obtained by the
perturbation of $\textbf{u}$ and so on.

Problem
(\ref{mathcal_P2_e1_6_ecStokes_ecPoisson_variational_dem_q_compl_p_dem11_cont_alt_izom})
- (\ref{e1_11_ecStokes_ecPoisson_variational_dem_grad_miu_ec_TAU})
becomes: find $(x,U)$ $\in$ $\widehat{\Gamma}$ such that
\begin{eqnarray}
   && p - (q+\hat{p}) = \hat{g} \, ,
         \label{mathcal_P2_e1_6_ecStokes_ecPoisson_variational_dem_q_compl_p_dem11_cont_alt_izom_sist} \\
   && p_{S} - p = \ell_{S} \, ,
         \label{e5_1_cond_ecStokes_ecPoisson_variational_dem_div_corelatie_exact_sist} \\
   && div \, \textbf{u}-y = g \, ,
         \label{e5_1_cond_ecStokes_ecPoisson_variational_dem_div_sist} \\
   && \textbf{u} - T_{VPD}G_{\textbf{u}}(p_{S})
         = T_{VPD}\textbf{f} \, ,
         \label{e1_11_ecStokes_ecPoisson_variational_dem_q_compl4} \\
   && q - T_{PD}G_{q}(\textbf{t},\textbf{z})
         = T_{PD}\psi \, ,
         \label{e1_6_ecStokes_ecPoisson_variational_dem_q_compl4} \\
   && \hat{q} - T_{PD}G_{\hat{q}}(p)
         = T_{PD}\hat{\psi} \, ,
         \label{e1_6_ecStokes_ecPoisson_variational_dem_q_compl_hat_q4} \\
   && \textbf{z} - \pi G_{\textbf{z}}(p)
         = \pi \textbf{f}_{\triangle} \, ,
         \label{dem10_cont_alt_izom_instead2_delta_cont} \\
   && p - B_{1}G_{p}(p,\hat{p},\hat{q},\textbf{z},\textbf{t},r)
         = B_{1}\varrho \, ,
         \label{e1_11_ecStokes_ecPoisson_variational_dem_grad_miu_B0} \\
   && r - B_{1}G_{r}(q,\hat{q},r)
         = B_{1}\hat{\varrho} \, ,
         \label{e1_11_ecStokes_ecPoisson_variational_dem_grad_miu_inlocuiri_ec_op_sist} \\
   && \textbf{t} - \pi G_{\textbf{t}}(\textbf{u})
            =\pi \hat{\textbf{f}} \, ,
          \label{e1_11_ecStokes_ecPoisson_variational_dem_grad_miu_ec_TAU_sist}
\end{eqnarray}

\begin{lem}
\label{lema_teorema_izom_2} Problem
(\ref{mathcal_P2_e1_6_ecStokes_ecPoisson_variational_dem_q_compl_p_dem11_cont_alt_izom_sist})
-
(\ref{e1_11_ecStokes_ecPoisson_variational_dem_grad_miu_ec_TAU_sist})
has the form of equation
(\ref{well_posed_e1_11_ecStokes_ecPoisson_variational_dem_q_compl4_ec_mathcal_X_op_E}).
\end{lem}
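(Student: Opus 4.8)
The plan is to verify, component by component, that the ten equations (\ref{mathcal_P2_e1_6_ecStokes_ecPoisson_variational_dem_q_compl_p_dem11_cont_alt_izom_sist})--(\ref{e1_11_ecStokes_ecPoisson_variational_dem_grad_miu_ec_TAU_sist}) split into the two blocks appearing in (\ref{well_posed_e1_11_ecStokes_ecPoisson_variational_dem_q_compl4_ec_mathcal_X_op_E}), under the identifications of $\widehat{\Gamma}$, $\widehat{\Sigma}$, $x$, $U$, $\mathcal{B}$, $\mathcal{T}$, $\mathcal{G}$, $\widehat{\mathcal{F}}$ fixed in Section \ref{sectiunea_ecuatii_remarks_2} and with the solution operators $T_{VPD}$, $T_{PD}$, $\pi$, $B_{1}$ introduced through the sets $\mathcal{M}_{1},\dots,\mathcal{M}_{5}$ above. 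First I would observe that the first three equations (\ref{mathcal_P2_e1_6_ecStokes_ecPoisson_variational_dem_q_compl_p_dem11_cont_alt_izom_sist})--(\ref{e5_1_cond_ecStokes_ecPoisson_variational_dem_div_sist}) are, by the very definitions of $\mathcal{B}$ and of $\mathcal{F}_{\mathcal{B}}=(\hat{g},\ell_{S},g)$, exactly $\mathcal{B}(x,U)=\mathcal{F}_{\mathcal{B}}=\mathcal{J}_{\mathcal{Z}}\mathcal{F}_{\mathcal{B}}$, since $\mathcal{J}_{\mathcal{Z}}$ is the identity on $\mathcal{Z}$. Here the only point to record is that the variational equation (\ref{e5_1_cond_ecStokes_ecPoisson_variational_dem_div}) is equivalent to the strong equality $div\,\textbf{u}-y=g$ in $L_{0}^{2}(\Omega)$, because $div\,\textbf{u}-y$ and $g$ all lie in the Hilbert space $L_{0}^{2}(\Omega)$ (note $\int_{\Omega}div\,\textbf{u}\,dx=0$ for $\textbf{u}\in\textbf{H}_{0}^{1}(\Omega)$).

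Next I would treat the remaining seven equations one at a time, each time rewriting the corresponding variational equation of the extended system (\ref{mathcal_P2_e1_6_ecStokes_ecPoisson_variational_dem_q_compl_p_dem11_cont_alt_izom})--(\ref{e1_11_ecStokes_ecPoisson_variational_dem_grad_miu_ec_TAU}) so that the left-hand side is the coercive (inner-product) form associated with the relevant $\mathcal{M}_{i}$ and the right-hand side is of the form $\mathcal{F}_{j}+\mathcal{G}_{j}(x,U)$ in the appropriate dual, and then applying the solution operator $\mathcal{T}_{j}$. For instance, the momentum equation (\ref{e1_11_ecStokes_ecPoisson_variational_dem}) reads $(grad\,\textbf{u},grad\,\textbf{w})=\langle\textbf{f},\textbf{w}\rangle+(p_{S},div\,\textbf{w})=\langle\textbf{f}+G_{\textbf{u}}(p_{S}),\textbf{w}\rangle$, hence by linearity of $T_{VPD}$ one gets (\ref{e1_11_ecStokes_ecPoisson_variational_dem_q_compl4}); equations (\ref{mathcal_P2_e1_6_ecStokes_ecPoisson_variational_dem_q_compl_dem11_cont_alt_izom}) and (\ref{mathcal_P2_e1_6_ecStokes_ecPoisson_variational_dem_q_compl_hat_q_dem11_cont_alt_izom}) are handled in the same manner with $T_{PD}$, giving (\ref{e1_6_ecStokes_ecPoisson_variational_dem_q_compl4}) and (\ref{e1_6_ecStokes_ecPoisson_variational_dem_q_compl_hat_q4}); equation (\ref{mathcal_P2_e1_6_ecStokes_ecPoisson_variational_dem_q_compl_dem11_cont_alt_izom_instead2_delta_hat_delta_aprox_ECH_var_modif_cont}) (used in place of (\ref{mathcal_P2_e1_6_ecStokes_ecPoisson_variational_dem_q_compl_dem11_cont_alt_izom_instead2_delta_hat_delta_aprox}) via Remark \ref{observatia_EC_EXACT_ecuatii_r}) and (\ref{e1_11_ecStokes_ecPoisson_variational_dem_grad_miu_ec_TAU}) reduce to (\ref{dem10_cont_alt_izom_instead2_delta_cont}) and (\ref{e1_11_ecStokes_ecPoisson_variational_dem_grad_miu_ec_TAU_sist}) with the identity $\pi$. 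For the two equations in which the natural left-hand operator is not coercive, namely (\ref{e1_11_ecStokes_ecPoisson_variational_dem_grad_miu}) and (\ref{e1_11_ecStokes_ecPoisson_variational_dem_grad_miu_r}), the device is to add $((p,\varphi))_{1}$ (resp. $((r,\varphi))_{1}$) to both sides: this places the $H^{1}$-inner-product form on the left and absorbs the extra term into $G_{p}$ (resp. $G_{r}$), so that applying $B_{1}$ yields (\ref{e1_11_ecStokes_ecPoisson_variational_dem_grad_miu_B0}) (resp. (\ref{e1_11_ecStokes_ecPoisson_variational_dem_grad_miu_inlocuiri_ec_op_sist})); conversely, expanding $p=B_{1}(\varrho+G_{p}(\ldots))$ the cancelling $((p,\varphi))_{1}$ terms recover (\ref{e1_11_ecStokes_ecPoisson_variational_dem_grad_miu}), and similarly for $r$.

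Collecting the seven resulting identities into a single vector equation and invoking the definitions $\mathcal{T}=(T_{VPD},T_{PD},T_{PD},\pi,B_{1},B_{1},\pi)$, $\mathcal{G}(x,U)=(G_{\textbf{u}}(p_{S}),G_{q}(\textbf{t},\textbf{z}),G_{\hat{q}}(p),G_{\textbf{z}}(p),G_{p}(p,\hat{p},\hat{q},\textbf{z},\textbf{t},r),G_{r}(q,\hat{q},r),G_{\textbf{t}}(\textbf{u}))$ and $\mathcal{F}=(\textbf{f},\psi,\hat{\psi},\textbf{f}_{\triangle},\varrho,\hat{\varrho},\hat{\textbf{f}})$, this block becomes exactly $U-\mathcal{T}\mathcal{G}(x,U)=\mathcal{T}\mathcal{F}$. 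Together with the first block, and with $\widehat{\mathcal{A}}(x,U)=[\mathcal{B}(x,U),\,U-\mathcal{T}\mathcal{G}(x,U)]^{T}$ and $\widehat{\mathcal{T}}\widehat{\mathcal{F}}=[\mathcal{J}_{\mathcal{Z}}\mathcal{F}_{\mathcal{B}},\,\mathcal{T}\mathcal{F}]^{T}$, this is precisely (\ref{well_posed_e1_11_ecStokes_ecPoisson_variational_dem_q_compl4_ec_mathcal_X_op_E}), which is the assertion. I expect the only genuine effort to be bookkeeping: checking that each $\mathcal{G}_{j}$ indeed takes values in the dual space on which the corresponding $\mathcal{T}_{j}$ is defined — in particular that $G_{p}$ maps into $B_{1}^{-1}(H^{1}(\Omega)\cap L_{0}^{2}(\Omega))$, so that $B_{1}G_{p}$ is well defined with range in $H^{1}(\Omega)\cap L_{0}^{2}(\Omega)$, matching the $p$-component of $\mathcal{X}$ — and that the continuity recorded in the definitions of $\mathcal{B}$, $\mathcal{G}$, $\mathcal{T}$ is exactly what places $\widehat{\mathcal{A}}=[\mathcal{B},\mathcal{I}_{\mathcal{X}}-\mathcal{T}\mathcal{G}]^{T}$ in $L(\widehat{\Gamma},\widehat{\Delta})$ as demanded by the framework of Section \ref{well_posed_sectiunea_general_theorem_on_the_well_posedness}.
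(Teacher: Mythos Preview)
Your proposal is correct and follows essentially the same approach as the paper: the lemma is a ``by construction'' statement, and both you and the paper verify it by invoking the definitions of $\mathcal{Q}$, $\mathcal{X}$, $\mathcal{Y}$, $\mathcal{Z}$, $\mathcal{B}$, $\mathcal{G}$, $\mathcal{T}$ already laid out in Section~\ref{sectiunea_ecuatii_remarks_2} and Section~\ref{sectiunea_alt_izomorfism}. The paper's proof is a two-line reference to those definitions, whereas you spell out the component-by-component verification (including the $((p,\varphi))_{1}$ trick that is implicit in the definition of $G_{p}$); this extra detail is not required but is entirely sound.
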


\begin{proof} Consider the spaces
$\mathcal{Q}$, $\mathcal{X}$, $\mathcal{Y}$, $\mathcal{Z}$ defined
in Section \ref{sectiunea_ecuatii_remarks}. The operator
$\widehat{\mathcal{A}}$ defined by the left hand side of
(\ref{mathcal_P2_e1_6_ecStokes_ecPoisson_variational_dem_q_compl_p_dem11_cont_alt_izom_sist})
-
(\ref{e1_11_ecStokes_ecPoisson_variational_dem_grad_miu_ec_TAU_sist})
has the form of the operator $\widehat{\mathcal{A}}$ defined in
Section
\ref{well_posed_sectiunea_general_theorem_on_the_well_posedness}.

\qquad
\end{proof}

\begin{thm}
\label{teorema_izom_2} Assume the hypotheses of Theorem
\ref{teorema_izom_1}. Then, to solve
(\ref{mathcal_P2_e1_6_ecStokes_ecPoisson_variational_dem_q_compl_p_dem11_cont_alt_izom})
- (\ref{e1_11_ecStokes_ecPoisson_variational_dem_grad_miu_ec_TAU})
is equivalent to solve
(\ref{mathcal_P2_e1_6_ecStokes_ecPoisson_variational_dem_q_compl_p_dem11_cont_alt_izom_sist})
-
(\ref{e1_11_ecStokes_ecPoisson_variational_dem_grad_miu_ec_TAU_sist}).
In other words,
(\ref{well_posed_e1_11_ecStokes_ecPoisson_variational_dem_q_compl4_ec_mathcal_X_op_E_fi})
is equivalent to
(\ref{well_posed_e1_11_ecStokes_ecPoisson_variational_dem_q_compl4_ec_mathcal_X_op_E})
in the particular case considered.
\end{thm}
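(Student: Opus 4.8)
The plan is to establish the equivalence \emph{line by line}: for each of the ten equations I would show that the variational equation occupying a given position in (\ref{mathcal_P2_e1_6_ecStokes_ecPoisson_variational_dem_q_compl_p_dem11_cont_alt_izom})--(\ref{e1_11_ecStokes_ecPoisson_variational_dem_grad_miu_ec_TAU}) is equivalent to the operator equation in the same position in (\ref{mathcal_P2_e1_6_ecStokes_ecPoisson_variational_dem_q_compl_p_dem11_cont_alt_izom_sist})--(\ref{e1_11_ecStokes_ecPoisson_variational_dem_grad_miu_ec_TAU_sist}). The only preliminary move is to replace (\ref{mathcal_P2_e1_6_ecStokes_ecPoisson_variational_dem_q_compl_dem11_cont_alt_izom_instead2_delta_hat_delta_aprox}) by its equivalent reformulation over $\textbf{L}^{2}(\Omega)$, which is legitimate by Remark \ref{observatia_EC_EXACT_ecuatii_r} and is the form matched by $\mathcal{M}_{5}$. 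By Lemma \ref{lema_teorema_izom_2} the operator system is exactly (\ref{well_posed_e1_11_ecStokes_ecPoisson_variational_dem_q_compl4_ec_mathcal_X_op_E}), while the variational system is (\ref{well_posed_e1_11_ecStokes_ecPoisson_variational_dem_q_compl4_ec_mathcal_X_op_E_fi}) by the way $\widehat{\Phi}$ was introduced in Section \ref{sectiunea_forme_echivalente_ale_ecuatiilor_extended}; hence the componentwise equivalence yields the stated assertion, with the same pair $(x,U)$ solving both.

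First I would dispose of the three ``$\mathcal{B}$-equations''. Equations (\ref{mathcal_P2_e1_6_ecStokes_ecPoisson_variational_dem_q_compl_p_dem11_cont_alt_izom}) and (\ref{e5_1_cond_ecStokes_ecPoisson_variational_dem_div_corelatie_exact}) are already pointwise identities in $H^{1}(\Omega)$ and in $H^{1}(\Omega)\cap L_{0}^{2}(\Omega)$, so they coincide verbatim with the first two components of $\mathcal{B}(x,U)=\mathcal{F}_{\mathcal{B}}$. In (\ref{e5_1_cond_ecStokes_ecPoisson_variational_dem_div}) the quantities $div \, \textbf{u}-y$ and $g$ lie in $L_{0}^{2}(\Omega)$ and the pairing is the $L_{0}^{2}(\Omega)$ inner product, so the Riesz identification $L_{0}^{2}(\Omega)'\simeq L_{0}^{2}(\Omega)$ turns it into $div \, \textbf{u}-y=g$, the third component of $\mathcal{B}(x,U)=\mathcal{F}_{\mathcal{B}}$. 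Thus the three equations together are equivalent to $\mathcal{B}(x,U)=\mathcal{J}_{\mathcal{Z}}\mathcal{F}_{\mathcal{B}}$.

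Next I would treat the remaining seven equations uniformly. In each of them I move to the right-hand side all the terms that involve unknowns other than the ``principal'' one of that line; by the very definitions of $G_{\textbf{u}},G_{q},G_{\hat{q}},G_{\textbf{z}},G_{p},G_{r},G_{\textbf{t}}$ the collected terms equal $\langle G_{\bullet}(\cdots),\lambda\rangle$ tested against the appropriate space, while the bilinear form left on the left is precisely the inner product of the corresponding $\mathcal{M}_{i}$. For example (\ref{e1_11_ecStokes_ecPoisson_variational_dem}) becomes $(grad \, \textbf{u},grad \, \textbf{w})=\langle\textbf{f}+G_{\textbf{u}}(p_{S}),\textbf{w}\rangle$ for all $\textbf{w}\in\textbf{H}_{0}^{1}(\Omega)$, so by the defining property of $T_{VPD}$ ($\mathcal{M}_{1}$) and its linearity $\textbf{u}=T_{VPD}\textbf{f}+T_{VPD}G_{\textbf{u}}(p_{S})$, which is (\ref{e1_11_ecStokes_ecPoisson_variational_dem_q_compl4}); the identical manipulation, using $T_{PD}$ ($\mathcal{M}_{2}$), $B_{1}$ ($\mathcal{M}_{3},\mathcal{M}_{4}$) and the identity $\pi$ ($\mathcal{M}_{5}$), produces (\ref{e1_6_ecStokes_ecPoisson_variational_dem_q_compl4})--(\ref{e1_11_ecStokes_ecPoisson_variational_dem_grad_miu_ec_TAU_sist}). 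For (\ref{e1_11_ecStokes_ecPoisson_variational_dem_grad_miu}) and (\ref{e1_11_ecStokes_ecPoisson_variational_dem_grad_miu_r}) one uses that adding $((\cdot,\cdot))_{1}$ to both sides is harmless and that $G_{p}$, $G_{r}$ were defined with exactly the $H^{1}$-inner-product correction needed, and one must read $B_{1}$ as the $\mathcal{M}_{4}$ operator for the $p$-line (target space $H^{1}(\Omega)\cap L_{0}^{2}(\Omega)$) and as the $\mathcal{M}_{3}$ operator for the $r$-line (target space $H^{1}(\Omega)$), which is consistent with the ranges of $G_{p}$, $G_{r}$ and with the components of $\widehat{\Sigma}$. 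Each step being a genuine equivalence leaving $(x,U)$ untouched, a pair solves one system iff it solves the other.

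The bulk of the work is bookkeeping; the one point deserving care — and the only genuine obstacle — is to verify that every $G_{\bullet}$ is set up so that the rearrangement ``closes'' in the correct dual or subspace, in particular the coexistence of the two incarnations $\mathcal{M}_{3}$, $\mathcal{M}_{4}$ of $B_{1}$, the replacement of (\ref{mathcal_P2_e1_6_ecStokes_ecPoisson_variational_dem_q_compl_dem11_cont_alt_izom_instead2_delta_hat_delta_aprox}) by its $\textbf{L}^{2}(\Omega)$-form, and the Riesz identification used in the $\mathcal{B}$-block, so that no spurious compatibility condition is introduced. Once this is checked, combining the line-by-line equivalences with Lemma \ref{lema_teorema_izom_2} and Theorem \ref{teorema_izom_1} gives that (\ref{well_posed_e1_11_ecStokes_ecPoisson_variational_dem_q_compl4_ec_mathcal_X_op_E_fi}) and (\ref{well_posed_e1_11_ecStokes_ecPoisson_variational_dem_q_compl4_ec_mathcal_X_op_E}) are equivalent in the case at hand.
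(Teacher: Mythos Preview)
Your proposal is correct and is precisely the natural elaboration of what the paper leaves implicit: the paper states Theorem \ref{teorema_izom_2} without an explicit proof, treating the equivalence as immediate from the definitions of the solution operators $T_{VPD}$, $T_{PD}$, $B_{1}$, $\pi$ and of the $G_{\bullet}$. Your line-by-line verification, together with the invocation of Remark \ref{observatia_EC_EXACT_ecuatii_r} for the $\textbf{z}$-equation and the care you take over the two roles of $B_{1}$, is exactly the content that the paper suppresses.
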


\begin{thm}
\label{teorema_izom_1_T0} $\mathcal{T}$ is an isomorphism of
$\mathcal{Y}$ onto $\mathcal{X}$. $\mathcal{T}^{-1}$ $=$
$(T_{VPD}^{-1}$, $T_{PD}^{-1}$, $T_{PD}^{-1}$, $\pi^{-1}$,
$B_{1}^{-1}$, $B_{1}^{-1}$, $\pi^{-1})$, $\mathcal{T}^{-1}U$ $=$
$(T_{VPD}^{-1}\textbf{u}$, $T_{PD}^{-1}q$, $T_{PD}^{-1}\hat{q}$,
$\pi^{-1}\textbf{z}$, $B_{1}^{-1}p$, $B_{1}^{-1}r$,
$\pi^{-1}\textbf{t})$.

\end{thm}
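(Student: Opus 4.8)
The plan is to prove the statement by exhibiting $\mathcal{T}$ as a componentwise product of seven isomorphisms and then reading off the inverse from the componentwise inverses. First I would recall that, as noted in the generic discussion of $(\ref{e1_ecStokes_ecPoisson_variational_scriere_COMPACTA_exact_generic})$, each solution operator $T$ attached to a set $\mathcal{M}_{i}$ is an isomorphism: by the Lax--Milgram lemma the bilinear forms of $(\ref{e1_ecStokes_ecPoisson_variational_a_BIS})$, $(\ref{e1_ecStokes_ecPoisson_variational_a_BIS_scalar})$ and $(\ref{e1_ecStokes_ecHelmholtz_variational_d_dual})$ are continuous and coercive on the respective Hilbert spaces (Poincar\'e's inequality for the first two, and the form $(grad\,\phi,grad\,\mu)+(\phi,\mu)$ being precisely the inner product of $H^{1}(\Omega)$ for the third), so $T_{VPD}$ is an isomorphism of $\textbf{H}^{-1}(\Omega)$ onto $\textbf{H}_{0}^{1}(\Omega)$, $T_{PD}$ of $H^{-1}(\Omega)$ onto $H_{0}^{1}(\Omega)$, and $B_{1}$ of $(H^{1}(\Omega))'$ onto $H^{1}(\Omega)$; moreover $\pi$, being the identity on $\textbf{L}^{2}(\Omega)$, is an automorphism with $\pi^{-1}=\pi$. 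In each case the inverse is the operator that recovers the data from the solution, i.e. $T_{VPD}^{-1}$, $T_{PD}^{-1}$, $B_{1}^{-1}$, $\pi^{-1}$.

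Next I would deal with the fifth component, the only one involving a proper restriction, in which $B_{1}$ is used with domain $B_{1}^{-1}(H^{1}(\Omega)\cap L_{0}^{2}(\Omega))$ and codomain $H^{1}(\Omega)\cap L_{0}^{2}(\Omega)$ (the set $\mathcal{M}_{4}$). Since $H^{1}(\Omega)\cap L_{0}^{2}(\Omega)$ is the kernel of the continuous functional $p\mapsto\int_{\Omega}p\,dx$ on $H^{1}(\Omega)$, it is a closed subspace, hence a Banach (indeed Hilbert) space; and since $B_{1}$ is an isomorphism, $B_{1}^{-1}(H^{1}(\Omega)\cap L_{0}^{2}(\Omega))$ is a closed subspace of $(H^{1}(\Omega))'$. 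The restriction of $B_{1}$ is then a continuous bijection between these two Banach spaces, so by the open mapping theorem its inverse stays continuous and the restriction is again an isomorphism. (Alternatively, one may apply Lax--Milgram directly to the restriction of the $H^{1}$ inner product to $(H^{1}(\Omega)\cap L_{0}^{2}(\Omega))\times(H^{1}(\Omega)\cap L_{0}^{2}(\Omega))$.)

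Finally I would invoke the elementary fact that a finite product of isomorphisms of Banach spaces is an isomorphism whose inverse is the tuple of inverses: with the product norm $\|\cdot\|_{(1)}$ of Section \ref{sectiunea_ecuatii_Stokes_1} (or, equivalently, the Hilbert product norm), the canonical projections are continuous and the norm estimates on the individual factors assemble into estimates for the whole map and its inverse. Applying this to the seven factors above shows that $\mathcal{T}=(T_{VPD},T_{PD},T_{PD},\pi,B_{1},B_{1},\pi)$ is an isomorphism of $\mathcal{Y}$ onto $\mathcal{X}$, that $\mathcal{T}^{-1}=(T_{VPD}^{-1},T_{PD}^{-1},T_{PD}^{-1},\pi^{-1},B_{1}^{-1},B_{1}^{-1},\pi^{-1})$, and that $\mathcal{T}^{-1}U=(T_{VPD}^{-1}\textbf{u},T_{PD}^{-1}q,T_{PD}^{-1}\hat{q},\pi^{-1}\textbf{z},B_{1}^{-1}p,B_{1}^{-1}r,\pi^{-1}\textbf{t})$. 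The whole argument is routine; the only spot where some attention is needed is this fifth component, i.e. checking that passing to the closed subspace $H^{1}(\Omega)\cap L_{0}^{2}(\Omega)$ and its $B_{1}$-preimage preserves the isomorphism property, which is exactly the open mapping argument above.
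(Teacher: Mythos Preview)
Your proof is correct and follows exactly the approach the paper implies: each component operator is noted in the generic discussion around $(\ref{e1_ecStokes_ecPoisson_variational_scriere_COMPACTA_exact_generic})$ to be an isomorphism via Lax--Milgram, and the product structure then gives the result. In fact the paper states Theorem~\ref{teorema_izom_1_T0} without any proof, treating it as immediate from the definitions of $\mathcal{M}_{1},\ldots,\mathcal{M}_{5}$; your write-up is more explicit, particularly in handling the fifth component where $B_{1}$ is restricted to $B_{1}^{-1}(H^{1}(\Omega)\cap L_{0}^{2}(\Omega))$, which the paper leaves entirely implicit.
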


\section{An abstract approximation. Construction of the isomorphism for the approximate problem}
\label{sectiunea_ecuatii_Stokes_problema_aproximativa}

\subsection{The approximate problem}
\label{sectiunea_ecuatii_Stokes_problema_aproximativa_general}

In the sequel, we write the approximate problem of the problem
(\ref{mathcal_P2_e1_6_ecStokes_ecPoisson_variational_dem_q_compl_p_dem11_cont_alt_izom})
- (\ref{e1_11_ecStokes_ecPoisson_variational_dem_grad_miu_ec_TAU})
in a general framework. We then formulate this approximate problem
as equation
(\ref{well_posed_e1_11_ecStokes_ecPoisson_variational_dem_q_compl4_ec_mathcal_X_h_E}).
Particular approximations can be obtained by finite element
method, finite differences method, finite volume method, spectral
methods, wavelets and so on. In Section
\ref{sectiunea_ecuatii_Stokes_problema_aproximativa_partea_2}, we
use finite element method.

We use the general approximation context formulated in
\cite{CLBichir_bib_Bochev_GunzburgerLSFEM2009,
CLBichir_bib_Gir_Rav1986, CLBichir_bib_Ka_Ak1986}. Let $X_{h}$ and
$Y_{h}$ be two closed subspaces of $H^{1}(\Omega)$. Following
\cite{CLBichir_bib_Ciarlet2002, CLBichir_bib_Ciarlet_Raviart1974,
CLBichir_bib_Gir_Rav1986}, we approximate $H^{1}(\Omega)$ and
$L^{2}(\Omega)$ with the same space.

In order to approximate
(\ref{mathcal_P2_e1_6_ecStokes_ecPoisson_variational_dem_q_compl_p_dem11_cont_alt_izom})
-
(\ref{e1_11_ecStokes_ecPoisson_variational_dem_grad_miu_ec_TAU}),
we introduce the spaces

$X_{0h}$ $=$ $X_{h} \cap H_{0}^{1}(\Omega)$ $=$ $\{ x_{h} \in
X_{h}; \ x_{h} = 0 \ \textrm{on} \ \partial \Omega \}$,

$M_{h}$ $=$ $Y_{h} \cap L_{0}^{2}(\Omega)$ $=$ $\{ y_{h} \in
Y_{h}; \, \int_{\Omega} y_{h} dx = 0 \}$,

$Y_{0h}$ $=$ $Y_{h} \cap H_{0}^{1}(\Omega)$ $=$ $\{ y_{h} \in
Y_{h}; \ y_{h} = 0 \ \textrm{on} \ \partial \Omega \}$,

$\textbf{X}_{0h}$ $=$ $\textbf{X}_{h} \cap
\textbf{H}_{0}^{1}(\Omega)$.

Let us consider the approximate equation: find $v_{h}$ $\in$
$E_{h}$ such that
\begin{eqnarray}
         a(v_{h},\lambda_{h})=\langle \psi,\lambda_{h} \rangle , \
              \forall \, \lambda_{h}  \, \in \, E_{h} \, ,
         \label{e1_ecStokes_ecPoisson_variational_scriere_COMPACTA_h_generic}
\end{eqnarray}
corresponding to
(\ref{e1_ecStokes_ecPoisson_variational_scriere_COMPACTA_exact_generic})
\cite{CLBichir_bib_Atkinson_Han2009,
CLBichir_bib_Bochev_GunzburgerLSFEM2009,
CLBichir_bib_Brenner_Scott2008, CLBichir_bib_Ciarlet2002,
CLBichir_bib_Dautray_Lions_vol6_1988, CLBichir_bib_A_Ern2005,
CLBichir_bib_Ili1980, CLBichir_bib_Quarteroni_Valli2008,
CLBichir_bib_Temam1979, CLBichir_bib_E_Zeidler_NFA_IIB}. $E_{h}$
is a closed subspace of $E$. The hypotheses of C\'ea's lemma are
satisfied for this problem. So
(\ref{e1_ecStokes_ecPoisson_variational_scriere_COMPACTA_h_generic})
has an unique solution $v_{h}$ $\in$ $E_{h}$ and, for the
solutions $v$, $v_{h}$ of
(\ref{e1_ecStokes_ecPoisson_variational_scriere_COMPACTA_exact_generic}),
(\ref{e1_ecStokes_ecPoisson_variational_scriere_COMPACTA_h_generic}),
there exists $\gamma_{E,a} > 0$ (that depends on the continuity
and ellipticity of $a(\cdot,\cdot)$ on $E$) such that
\begin{equation}
\label{e5_45p_conditia_q_dense_general_Cea}
   \| v-v_{h} \|_{E} \leq \gamma_{E,a}\inf_{e_{h} \in E_{h}}\| v-e_{h} \|_{E} \, ,
\end{equation}
Define the operator $T_{h} : E' \rightarrow E_{h}$ by: given
$\psi$ $\in$ $E'$, $v_{h}=T_{h}\psi$ if and only if $v_{h}$ $\in$
$E_{h}$ is the unique solution of problem
(\ref{e1_ecStokes_ecPoisson_variational_scriere_COMPACTA_h_generic}).

Let us associate the set $\mathcal{M}_{h,i}$ $=$ $\{ i$,
$\mathcal{M}_{i}$,
$(\ref{e1_ecStokes_ecPoisson_variational_scriere_COMPACTA_exact_generic})$,
$(\ref{e1_ecStokes_ecPoisson_variational_scriere_COMPACTA_h_generic})$,
$E_{h}$, $v_{h}$, $\gamma_{E,a}$, $T_{h} \}$ to the set
$\mathcal{M}_{i}$. Let us retain the following particular cases
$\mathcal{M}_{h,1}$, ..., $\mathcal{M}_{h,5}$ of
$\mathcal{M}_{h,i}$:

$\mathcal{M}_{h,1}$ $=$ $\{ 1$, $\mathcal{M}_{1}$,
$(\ref{e1_ecStokes_ecPoisson_variational_a_BIS})$,
$(\ref{e1_ecStokes_ecPoisson_variational_a_BIS_h})$,
$\textbf{X}_{0h}$, $\textbf{u}_{h}$, $\gamma_{1}$, $T_{VPD,h} \}$
with
\begin{equation}
\label{e1_ecStokes_ecPoisson_variational_a_BIS_h}
   (grad \, \textbf{u}_{h},grad \, \textbf{w}_{h})
      =\langle \textbf{f},\textbf{w}_{h} \rangle, \
              \forall \, w_{h}  \, \in \, \textbf{X}_{0h} \, .
\end{equation}

$\mathcal{M}_{h,2}$ $=$ $\{ 2$, $\mathcal{M}_{2}$,
$(\ref{e1_ecStokes_ecPoisson_variational_a_BIS_scalar})$,
$(\ref{e1_ecStokes_ecPoisson_variational_a_BIS_scalar_h})$,
$Y_{0h}$, $q_{h}$, $\gamma_{2}$, $T_{PD,h} \}$ with
\begin{equation}
\label{e1_ecStokes_ecPoisson_variational_a_BIS_scalar_h}
   (grad \, q_{h},grad \, \lambda_{h})=\langle \psi,\lambda_{h} \rangle, \
              \forall \, \lambda_{h}  \, \in \, Y_{0h} \, .
\end{equation}

$\mathcal{M}_{h,3}$ $=$ $\{ 3$, $\mathcal{M}_{3}$,
$(\ref{e1_ecStokes_ecHelmholtz_variational_d_dual})$,
$(\ref{e1_ecStokes_ecHelmholtz_variational_d_h})$, $Y_{h}$,
$\phi_{h}$, $\gamma_{3}$, $B_{1,h} \}$ with
\begin{equation}
\label{e1_ecStokes_ecHelmholtz_variational_d_h}
   (grad \, \phi_{h},grad \, \mu_{h}) + (\phi_{h},\mu_{h})
      =\langle \psi_{1},\mu_{h} \rangle, \
              \forall \, \mu_{h}  \, \in \, Y_{h} \, .
\end{equation}

$\mathcal{M}_{h,4}$ $=$ $\{ 4$, $\mathcal{M}_{4}$,
$(\ref{e1_ecStokes_ecHelmholtz_variational_d_dual})$,
$(\ref{e1_ecStokes_ecHelmholtz_variational_d_h})$, $M_{h}$,
$\phi_{h}$, $\gamma_{4}$, $B_{1,h} \}$.

Let $\pi_{h}$ be the $L^{2}$ projection operator onto
$\textbf{X}_{h}$ defined  by $\pi_{h}$ $\in$
$L(\textbf{L}^{2}(\Omega),\textbf{X}_{h})$ such that
$\textbf{z}_{h}$ $\in$ $\textbf{X}_{h}$, $\textbf{f}_{\ast}$ $\in$
$\textbf{L}^{2}(\Omega)$,
$\pi_{h}\textbf{f}_{\ast}=\textbf{z}_{h}$ if and only if
\begin{equation}
\label{SOL_prel_op_hat_pi_h_DEF1}
   (\textbf{z}_{h}, \textbf{w}_{h})_{0}
      =(\textbf{f}_{\ast}, \textbf{w}_{h})_{0}, \
              \forall \, \textbf{w}_{h}  \, \in \, \textbf{X}_{h} \, ,
\end{equation}
$\mathcal{M}_{h,5}$ $=$ $\{ 5$, $\mathcal{M}_{5}$,
$(\ref{prel_op_hat_pi_h_DEF2H_1_EC_M9})$,
$(\ref{SOL_prel_op_hat_pi_h_DEF1})$, $\textbf{X}_{h}$,
$\textbf{z}_{h}$, $\gamma_{5}$, $\pi_{h} \}$.

Approximate
(\ref{mathcal_P2_e1_6_ecStokes_ecPoisson_variational_dem_q_compl_p_dem11_cont_alt_izom})
-
(\ref{e1_11_ecStokes_ecPoisson_variational_dem_grad_miu_ec_TAU}),
where
(\ref{mathcal_P2_e1_6_ecStokes_ecPoisson_variational_dem_q_compl_dem11_cont_alt_izom_instead2_delta_hat_delta_aprox})
is considered with the formulation
(\ref{mathcal_P2_e1_6_ecStokes_ecPoisson_variational_dem_q_compl_dem11_cont_alt_izom_instead2_delta_hat_delta_aprox_ECH_var_modif_cont}),
by the approximate extended system: find $(x_{h},U_{h})$ $\in$
$\widehat{\Gamma}_{h}$ such that
\begin{eqnarray}
   && p_{h} - (q_{h}+\hat{p}_{h}) = \hat{g} \, ,
         \label{mathcal_P2_e1_6_ecStokes_ecPoisson_variational_dem_q_compl_p_dem11_cont_alt_izom_h} \\
   && p_{S,h} - p_{h} = \ell_{S} \, ,
         \label{e5_1_cond_ecStokes_ecPoisson_variational_dem_div_corelatie_aprox} \\
   && (div \, \textbf{u}_{h}-y_{h},\hat{\mu}) = (g,\hat{\mu}), \
              \forall \, \hat{\mu}  \, \in \, L_{0}^{2}(\Omega) \, ,
         \label{e5_1_cond_ecStokes_ecPoisson_variational_dem_div_h} \\
   && (grad \, \textbf{u}_{h},grad \, \textbf{w}_{h})-(p_{S,h},div \, \textbf{w}_{h})=\langle \textbf{f},\textbf{w}_{h} \rangle, \
              \forall \, \textbf{w}_{h}  \, \in \, \textbf{X}_{0h} \, ,
         \label{e1_11_ecStokes_ecPoisson_variational_dem_h} \\
   && - \alpha \langle \widetilde{S}_{\textbf{z}_{h}},\bar{\lambda}_{h} \rangle
            + \langle \widetilde{S}_{\textbf{t}_{h}},\bar{\lambda}_{h} \rangle
            + (grad \, q_{h},grad \, \bar{\lambda}_{h})
            =\langle \psi,\bar{\lambda}_{h} \rangle, \
              \forall \, \bar{\lambda}_{h}  \, \in \, Y_{0h} \, ,
         \label{mathcal_P2_e1_6_ecStokes_ecPoisson_variational_dem_q_compl_dem11_cont_alt_izom_h} \\
   && (grad \, \hat{q}_{h},grad \, \tilde{\lambda}_{h})
            +(grad \, p_{h},grad \, \tilde{\lambda}_{h})
         \label{mathcal_P2_e1_6_ecStokes_ecPoisson_variational_dem_q_compl_hat_q_dem11_cont_alt_izom_h} \\
   && \qquad  = \langle \hat{\psi},\tilde{\lambda}_{h} \rangle, \
              \forall \, \tilde{\lambda}_{h}  \, \in \, Y_{0h} \, ,
         \nonumber \\
   && (\textbf{z}_{h},\tilde{\textbf{w}}_{h}) + (grad \, p_{h},\tilde{\textbf{w}}_{h})=(\textbf{f}_{\triangle},\tilde{\textbf{w}}_{h}), \
              \forall \, \tilde{\textbf{w}}_{h}  \, \in \, \textbf{X}_{h} \, ,
         \label{mathcal_P2_e1_6_ecStokes_ecPoisson_variational_dem_q_compl_dem11_cont_alt_izom_instead2_delta_h_hat_delta_aprox} \\
   && (\textbf{z}_{h} - \textbf{t}_{h} + grad \, (\hat{p}_{h}-\hat{q}_{h}),grad \, \varphi_{h})
         \label{e1_11_ecStokes_ecPoisson_variational_dem_grad_miu_h} \\
   && \qquad  +(grad \, r_{h},grad \, \varphi_{h})
              = \langle \varrho,\varphi_{h} \rangle, \
              \forall \, \varphi_{h}  \, \in \, Y_{h} \, ,
         \nonumber \\
   && (r_{h},\tilde{\varphi}_{h})
            +(grad \, (q_{h}+\hat{q}_{h}),grad \, \tilde{\varphi}_{h})
            = \langle \hat{\varrho},\tilde{\varphi}_{h} \rangle, \
              \forall \, \tilde{\varphi}_{h}  \, \in \, Y_{h} \, ,
         \label{e1_11_ecStokes_ecPoisson_variational_dem_grad_miu_r_h} \\
   && (\textbf{t}_{h},\hat{\textbf{w}}_{h})=(\hat{\textbf{f}},\hat{\textbf{w}}_{h}), \
              \forall \, \hat{\textbf{w}}_{h}  \, \in \, \textbf{X}_{h} \, ,
         \label{e1_11_ecStokes_ecPoisson_variational_dem_grad_miu_ec_TAU_h}
\end{eqnarray}

\begin{rem}
\label{observatia5_echiv_dense} Equation
(\ref{mathcal_P2_e1_6_ecStokes_ecPoisson_variational_dem_q_compl_dem11_cont_alt_izom_instead2_delta_h_hat_delta_aprox})
approximates equation
(\ref{mathcal_P2_e1_6_ecStokes_ecPoisson_variational_dem_q_compl_dem11_cont_alt_izom_instead2_delta_hat_delta_aprox_ECH_var_modif_cont}).
\end{rem}

Let $\textbf{f}_{\triangle,h}$ and $\hat{\textbf{f}}_{h}$ $\in$
$\textbf{X}_{h}$ be defined by
\begin{eqnarray}
   && (\textbf{f}_{\triangle,h},\tilde{\textbf{w}}_{h})=(\textbf{f}_{\triangle},\tilde{\textbf{w}}_{h}), \
              \forall \, \tilde{\textbf{w}}_{h}  \, \in \, \textbf{X}_{h} \, , \
      (\hat{\textbf{f}}_{h},\tilde{\textbf{w}}_{h})=(\hat{\textbf{f}},\hat{\textbf{w}}_{h}), \
              \forall \, \hat{\textbf{w}}_{h}  \, \in \, \textbf{X}_{h} \, .
         \label{def_chi_h}
\end{eqnarray}
Using $\textbf{f}_{\triangle,h}$, we work with
(\ref{mathcal_P2_e1_6_ecStokes_ecPoisson_variational_dem_q_compl_dem11_cont_alt_izom_instead2_delta_h_hat_delta_aprox})
as we work with
(\ref{VAR_mathcal_P2_e1_6_ecStokes_ecPoisson_variational_dem_q_compl_dem11_cont_alt_izom_instead2_delta_hat_delta_aprox_D_VAR_ec}).

Using (\ref{def_chi_h}), equations
(\ref{mathcal_P2_e1_6_ecStokes_ecPoisson_variational_dem_q_compl_dem11_cont_alt_izom_instead2_delta_h_hat_delta_aprox})
and
(\ref{e1_11_ecStokes_ecPoisson_variational_dem_grad_miu_ec_TAU_h})
are equivalent, respectively, to
\begin{eqnarray}
   && \textbf{z}_{h} + grad \, p_{h} = \textbf{f}_{\triangle,h} \, , \
      \textbf{t}_{h} = \hat{\textbf{f}}_{h}
         \label{VAR_mathcal_P2_e1_6_ecStokes_ecPoisson_variational_dem_q_compl_dem11_cont_alt_izom_instead2_delta_hat_delta_aprox_D_VAR_ec_h}
\end{eqnarray}

Replacing
(\ref{mathcal_P2_e1_6_ecStokes_ecPoisson_variational_dem_q_compl_p_dem11_cont_alt_izom_h})
and
(\ref{VAR_mathcal_P2_e1_6_ecStokes_ecPoisson_variational_dem_q_compl_dem11_cont_alt_izom_instead2_delta_hat_delta_aprox_D_VAR_ec_h})
in (\ref{e1_11_ecStokes_ecPoisson_variational_dem_grad_miu_h}), it
results
\begin{eqnarray}
   && (\textbf{f}_{\triangle,h} - \hat{\textbf{f}}_{h} - grad \, \hat{g},grad \, \varphi_{h})
      - (grad \, (q_{h}+\hat{q}_{h}),grad \, \varphi_{h})
         \label{e1_11_ecStokes_ecPoisson_variational_dem_grad_miu_inlocuiri_h} \\
   && \qquad + (grad \, r_{h},grad \, \varphi_{h})
            = \langle \varrho,\varphi_{h} \rangle, \
              \forall \, \varphi_{h}  \, \in \, Y_{h} \, ,
         \nonumber
\end{eqnarray}

Let $\tilde{\psi}_{r,h}$ $\in$ $Y_{h}'$ be defined by
\begin{eqnarray}
   && \langle \tilde{\psi}_{r,h},\varphi_{h} \rangle
      = (-\textbf{f}_{\triangle,h} + \hat{\textbf{f}}_{h} + grad \, \hat{g},grad \, \varphi_{h})
         \label{e1_11_ecStokes_ecPoisson_variational_dem_grad_miu_inlocuiri_def_h} \\
   && \qquad + \langle \varrho,\varphi_{h} \rangle
            + \langle \hat{\varrho},\varphi_{h} \rangle, \
              \forall \, \varphi_{h}  \, \in \, Y_{h} \, ,
         \nonumber
\end{eqnarray}
Summing
(\ref{e1_11_ecStokes_ecPoisson_variational_dem_grad_miu_inlocuiri_h})
and (\ref{e1_11_ecStokes_ecPoisson_variational_dem_grad_miu_r_h}),
we obtain that $r_{h}$ is the solution of the equation
\begin{eqnarray}
   && ((r_{h},\varphi_{h}))_{1}
      = \langle \tilde{\psi}_{r,h},\varphi_{h} \rangle, \
              \forall \, \varphi_{h}  \, \in \, Y_{h} \, ,
         \label{e1_11_ecStokes_ecPoisson_variational_dem_grad_miu_inlocuiri_ec_h}
\end{eqnarray}

From
(\ref{VAR_mathcal_P2_e1_6_ecStokes_ecPoisson_variational_dem_q_compl_dem11_cont_alt_izom_instead2_delta_hat_delta_aprox_D_VAR_ec_h}),
we deduce
\begin{eqnarray}
   && (\textbf{z}_{h} - \textbf{t}_{h} + grad \, p_{h},grad \, \varphi_{h})
            = (\textbf{f}_{\triangle,h} - \hat{\textbf{f}}_{h},grad \, \varphi_{h}), \
              \forall \, \varphi_{h}  \, \in \, Y_{h} \, ,
         \label{e1_11_ecStokes_ecPoisson_variational_dem_grad_miu_inlocuiri_sum_2_L2_h}
\end{eqnarray}
Equation
(\ref{e1_11_ecStokes_ecPoisson_variational_dem_grad_miu_inlocuiri_sum_2_L2_h})
approximates
(\ref{e1_11_ecStokes_ecPoisson_variational_dem_grad_miu_inlocuiri_sum_2}).
The same is obtained if we replace
(\ref{mathcal_P2_e1_6_ecStokes_ecPoisson_variational_dem_q_compl_p_dem11_cont_alt_izom_h}),
(\ref{e1_11_ecStokes_ecPoisson_variational_dem_grad_miu_ec_TAU_h})
in (\ref{e1_11_ecStokes_ecPoisson_variational_dem_grad_miu_h}) and
then sum
(\ref{e1_11_ecStokes_ecPoisson_variational_dem_grad_miu_r_h}) to
(\ref{e1_11_ecStokes_ecPoisson_variational_dem_grad_miu_h}), where
we use
(\ref{e1_11_ecStokes_ecPoisson_variational_dem_grad_miu_inlocuiri_ec_h}).

\begin{cor}
\label{observatia_ec_Poisson_grad_p_h} We have
\begin{eqnarray}
   && ( \alpha \textbf{z}_{h} - \textbf{t}_{h}, grad \, \bar{\lambda}_{h} )
            + (grad \, p_{h},grad \, \bar{\lambda}_{h})
            =(r_{h},\bar{\lambda}_{h})
          \label{VAR_mathcal_P2_e1_6_ecStokes_ecPoisson_variational_dem_q_compl_dem11_cont_alt_izom_PRESIUNE_dir_h} \\
   && +\langle \psi,\bar{\lambda}_{h} \rangle
            +\langle \hat{\psi},\bar{\lambda}_{h} \rangle
            -\langle \hat{\varrho},\bar{\lambda}_{h} \rangle , \
              \forall \, \bar{\lambda}_{h}  \, \in \, Y_{0h} \, ,
         \nonumber
\end{eqnarray}
where we use
(\ref{e1_11_ecStokes_ecPoisson_variational_dem_grad_miu_inlocuiri_ec_h}).
This equation approximates
(\ref{VAR_mathcal_P2_e1_6_ecStokes_ecPoisson_variational_dem_q_compl_dem11_cont_alt_izom_PRESIUNE_dir}).

\begin{eqnarray}
   && (\textbf{z}_{h} - \textbf{t}_{h} + grad \, p_{h},grad \, \varphi_{h})
         \label{e1_11_ecStokes_ecPoisson_variational_dem_grad_miu_inlocuiri_sum_2_L2_cor_h} \\
   && \qquad = (\textbf{f}_{\triangle,h} - \hat{\textbf{f}}_{h},grad \, \varphi_{h}), \
              \forall \, \varphi_{h}  \, \in \, Y_{h} \ominus Y_{0h} \, ,
         \nonumber
\end{eqnarray}
This equation approximates
(\ref{e1_11_ecStokes_ecPoisson_variational_dem_grad_miu_inlocuiri_sum_2_L2_cor}).
\end{cor}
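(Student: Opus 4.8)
The plan is to mirror, in the discrete setting, the two-line computation in the proof of Corollary \ref{observatia_ec_Poisson_grad_p}: both asserted identities are Galerkin restrictions of facts already established at the continuous level, so no new analytic input is needed, only care about which conforming space each equation of the approximate extended system is posed on.

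For (\ref{VAR_mathcal_P2_e1_6_ecStokes_ecPoisson_variational_dem_q_compl_dem11_cont_alt_izom_PRESIUNE_dir_h}) I would proceed as follows. First, rewrite the two duality terms in (\ref{mathcal_P2_e1_6_ecStokes_ecPoisson_variational_dem_q_compl_dem11_cont_alt_izom_h}) as $\langle\widetilde{S}_{\textbf{z}_{h}},\bar{\lambda}_{h}\rangle=-(\textbf{z}_{h},grad\,\bar{\lambda}_{h})$ and $\langle\widetilde{S}_{\textbf{t}_{h}},\bar{\lambda}_{h}\rangle=-(\textbf{t}_{h},grad\,\bar{\lambda}_{h})$; this is legitimate since $\bar{\lambda}_{h}\in Y_{0h}\subset H_{0}^{1}(\Omega)$, $\textbf{z}_{h},\textbf{t}_{h}\in\textbf{L}^{2}(\Omega)$, and $\widetilde{S}_{\cdot}$ is the continuous extension to $H_{0}^{1}(\Omega)$ of the functional in (\ref{e1_6_ecStokes_ecPoisson_variational_dem_f}). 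Next, add (\ref{mathcal_P2_e1_6_ecStokes_ecPoisson_variational_dem_q_compl_dem11_cont_alt_izom_h}) and (\ref{mathcal_P2_e1_6_ecStokes_ecPoisson_variational_dem_q_compl_hat_q_dem11_cont_alt_izom_h}) with the common test function $\bar{\lambda}_{h}\in Y_{0h}$, which produces $(\alpha\textbf{z}_{h}-\textbf{t}_{h},grad\,\bar{\lambda}_{h})+(grad\,p_{h},grad\,\bar{\lambda}_{h})+(grad\,(q_{h}+\hat{q}_{h}),grad\,\bar{\lambda}_{h})=\langle\psi,\bar{\lambda}_{h}\rangle+\langle\hat{\psi},\bar{\lambda}_{h}\rangle$. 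Finally, use (\ref{e1_11_ecStokes_ecPoisson_variational_dem_grad_miu_r_h}) with $\tilde{\varphi}_{h}=\bar{\lambda}_{h}$ — which is allowed because $Y_{0h}\subseteq Y_{h}$ — to replace $(grad\,(q_{h}+\hat{q}_{h}),grad\,\bar{\lambda}_{h})$ by $\langle\hat{\varrho},\bar{\lambda}_{h}\rangle-(r_{h},\bar{\lambda}_{h})$, and rearrange; equivalently one may quote (\ref{e1_11_ecStokes_ecPoisson_variational_dem_grad_miu_inlocuiri_ec_h}), which encapsulates the same cancellation and keeps the bookkeeping parallel with the continuous proof. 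This yields exactly (\ref{VAR_mathcal_P2_e1_6_ecStokes_ecPoisson_variational_dem_q_compl_dem11_cont_alt_izom_PRESIUNE_dir_h}).

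For (\ref{e1_11_ecStokes_ecPoisson_variational_dem_grad_miu_inlocuiri_sum_2_L2_cor_h}) I would simply restrict the already-proved identity (\ref{e1_11_ecStokes_ecPoisson_variational_dem_grad_miu_inlocuiri_sum_2_L2_h}), valid for all $\varphi_{h}\in Y_{h}$, to the closed subspace $Y_{h}\ominus Y_{0h}\subseteq Y_{h}$; this is the discrete counterpart of passing from (\ref{e1_11_ecStokes_ecPoisson_variational_dem_grad_miu_inlocuiri_sum_2}) to (\ref{e1_11_ecStokes_ecPoisson_variational_dem_grad_miu_inlocuiri_sum_2_L2_cor}). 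I do not expect any genuine obstacle here: the only point requiring care is that the equations (\ref{mathcal_P2_e1_6_ecStokes_ecPoisson_variational_dem_q_compl_p_dem11_cont_alt_izom_h})–(\ref{e1_11_ecStokes_ecPoisson_variational_dem_grad_miu_ec_TAU_h}) are posed on different conforming spaces, so the substitution of (\ref{e1_11_ecStokes_ecPoisson_variational_dem_grad_miu_r_h}) is permitted precisely because of the inclusion $Y_{0h}\subseteq Y_{h}$, and this inclusion together with the rewriting of $\widetilde{S}_{\cdot}$ on $Y_{0h}$ is all the argument uses. As in Corollary \ref{observatia_ec_Poisson_grad_p}, one then records that the two displays approximate (\ref{VAR_mathcal_P2_e1_6_ecStokes_ecPoisson_variational_dem_q_compl_dem11_cont_alt_izom_PRESIUNE_dir}) and (\ref{e1_11_ecStokes_ecPoisson_variational_dem_grad_miu_inlocuiri_sum_2_L2_cor}) respectively.
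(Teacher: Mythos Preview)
Your proposal is correct and follows essentially the same approach as the paper: for (\ref{VAR_mathcal_P2_e1_6_ecStokes_ecPoisson_variational_dem_q_compl_dem11_cont_alt_izom_PRESIUNE_dir_h}) the paper adds (\ref{mathcal_P2_e1_6_ecStokes_ecPoisson_variational_dem_q_compl_dem11_cont_alt_izom_h}) and (\ref{mathcal_P2_e1_6_ecStokes_ecPoisson_variational_dem_q_compl_hat_q_dem11_cont_alt_izom_h}) and subtracts (\ref{e1_11_ecStokes_ecPoisson_variational_dem_grad_miu_r_h}) with test functions in $Y_{0h}$, invoking (\ref{e1_11_ecStokes_ecPoisson_variational_dem_grad_miu_inlocuiri_ec_h}), and for (\ref{e1_11_ecStokes_ecPoisson_variational_dem_grad_miu_inlocuiri_sum_2_L2_cor_h}) it simply restricts (\ref{e1_11_ecStokes_ecPoisson_variational_dem_grad_miu_inlocuiri_sum_2_L2_h}) to $Y_{h}\ominus Y_{0h}$. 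Your additional remarks on the rewriting of $\widetilde{S}_{\cdot}$ and on the inclusion $Y_{0h}\subseteq Y_{h}$ make explicit what the paper leaves implicit, but the argument is the same.
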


\begin{proof}

If we add
(\ref{mathcal_P2_e1_6_ecStokes_ecPoisson_variational_dem_q_compl_dem11_cont_alt_izom_h})
and
(\ref{mathcal_P2_e1_6_ecStokes_ecPoisson_variational_dem_q_compl_hat_q_dem11_cont_alt_izom_h})
and subtract
(\ref{e1_11_ecStokes_ecPoisson_variational_dem_grad_miu_r_h}),
where the test functions $\varphi_{h}$ $\in$ $Y_{0h}$, we have
(\ref{VAR_mathcal_P2_e1_6_ecStokes_ecPoisson_variational_dem_q_compl_dem11_cont_alt_izom_PRESIUNE_dir_h}),
where we use
(\ref{e1_11_ecStokes_ecPoisson_variational_dem_grad_miu_inlocuiri_ec_h}).

(\ref{e1_11_ecStokes_ecPoisson_variational_dem_grad_miu_inlocuiri_sum_2_L2_h})
gives
(\ref{e1_11_ecStokes_ecPoisson_variational_dem_grad_miu_inlocuiri_sum_2_L2_cor_h}).

\qquad
\end{proof}

\begin{rem}
\label{observatia5_consistentFEM_compatibilitate}

As in the exact case, in
(\ref{e1_11_ecStokes_ecPoisson_variational_dem_diferenta}), we
obtain
\begin{eqnarray}
   && (grad \, \textbf{u}_{h},grad \, \textbf{w}_{h})
      - (\textbf{z}_{h},\textbf{w}_{h})
      = (\ell_{S},div \, \textbf{w}_{h})
         \label{e1_11_ecStokes_ecPoisson_variational_dem_diferenta_h} \\
   && \qquad +\langle \textbf{f},\textbf{w}_{h} \rangle-(\textbf{f}_{\triangle},\textbf{w}_{h}), \
              \forall \, \textbf{w}_{h}  \, \in \, \textbf{X}_{0h} \, ,
         \nonumber
\end{eqnarray}
\end{rem}

Approximate
(\ref{mathcal_P2_e1_6_ecStokes_ecPoisson_variational_dem_q_compl_p_dem11_cont_alt_izom_sist})
-
(\ref{e1_11_ecStokes_ecPoisson_variational_dem_grad_miu_ec_TAU_sist})
by the problem: find $(x_{h},U_{h})$ $\in$ $\widehat{\Gamma}_{h}$
such that
\begin{eqnarray}
   && p_{h} - (q_{h}+\hat{p}_{h}) = \hat{g} \, ,
         \label{mathcal_P2_e1_6_ecStokes_ecPoisson_variational_dem_q_compl_p_dem11_cont_alt_izom_h_sist} \\
   && p_{S,h} - p_{h} = \ell_{S} \, ,
         \label{e5_1_cond_ecStokes_ecPoisson_variational_dem_div_corelatie_aprox_sist} \\
   && div \, \textbf{u}_{h}-y_{h} = g \, ,
         \label{e5_1_cond_ecStokes_ecPoisson_variational_dem_div_h_sist} \\
   && \textbf{u}_{h} - T_{VPD,h}G_{\textbf{u}}(p_{S,h})
         = T_{VPD,h}\textbf{f} \, ,
         \label{e1_11_ecStokes_ecPoisson_variational_dem_q_compl4_h} \\
   && q_{h} - T_{PD,h}G_{q}(\textbf{t}_{h},\textbf{z}_{h})
         = T_{PD,h}\psi \, ,
         \label{e1_6_ecStokes_ecPoisson_variational_dem_q_compl4_h} \\
   && \hat{q}_{h} - T_{PD,h}G_{\hat{q}}(p_{h})
         = T_{PD,h}\hat{\psi} \, ,
         \label{e1_6_ecStokes_ecPoisson_variational_dem_q_compl_hat_q4_h} \\
   && \textbf{z}_{h} - \pi_{h} G_{\textbf{z}}(p_{h})
         = \pi_{h} \textbf{f}_{\triangle} \, ,
         \label{dem10_cont_alt_izom_instead2_delta_h_cont} \\
   && p_{h} - B_{1,h}G_{p}(p_{h},\hat{p}_{h},\hat{q}_{h},\textbf{z}_{h},\textbf{t}_{h},r_{h})
         = B_{1,h}\varrho \, ,
         \label{e1_11_ecStokes_ecPoisson_variational_dem_grad_miu_B0_h} \\
   && r_{h} - B_{1,h}G_{r}(q_{h},\hat{q}_{h},r_{h})
         = B_{1,h}\hat{\varrho} \, ,
         \label{e1_11_ecStokes_ecPoisson_variational_dem_grad_miu_inlocuiri_ec_op_sist_h} \\
   && \textbf{t}_{h} - \pi_{h} G_{\textbf{t}}(\textbf{u}_{h})
            =\pi_{h} \hat{\textbf{f}} \, ,
          \label{e1_11_ecStokes_ecPoisson_variational_dem_grad_miu_ec_TAU_h_sist}
\end{eqnarray}

\begin{lem}
\label{lema__izom_1_hhh} Problem
(\ref{mathcal_P2_e1_6_ecStokes_ecPoisson_variational_dem_q_compl_p_dem11_cont_alt_izom_h_sist})
-
(\ref{e1_11_ecStokes_ecPoisson_variational_dem_grad_miu_ec_TAU_h_sist})
is equivalent to problem
(\ref{mathcal_P2_e1_6_ecStokes_ecPoisson_variational_dem_q_compl_p_dem11_cont_alt_izom_h})
-
(\ref{e1_11_ecStokes_ecPoisson_variational_dem_grad_miu_ec_TAU_h}).
\end{lem}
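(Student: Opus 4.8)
The plan is to prove the equivalence equation by equation, as the discrete mirror of the exact equivalence recorded in Lemma~\ref{lema_teorema_izom_2} and Theorem~\ref{teorema_izom_2}. One replaces the solution operators $T_{VPD},T_{PD},B_{1},\pi$ by their Galerkin counterparts $T_{VPD,h},T_{PD,h},B_{1,h},\pi_{h}$ from $\mathcal{M}_{h,1},\dots,\mathcal{M}_{h,5}$, and in each equation one uses only the defining property of the relevant discrete operator (``$v_{h}=T_{h}\psi$ if and only if $v_{h}$ solves the corresponding Galerkin problem'') together with its linearity.

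First I would dispose of the constraint block $\mathcal{B}(x_{h},U_{h})=\mathcal{F}_{\mathcal{B}}$. Equations (\ref{mathcal_P2_e1_6_ecStokes_ecPoisson_variational_dem_q_compl_p_dem11_cont_alt_izom_h}) and (\ref{e5_1_cond_ecStokes_ecPoisson_variational_dem_div_corelatie_aprox}) coincide verbatim with (\ref{mathcal_P2_e1_6_ecStokes_ecPoisson_variational_dem_q_compl_p_dem11_cont_alt_izom_h_sist}) and (\ref{e5_1_cond_ecStokes_ecPoisson_variational_dem_div_corelatie_aprox_sist}). For (\ref{e5_1_cond_ecStokes_ecPoisson_variational_dem_div_h}), since $\textbf{u}_{h}\in\textbf{X}_{0h}\subset\textbf{H}_{0}^{1}(\Omega)$ one has $\int_{\Omega}div\,\textbf{u}_{h}=0$, while $y_{h}\in M_{h}$ and $g\in L_{0}^{2}(\Omega)$; thus $div\,\textbf{u}_{h}-y_{h}-g\in L_{0}^{2}(\Omega)$ is $L^{2}$-orthogonal to all of $L_{0}^{2}(\Omega)$ and so vanishes, which is (\ref{e5_1_cond_ecStokes_ecPoisson_variational_dem_div_h_sist}); the converse is immediate. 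Then I would handle the equations whose variational test space already matches the one built into the corresponding operator: in (\ref{e1_11_ecStokes_ecPoisson_variational_dem_h}) replacing $(p_{S,h},div\,\textbf{w}_{h})$ by $\langle G_{\textbf{u}}(p_{S,h}),\textbf{w}_{h}\rangle$ turns it into $(grad\,\textbf{u}_{h},grad\,\textbf{w}_{h})=\langle\textbf{f}+G_{\textbf{u}}(p_{S,h}),\textbf{w}_{h}\rangle$ on $\textbf{X}_{0h}$, i.e.\ by definition of $T_{VPD,h}$ and its linearity $\textbf{u}_{h}=T_{VPD,h}\textbf{f}+T_{VPD,h}G_{\textbf{u}}(p_{S,h})$, which is (\ref{e1_11_ecStokes_ecPoisson_variational_dem_q_compl4_h}); the identical mechanism gives (\ref{mathcal_P2_e1_6_ecStokes_ecPoisson_variational_dem_q_compl_dem11_cont_alt_izom_h}) $\Leftrightarrow$ (\ref{e1_6_ecStokes_ecPoisson_variational_dem_q_compl4_h}) via $G_{q}$ and $T_{PD,h}$, (\ref{mathcal_P2_e1_6_ecStokes_ecPoisson_variational_dem_q_compl_hat_q_dem11_cont_alt_izom_h}) $\Leftrightarrow$ (\ref{e1_6_ecStokes_ecPoisson_variational_dem_q_compl_hat_q4_h}) via $G_{\hat{q}}$ and $T_{PD,h}$, (\ref{mathcal_P2_e1_6_ecStokes_ecPoisson_variational_dem_q_compl_dem11_cont_alt_izom_instead2_delta_h_hat_delta_aprox}) $\Leftrightarrow$ (\ref{dem10_cont_alt_izom_instead2_delta_h_cont}) via $G_{\textbf{z}}$ and the $L^{2}$-projection $\pi_{h}$, and (\ref{e1_11_ecStokes_ecPoisson_variational_dem_grad_miu_ec_TAU_h}) $\Leftrightarrow$ (\ref{e1_11_ecStokes_ecPoisson_variational_dem_grad_miu_ec_TAU_h_sist}) via $G_{\textbf{t}}=0$ and $\pi_{h}$; in each case the reverse direction is the same chain read backwards.

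The only step needing care is the pair of equations posed on $H^{1}(\Omega)$. For (\ref{e1_11_ecStokes_ecPoisson_variational_dem_grad_miu_r_h}), using the equation itself to eliminate $(r_{h},\tilde{\varphi}_{h})$ together with the definitions of $G_{r}$ and of $((\cdot,\cdot))_{1}$, one rewrites it as $((r_{h},\tilde{\varphi}_{h}))_{1}=\langle\hat{\varrho}+G_{r}(q_{h},\hat{q}_{h},r_{h}),\tilde{\varphi}_{h}\rangle$ for all $\tilde{\varphi}_{h}\in Y_{h}$, which by definition of $B_{1,h}$ ($\mathcal{M}_{h,3}$, test space $Y_{h}$) and linearity is (\ref{e1_11_ecStokes_ecPoisson_variational_dem_grad_miu_inlocuiri_ec_op_sist_h}). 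For (\ref{e1_11_ecStokes_ecPoisson_variational_dem_grad_miu_h}) I would first merge the two $grad$-terms and then add $((p_{h},\varphi_{h}))_{1}$ to both sides; using the definition of $G_{p}$, the equation becomes $((p_{h},\varphi_{h}))_{1}=\langle\varrho+G_{p}(p_{h},\hat{p}_{h},\hat{q}_{h},\textbf{z}_{h},\textbf{t}_{h},r_{h}),\varphi_{h}\rangle$ for all $\varphi_{h}\in Y_{h}$. Here the test space built into the operator $B_{1,h}$ used for the $p$-component is $M_{h}$ ($\mathcal{M}_{h,4}$), not $Y_{h}$, so one must split $Y_{h}=M_{h}\oplus(Y_{h}\ominus M_{h})$: the $M_{h}$-part reproduces exactly (\ref{e1_11_ecStokes_ecPoisson_variational_dem_grad_miu_B0_h}), while the one-dimensional complementary part only produces the relation $\langle\varrho+G_{p}(\dots),c\rangle=0$, which is automatically true because $\varrho$ and $G_{p}(\dots)$ lie in $B_{1}^{-1}(H^{1}(\Omega)\cap L_{0}^{2}(\Omega))$ (equivalently, because $grad$-terms annihilate constants and $p_{h}\in M_{h}$ has zero mean). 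I expect this reconciliation of $Y_{h}$ with $M_{h}$ in the $p$-equation to be the main, and essentially the only non-routine, point; it is the verbatim discrete transcription of what is done for the exact equation (\ref{e1_11_ecStokes_ecPoisson_variational_dem_grad_miu}) in the passage (\ref{e1_11_ecStokes_ecPoisson_variational_dem_grad_miu_inlocuiri_h})--(\ref{e1_11_ecStokes_ecPoisson_variational_dem_grad_miu_inlocuiri_ec_h}) and in Corollary~\ref{observatia_ec_Poisson_grad_p_h}. Collecting the ten equivalences and their converses yields the lemma.
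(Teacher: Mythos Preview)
Your proof is correct and follows exactly the route the paper takes --- unpacking each equation through the defining variational identity of the relevant discrete operator $T_{VPD,h}$, $T_{PD,h}$, $B_{1,h}$, or $\pi_{h}$ --- only far more explicitly (the paper's own proof is a single sentence invoking these definitions). One harmless slip: $y_{h}$ lies in $L_{0}^{2}(\Omega)$, not $M_{h}$, since it sits in the $\mathcal{Q}$-factor of $\widehat{\Gamma}_{h}=\mathcal{Q}\times\mathcal{X}_{h}$; your orthogonality argument for (\ref{e5_1_cond_ecStokes_ecPoisson_variational_dem_div_h}) is unaffected.
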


\begin{proof}
The conclusion follows from the definition of operators
$T_{VPD,h}$, $T_{PD,h}$, $\pi_{h}$, $B_{1,h}$.

\qquad
\end{proof}

\begin{lem}
\label{lema__izom_1_hhh_th} Problem
(\ref{mathcal_P2_e1_6_ecStokes_ecPoisson_variational_dem_q_compl_p_dem11_cont_alt_izom_h_sist})
-
(\ref{e1_11_ecStokes_ecPoisson_variational_dem_grad_miu_ec_TAU_h_sist})
has the form of equation
(\ref{well_posed_e1_11_ecStokes_ecPoisson_variational_dem_q_compl4_ec_mathcal_X_h_E}),
so Theorem \ref{well_posed_teorema5_1_Stokes_math_B_L2}, Corollary
\ref{well_posed_corolarul_teorema5_1_Stokes_math_B_L2_Tdense},
Corollary \ref{well_posed_corolarul_teorema5_1_restrictie_dense}
can be used in order to analyze
(\ref{mathcal_P2_e1_6_ecStokes_ecPoisson_variational_dem_q_compl_p_dem11_cont_alt_izom_h_sist})
-
(\ref{e1_11_ecStokes_ecPoisson_variational_dem_grad_miu_ec_TAU_h_sist}).
\end{lem}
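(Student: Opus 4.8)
The plan is to check, line by line, that the discrete system (\ref{mathcal_P2_e1_6_ecStokes_ecPoisson_variational_dem_q_compl_p_dem11_cont_alt_izom_h_sist})--(\ref{e1_11_ecStokes_ecPoisson_variational_dem_grad_miu_ec_TAU_h_sist}) is, after the identifications fixed in Section \ref{sectiunea_ecuatii_remarks_2}, literally the equation $\widehat{\mathcal{A}}_{h}(x_{h},U_{h})=\widehat{\mathcal{T}}_{h}\widehat{\mathcal{F}}$ of (\ref{well_posed_e1_11_ecStokes_ecPoisson_variational_dem_q_compl4_ec_mathcal_X_h_E}). The first three lines of the discrete system are exactly $\mathcal{B}(x_{h},U_{h})=\mathcal{J}_{\mathcal{Z}}\mathcal{F}_{\mathcal{B}}$, with $\mathcal{B}$ and $\mathcal{F}_{\mathcal{B}}=(\hat{g},\ell_{S},g)$ as in Section \ref{sectiunea_alt_izomorfism}; the remaining seven lines are $U_{h}-\mathcal{T}_{h}\mathcal{G}(x_{h},U_{h})=\mathcal{T}_{h}\mathcal{F}$ read component by component, using $\mathcal{T}_{h}=(T_{VPD,h},T_{PD,h},T_{PD,h},\pi_{h},B_{1,h},B_{1,h},\pi_{h})$ together with the operators $G_{\textbf{u}},G_{q},\ldots,G_{\textbf{t}}$ already introduced. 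This is precisely the content of Lemma \ref{lema__izom_1_hhh} combined with the definitions of the discrete solution operators $T_{VPD,h},T_{PD,h},B_{1,h},\pi_{h}$ attached to the sets $\mathcal{M}_{h,1},\ldots,\mathcal{M}_{h,5}$, so writing out the match is bookkeeping against the component tables.

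Next I would verify that the data fit the abstract framework of Section \ref{well_posed_sectiunea_general_theorem_on_the_well_posedness}: $\mathcal{X}_{h}=\textbf{X}_{0h}\times Y_{0h}^{2}\times\textbf{X}_{h}\times M_{h}\times Y_{h}\times\textbf{X}_{h}$ is a closed subspace of $\mathcal{X}$ (finite dimensional), $\widehat{\Gamma}_{h}=\mathcal{Q}\times\mathcal{X}_{h}$, $\widehat{\Delta}_{h}=\mathcal{Z}\times\mathcal{X}_{h}$, and $\mathcal{T}_{h}\in L(\mathcal{Y},\mathcal{X}_{h})$ with $\mathcal{X}_{h}=\mathcal{T}_{h}(\mathcal{Y})$. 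For the last point one checks that each component operator is surjective onto its discrete target, i.e. $T_{VPD,h}(\textbf{H}^{-1}(\Omega))=\textbf{X}_{0h}$, $T_{PD,h}(H^{-1}(\Omega))=Y_{0h}$, $\pi_{h}(\textbf{L}^{2}(\Omega))=\textbf{X}_{h}$, $B_{1,h}(\,\cdot\,)=Y_{h}$ (resp. $M_{h}$): this is immediate because the Galerkin problems of $\mathcal{M}_{h,i}$ are uniquely solvable by the Lax--Milgram lemma on $E_{h}$ and their right-hand sides sweep out the whole discrete dual. Combined with the isomorphism property of $\widehat{\mathcal{A}}$ (Theorem \ref{teorema_izom_2}, via Theorem \ref{teorema_izom_1} and Theorem \ref{teorema_izom_1_T0}) and of $\mathcal{T}$ (Theorem \ref{teorema_izom_1_T0}), this places us exactly in the situation of Theorem \ref{well_posed_teorema5_1_Stokes_math_B_L2}: $\widehat{\mathcal{A}}_{h}=[\mathcal{B},\mathcal{A}_{h}]^{T}$ with $\mathcal{A}_{h}(x_{h},U_{h})=U_{h}-\mathcal{T}_{h}\mathcal{G}(x_{h},U_{h})$ and $Range(\mathcal{K})\subseteq Range(\mathcal{A})=\mathcal{X}$.

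Finally I would record that, with $\mathcal{X}_{d}$ and $\mathcal{S}$ as listed in Section \ref{sectiunea_ecuatii_remarks_2}, Lemma \ref{lema_widehat_GAMA_dense_T_tabel} supplies the density of $\widehat{\Gamma}_{0},\widehat{\Gamma}_{1}$ and $\widehat{\Sigma}_{0},\widehat{\Sigma}_{1}$, the continuity and compactness of the embedding $\mathcal{X}_{d}\subseteq\mathcal{X}$ and the density of $\mathcal{S}$, i.e. the hypotheses of Corollary \ref{well_posed_corolarul_teorema5_1_Stokes_math_B_L2_Tdense}; and the C\'ea estimates (\ref{e5_45p_conditia_q_dense_general_Cea}) built into $\mathcal{M}_{h,1},\ldots,\mathcal{M}_{h,5}$ give the inequalities (\ref{well_posed_e5_45p_conditia_Cea_GAMA0_ini_restrictie}) of Corollary \ref{well_posed_corolarul_teorema5_1_restrictie_dense}. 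Hence all three results apply to (\ref{mathcal_P2_e1_6_ecStokes_ecPoisson_variational_dem_q_compl_p_dem11_cont_alt_izom_h_sist})--(\ref{e1_11_ecStokes_ecPoisson_variational_dem_grad_miu_ec_TAU_h_sist}). The only genuinely delicate point is the surjectivity $\mathcal{X}_{h}=\mathcal{T}_{h}(\mathcal{Y})$ and the correlative range inclusion $Range(\mathcal{K})\subseteq Range(\mathcal{A})$: one must take each discrete solution operator with codomain equal to the \emph{whole} conforming subspace and not merely a proper part of it, so that the block operator $\widehat{\mathcal{A}}_{h}$ maps $\widehat{\Gamma}_{h}$ onto $\widehat{\Delta}_{h}$; everything else is routine matching of the component lists.
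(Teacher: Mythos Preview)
Your proposal is correct and follows the same approach as the paper: identify the spaces $\mathcal{Q},\mathcal{X},\mathcal{Y},\mathcal{Z},\mathcal{X}_{h}$ from Section~\ref{sectiunea_ecuatii_remarks} and observe that the left-hand side of (\ref{mathcal_P2_e1_6_ecStokes_ecPoisson_variational_dem_q_compl_p_dem11_cont_alt_izom_h_sist})--(\ref{e1_11_ecStokes_ecPoisson_variational_dem_grad_miu_ec_TAU_h_sist}) is precisely the operator $\widehat{\mathcal{A}}_{h}$ of Section~\ref{well_posed_sectiunea_general_theorem_on_the_well_posedness}. The paper's own proof is a two-sentence remark to this effect, whereas you spell out the component-by-component match and also anticipate the verification of the hypotheses of Corollaries~\ref{well_posed_corolarul_teorema5_1_Stokes_math_B_L2_Tdense} and~\ref{well_posed_corolarul_teorema5_1_restrictie_dense}; that extra material is not strictly needed for this lemma (the paper defers it to Theorem~\ref{teorema5_1_partea2NOU}) but it is correct and does no harm.
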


\begin{proof}

Consider the spaces $\mathcal{Q}$, $\mathcal{X}$, $\mathcal{Y}$,
$\mathcal{Z}$ and $\mathcal{X}_{h}$ defined in Section
\ref{sectiunea_ecuatii_remarks}. The operator
$\widehat{\mathcal{A}}_{h}$ defined by the left hand side of
(\ref{mathcal_P2_e1_6_ecStokes_ecPoisson_variational_dem_q_compl_p_dem11_cont_alt_izom_h_sist})
-
(\ref{e1_11_ecStokes_ecPoisson_variational_dem_grad_miu_ec_TAU_h_sist})
has the form of the operator $\widehat{\mathcal{A}}_{h}$ defined
in Section
\ref{well_posed_sectiunea_general_theorem_on_the_well_posedness}.

\qquad
\end{proof}

\begin{thm}
\label{teorema_izom_1_hhh_alfa} Assume the hypotheses of Theorem
\ref{well_posed_teorema5_1_Stokes_math_B_L2}. Taking $\alpha$
$\in$ $\mathbb{R}$, $\alpha \neq 1$, we obtain a family of well -
posed approximate formulations
(\ref{mathcal_P2_e1_6_ecStokes_ecPoisson_variational_dem_q_compl_p_dem11_cont_alt_izom_h})
-
(\ref{e1_11_ecStokes_ecPoisson_variational_dem_grad_miu_ec_TAU_h}).
\end{thm}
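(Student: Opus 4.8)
The plan is to recognise the approximate extended system (\ref{mathcal_P2_e1_6_ecStokes_ecPoisson_variational_dem_q_compl_p_dem11_cont_alt_izom_h})--(\ref{e1_11_ecStokes_ecPoisson_variational_dem_grad_miu_ec_TAU_h}) as a concrete instance of the abstract approximate equation (\ref{well_posed_e1_11_ecStokes_ecPoisson_variational_dem_q_compl4_ec_mathcal_X_h_E}) and to deduce its well-posedness from Theorem \ref{well_posed_teorema5_1_Stokes_math_B_L2}. First I would fix $\alpha \in \mathbb{R}$, $\alpha \neq 1$, and check that the standing framework of Theorem \ref{well_posed_teorema5_1_Stokes_math_B_L2} is in force, i.e.\ that $\widehat{\mathcal{A}}$ is an isomorphism of $\widehat{\Gamma}$ onto $\widehat{\Delta}$ with the block form $[\mathcal{B},\mathcal{I}_{\mathcal{X}}-\mathcal{T}\mathcal{G}]^{T}$. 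This is assembled from earlier results: Theorem \ref{teorema_izom_1} gives that $\widehat{\Phi}$ is an isomorphism of $\widehat{\Gamma}$ onto $\widehat{\Sigma}$; Theorem \ref{teorema_izom_1_T0} gives that $\mathcal{T}$ is an isomorphism of $\mathcal{Y}$ onto $\mathcal{X}$, whence $\widehat{\mathcal{T}}=[\mathcal{J}_{\mathcal{Z}},\mathcal{T}]^{T}$ is an isomorphism of $\widehat{\Sigma}$ onto $\widehat{\Delta}$; and Theorem \ref{teorema_izom_2} together with Lemma \ref{lema_teorema_izom_2} identifies $\widehat{\mathcal{A}}=\widehat{\mathcal{T}}\widehat{\Phi}$ with the operator of Section \ref{well_posed_sectiunea_general_theorem_on_the_well_posedness} built from the concrete $\mathcal{B}$, $\mathcal{G}$, $\mathcal{T}$, $\mathcal{X}_{h}$, $\mathcal{T}_{h}$ fixed in Section \ref{sectiunea_ecuatii_remarks_2}.

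Next I would invoke Lemma \ref{lema__izom_1_hhh_th} to write the discrete problem (\ref{mathcal_P2_e1_6_ecStokes_ecPoisson_variational_dem_q_compl_p_dem11_cont_alt_izom_h_sist})--(\ref{e1_11_ecStokes_ecPoisson_variational_dem_grad_miu_ec_TAU_h_sist}) in the form (\ref{well_posed_e1_11_ecStokes_ecPoisson_variational_dem_q_compl4_ec_mathcal_X_h_E}) for the operator $\widehat{\mathcal{A}}_{h}$, and then apply Theorem \ref{well_posed_teorema5_1_Stokes_math_B_L2}. Under the hypotheses being assumed --- in particular the smallness condition $q_{\ast}=\|(\mathcal{T}-\mathcal{T}_{h})\mathcal{G}\|_{L(\widehat{\Gamma},\mathcal{X})}\,\|\widehat{\mathcal{A}}^{-1}\|_{L(\widehat{\Delta},\widehat{\Gamma})}<1$ of (\ref{well_posed_e5_45p_conditia_q}) --- the theorem furnishes a nonempty closed subspace $\widehat{\Gamma}_{\ast}=\widehat{\mathcal{K}}^{-1}(\widehat{\Delta}_{h})\subseteq\widehat{\Gamma}_{h}$ on which $\widehat{\mathcal{A}}_{\ast}=\widehat{\mathcal{A}}_{h}|_{\widehat{\Gamma}_{\ast}}$ is an isomorphism onto $\widehat{\Delta}_{h}$, with the stability bound (\ref{well_posed_e5_45p_conditia_q_estimare}) and the error estimate (\ref{well_posed_e5_45p_U_Uh_eroare}). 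Consequently, for every $\widehat{\mathcal{F}}\in\widehat{\Sigma}$ there is a unique solution $(x_{h},U_{h})$ of (\ref{mathcal_P2_e1_6_ecStokes_ecPoisson_variational_dem_q_compl_p_dem11_cont_alt_izom_h_sist})--(\ref{e1_11_ecStokes_ecPoisson_variational_dem_grad_miu_ec_TAU_h_sist}), depending continuously on the data, and approximating the unique solution $(x,U)$ of the exact problem (\ref{well_posed_e1_11_ecStokes_ecPoisson_variational_dem_q_compl4_ec_mathcal_X_op_E}).

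Then I would transfer this back to the approximate extended system via Lemma \ref{lema__izom_1_hhh}, which says that (\ref{mathcal_P2_e1_6_ecStokes_ecPoisson_variational_dem_q_compl_p_dem11_cont_alt_izom_h_sist})--(\ref{e1_11_ecStokes_ecPoisson_variational_dem_grad_miu_ec_TAU_h_sist}) is equivalent to (\ref{mathcal_P2_e1_6_ecStokes_ecPoisson_variational_dem_q_compl_p_dem11_cont_alt_izom_h})--(\ref{e1_11_ecStokes_ecPoisson_variational_dem_grad_miu_ec_TAU_h}), the translation being just the definitions of $T_{VPD,h}$, $T_{PD,h}$, $\pi_{h}$, $B_{1,h}$. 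Hence the approximate extended system inherits unique solvability and continuous dependence on the data, i.e.\ it is well posed; moreover no discrete inf-sup condition enters, since this well-posedness rests only on $q_{\ast}<1$, which by Corollaries \ref{well_posed_corolarul_teorema5_1_Stokes_math_B_L2_Tdense} and \ref{well_posed_corolarul_teorema5_1_restrictie_dense} follows from the convergence of the Galerkin method for the elliptic solution operators underlying $\mathcal{T}$. Since $\alpha\in\mathbb{R}$, $\alpha\neq 1$, was arbitrary, this yields the asserted family of well-posed approximate formulations indexed by $\alpha$.

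The step I expect to be the main obstacle is the bookkeeping of the first paragraph: one must verify carefully that the concrete operators $\mathcal{B}$, $\mathcal{G}$, $\mathcal{T}$, $\mathcal{T}_{h}$ listed in Section \ref{sectiunea_ecuatii_remarks_2} are exactly the ones appearing in (\ref{mathcal_P2_e1_6_ecStokes_ecPoisson_variational_dem_q_compl_p_dem11_cont_alt_izom_h_sist})--(\ref{e1_11_ecStokes_ecPoisson_variational_dem_grad_miu_ec_TAU_h_sist}), so that $\widehat{\mathcal{A}}=\widehat{\mathcal{T}}\widehat{\Phi}$ really has the precise block structure demanded by the framework of Theorem \ref{well_posed_teorema5_1_Stokes_math_B_L2}; this identification is supplied by Lemmas \ref{lema_teorema_izom_2} and \ref{lema__izom_1_hhh_th} together with Theorem \ref{teorema_izom_2}, after which the remainder of the argument is a direct citation of the abstract machinery, with the index $\alpha$ carried along passively.
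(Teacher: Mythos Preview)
Your proposal is correct and follows exactly the paper's route: the paper's proof is a one-line citation of Lemma \ref{lema__izom_1_hhh_th}, Theorem \ref{well_posed_teorema5_1_Stokes_math_B_L2}, and Lemma \ref{lema__izom_1_hhh}, which is precisely the chain you outline. Your additional remarks invoking Theorems \ref{teorema_izom_1}, \ref{teorema_izom_1_T0}, \ref{teorema_izom_2} and Lemma \ref{lema_teorema_izom_2} are helpful background but are already absorbed into Lemma \ref{lema__izom_1_hhh_th}; likewise your mention of Corollaries \ref{well_posed_corolarul_teorema5_1_Stokes_math_B_L2_Tdense} and \ref{well_posed_corolarul_teorema5_1_restrictie_dense} is not needed here since the statement assumes the hypotheses of Theorem \ref{well_posed_teorema5_1_Stokes_math_B_L2} directly (in particular $q_{\ast}<1$).
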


\begin{proof} The results follow from Lemma \ref{lema__izom_1_hhh_th}, Theorem
\ref{well_posed_teorema5_1_Stokes_math_B_L2} and Lemma
\ref{lema__izom_1_hhh}.

\qquad
\end{proof}

\begin{thm}
\label{teorema_izom_1_hhh} Assume the hypotheses of Corollaries
\ref{corolarul5_teorema_izom_1_cor10_div0} and
\ref{corolarul5_teorema_izom_1_cor10}. Assume the hypotheses of
Theorem \ref{well_posed_teorema5_1_Stokes_math_B_L2}. For each
$\alpha \neq 1$, the solution $(\textbf{u},p)$ of problem
(\ref{e1_11_ecStokes_ecPoisson_variational_dem_th_I_5_1}) -
(\ref{e5_1_cond_ecStokes_ecPoisson_variational_dem_th_I_5_1}) is
approximated by the components $(\textbf{u}_{h},p_{h})$ of the
solution of
(\ref{mathcal_P2_e1_6_ecStokes_ecPoisson_variational_dem_q_compl_p_dem11_cont_alt_izom_h})
-
(\ref{e1_11_ecStokes_ecPoisson_variational_dem_grad_miu_ec_TAU_h}).
\end{thm}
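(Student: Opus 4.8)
The plan is to chain together the equivalences and isomorphism results already established, so that the statement reduces to an application of Theorem \ref{well_posed_teorema5_1_Stokes_math_B_L2}. Fix $\alpha \neq 1$ and impose the data $\hat{g}=0$, $\ell_{S}=0$, $g=0$, $\psi=0$, $\hat{\psi}=0$, $\textbf{f}_{\triangle}=\textbf{f}$, $\varrho=0$, $\hat{\varrho}=0$, $\hat{\textbf{f}}=\textbf{f}$ of Corollaries \ref{corolarul5_teorema_izom_1_cor10_div0} and \ref{corolarul5_teorema_izom_1_cor10}. By Corollary \ref{corolarul5_teorema_izom_1_cor10_div0}, the exact extended system (\ref{mathcal_P2_e1_6_ecStokes_ecPoisson_variational_dem_q_compl_p_dem11_cont_alt_izom})--(\ref{e1_11_ecStokes_ecPoisson_variational_dem_grad_miu_ec_TAU}) has a unique solution $(x,U)$, and the components $(\textbf{u},p)$ of $U$ (note $p_{S}=p$ since $\ell_{S}=0$) coincide with the unique solution of the Stokes problem (\ref{e1_11_ecStokes_ecPoisson_variational_dem_th_I_5_1})--(\ref{e5_1_cond_ecStokes_ecPoisson_variational_dem_th_I_5_1}) with $g=0$. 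By Theorem \ref{teorema_izom_2} and Lemma \ref{lema_teorema_izom_2} this exact system is equivalent to the operator equation (\ref{well_posed_e1_11_ecStokes_ecPoisson_variational_dem_q_compl4_ec_mathcal_X_op_E}), i.e.\ $\widehat{\mathcal{A}}(x,U)=\widehat{\mathcal{T}}\widehat{\mathcal{F}}$, where, by the construction of Section \ref{sectiunea_alt_izomorfism} together with Theorem \ref{teorema_izom_1}, $\widehat{\mathcal{A}}$ is an isomorphism of $\widehat{\Gamma}$ onto $\widehat{\Delta}$.

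Next I would pass to the approximate side. By Lemma \ref{lema__izom_1_hhh} the approximate extended system (\ref{mathcal_P2_e1_6_ecStokes_ecPoisson_variational_dem_q_compl_p_dem11_cont_alt_izom_h})--(\ref{e1_11_ecStokes_ecPoisson_variational_dem_grad_miu_ec_TAU_h}) is equivalent to its operator form (\ref{mathcal_P2_e1_6_ecStokes_ecPoisson_variational_dem_q_compl_p_dem11_cont_alt_izom_h_sist})--(\ref{e1_11_ecStokes_ecPoisson_variational_dem_grad_miu_ec_TAU_h_sist}), which by Lemma \ref{lema__izom_1_hhh_th} has the form (\ref{well_posed_e1_11_ecStokes_ecPoisson_variational_dem_q_compl4_ec_mathcal_X_h_E}), i.e.\ $\widehat{\mathcal{A}}_{h}(x_{h},U_{h})=\widehat{\mathcal{T}}_{h}\widehat{\mathcal{F}}$, with the same right-hand datum $\widehat{\mathcal{F}}$. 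Since the hypotheses of Theorem \ref{well_posed_teorema5_1_Stokes_math_B_L2} are assumed (in particular the isomorphism property of $\widehat{\mathcal{A}}$ and the smallness condition (\ref{well_posed_e5_45p_conditia_q})), that theorem provides a unique solution $(x_{h},U_{h})$ of the approximate problem together with the error estimate (\ref{well_posed_e5_45p_U_Uh_eroare}) bounding $\|(x,U)-(x_{h},U_{h})\|_{\widehat{\Gamma}}$ by $(1-q_{\ast})^{-1}\| \widehat{\mathcal{A}}^{-1} \| \,\|(\mathcal{T}-\mathcal{T}_{h})\mathcal{T}^{-1}U\|_{\mathcal{X}}$.

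Finally, because the norm on the product space $\widehat{\Gamma}$ dominates each component norm, the bound on $\|(x,U)-(x_{h},U_{h})\|_{\widehat{\Gamma}}$ in particular controls $\|\textbf{u}-\textbf{u}_{h}\|_{\textbf{H}_{0}^{1}(\Omega)}$ and $\|p-p_{h}\|_{H^{1}(\Omega)\cap L_{0}^{2}(\Omega)}$. Invoking Corollaries \ref{well_posed_corolarul_teorema5_1_Stokes_math_B_L2_Tdense} and \ref{well_posed_corolarul_teorema5_1_restrictie_dense} under the density and C\'ea-type hypotheses recorded in Section \ref{sectiunea_ecuatii_remarks} and Lemma \ref{lema_widehat_GAMA_dense_T_tabel}, the right-hand side of (\ref{well_posed_e5_45p_U_Uh_eroare}) tends to $0$ as $h\to 0$, whence $(\textbf{u}_{h},p_{h})$ approximates the Stokes pair $(\textbf{u},p)$, which is the claim. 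The main obstacle is not any of these reductions -- each is a direct appeal to an earlier statement -- but the verification, hidden in the hypotheses of Theorem \ref{well_posed_teorema5_1_Stokes_math_B_L2}, that $\widehat{\mathcal{A}}\colon\widehat{\Gamma}\to\widehat{\Delta}$ is an isomorphism and that $q_{\ast}<1$ for $h$ small; this rests in turn on the Galerkin convergence of the elliptic solution operators $T_{VPD},T_{PD},B_{1},\pi$ attached to $\mathcal{M}_{1},\dots,\mathcal{M}_{5}$, so the only genuinely substantive checking is that each of those discrete elliptic problems satisfies the conditions for the Galerkin method and the conclusion of C\'ea's lemma. A secondary subtlety is the identification $p_{S}=p$, which is exactly where $\ell_{S}=0$ is used to make the extended-system components reproduce the Stokes pair.
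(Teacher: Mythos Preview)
Your proof is correct and follows essentially the same route as the paper, which proves the theorem in one line by appeal to Theorem \ref{teorema_izom_1_hhh_alfa} (whose proof in turn cites Lemma \ref{lema__izom_1_hhh_th}, Theorem \ref{well_posed_teorema5_1_Stokes_math_B_L2}, and Lemma \ref{lema__izom_1_hhh}). You have simply unpacked that chain explicitly and added the identification of $(\textbf{u},p)$ with the Stokes pair via Corollary \ref{corolarul5_teorema_izom_1_cor10_div0}, together with the optional convergence step through Corollaries \ref{well_posed_corolarul_teorema5_1_Stokes_math_B_L2_Tdense} and \ref{well_posed_corolarul_teorema5_1_restrictie_dense}; the latter goes slightly beyond what the statement strictly requires but is harmless.
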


\begin{proof}
This follows from Theorem \ref{teorema_izom_1_hhh_alfa}.

\qquad
\end{proof}

\subsection{Approximation by the finite element method}
\label{sectiunea_ecuatii_Stokes_problema_aproximativa_fem}

In order to define the following operator $\pi_{0h}$ and the
discrete Laplace operator $-\triangle^{h}$, assume that the
approximate spaces are finite element spaces and the following
inverse inequality \cite{CLBichir_bib_Bochev_Gunzburger2004,
CLBichir_bib_Bochev_GunzburgerLSFEM2009, CLBichir_bib_Ciarlet2002,
CLBichir_bib_Gir_Rav1986} is satisfied
\begin{eqnarray}
   && \| \textbf{u}_{h} \|_{1} \leq C_{FEM} h^{-1} \| \textbf{u}_{h} \|_{0}, \
              \forall \, \textbf{u}_{h}  \, \in \, \textbf{X}_{h} \, ,
         \label{cont_SOL_prel_op_hat_pi_h_DEF3_ineg}
\end{eqnarray}

We refer to \cite{CLBichir_bib_Bochev_Gunzburger2004}. Assume
(\ref{cont_SOL_prel_op_hat_pi_h_DEF3_ineg}). Let $\pi_{0h}$ be the
$L^{2}$ projection operator onto $\textbf{X}_{0h}$ defined by
$\pi_{0h}:\textbf{L}^{2}(\Omega) \rightarrow \textbf{X}_{0h}$ such
that $\textbf{z}_{h}$ $\in$ $\textbf{X}_{0h}$, $\textbf{f}_{\ast}$
$\in$ $\textbf{L}^{2}(\Omega)$,
$\pi_{0h}\textbf{f}_{\ast}=\textbf{z}_{h}$ if and only if
\begin{eqnarray}
   && (\textbf{z}_{h}, \textbf{w}_{h})=(\textbf{f}_{\ast}, \textbf{w}_{h}), \
              \forall \, \textbf{w}_{h}  \, \in \, \textbf{X}_{0h} \, ,
         \label{SOL_prel_op_hat_pi_h_DEF1_zero}
\end{eqnarray}
$\mathcal{M}_{h,6}$ $=$ $\{ 6$, $\mathcal{M}_{5}$,
$(\ref{prel_op_hat_pi_h_DEF2H_1_EC_M9})$,
$(\ref{SOL_prel_op_hat_pi_h_DEF1_zero})$, $\textbf{X}_{0h}$,
$\textbf{z}_{h}$, $\gamma_{6}$, $\pi_{0h} \}$.

We refer to \cite{CLBichir_bib_Bochev_Gunzburger2004}. Assume
(\ref{cont_SOL_prel_op_hat_pi_h_DEF3_ineg}). The discrete Laplace
operator $-\triangle^{h}$ is defined to be the mapping
$-\triangle^{h}:\textbf{H}_{0}^{1}(\Omega) \rightarrow
\textbf{X}_{0h}$ be such that $-\triangle^{h}\textbf{u}$ $=$
$\xi_{h}$ if and only if
\begin{eqnarray}
   && (\xi_{h},\textbf{w}_{h})=(grad \, \textbf{u},grad \, \textbf{w}_{h}), \
              \forall \, \textbf{w}_{h}  \, \in \, \textbf{X}_{0h} \, ,
         \label{e1_11_ecStokes_ecPoisson_variational_dem_h_def_delta}
\end{eqnarray}

\begin{rem}
\label{observatia5_consistentFEM_frontiera} Assume that $\Omega$
is a polygon in $\mathbb{R}^{2}$. $N=2$. Let us consider a regular
triangulation $\Theta_{h}$ of $\Omega$ by triangles $K$ that are
triangles with three nodal points that coincide with the three
vertices.
\begin{equation}
\label{ec12_V}
   X_{h}=\{ v_{h} \in C^{0}(\bar{\Omega}),
      v_{h}|_{K} \in P_{1}(K),
      \forall K \in \Theta_{h} \} \, .
\end{equation}
We take $X_{h}$ $=$ $Y_{h}$. A basis functions of the space
$P_{1}(K)$ is defined by the three nodal points of $K$ (e.g.
subsection 10.2.1, page 394,
\cite{CLBichir_bib_Atkinson_Han2009}). The inverse inequality
(\ref{cont_SOL_prel_op_hat_pi_h_DEF3_ineg}) holds
(\cite{CLBichir_bib_Bochev_Gunzburger2004,
CLBichir_bib_Bochev_GunzburgerLSFEM2009, CLBichir_bib_Ciarlet2002,
CLBichir_bib_Gir_Rav1986}). Then, in equation
(\ref{e1_11_ecStokes_ecPoisson_variational_dem_grad_miu_inlocuiri_sum_2_L2_cor_h}),
we use \linebreak $Y_{h}$ $\ominus$ $Y_{0h}$ $=$ $\Xi_{h}$. The
space $\Xi_{h}$ denotes the complementary space $M_{h}$ from the
subsection 5.7.2.2, page 268, \cite{CLBichir_bib_Glowinski1984}.
From this reference, we also retain that $\mu_{h}$ $\in$ $\Xi_{h}$
is determined by its values at the boundary nodes.
\end{rem}

\begin{cor}
\label{observatia_ec_Poisson_grad_p_h_modalitate_zh_1} The
approximation $p_{h}$ of the pressure $p$ on the boundary depends
on the approximation $\textbf{z}_{h}$ of the $\textbf{z}$ on the
boundary. In order to obtain the approximate solution
$(\textbf{u}_{h},p_{h})$ as a component of the solution of
(\ref{e1_11_ecStokes_ecPoisson_variational_dem_h}),
(\ref{VAR_mathcal_P2_e1_6_ecStokes_ecPoisson_variational_dem_q_compl_dem11_cont_alt_izom_PRESIUNE_dir_h}),
(\ref{e1_11_ecStokes_ecPoisson_variational_dem_grad_miu_inlocuiri_sum_2_L2_cor_h}),
where we use
(\ref{e1_11_ecStokes_ecPoisson_variational_dem_grad_miu_ec_TAU_h}),
(\ref{e1_11_ecStokes_ecPoisson_variational_dem_grad_miu_inlocuiri_ec_h}),
$\textbf{z}_{h}$ can be determined from equation
(\ref{e1_11_ecStokes_ecPoisson_variational_dem_diferenta_h}) under
some additional conditions which are decided in every studied
particular case of approximation. One such condition is
$\textbf{z}_{h}$ $\in$ $\textbf{X}_{0h}$ in the case of the finite
element method, under the hypothesis
(\ref{cont_SOL_prel_op_hat_pi_h_DEF3_ineg}), that is,
$\textbf{z}_{h}$ $=$ $\xi_{h}$ from
(\ref{e1_11_ecStokes_ecPoisson_variational_dem_h_def_delta}),
where we use the approximation
$-\triangle^{h}:\textbf{H}_{0}^{1}(\Omega) \rightarrow
\textbf{X}_{0h}$ of the Laplace operator from
\cite{CLBichir_bib_Bochev_Gunzburger2004}.
\end{cor}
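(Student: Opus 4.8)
The plan is to read the claimed dependence off the reduced form of the approximate system, and then to verify, in the finite element setting, that the side condition $\textbf{z}_{h} \in \textbf{X}_{0h}$ together with the inverse inequality (\ref{cont_SOL_prel_op_hat_pi_h_DEF3_ineg}) forces $\textbf{z}_{h}$ to coincide with the discrete Laplacian $\xi_{h} = -\triangle^{h}\textbf{u}_{h}$ from (\ref{e1_11_ecStokes_ecPoisson_variational_dem_h_def_delta}). First I would recall, from Corollary \ref{observatia_ec_Poisson_grad_p_h_modalitate_zh_1}'s counterpart \ref{observatia_ec_Poisson_grad_p_h}, that the approximate solution produced by Theorem \ref{teorema_izom_1_hhh} satisfies (\ref{VAR_mathcal_P2_e1_6_ecStokes_ecPoisson_variational_dem_q_compl_dem11_cont_alt_izom_PRESIUNE_dir_h}) on $Y_{0h}$ and (\ref{e1_11_ecStokes_ecPoisson_variational_dem_grad_miu_inlocuiri_sum_2_L2_cor_h}) on $Y_{h} \ominus Y_{0h}$, and that in the latter it is the combination $\textbf{z}_{h} - \textbf{t}_{h} + grad \, p_{h}$ that is tested against $grad \, \varphi_{h}$. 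Since $\textbf{t}_{h}$ is fixed by (\ref{e1_11_ecStokes_ecPoisson_variational_dem_grad_miu_ec_TAU_h}) and the right-hand sides are data, the degrees of freedom of $p_{h}$ carried by the boundary nodes — which by Remark \ref{observatia5_consistentFEM_frontiera} are exactly those associated with $\Xi_{h} = Y_{h} \ominus Y_{0h}$ — are determined once $\textbf{z}_{h}$ is known in those test directions. This yields the first assertion.

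Next I would address the closure of the system for $\textbf{z}_{h}$. Equation (\ref{e1_11_ecStokes_ecPoisson_variational_dem_diferenta_h}), obtained in Remark \ref{observatia5_consistentFEM_compatibilitate}, only holds with test functions $\textbf{w}_{h} \in \textbf{X}_{0h}$, so it cannot by itself single out $\textbf{z}_{h}$ in $\textbf{L}^{2}(\Omega)$; an additional prescription is needed, and the natural one depends on the chosen discretization. Under the hypotheses of Corollaries \ref{corolarul5_teorema_izom_1_cor10_div0} and \ref{corolarul5_teorema_izom_1_cor10} one has $\ell_{S} = 0$, $\textbf{f}_{\triangle} = \textbf{f}$ with $\textbf{f} \in \textbf{L}^{2}(\Omega)$, so (\ref{e1_11_ecStokes_ecPoisson_variational_dem_diferenta_h}) reduces to $(grad \, \textbf{u}_{h}, grad \, \textbf{w}_{h}) = (\textbf{z}_{h}, \textbf{w}_{h})$ for all $\textbf{w}_{h} \in \textbf{X}_{0h}$. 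Imposing the finite element side condition $\textbf{z}_{h} \in \textbf{X}_{0h}$ — which is admissible precisely because, under (\ref{cont_SOL_prel_op_hat_pi_h_DEF3_ineg}), the operator $-\triangle^{h}$ of \cite{CLBichir_bib_Bochev_Gunzburger2004} maps $\textbf{H}_{0}^{1}(\Omega)$ into $\textbf{X}_{0h}$ — this equation is exactly the defining relation (\ref{e1_11_ecStokes_ecPoisson_variational_dem_h_def_delta}) of $\xi_{h} = -\triangle^{h}\textbf{u}_{h}$. Since the $\textbf{L}^{2}$ inner product is nondegenerate on the finite dimensional space $\textbf{X}_{0h}$, $\textbf{z}_{h} = \xi_{h}$ is uniquely determined by $\textbf{u}_{h}$, after which (\ref{VAR_mathcal_P2_e1_6_ecStokes_ecPoisson_variational_dem_q_compl_dem11_cont_alt_izom_PRESIUNE_dir_h}) and (\ref{e1_11_ecStokes_ecPoisson_variational_dem_grad_miu_inlocuiri_sum_2_L2_cor_h}) recover $p_{h}$ (including its boundary nodal values) while (\ref{e1_11_ecStokes_ecPoisson_variational_dem_h}) recovers $\textbf{u}_{h}$.

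I expect the only delicate point to be the compatibility check: one must verify that prescribing $\textbf{z}_{h} \in \textbf{X}_{0h}$ through (\ref{e1_11_ecStokes_ecPoisson_variational_dem_h_def_delta}) is consistent with, rather than an over-determination of, the component $\textbf{z}_{h}$ of the solution $(x_{h},U_{h})$ supplied by the abstract isomorphism of Theorem \ref{well_posed_teorema5_1_Stokes_math_B_L2}; equivalently, that (\ref{e1_11_ecStokes_ecPoisson_variational_dem_diferenta_h}) is automatically satisfied by that solution, so the side condition merely fixes how $\textbf{z}_{h}$ is to be computed in practice and does not alter the solution set. Granting this, all remaining steps are the routine substitutions already performed in Remarks \ref{observatia5_echiv_dense} and \ref{observatia5_consistentFEM_compatibilitate} and in Corollary \ref{observatia_ec_Poisson_grad_p_h}, so the proof is essentially a verification rather than a new argument.
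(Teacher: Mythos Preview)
Your proposal is correct and follows essentially the same line as the paper's own proof: both read the boundary dependence of $p_{h}$ on $\textbf{z}_{h}$ directly from (\ref{e1_11_ecStokes_ecPoisson_variational_dem_grad_miu_inlocuiri_sum_2_L2_cor_h}), and both treat the side condition $\textbf{z}_{h}\in\textbf{X}_{0h}$ as the device that closes (\ref{e1_11_ecStokes_ecPoisson_variational_dem_diferenta_h}) to the defining relation (\ref{e1_11_ecStokes_ecPoisson_variational_dem_h_def_delta}). The only difference in emphasis is that the paper disposes of your ``delicate point'' by noting that under the Stokes data $\tilde{\psi}_{r,h}=0$ and hence $r_{h}=0$, and then arguing that $\textbf{u}_{h},p_{h},\textbf{z}_{h},\textbf{t}_{h}$ already exist as components of the unique solution $(x_{h},U_{h})$ of the full approximate system, so any non-uniqueness in the reduced system would contradict the isomorphism of Theorem~\ref{well_posed_teorema5_1_Stokes_math_B_L2}; you instead verify the identification $\textbf{z}_{h}=\xi_{h}$ more explicitly and flag the consistency question, which amounts to the same thing.
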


\begin{proof} The first affirmation follows from
(\ref{e1_11_ecStokes_ecPoisson_variational_dem_grad_miu_inlocuiri_sum_2_L2_cor_h}).
$\tilde{\psi}_{r,h}$ $=$ $0$ so $r_{h}$ $=$ $0$ from
(\ref{e1_11_ecStokes_ecPoisson_variational_dem_grad_miu_inlocuiri_ec_h}).
If we determine $u_{h}$, $p_{h}$, $\textbf{z}_{h}$,
$\textbf{t}_{h}$ as it is indicated in the Corollary
\ref{observatia_ec_Poisson_grad_p_h_modalitate_zh_1}, the rest of
the components of the solution $(x_{h},U_{h})$ are obtained from
the rest of the equations of
(\ref{mathcal_P2_e1_6_ecStokes_ecPoisson_variational_dem_q_compl_p_dem11_cont_alt_izom_h})
-
(\ref{e1_11_ecStokes_ecPoisson_variational_dem_grad_miu_ec_TAU_h}).
$u_{h}$, $p_{h}$, $\textbf{z}_{h}$, $\textbf{t}_{h}$ exist as
components of the solution $(x_{h},U_{h})$ of
(\ref{mathcal_P2_e1_6_ecStokes_ecPoisson_variational_dem_q_compl_p_dem11_cont_alt_izom_h})
-
(\ref{e1_11_ecStokes_ecPoisson_variational_dem_grad_miu_ec_TAU_h})
or
(\ref{mathcal_P2_e1_6_ecStokes_ecPoisson_variational_dem_q_compl_p_dem11_cont_alt_izom_h_sist})
-
(\ref{e1_11_ecStokes_ecPoisson_variational_dem_grad_miu_ec_TAU_h_sist}).
If we assume that they are not unique, then we contradict the
uniqueness of $(x_{h},U_{h})$.

\qquad
\end{proof}

\begin{cor}
\label{observatia_ec_Poisson_grad_p_h_modalitate_zh} Let us
continue the discussion from Corollary
\ref{observatia_ec_Poisson_grad_p_h_modalitate_zh_1} in the case
of the finite element method, under the hypothesis
(\ref{cont_SOL_prel_op_hat_pi_h_DEF3_ineg}).

We refer as to "the basic case" to the exact and approximated
problems studied till now,
(\ref{mathcal_P2_e1_6_ecStokes_ecPoisson_variational_dem_q_compl_p_dem11_cont_alt_izom})
- (\ref{e1_11_ecStokes_ecPoisson_variational_dem_grad_miu_ec_TAU})
(where we work with
(\ref{mathcal_P2_e1_6_ecStokes_ecPoisson_variational_dem_q_compl_dem11_cont_alt_izom_instead2_delta_hat_delta_aprox_ECH_var_modif_cont})
instead of
(\ref{mathcal_P2_e1_6_ecStokes_ecPoisson_variational_dem_q_compl_dem11_cont_alt_izom_instead2_delta_hat_delta_aprox})),
(\ref{mathcal_P2_e1_6_ecStokes_ecPoisson_variational_dem_q_compl_p_dem11_cont_alt_izom_sist})
-
(\ref{e1_11_ecStokes_ecPoisson_variational_dem_grad_miu_ec_TAU_sist}),
(\ref{mathcal_P2_e1_6_ecStokes_ecPoisson_variational_dem_q_compl_p_dem11_cont_alt_izom_h})
-
(\ref{e1_11_ecStokes_ecPoisson_variational_dem_grad_miu_ec_TAU_h}),
(\ref{mathcal_P2_e1_6_ecStokes_ecPoisson_variational_dem_q_compl_p_dem11_cont_alt_izom_h_sist})
-
(\ref{e1_11_ecStokes_ecPoisson_variational_dem_grad_miu_ec_TAU_h_sist}).

I. A modality to introduce the condition $\textbf{z}_{h}$ $\in$
$\textbf{X}_{0h}$ is obtained by taking two steps:

1. The basic case. For the study, we use Theorem
\ref{teorema5_1_partea2NOU} (or similar to this).

2. The approximate problem
(\ref{mathcal_P2_e1_6_ecStokes_ecPoisson_variational_dem_q_compl_p_dem11_cont_alt_izom_h})
-
(\ref{e1_11_ecStokes_ecPoisson_variational_dem_grad_miu_ec_TAU_h})
from the basic case is approximated by a problem of the same form,
with the same data, with a single modification: $\textbf{z}_{h}$
$\in$ $\textbf{X}_{h}$ from the basic case is approximated by the
new $\textbf{z}_{h}$ $\in$ $\textbf{X}_{0h}$; we replace
(\ref{mathcal_P2_e1_6_ecStokes_ecPoisson_variational_dem_q_compl_dem11_cont_alt_izom_instead2_delta_h_hat_delta_aprox}),
(\ref{dem10_cont_alt_izom_instead2_delta_h_cont}) by
(\ref{mathcal_P2_e1_6_ecStokes_ecPoisson_variational_dem_q_compl_dem11_cont_alt_izom_instead2_delta_h_hat_delta_aprox_caseIII}),
(\ref{dem10_cont_alt_izom_instead2_delta_h_cont_caseIII}),
respectively, where
\begin{eqnarray}
   && (\textbf{z}_{h},\tilde{\textbf{w}}_{h}) -(p_{h},div \, \tilde{\textbf{w}}_{h})=(\textbf{f}_{\triangle},\tilde{\textbf{w}}_{h}), \
              \forall \, \tilde{\textbf{w}}_{h}  \, \in \, \textbf{X}_{0h} \, ,
         \label{mathcal_P2_e1_6_ecStokes_ecPoisson_variational_dem_q_compl_dem11_cont_alt_izom_instead2_delta_h_hat_delta_aprox_caseIII}
\end{eqnarray}
\begin{eqnarray}
   && \textbf{z}_{h} - \pi_{0h} G_{\textbf{z}}(p_{h})
         = \pi_{0h} \textbf{f}_{\triangle} \, .
         \label{dem10_cont_alt_izom_instead2_delta_h_cont_caseIII}
\end{eqnarray}
For this second step of the study, for a fixed $h$, we use only
Theorem \ref{well_posed_teorema5_1_Stokes_math_B_L2}, without the
convergence $h \rightarrow 0$. In order to verify the estimate
(\ref{well_posed_e5_45p_conditia_q}), it is necessary only the use
of (\ref{e5_1_eroare_aprox_Poisson_HypH2_Td_2_STOKES_abc_3}) (or
similar to this) in order to compare
(\ref{dem10_cont_alt_izom_instead2_delta_h_cont}), from the basic
case, with (\ref{dem10_cont_alt_izom_instead2_delta_h_cont}) from
this second step.

Finally, for these two steps, the estimate
(\ref{well_posed_e5_45p_conditia_q}) must be satisfied comparing
(\ref{mathcal_P2_e1_6_ecStokes_ecPoisson_variational_dem_q_compl_p_dem11_cont_alt_izom_sist})
-
(\ref{e1_11_ecStokes_ecPoisson_variational_dem_grad_miu_ec_TAU_sist})
with
(\ref{mathcal_P2_e1_6_ecStokes_ecPoisson_variational_dem_q_compl_p_dem11_cont_alt_izom_h_sist})
-
(\ref{e1_11_ecStokes_ecPoisson_variational_dem_grad_miu_ec_TAU_h_sist}),
where
(\ref{mathcal_P2_e1_6_ecStokes_ecPoisson_variational_dem_q_compl_dem11_cont_alt_izom_instead2_delta_h_hat_delta_aprox}),
(\ref{dem10_cont_alt_izom_instead2_delta_h_cont}) are replaced by
(\ref{mathcal_P2_e1_6_ecStokes_ecPoisson_variational_dem_q_compl_dem11_cont_alt_izom_instead2_delta_h_hat_delta_aprox_caseIII}),
(\ref{dem10_cont_alt_izom_instead2_delta_h_cont_caseIII}) (and
where $\textbf{z}_{h}$ $\in$ $\textbf{X}_{0h}$). The convergence
$h \rightarrow 0$ from the first step and
(\ref{e5_1_eroare_aprox_Poisson_HypH2_Td_2_STOKES_abc_3}) (or
similar to this) in the second step lead us to the existence of an
index $h_{2}$ such that $\forall$ $h < h_{2}$, $q_{\ast} < 1$,
that is, (\ref{well_posed_e5_45p_conditia_q}).

II. Another modality to introduce the condition $\textbf{z}_{h}$
$\in$ $\textbf{X}_{0h}$ is obtained in the following manner. In
the basic case, we approximate
(\ref{mathcal_P2_e1_6_ecStokes_ecPoisson_variational_dem_q_compl_dem11_cont_alt_izom_instead2_delta_hat_delta_aprox})
by
(\ref{mathcal_P2_e1_6_ecStokes_ecPoisson_variational_dem_q_compl_dem11_cont_alt_izom_instead2_delta_h_hat_delta_aprox_caseIII})
that replaces
(\ref{mathcal_P2_e1_6_ecStokes_ecPoisson_variational_dem_q_compl_dem11_cont_alt_izom_instead2_delta_h_hat_delta_aprox})
in
(\ref{mathcal_P2_e1_6_ecStokes_ecPoisson_variational_dem_q_compl_p_dem11_cont_alt_izom_h})
-
(\ref{e1_11_ecStokes_ecPoisson_variational_dem_grad_miu_ec_TAU_h}),
and, in
(\ref{mathcal_P2_e1_6_ecStokes_ecPoisson_variational_dem_q_compl_p_dem11_cont_alt_izom_h_sist})
-
(\ref{e1_11_ecStokes_ecPoisson_variational_dem_grad_miu_ec_TAU_h_sist}),
we replace (\ref{dem10_cont_alt_izom_instead2_delta_h_cont}) by
(\ref{dem10_cont_alt_izom_instead2_delta_h_cont_caseIII}). For the
study, it can be used Theorem
\ref{well_posed_teorema5_1_Stokes_math_B_L2} and estimates such
those from Lemmas \ref{lema_eroare_aprox_toate_pr} and
\ref{lema_eroare_aprox_toate_pr_0} (or similar to these),
accompanied to laborious correlations between spaces, norms and
the components of $\mathcal{G}$.
\end{cor}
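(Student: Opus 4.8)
The plan is to treat both modalities (I and II) of the statement as applications of the abstract approximation Theorem \ref{well_posed_teorema5_1_Stokes_math_B_L2} to the extended system in operator form (\ref{mathcal_P2_e1_6_ecStokes_ecPoisson_variational_dem_q_compl_p_dem11_cont_alt_izom_h_sist}) - (\ref{e1_11_ecStokes_ecPoisson_variational_dem_grad_miu_ec_TAU_h_sist}), reducing everything to the single smallness condition (\ref{well_posed_e5_45p_conditia_q}), $q_\ast < 1$. The only new ingredient over the basic case is that the fourth component of the discrete solution operator $\mathcal{T}_h$ (the one acting on $\textbf{z}$) is changed from the projection $\pi_h$ onto $\textbf{X}_h$ to the $L^{2}$-projection $\pi_{0h}$ onto $\textbf{X}_{0h}$, and correspondingly the fourth factor of $\mathcal{X}_h$ is changed from $\textbf{X}_h$ to $\textbf{X}_{0h}$; all other factors and operators, and the data $\widehat{\mathcal{F}}$, stay as in Section \ref{sectiunea_ecuatii_Stokes_problema_aproximativa}.

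For modality I I would proceed in the two announced steps. First, the basic case: by an analogue of Theorem \ref{teorema_izom_1_hhh_alfa} (the result quoted as Theorem \ref{teorema5_1_partea2NOU}) together with Corollaries \ref{well_posed_corolarul_teorema5_1_Stokes_math_B_L2_Tdense} and \ref{well_posed_corolarul_teorema5_1_restrictie_dense}, the operator $\widehat{\mathcal{A}}_h$ built from $\mathcal{T}_h = (T_{VPD,h}, T_{PD,h}, T_{PD,h}, \pi_h, B_{1,h}, B_{1,h}, \pi_h)$ restricts, for $h$ small, to an isomorphism $\widehat{\mathcal{A}}_\ast$ of $\widehat{\Gamma}_\ast$ onto $\widehat{\Delta}_h$ with $\|\widehat{\mathcal{A}}_\ast^{-1}\|$ uniformly bounded, and the convergences (\ref{well_posed_e5_45p_U_Uh_eroare_U_Uh_lim_y_yh_lim}) hold. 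This basic discrete operator now plays the role of ``$\mathcal{T}$'' in a second application of Theorem \ref{well_posed_teorema5_1_Stokes_math_B_L2}, with ``$\mathcal{T}_h$'' equal to $\mathcal{T}_h' = (T_{VPD,h}, T_{PD,h}, T_{PD,h}, \pi_{0h}, B_{1,h}, B_{1,h}, \pi_h)$; the associated system is exactly (\ref{mathcal_P2_e1_6_ecStokes_ecPoisson_variational_dem_q_compl_p_dem11_cont_alt_izom_h}) - (\ref{e1_11_ecStokes_ecPoisson_variational_dem_grad_miu_ec_TAU_h}) with (\ref{mathcal_P2_e1_6_ecStokes_ecPoisson_variational_dem_q_compl_dem11_cont_alt_izom_instead2_delta_h_hat_delta_aprox}), (\ref{dem10_cont_alt_izom_instead2_delta_h_cont}) replaced by (\ref{mathcal_P2_e1_6_ecStokes_ecPoisson_variational_dem_q_compl_dem11_cont_alt_izom_instead2_delta_h_hat_delta_aprox_caseIII}), (\ref{dem10_cont_alt_izom_instead2_delta_h_cont_caseIII}). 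Since $(\mathcal{T}-\mathcal{T}_h')\mathcal{G}$ only picks up $(\pi_h-\pi_{0h})G_{\textbf{z}}$ in the fourth slot, with $G_{\textbf{z}}(p) = - grad \, p$, the condition (\ref{well_posed_e5_45p_conditia_q}) for this pair is controlled by the $L^{2}$-projection error between $\textbf{X}_h$ and $\textbf{X}_{0h}$, i.e. by the estimate analogous to (\ref{e5_1_eroare_aprox_Poisson_HypH2_Td_2_STOKES_abc_3}). Adding the step-one quantity (which tends to $0$ as $h\to 0$) to this step-two quantity makes the composite $q_\ast$ tend to $0$, hence $q_\ast < 1$ for all $h$ below some $h_2$; Theorem \ref{well_posed_teorema5_1_Stokes_math_B_L2} then gives the isomorphism $\widehat{\mathcal{A}}_\ast$ for the modified system, i.e. well-posedness with $\textbf{z}_h \in \textbf{X}_{0h}$. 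Moreover $\textbf{z}_h = \xi_h$ of (\ref{e1_11_ecStokes_ecPoisson_variational_dem_h_def_delta}): here $\tilde{\psi}_{r,h}=0$, so $r_h=0$ by (\ref{e1_11_ecStokes_ecPoisson_variational_dem_grad_miu_inlocuiri_ec_h}), and (\ref{mathcal_P2_e1_6_ecStokes_ecPoisson_variational_dem_q_compl_dem11_cont_alt_izom_instead2_delta_h_hat_delta_aprox_caseIII}) together with $\textbf{t}_h = \hat{\textbf{f}}_h$ and (\ref{e1_11_ecStokes_ecPoisson_variational_dem_diferenta_h}) identifies $\textbf{z}_h$ with $-\triangle^{h}\textbf{u}_h$.

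For modality II I would instead set up (\ref{mathcal_P2_e1_6_ecStokes_ecPoisson_variational_dem_q_compl_p_dem11_cont_alt_izom_h_sist}) - (\ref{e1_11_ecStokes_ecPoisson_variational_dem_grad_miu_ec_TAU_h_sist}) from the outset with fourth component $\pi_{0h}$ and fourth factor $\textbf{X}_{0h}$, so that the discrete $\textbf{z}$-equation is directly (\ref{mathcal_P2_e1_6_ecStokes_ecPoisson_variational_dem_q_compl_dem11_cont_alt_izom_instead2_delta_h_hat_delta_aprox_caseIII}), (\ref{dem10_cont_alt_izom_instead2_delta_h_cont_caseIII}), and apply Theorem \ref{well_posed_teorema5_1_Stokes_math_B_L2} directly with ``$\mathcal{T}$'' the exact operator $(T_{VPD}, T_{PD}, T_{PD}, \pi, B_1, B_1, \pi)$ of Theorem \ref{teorema_izom_1_T0}. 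The verification of (\ref{well_posed_e5_45p_conditia_q}) is then componentwise: in the $T_{VPD}, T_{PD}, B_1$ slots it comes from Corollary \ref{well_posed_corolarul_teorema5_1_restrictie_dense} (C\'ea's lemma with the density of $\mathcal{S}$ in $\mathcal{X}$ from Lemma \ref{lema_widehat_GAMA_dense_T_tabel}, feeding Corollary \ref{well_posed_corolarul_teorema5_1_Stokes_math_B_L2_Tdense} via the compactness of $\mathcal{X}_d\hookrightarrow\mathcal{X}$), and in the fourth slot from the $L^{2}$-projection error $\|(\pi-\pi_{0h})(grad \, p)\|_{0}$ on $\textbf{X}_{0h}$, bounded using the inverse inequality (\ref{cont_SOL_prel_op_hat_pi_h_DEF3_ineg}) and the family-wide error estimates (the analogues of Lemmas \ref{lema_eroare_aprox_toate_pr} and \ref{lema_eroare_aprox_toate_pr_0}). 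Theorem \ref{well_posed_teorema5_1_Stokes_math_B_L2} then yields the isomorphism $\widehat{\mathcal{A}}_\ast$ and the error bound (\ref{well_posed_e5_45p_U_Uh_eroare}).

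The hard part will be the fourth-component bookkeeping. One must show that $\pi_h - \pi_{0h}$ (resp. $\pi - \pi_{0h}$), composed with $G_{\textbf{z}}$, is small in $L(\widehat{\Gamma},\mathcal{X})$; this forces control of the $\textbf{X}_{0h}$-projection of a gradient lying only in $\textbf{L}^{2}(\Omega)$, whence the indispensable role of the inverse inequality (\ref{cont_SOL_prel_op_hat_pi_h_DEF3_ineg}). Simultaneously one must check that replacing $\textbf{X}_h$ by $\textbf{X}_{0h}$ preserves the structural hypotheses of Theorem \ref{well_posed_teorema5_1_Stokes_math_B_L2} ($\mathcal{X}_h$ closed in $\mathcal{X}$ and $\mathcal{X}_h = \mathcal{T}_h(\mathcal{Y})$), with constants uniform in $h$, and, in modality I, that the two successive perturbations genuinely fit the perturbation framework of Lemma \ref{well_posed_lema_Sir_Spaces_AF1982_Cor1}. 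These are precisely the ``laborious correlations between spaces, norms and the components of $\mathcal{G}$'' alluded to in the statement; everything else is the routine assembly of Theorem \ref{well_posed_teorema5_1_Stokes_math_B_L2}, Corollaries \ref{well_posed_corolarul_teorema5_1_Stokes_math_B_L2_Tdense} and \ref{well_posed_corolarul_teorema5_1_restrictie_dense}, and Theorem \ref{teorema_izom_1_hhh_alfa}.
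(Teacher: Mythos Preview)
The paper does not give a separate proof for this corollary: after the \texttt{\textbackslash end\{cor\}} the text passes directly to the next corollary, so the statement is meant to be self-justifying, a programmatic outline of two ways to push $\textbf{z}_{h}$ into $\textbf{X}_{0h}$ via Theorem \ref{well_posed_teorema5_1_Stokes_math_B_L2}. Your proposal is a faithful and correct expansion of that outline and follows exactly the same route the corollary itself sketches: in modality I you stack two applications of the perturbation theorem (basic case first, then $\pi_h\to\pi_{0h}$ in the fourth slot, controlled by the Bochev--Gunzburger estimate (\ref{e5_1_eroare_aprox_Poisson_HypH2_Td_2_STOKES_abc_3})), and in modality II you go directly from the exact operator to the modified discrete one, bounding the fourth component by Lemmas \ref{lema_eroare_aprox_toate_pr} and \ref{lema_eroare_aprox_toate_pr_0}.

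One small point of care in modality I: in your step 2 you let the basic discrete operator $\mathcal{T}_h$ play the role of the ``exact'' $\mathcal{T}$ in Theorem \ref{well_posed_teorema5_1_Stokes_math_B_L2}. Strictly, that theorem requires $\mathcal{T}$ to be an isomorphism of $\mathcal{Y}$ onto $\mathcal{X}$, which $\mathcal{T}_h$ is not. The corollary sidesteps this by noting that the final smallness check (\ref{well_posed_e5_45p_conditia_q}) is made directly between the exact system (\ref{mathcal_P2_e1_6_ecStokes_ecPoisson_variational_dem_q_compl_p_dem11_cont_alt_izom_sist})--(\ref{e1_11_ecStokes_ecPoisson_variational_dem_grad_miu_ec_TAU_sist}) and the modified discrete system, with the two-step decomposition serving only to bound $\|(\mathcal{T}-\mathcal{T}_h')\mathcal{G}\|$ by a triangle inequality through $\mathcal{T}_h$; you should phrase step 2 that way rather than as a second free-standing application of Theorem \ref{well_posed_teorema5_1_Stokes_math_B_L2}.
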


\begin{cor}
\label{observatia_ec_Poisson_grad_p_h_matrice_2} Assume the
hypotheses of Corollary
\ref{observatia_ec_Poisson_grad_p_h_modalitate_zh}. Then, the
problem has the following matrix form
\begin{equation}
\label{ecStokes_ecPoisson_variational_01_matr_3}
      \left[\begin{array}{ccc}
      A_{h} & 0 & B_{h}^{T} \\
      A_{h} & K_{h} & 0     \\
      0 & \left[\begin{array}{c}
      \alpha C_{h} \\
      C_{h,0}
      \end{array}\right] & D_{h}
      \end{array}\right]
      \left[\begin{array}{c}
      \bar{\textbf{u}}_{h} \\
      \bar{\textbf{z}}_{h} \\
      \bar{p}_{h}
      \end{array}\right]
      = \left[\begin{array}{c}
      \bar{\textbf{f}} \\
      \bar{0} \\
      \bar{\psi}
      \end{array}\right] \ ,
\end{equation}
with three block Poisson preconditioners $A_{h}$, $K_{h}$ and
$D_{h}$. $(\bar{\textbf{u}}_{h},\bar{\textbf{z}}_{h},\bar{p}_{h})$
is the coefficient vector corresponding to the finite element
solution $(\textbf{u}_{h},\textbf{z}_{h},p_{h})$.
\end{cor}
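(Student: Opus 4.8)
The plan is to specialize the approximate extended system (\ref{mathcal_P2_e1_6_ecStokes_ecPoisson_variational_dem_q_compl_p_dem11_cont_alt_izom_h})--(\ref{e1_11_ecStokes_ecPoisson_variational_dem_grad_miu_ec_TAU_h}) to the data fixed in Corollaries \ref{corolarul5_teorema_izom_1_cor10_div0}, \ref{corolarul5_teorema_izom_1_cor10} and \ref{observatia_ec_Poisson_grad_p_h_modalitate_zh} (that is $\hat{g}=0$, $\ell_{S}=0$, $g=0$, $\psi=0$, $\hat{\psi}=0$, $\textbf{f}_{\triangle}=\textbf{f}$, $\varrho=0$, $\hat{\varrho}=0$, $\hat{\textbf{f}}=\textbf{f}$, with finite element spaces satisfying (\ref{cont_SOL_prel_op_hat_pi_h_DEF3_ineg}) and $\textbf{z}_{h}\in\textbf{X}_{0h}$), to reduce it to a system in the three unknowns $\textbf{u}_{h}$, $\textbf{z}_{h}$, $p_{h}$, and then to read off its matrix form. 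First I would record the simplifications already obtained in the proof of Corollary \ref{observatia_ec_Poisson_grad_p_h_modalitate_zh_1}: $\tilde{\psi}_{r,h}=0$, hence $r_{h}=0$ by (\ref{e1_11_ecStokes_ecPoisson_variational_dem_grad_miu_inlocuiri_ec_h}); $\textbf{t}_{h}=\hat{\textbf{f}}_{h}$ by (\ref{VAR_mathcal_P2_e1_6_ecStokes_ecPoisson_variational_dem_q_compl_dem11_cont_alt_izom_instead2_delta_hat_delta_aprox_D_VAR_ec_h}); $p_{S,h}=p_{h}$ by (\ref{e5_1_cond_ecStokes_ecPoisson_variational_dem_div_corelatie_aprox}); and, since $\textbf{f}\in\textbf{L}^{2}(\Omega)$, the right-hand side of (\ref{e1_11_ecStokes_ecPoisson_variational_dem_diferenta_h}) vanishes, so that $\textbf{z}_{h}\in\textbf{X}_{0h}$ is characterized by $(\textbf{z}_{h},\textbf{w}_{h})=(grad\,\textbf{u}_{h},grad\,\textbf{w}_{h})$ for all $\textbf{w}_{h}\in\textbf{X}_{0h}$, i.e.\ $\textbf{z}_{h}=-\triangle^{h}\textbf{u}_{h}$ in the sense of (\ref{e1_11_ecStokes_ecPoisson_variational_dem_h_def_delta}). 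By the uniqueness argument of Corollary \ref{observatia_ec_Poisson_grad_p_h_modalitate_zh_1} the remaining components $q_{h},\hat{q}_{h},\hat{p}_{h},p_{S,h},y_{h}$ are recovered from the untouched equations of (\ref{mathcal_P2_e1_6_ecStokes_ecPoisson_variational_dem_q_compl_p_dem11_cont_alt_izom_h})--(\ref{e1_11_ecStokes_ecPoisson_variational_dem_grad_miu_ec_TAU_h}), so it suffices to keep the momentum equation (\ref{e1_11_ecStokes_ecPoisson_variational_dem_h}), the equation defining $\textbf{z}_{h}$, namely (\ref{e1_11_ecStokes_ecPoisson_variational_dem_diferenta_h}), and the two pressure equations (\ref{VAR_mathcal_P2_e1_6_ecStokes_ecPoisson_variational_dem_q_compl_dem11_cont_alt_izom_PRESIUNE_dir_h}), (\ref{e1_11_ecStokes_ecPoisson_variational_dem_grad_miu_inlocuiri_sum_2_L2_cor_h}).

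Next I would fix Lagrange bases of $\textbf{X}_{0h}$ (used for both $\textbf{u}_{h}$ and $\textbf{z}_{h}$) and of $Y_{h}$ (used for $p_{h}$), together with the direct decomposition $Y_{h}=Y_{0h}\oplus\Xi_{h}$ from Remark \ref{observatia5_consistentFEM_frontiera}, and translate each surviving equation into a linear relation between the coefficient vectors. Testing (\ref{e1_11_ecStokes_ecPoisson_variational_dem_h}) with $\textbf{w}_{h}\in\textbf{X}_{0h}$ and using $p_{S,h}=p_{h}$ gives the first block row $A_{h}\bar{\textbf{u}}_{h}+B_{h}^{T}\bar{p}_{h}=\bar{\textbf{f}}$, where $A_{h}$ is the discrete Laplacian on $\textbf{X}_{0h}$ and $B_{h}^{T}$ is the matrix of $\textbf{w}_{h}\mapsto-(p_{h},div\,\textbf{w}_{h})$. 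Equation (\ref{e1_11_ecStokes_ecPoisson_variational_dem_diferenta_h}) with the present data gives the second block row $A_{h}\bar{\textbf{u}}_{h}+K_{h}\bar{\textbf{z}}_{h}=\bar{0}$, where $K_{h}$ is the block coming from the term $-(\textbf{z}_{h},\textbf{w}_{h})$. Testing the pressure equation (\ref{VAR_mathcal_P2_e1_6_ecStokes_ecPoisson_variational_dem_q_compl_dem11_cont_alt_izom_PRESIUNE_dir_h}) (with $\psi=\hat{\psi}=\hat{\varrho}=0$, $r_{h}=0$, $\textbf{t}_{h}=\hat{\textbf{f}}_{h}$) against $\bar{\lambda}_{h}\in Y_{0h}$, and the boundary pressure equation (\ref{e1_11_ecStokes_ecPoisson_variational_dem_grad_miu_inlocuiri_sum_2_L2_cor_h}) (with $\textbf{t}_{h}=\hat{\textbf{f}}_{h}$) against $\varphi_{h}\in\Xi_{h}=Y_{h}\ominus Y_{0h}$, produces the rows indexed by $Y_{0h}$ and by $\Xi_{h}$ of the third block row; on the $Y_{0h}$-rows the coefficient of $\bar{\textbf{z}}_{h}$ is $\alpha C_{h}$ (from the term $(\alpha\textbf{z}_{h}-\textbf{t}_{h},grad\,\bar{\lambda}_{h})$) and on the $\Xi_{h}$-rows it is $C_{h,0}$ with no $\alpha$ (from $(\textbf{z}_{h}-\textbf{t}_{h}+grad\,p_{h},grad\,\varphi_{h})$), while $grad\,p_{h}$ contributes on both groups of rows the corresponding rows of the discrete Laplacian on $Y_{h}$. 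Stacking the two groups over $Y_{h}=Y_{0h}\oplus\Xi_{h}$ assembles the full discrete Laplacian $D_{h}$, the coupling block $[\alpha C_{h};\,C_{h,0}]$ displayed in (\ref{ecStokes_ecPoisson_variational_01_matr_3}), and the right-hand side $\bar{\psi}$ collecting the $\textbf{f}$-dependent load terms. Putting the three block rows together is precisely (\ref{ecStokes_ecPoisson_variational_01_matr_3}), with $A_{h}$, $K_{h}$, $D_{h}$ as the three announced diagonal block Poisson preconditioners.

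Finally I would note that the coefficient matrix of (\ref{ecStokes_ecPoisson_variational_01_matr_3}) is nonsingular: by Theorem \ref{teorema_izom_1_hhh_alfa} (through Lemma \ref{lema__izom_1_hhh_th} and Theorem \ref{well_posed_teorema5_1_Stokes_math_B_L2}) the approximate extended system is uniquely solvable for every right-hand side, and the reduction of Corollary \ref{observatia_ec_Poisson_grad_p_h_modalitate_zh_1} is an equivalence, so $(\bar{\textbf{u}}_{h},\bar{\textbf{z}}_{h},\bar{p}_{h})$ is uniquely determined by the data. The work here is essentially bookkeeping rather than analysis; the main obstacle is to keep track of which of the ten equations of the extended system survive the reduction and which ones recover the remaining seven components, to pin down the sign and orientation conventions in the definitions of $A_{h},B_{h},C_{h},C_{h,0},D_{h},K_{h}$, and --- most delicately --- to split the pressure test space as $Y_{h}=Y_{0h}\oplus\Xi_{h}$ so that the interior rows carry the factor $\alpha$ while the boundary rows, coming from (\ref{e1_11_ecStokes_ecPoisson_variational_dem_grad_miu_inlocuiri_sum_2_L2_cor_h}), do not.
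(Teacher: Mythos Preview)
Your proposal is correct and follows essentially the same approach as the paper: the paper's proof simply cites (\ref{e1_11_ecStokes_ecPoisson_variational_dem_h}), (\ref{VAR_mathcal_P2_e1_6_ecStokes_ecPoisson_variational_dem_q_compl_dem11_cont_alt_izom_PRESIUNE_dir_h}), (\ref{e1_11_ecStokes_ecPoisson_variational_dem_grad_miu_inlocuiri_sum_2_L2_cor_h}), (\ref{e1_11_ecStokes_ecPoisson_variational_dem_diferenta_h}) together with ``some bases of $X_{h}$ and $Y_{h}$'', which is exactly the four equations you assemble row by row after the preliminary reductions. Your treatment is more detailed --- in particular the explicit handling of the $Y_{h}=Y_{0h}\oplus\Xi_{h}$ splitting via Remark~\ref{observatia5_consistentFEM_frontiera} and the concluding nonsingularity remark are not spelled out in the paper --- but the underlying argument is identical.
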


\begin{proof}
We use (\ref{e1_11_ecStokes_ecPoisson_variational_dem_h}),
(\ref{VAR_mathcal_P2_e1_6_ecStokes_ecPoisson_variational_dem_q_compl_dem11_cont_alt_izom_PRESIUNE_dir_h}),
(\ref{e1_11_ecStokes_ecPoisson_variational_dem_grad_miu_inlocuiri_sum_2_L2_cor_h}),
(\ref{e1_11_ecStokes_ecPoisson_variational_dem_diferenta_h}) and
some bases of $X_{h}$ and $Y_{h}$.

\qquad
\end{proof}

\begin{cor}
\label{observatia_ec_Poisson_grad_p_h_matrice_20} Assume that
$\bar{\textbf{z}}_{h}$ is evaluated in terms of
$\bar{\textbf{u}}_{h}$, $\bar{\textbf{z}}_{h}$ $=$ $-K_{h}^{-1}A
\bar{\textbf{u}}_{h}$. We denote $G_{h}(\alpha)$ $=$
$-\left[\begin{array}{c}
      \alpha C_{h} \\
      C_{h,0}
      \end{array}\right]K_{h}^{-1}A_{h}$. Then, the problem has the
      formulation (\ref{ecStokes_ecPoisson_variational_01})
with two block Poisson preconditioners $A_{h}$ and $D_{h}$.
\end{cor}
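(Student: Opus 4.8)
The plan is to derive (\ref{ecStokes_ecPoisson_variational_01}) from the block system (\ref{ecStokes_ecPoisson_variational_01_matr_3}) of Corollary \ref{observatia_ec_Poisson_grad_p_h_matrice_2} by a single step of block Gaussian elimination, removing the discrete unknown $\bar{\textbf{z}}_{h}$. First I would observe that $K_{h}$, being one of the three block Poisson preconditioners appearing in (\ref{ecStokes_ecPoisson_variational_01_matr_3}), is invertible, so that $K_{h}^{-1}$ is available. The second block row of (\ref{ecStokes_ecPoisson_variational_01_matr_3}) reads $A_{h}\bar{\textbf{u}}_{h}+K_{h}\bar{\textbf{z}}_{h}=\bar{0}$, which is exactly the relation $\bar{\textbf{z}}_{h}=-K_{h}^{-1}A_{h}\bar{\textbf{u}}_{h}$ assumed in the statement.

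Next I would substitute this expression for $\bar{\textbf{z}}_{h}$ into the third block row of (\ref{ecStokes_ecPoisson_variational_01_matr_3}), namely $\left[\begin{array}{c}\alpha C_{h}\\ C_{h,0}\end{array}\right]\bar{\textbf{z}}_{h}+D_{h}\bar{p}_{h}=\bar{\psi}$. With the notation $G_{h}(\alpha)=-\left[\begin{array}{c}\alpha C_{h}\\ C_{h,0}\end{array}\right]K_{h}^{-1}A_{h}$ this row turns into $G_{h}(\alpha)\bar{\textbf{u}}_{h}+D_{h}\bar{p}_{h}=\bar{\psi}$. Leaving the first block row $A_{h}\bar{\textbf{u}}_{h}+B_{h}^{T}\bar{p}_{h}=\bar{\textbf{f}}$ untouched, these two equations together are precisely the $2\times 2$ block form (\ref{ecStokes_ecPoisson_variational_01}), in which $A_{h}$ and $D_{h}$ are literally the same matrices as in (\ref{ecStokes_ecPoisson_variational_01_matr_3}); hence they remain the announced two block Poisson preconditioners. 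I would add that the elimination is reversible --- from any solution $(\bar{\textbf{u}}_{h},\bar{p}_{h})$ of (\ref{ecStokes_ecPoisson_variational_01}) one recovers $\bar{\textbf{z}}_{h}$ by the stated formula and thus a solution of (\ref{ecStokes_ecPoisson_variational_01_matr_3}) --- so that, by Theorem \ref{teorema_izom_1_hhh_alfa}, (\ref{ecStokes_ecPoisson_variational_01}) is uniquely solvable for every $\alpha\neq 1$.

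The whole argument is elementary linear algebra, so I do not expect any genuine obstacle. The two points that merit an explicit remark are the invertibility of $K_{h}$ (immediate once it is identified as a block Poisson preconditioner) and the fact that $A_{h}$ and $D_{h}$ are carried over verbatim from (\ref{ecStokes_ecPoisson_variational_01_matr_3}), which makes the preconditioner claim automatic. All the substantive work --- assembling (\ref{ecStokes_ecPoisson_variational_01_matr_3}) from the variational equations of Corollary \ref{observatia_ec_Poisson_grad_p_h_matrice_2} and establishing well-posedness for $\alpha\neq 1$ --- has already been carried out in the preceding corollaries and in Theorem \ref{teorema_izom_1_hhh_alfa}.
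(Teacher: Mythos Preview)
Your proposal is correct and follows exactly the approach of the paper, which simply states that the result ``comes from (\ref{ecStokes_ecPoisson_variational_01_matr_3}).'' Your block Gaussian elimination argument is just a more explicit rendering of this one-line derivation, and the extra remarks on invertibility of $K_{h}$ and on reversibility are sound additions that the paper does not spell out.
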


\begin{proof} This comes from (\ref{ecStokes_ecPoisson_variational_01_matr_3}).

\qquad
\end{proof}

\begin{cor}
\label{lema_AizomBCDizom_2_beta} Assume the hypotheses of
Corollaries \ref{corolarul5_teorema_izom_1_cor10_div0},
\ref{corolarul5_teorema_izom_1_cor10} and
\ref{observatia_ec_Poisson_grad_p_h_modalitate_zh}. It is expected
that $( \textbf{z}_{h}, grad \, \bar{\lambda}_{h} )$ $\neq$ $0$,
$\forall$ $\bar{\lambda}_{h}$ $\in$ $Y_{0h}$. Then
$(\textbf{u}_{h,1},\textbf{z}_{h,1},p_{h,1})$ $\neq$
$(\textbf{u}_{h,2},\textbf{z}_{h,2},p_{h,2})$, for $\alpha_{1}$
$\neq$ $\alpha_{2}$, where
$(\textbf{u}_{h,i},\textbf{z}_{h,i},p_{h,i})$ is the solution of
(\ref{e1_11_ecStokes_ecPoisson_variational_dem_h}),
(\ref{e1_11_ecStokes_ecPoisson_variational_dem_diferenta_h}),
(\ref{VAR_mathcal_P2_e1_6_ecStokes_ecPoisson_variational_dem_q_compl_dem11_cont_alt_izom_PRESIUNE_dir_h}),
(\ref{e1_11_ecStokes_ecPoisson_variational_dem_grad_miu_inlocuiri_sum_2_L2_cor_h})
for $\alpha$ $=$ $\alpha_{i}$.
\end{cor}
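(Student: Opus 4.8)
The plan is to argue by contradiction, exploiting the fact that, among the four equations referenced in the statement — (\ref{e1_11_ecStokes_ecPoisson_variational_dem_h}), (\ref{e1_11_ecStokes_ecPoisson_variational_dem_diferenta_h}), (\ref{VAR_mathcal_P2_e1_6_ecStokes_ecPoisson_variational_dem_q_compl_dem11_cont_alt_izom_PRESIUNE_dir_h}), (\ref{e1_11_ecStokes_ecPoisson_variational_dem_grad_miu_inlocuiri_sum_2_L2_cor_h}) (together with the relations (\ref{e1_11_ecStokes_ecPoisson_variational_dem_grad_miu_ec_TAU_h}), (\ref{e1_11_ecStokes_ecPoisson_variational_dem_grad_miu_inlocuiri_ec_h}) used in Corollary \ref{observatia_ec_Poisson_grad_p_h_modalitate_zh_1}) — only (\ref{VAR_mathcal_P2_e1_6_ecStokes_ecPoisson_variational_dem_q_compl_dem11_cont_alt_izom_PRESIUNE_dir_h}) carries the parameter $\alpha$, and under the data of Corollaries \ref{corolarul5_teorema_izom_1_cor10_div0} and \ref{corolarul5_teorema_izom_1_cor10} it collapses to a very thin identity. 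First I would record the consequences of that data, exactly as in the proof of Corollary \ref{observatia_ec_Poisson_grad_p_h_modalitate_zh_1}: since $\hat{g}=0$, $\varrho=0$, $\hat{\varrho}=0$ and $\textbf{f}_{\triangle}=\hat{\textbf{f}}=\textbf{f}$, we get $\textbf{f}_{\triangle,h}=\hat{\textbf{f}}_{h}$ and, by (\ref{e1_11_ecStokes_ecPoisson_variational_dem_grad_miu_inlocuiri_def_h}), $\tilde{\psi}_{r,h}=0$, hence $r_{h}=0$ by (\ref{e1_11_ecStokes_ecPoisson_variational_dem_grad_miu_inlocuiri_ec_h}); moreover $\textbf{t}_{h}=\hat{\textbf{f}}_{h}$ by (\ref{e1_11_ecStokes_ecPoisson_variational_dem_grad_miu_ec_TAU_h}), and this quantity depends neither on $\alpha$ nor on the other components. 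Using also $\psi=0$ and $\hat{\psi}=0$, equation (\ref{VAR_mathcal_P2_e1_6_ecStokes_ecPoisson_variational_dem_q_compl_dem11_cont_alt_izom_PRESIUNE_dir_h}) for the parameter $\alpha_{i}$ reduces to
\begin{equation*}
   (\alpha_{i}\,\textbf{z}_{h,i}-\hat{\textbf{f}}_{h},\,grad \, \bar{\lambda}_{h})
      +(grad \, p_{h,i},\,grad \, \bar{\lambda}_{h})=0,\qquad\forall\,\bar{\lambda}_{h}\in Y_{0h}.
\end{equation*}

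Next I would suppose, towards a contradiction, that $(\textbf{u}_{h,1},\textbf{z}_{h,1},p_{h,1})=(\textbf{u}_{h,2},\textbf{z}_{h,2},p_{h,2})$ and denote this common triple by $(\textbf{u}_{h},\textbf{z}_{h},p_{h})$. Writing the displayed identity for $\alpha_{1}$ and for $\alpha_{2}$ and subtracting, the terms containing $\hat{\textbf{f}}_{h}$ and $grad \, p_{h}$ cancel (they are the same in both), and we are left with
\begin{equation*}
   (\alpha_{1}-\alpha_{2})\,(\textbf{z}_{h},\,grad \, \bar{\lambda}_{h})=0,\qquad\forall\,\bar{\lambda}_{h}\in Y_{0h}.
\end{equation*}
Since $\alpha_{1}\neq\alpha_{2}$, this forces $(\textbf{z}_{h},grad \, \bar{\lambda}_{h})=0$ for every $\bar{\lambda}_{h}\in Y_{0h}$, which contradicts the standing expectation of the statement, namely that $(\textbf{z}_{h},grad \, \bar{\lambda}_{h})\neq0$ on $Y_{0h}$. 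Hence the two solutions cannot coincide, which is the assertion. I would also point out here that this is precisely where the discrete situation departs from the exact one of Corollary \ref{corolarul5_teorema_izom_1_beta_exact}: there Corollary \ref{observatia5_omega_domega_DEM_beta} guarantees $(\textbf{z},grad \, \varphi)=0$ for all $\varphi\in H_{0}^{1}(\Omega)$, so the $\alpha$-weighted term disappears identically and the exact solution is $\alpha$-independent, whereas the discrete orthogonality $(\textbf{z}_{h},grad \, \bar{\lambda}_{h})=0$ is not available in general — this being exactly the content of the hypothesis $\textbf{z}_{h}\in\textbf{X}_{0h}$ imposed via Corollary \ref{observatia_ec_Poisson_grad_p_h_modalitate_zh}.

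The routine parts are the bookkeeping verifications — through (\ref{e1_11_ecStokes_ecPoisson_variational_dem_grad_miu_inlocuiri_def_h}), (\ref{e1_11_ecStokes_ecPoisson_variational_dem_grad_miu_inlocuiri_ec_h}) and (\ref{e1_11_ecStokes_ecPoisson_variational_dem_grad_miu_ec_TAU_h}) — that $r_{h}$ and $\textbf{t}_{h}$ are genuinely $\alpha$-independent under the prescribed data and that the vanishing right-hand side terms $\psi,\hat{\psi},\hat{\varrho}$ really drop out; none of these presents any real difficulty. I do not expect a substantive obstacle: the only essential point is the nondegeneracy $\alpha_{1}-\alpha_{2}\neq0$, which permits the cancellation of the common $\hat{\textbf{f}}_{h}$- and $grad \, p_{h}$-contributions while preserving the $\textbf{z}_{h}$-term, so that assuming the two discrete solutions equal becomes incompatible with $(\textbf{z}_{h},grad \, \bar{\lambda}_{h})$ being nonzero on $Y_{0h}$.
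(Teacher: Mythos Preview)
Your proof is correct and follows essentially the same route as the paper: both subtract the $\alpha_{1}$- and $\alpha_{2}$-instances of (\ref{VAR_mathcal_P2_e1_6_ecStokes_ecPoisson_variational_dem_q_compl_dem11_cont_alt_izom_PRESIUNE_dir_h}) and observe that coincidence of the two solutions forces $(\textbf{z}_{h},grad\,\bar{\lambda}_{h})=0$ on $Y_{0h}$, contradicting the standing expectation. The paper additionally writes out the difference of all four equations and invokes Theorem~\ref{teorema_izom_1_hhh_alfa} to state this as an ``if and only if'', whereas you establish only the direction needed for the contradiction; this is a harmless economy.
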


\begin{proof}
We obtain
\begin{eqnarray*}
   && (grad \, (\textbf{u}_{h,1}-\textbf{u}_{h,2}),grad \, \textbf{w}_{h})
      -((p_{h,1}-p_{h,2}),div \, \textbf{w}_{h})
            = 0, \ \forall \, \textbf{w}_{h}  \, \in \, \textbf{X}_{0h} \, ,
         \label{e1_11_ecStokes_ecPoisson_variational_dem_h_dif} \\
   && (grad \, (\textbf{u}_{h,1}-\textbf{u}_{h,2}),grad \, \widetilde{\textbf{w}}_{h})
      - ((\textbf{z}_{h,1}-\textbf{z}_{h,2}),\widetilde{\textbf{w}}_{h})
            = 0, \ \forall \, \widetilde{\textbf{w}}_{h}  \, \in \, \textbf{X}_{0h} \, ,
         \label{e1_11_ecStokes_ecPoisson_variational_dem_diferenta_h_dif} \\
   && \alpha_{1} ( \textbf{z}_{h,1}-\textbf{z}_{h,2}, grad \, \bar{\lambda}_{h} )
            + (grad \, (p_{h,1}-p_{h,2}),grad \, \bar{\lambda}_{h})
          \label{VAR_mathcal_P2_e1_6_ecStokes_ecPoisson_variational_dem_q_compl_dem11_cont_alt_izom_PRESIUNE_dir_h_dif} \\
   && = (\alpha_{2}-\alpha_{1})( \textbf{z}_{h,2}, grad \, \bar{\lambda}_{h} ), \
         \forall \, \bar{\lambda}_{h}  \, \in \, Y_{0h} \, ,
         \nonumber \\
   && ((\textbf{z}_{h,1}-\textbf{z}_{h,2}) + grad \, (p_{h,1}-p_{h,2}),grad \, \varphi_{h})
            = 0, \ \forall \, \varphi_{h}  \, \in \, Y_{h} \ominus Y_{0h} \, ,
         \label{e1_11_ecStokes_ecPoisson_variational_dem_grad_miu_inlocuiri_sum_2_L2_cor_h_dif}
\end{eqnarray*}

From Theorem \ref{teorema_izom_1_hhh_alfa}, we deduce that
$(\textbf{u}_{h,1},\textbf{z}_{h,1},p_{h,1})$ $=$
$(\textbf{u}_{h,2},\textbf{z}_{h,2},p_{h,2})$ if and only if $(
\textbf{z}_{h,2}, grad \, \bar{\lambda}_{h} )$ $=$ $0$, $\forall$
$\bar{\lambda}_{h}$ $\in$ $Y_{0h}$.

In the situation considered in Corollary
\ref{observatia_ec_Poisson_grad_p_h_modalitate_zh}, it is expected
that $( \textbf{z}_{h}, grad \, \bar{\lambda}_{h} )$ $=$ $- (div
\, \textbf{z}_{h},\bar{\lambda}_{h} )$ $\neq$ $0$.

\qquad
\end{proof}

\begin{rem}
\label{observatia5_nodalFEM} Assume the hypotheses of Corollary
\ref{observatia_ec_Poisson_grad_p_h_modalitate_zh}. We can
simplify the problem by replacing $(grad \, \textbf{u}_{h},grad \,
\textbf{w}_{h})$ from
(\ref{e1_11_ecStokes_ecPoisson_variational_dem_diferenta_h}) by
$(\textbf{z}_{h},\textbf{w}_{h})$ in
(\ref{e1_11_ecStokes_ecPoisson_variational_dem_h}). In other
words, we replace
(\ref{e1_11_ecStokes_ecPoisson_variational_dem_h}),
(\ref{e1_11_ecStokes_ecPoisson_variational_dem_diferenta_h}) by
(\ref{mathcal_P2_e1_6_ecStokes_ecPoisson_variational_dem_q_compl_dem11_cont_alt_izom_instead2_delta_h_hat_delta_aprox}).
The system is given by
(\ref{mathcal_P2_e1_6_ecStokes_ecPoisson_variational_dem_q_compl_dem11_cont_alt_izom_instead2_delta_h_hat_delta_aprox}),
(\ref{e1_11_ecStokes_ecPoisson_variational_dem_grad_miu_ec_TAU_h}),
(\ref{VAR_mathcal_P2_e1_6_ecStokes_ecPoisson_variational_dem_q_compl_dem11_cont_alt_izom_PRESIUNE_dir_h}),
(\ref{e1_11_ecStokes_ecPoisson_variational_dem_grad_miu_inlocuiri_sum_2_L2_cor_h})
with the unknowns $\textbf{z}_{h}$, $p_{h}$, $\textbf{t}_{h}$.
$\textbf{u}_{h}$ results from
(\ref{e1_11_ecStokes_ecPoisson_variational_dem_diferenta_h}).
\end{rem}

\section{An analysis using the finite element method}
\label{sectiunea_ecuatii_Stokes_problema_aproximativa_partea_2}

Assume that $\Omega$ is a polygon in $\mathbb{R}^{2}$ or a
polyhedron in $\mathbb{R}^{3}$. Let $X_{h}$ be a finite
dimensional subspace of $H^{1}(\Omega)$. $h$ is the mesh
parameter.

We take $X_{h}$ $=$ $Y_{h}$. In the case of Stokes problem, if the
approximation space of the velocity is the same as the
approximation space, then the discrete inf-sup (or the LBB)
condition is not satisfied (for instance,
\cite{CLBichir_bib_Barth_Bochev_Gunzburger_Shadid2004,
CLBichir_bib_Bochev_Gunzburger2004,
CLBichir_bib_Bochev_GunzburgerLSFEM2009}).

Taking a regular triangulation $\Theta_{h}$ of $\Omega$ by
triangles $K$, define
\begin{equation} \label{ec12}
   X_{h}=\{ v_{h} \in C^{0}(\bar{\Omega}),
      v_{h}|_{K} \in \mathbb{P}_{k_{Q}}(K),
      \forall K \in \Theta_{h} \},
      \quad k_{Q} \geq 1 \, ,   \\
\end{equation}
where $\mathbb{P}_{k}(K)$ is the space of polynomials in the
variables $x_{1}$, ... , $x_{N}$, of degree less than or equal to
$k$, defined on $K$, $k \geq 1$. Let us assume the following
hypotheses: there exist some operators $I_{1} \in
L(H^{1}(\Omega),X_{h})$, $I_{0} \in L(L^{2}(\Omega),X_{h})$, some
integers $j_{1}$, $j_{0}$, and some positive constants
$\delta_{1}$, $\delta_{0}$, independent of $h$, such that
\begin{eqnarray}
    && \| \mu-I_{1}\mu \|_{1} \leq \delta_{1} \cdot h^{m} \cdot | \mu |_{m+1}, \
              \forall \mu \in H^{m+1}(\Omega), \ 1 \leq m \leq j_{1} \, .
        \label{HypH1}
\end{eqnarray}
\begin{eqnarray}
   && \| \mu-I_{0}\mu \|_{0} \leq \delta_{0} \cdot h^{m+1} \cdot | \mu |_{m+1}, \
              \forall \mu \in H^{m+1}(\Omega), \ 1 \leq m \leq j_{0} \, .
         \label{HypH2}
\end{eqnarray}

\begin{lem}
\label{lema_teorema5_1_dense_prStokes_FEM} Let us consider
$\mathcal{M}_{h,i}$, $i=1,\ldots,5$. Assume Hypotheses
(\ref{HypH1}) - (\ref{HypH2}). Let $\mathcal{E}_{1}$ $=$
$(C^{\infty}(\overline{\Omega}) \cap H_{0}^{1})^{N}$,
$\mathcal{E}_{2}$ $=$ $C^{\infty}(\overline{\Omega}) \cap
H_{0}^{1}$, $\mathcal{E}_{3}$ $=$ $C^{\infty}(\overline{\Omega})$,
$\mathcal{E}_{4}$ $=$ $C^{\infty}(\overline{\Omega}) \cap
L_{0}^{2}(\Omega)$, $\mathcal{E}_{5}$ $=$
$C_{0}^{\infty}(\Omega)^{N}$. Then,

i) $\mathcal{E}_{i}$ is dense in $E_{i}$ and
\begin{equation}
\label{e5_45p_conditia_q_dense_general_FEM}
   \lim_{h \rightarrow 0} \inf_{v_{h} \in E_{h,i}} \| v-v_{h} \|_{E_{i}}=0 \, ,
      \ \forall \, v \in \mathcal{E}_{i} \, ,
\end{equation}

ii) \begin{equation} \label{e5_45p_U_Uh_convergenta_FEM}
   \| v_{i} - v_{h,i} \|_{E_{i}}
   \rightarrow 0 \ \textrm{as} \ h \rightarrow 0 \, ,
\end{equation}
\end{lem}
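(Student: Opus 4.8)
The statement splits into two parts: (i) the density of each $\mathcal{E}_{i}$ in the corresponding space $E_{i}$ together with the approximation property \eqref{e5_45p_conditia_q_dense_general_FEM}, and (ii) the convergence \eqref{e5_45p_U_Uh_convergenta_FEM} of the Galerkin solutions. I would treat the five triples $\mathcal{M}_{h,i}$, $i=1,\dots,5$, one at a time, since the underlying elliptic problems differ (vector Poisson with homogeneous Dirichlet data for $i=1$; scalar Poisson with homogeneous Dirichlet data for $i=2$; the $H^{1}$-coercive Helmholtz-type form \eqref{e1_ecStokes_ecHelmholtz_variational_d_dual} on $H^{1}(\Omega)$ for $i=3$ and on $H^{1}(\Omega)\cap L_{0}^{2}(\Omega)$ for $i=4$; and the $L^{2}$-projection $\pi$ onto $\textbf{L}^{2}(\Omega)$ for $i=5$). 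For the density claims I would invoke the standard facts: $(C^{\infty}(\overline{\Omega})\cap H_{0}^{1}(\Omega))^{N}$ is dense in $\textbf{H}_{0}^{1}(\Omega)$, $C^{\infty}(\overline{\Omega})$ is dense in $H^{1}(\Omega)$, $C^{\infty}(\overline{\Omega})\cap L_{0}^{2}(\Omega)$ is dense in $H^{1}(\Omega)\cap L_{0}^{2}(\Omega)$, and $C_{0}^{\infty}(\Omega)^{N}$ is dense in $\textbf{L}^{2}(\Omega)$ — these are cited already in the references used for Lemma \ref{lema_widehat_GAMA_dense_T_tabel}.

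For \eqref{e5_45p_conditia_q_dense_general_FEM} the key point is that each $v\in\mathcal{E}_{i}$ lies in $H^{m+1}(\Omega)$ (indeed in $C^{\infty}(\overline{\Omega})$, with the appropriate boundary or mean-value constraint), so the interpolation hypotheses \eqref{HypH1}, \eqref{HypH2} apply directly: for $i=1,2,3$ one uses $I_{1}$ and \eqref{HypH1} to get $\inf_{v_{h}\in E_{h,i}}\|v-v_{h}\|_{E_{i}}\le\|v-I_{1}v\|_{1}\le\delta_{1}h^{m}|v|_{m+1}\to0$, for $i=4$ the same with the additional verification that $I_{1}v$ (or a mean-value correction $I_{1}v-\frac{1}{|\Omega|}\int_{\Omega}I_{1}v\,dx$) lies in $M_{h}$, and for $i=5$ one uses $I_{0}$ and \eqref{HypH2} in the $L^{2}$-norm. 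One must also check that $I_{1}v$ respects the homogeneous Dirichlet condition for $i=1,2$ so that it actually belongs to $E_{h,1}=\textbf{X}_{0h}$, resp. $E_{h,2}=Y_{0h}$; this is where the choice $\mathcal{E}_{1}=(C^{\infty}(\overline{\Omega})\cap H_{0}^{1})^{N}$, $\mathcal{E}_{2}=C^{\infty}(\overline{\Omega})\cap H_{0}^{1}$ is essential, and for a nodal finite element interpolant vanishing at boundary nodes this is automatic.

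For part (ii) I would argue exactly as in the convergence proof for Galerkin approximations of coercive variational problems (C\'ea's lemma plus density), which is the argument already invoked in the proof of Corollary \ref{well_posed_corolarul_teorema5_1_restrictie_dense}. Fix $v_{i}\in E_{i}$ (the exact solution of \eqref{e1_ecStokes_ecPoisson_variational_scriere_COMPACTA_exact_generic} for the data at hand) and $\varepsilon>0$; by part (i) the set $\mathcal{E}_{i}$ is dense in $E_{i}$, pick $w\in\mathcal{E}_{i}$ with $\|v_{i}-w\|_{E_{i}}<\varepsilon$, and combine \eqref{e5_45p_conditia_q_dense_general_Cea} (C\'ea) with the triangle inequality $\inf_{e_{h}\in E_{h,i}}\|v_{i}-e_{h}\|_{E_{i}}\le\|v_{i}-w\|_{E_{i}}+\inf_{e_{h}}\|w-e_{h}\|_{E_{i}}$ and \eqref{e5_45p_conditia_q_dense_general_FEM} applied to $w$; letting $h\to0$ and then $\varepsilon\to0$ gives \eqref{e5_45p_U_Uh_convergenta_FEM}. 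The only genuine obstacle is the mean-value/boundary-compatibility bookkeeping in case $i=4$ (and the boundary-node compatibility in cases $i=1,2$): one must confirm that the corrected interpolant still satisfies \eqref{HypH1} with a constant independent of $h$, which follows since the mean-value correction is a bounded perturbation controlled by $\|v-I_{1}v\|_{0}\le C\|v-I_{1}v\|_{1}$. Everything else is a routine assembly of the standard elliptic approximation toolkit.
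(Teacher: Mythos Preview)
Your proposal is correct and follows essentially the same route as the paper: the paper's proof consists of a single sentence deferring to the standard references on elliptic finite element approximation, and your plan simply unpacks that standard toolkit (density of smooth functions, the interpolation estimates \eqref{HypH1}--\eqref{HypH2}, C\'ea's lemma, and the density--plus--triangle--inequality argument for convergence). The extra care you take with the boundary compatibility for $i=1,2$ and the mean-value correction for $i=4$ is exactly the sort of bookkeeping that the cited textbooks handle, so nothing is missing.
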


\begin{proof}
The proof follows from \cite{CLBichir_bib_Atkinson_Han2009,
CLBichir_bib_Brenner_Scott2008, CLBichir_bib_Ciarlet2002,
CLBichir_bib_Dautray_Lions_vol6_1988, CLBichir_bib_A_Ern2005,
CLBichir_bib_Quarteroni_Valli2008}.

\qquad
\end{proof}

\begin{lem}
\label{lema_eroare_aprox_toate_pr} Assume that the hypotheses of
Lemma \ref{lema_teorema5_1_dense_prStokes_FEM}. Let $m \geq 1$. If
$\textbf{u}$ $\in$ $\textbf{H}_{0}^{1}(\Omega) \cap
\textbf{H}^{m+1}(\Omega)$, $q$ $\in$ $H_{0}^{1}(\Omega) \cap
H^{m+1}(\Omega)$, $\phi$ $\in$ $H^{m+1}(\Omega)$, $\textbf{z}$
$\in$ $\textbf{H}^{m+1}(\Omega)$, then there exist some constants
$C_{i}
> 0$, $i=1,\ldots,4$, such that
\begin{eqnarray}
   && | (T_{VPD}-T_{VPD,h})T_{VPD}^{-1}\textbf{u} |_{1}
      \leq C_{1} \cdot h^{m} \cdot | \textbf{u} |_{m+1} \, .
         \label{e5_1_eroare_aprox_Poisson_Dirichlet_Ta_STOKES} \\
   && | (T_{PD}-T_{PD,h})T_{PD}^{-1}q |_{1}
      \leq C_{2} \cdot h^{m} \cdot | q |_{m+1} \, .
         \label{e5_1_eroare_aprox_Poisson_HypH3_Td_STOKES} \\
   && \| (B_{1}-B_{1,h})B_{1}^{-1}\phi \|_{1}
      \leq C_{3} \cdot h^{m} \cdot \| \phi \|_{m+1} \, .
         \label{e5_1_eroare_aprox_Poisson_HypH2_Td_STOKES} \\
   && \| (\pi-\pi_{h})\pi^{-1}\textbf{z} \|_{0}
      \leq C_{4} \cdot h^{m+1} \cdot \| \textbf{z} \|_{m+1} \, .
         \label{e5_1_eroare_aprox_Poisson_HypH3_L0_H_Y_VAR}
\end{eqnarray}
\end{lem}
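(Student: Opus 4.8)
The plan is to recognize these four bounds as the standard finite element error estimates obtained by combining the quasi-optimality of each Galerkin (or projection) scheme with the interpolation estimates (\ref{HypH1}) and (\ref{HypH2}). The first step is the elementary identity that, for each pair $(T,T_{h})$ among $(T_{VPD},T_{VPD,h})$, $(T_{PD},T_{PD,h})$, $(B_{1},B_{1,h})$ and $(\pi,\pi_{h})$, setting $v^{\ast}=T^{-1}v$ (well defined in the relevant dual space thanks to the regularity imposed on $v$) and $v_{h}=T_{h}v^{\ast}$ gives $(T-T_{h})T^{-1}v=Tv^{\ast}-T_{h}v^{\ast}=v-v_{h}$; thus $v_{h}$ is exactly the Galerkin approximation of $v$ in the pertinent discrete space ($\textbf{X}_{0h}$, $Y_{0h}$, $Y_{h}$, and the $\textbf{L}^{2}$-projection of $\textbf{z}$ onto $\textbf{X}_{h}$, respectively). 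Since every space and bilinear form here is a product of scalar ones, the vector-valued estimates follow from the scalar ones applied componentwise, the vector seminorm/norm aggregating the components into the single constants $C_{i}$.

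For (\ref{e5_1_eroare_aprox_Poisson_Dirichlet_Ta_STOKES}), (\ref{e5_1_eroare_aprox_Poisson_HypH3_Td_STOKES}) and (\ref{e5_1_eroare_aprox_Poisson_HypH2_Td_STOKES}) I would invoke C\'ea's lemma in the form (\ref{e5_45p_conditia_q_dense_general_Cea}): the bilinear forms underlying $\mathcal{M}_{1}$, $\mathcal{M}_{2}$, $\mathcal{M}_{3}$ (the gradient inner product on $\textbf{H}_{0}^{1}(\Omega)$, on $H_{0}^{1}(\Omega)$, and the full $H^{1}(\Omega)$ inner product) are continuous and coercive, so $|v-v_{h}|_{1}\le\gamma_{E,a}\inf_{w_{h}}|v-w_{h}|_{1}$, and $\|\phi-\phi_{h}\|_{1}\le\gamma_{E,a}\inf_{w_{h}}\|\phi-w_{h}\|_{1}$ for $B_{1}$. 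Taking $w_{h}=I_{1}v$ (componentwise) and using (\ref{HypH1}) then produces the $h^{m}|v|_{m+1}$ (resp.\ $h^{m}\|\phi\|_{m+1}$) bounds, the regularity $v\in H^{m+1}(\Omega)$ being supplied by the hypotheses $\textbf{u}\in\textbf{H}_{0}^{1}(\Omega)\cap\textbf{H}^{m+1}(\Omega)$, $q\in H_{0}^{1}(\Omega)\cap H^{m+1}(\Omega)$, $\phi\in H^{m+1}(\Omega)$.

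For (\ref{e5_1_eroare_aprox_Poisson_HypH3_L0_H_Y_VAR}) no coercivity argument is needed: $\pi$ is the identity on $\textbf{L}^{2}(\Omega)$, so $\pi^{-1}\textbf{z}=\textbf{z}$ and $(\pi-\pi_{h})\pi^{-1}\textbf{z}=\textbf{z}-\pi_{h}\textbf{z}$; by the defining orthogonality (\ref{SOL_prel_op_hat_pi_h_DEF1}), $\pi_{h}\textbf{z}$ is the best $\textbf{L}^{2}$-approximation of $\textbf{z}$ in $\textbf{X}_{h}$, whence $\|\textbf{z}-\pi_{h}\textbf{z}\|_{0}\le\|\textbf{z}-I_{0}\textbf{z}\|_{0}$ and (\ref{HypH2}) gives the $h^{m+1}\|\textbf{z}\|_{m+1}$ bound (which is why the exponent here is $m+1$ rather than $m$).

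The one point requiring care is the admissibility of the interpolant in the two Dirichlet cases: one must know that $I_{1}$ from (\ref{HypH1}), applied to a function vanishing on $\partial\Omega$, lands in $X_{0h}$ (and $Y_{0h}$), so that it is a legitimate competitor in the restricted infimum of C\'ea's lemma. For the Lagrange spaces (\ref{ec12}) this holds because the nodal values of a function in $H_{0}^{1}(\Omega)\cap H^{m+1}(\Omega)$ vanish on the boundary; otherwise one replaces $I_{1}$ by a Scott--Zhang type quasi-interpolant preserving homogeneous Dirichlet data, as in \cite{CLBichir_bib_Atkinson_Han2009, CLBichir_bib_Brenner_Scott2008, CLBichir_bib_Ciarlet2002}. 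Beyond this, the proof is a direct substitution and collection of the constants $C_{1},\dots,C_{4}$.
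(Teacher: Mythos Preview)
Your proposal is correct and is exactly the ``standard methods'' argument the paper points to: the paper's own proof is a one-line remark that these estimates follow from standard finite element theory together with the identifications $\textbf{f}=T_{VPD}^{-1}\textbf{u}$, $\psi=T_{PD}^{-1}q$, $\psi_{1}=B_{1}^{-1}\phi$, $\textbf{f}_{\ast}=\pi^{-1}\textbf{z}$ and $\pi_{h}\pi=\pi_{h}$, which is precisely your observation that $(T-T_{h})T^{-1}v=v-v_{h}$ reduces each bound to C\'ea's lemma (or the best-approximation property of the $L^{2}$ projection) followed by the interpolation hypotheses (\ref{HypH1})--(\ref{HypH2}).
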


\begin{proof} These are obtained by standard methods (\cite{CLBichir_bib_Atkinson_Han2009,
CLBichir_bib_Brenner_Scott2008, CLBichir_bib_Ciarlet2002,
CLBichir_bib_Dautray_Lions_vol6_1988, CLBichir_bib_A_Ern2005,
CLBichir_bib_Quarteroni_Valli2008}). We also use
$\textbf{f}=T_{VPD}^{-1}\textbf{u}$, $\psi=T_{PD}^{-1}q$,
$\psi_{1}=B_{1}^{-1}\phi$, $\textbf{f}_{\ast}=\pi^{-1}\textbf{z}$
and $\pi_{h}\pi = \pi_{h}$.

\qquad
\end{proof}

\begin{lem}
\label{lema_eroare_aprox_toate_pr_0}
(\cite{CLBichir_bib_Bochev_Gunzburger2004}, Theorem 5.1, $k=1$)
For $\pi_{0h}$ from (\ref{SOL_prel_op_hat_pi_h_DEF1_zero}) and for
any $\textbf{u}$ $\in$ $\textbf{L}^{2}(\Omega)$, we retain for
$k=1$ in Theorem 5.1, \cite{CLBichir_bib_Bochev_Gunzburger2004},
the following facts:
\begin{eqnarray}
   && \| (I-\pi_{0h})\textbf{u} \|_{-1}  = \sup_{\phi \in \textbf{H}_{0}^{1}(\Omega)}
      \frac{((I-\pi_{0h})\textbf{u},\phi)}{\| \phi \|_{1}} \, ,
         \label{e5_1_eroare_aprox_Poisson_HypH2_Td_2_STOKES_abc}
\end{eqnarray}
\begin{eqnarray}
   && \| (I-\pi_{0h})\textbf{u} \|_{-1} \leq C h \| \textbf{u} \|_{0} \, .
         \label{e5_1_eroare_aprox_Poisson_HypH2_Td_2_STOKES_abc_3}
\end{eqnarray}
\end{lem}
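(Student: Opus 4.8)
The plan is to read off both displayed facts from the duality characterisation of the $\textbf{H}^{-1}(\Omega)$ norm combined with the two elementary properties of the $\textbf{L}^{2}$ projection $\pi_{0h}$: its contractivity in $\textbf{L}^{2}(\Omega)$ and the approximation power of $\textbf{X}_{0h}$. No deep machinery is involved; the statement is simply Theorem 5.1 of \cite{CLBichir_bib_Bochev_Gunzburger2004} specialised to $k=1$, and the sketch below records the standard argument.

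First I would settle (\ref{e5_1_eroare_aprox_Poisson_HypH2_Td_2_STOKES_abc}). Since $\textbf{u}\in\textbf{L}^{2}(\Omega)$ and $\pi_{0h}\textbf{u}\in\textbf{X}_{0h}\subset\textbf{H}_{0}^{1}(\Omega)\subset\textbf{L}^{2}(\Omega)$, the residual $(I-\pi_{0h})\textbf{u}$ belongs to $\textbf{L}^{2}(\Omega)\hookrightarrow\textbf{H}^{-1}(\Omega)$, and for any $\phi\in\textbf{H}_{0}^{1}(\Omega)$ its duality pairing with $\phi$ is realised by the $\textbf{L}^{2}$ inner product, $\langle(I-\pi_{0h})\textbf{u},\phi\rangle=((I-\pi_{0h})\textbf{u},\phi)$. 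Then (\ref{e5_1_eroare_aprox_Poisson_HypH2_Td_2_STOKES_abc}) is exactly the definition of the dual norm $\|\cdot\|_{-1}$ on $\textbf{H}^{-1}(\Omega)$ applied to this element.

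Next, for (\ref{e5_1_eroare_aprox_Poisson_HypH2_Td_2_STOKES_abc_3}), I would fix $\phi\in\textbf{H}_{0}^{1}(\Omega)$, $\phi\neq 0$. Since $\pi_{0h}\phi\in\textbf{X}_{0h}$, the defining orthogonality (\ref{SOL_prel_op_hat_pi_h_DEF1_zero}) of $\pi_{0h}$ gives $((I-\pi_{0h})\textbf{u},\pi_{0h}\phi)=0$, hence
\begin{equation*}
   ((I-\pi_{0h})\textbf{u},\phi)=((I-\pi_{0h})\textbf{u},\phi-\pi_{0h}\phi)
   \leq \|(I-\pi_{0h})\textbf{u}\|_{0}\,\|\phi-\pi_{0h}\phi\|_{0}.
\end{equation*}
The $\textbf{L}^{2}$ projection is a contraction, so $\|(I-\pi_{0h})\textbf{u}\|_{0}\leq\|\textbf{u}\|_{0}$; and by its $\textbf{L}^{2}$-best-approximation property together with the standard interpolation estimate for the spaces of (\ref{ec12_V})/(\ref{ec12}) on a shape-regular triangulation (using a Scott--Zhang/Cl\'ement quasi-interpolant $I_{h}\phi\in\textbf{X}_{0h}$ that preserves the homogeneous Dirichlet condition, see \cite{CLBichir_bib_Ciarlet2002, CLBichir_bib_Brenner_Scott2008}), $\|\phi-\pi_{0h}\phi\|_{0}\leq\|\phi-I_{h}\phi\|_{0}\leq C\,h\,|\phi|_{1}\leq C\,h\,\|\phi\|_{1}$. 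Combining these, dividing by $\|\phi\|_{1}$, taking the supremum over $\phi$ and invoking (\ref{e5_1_eroare_aprox_Poisson_HypH2_Td_2_STOKES_abc}) yields (\ref{e5_1_eroare_aprox_Poisson_HypH2_Td_2_STOKES_abc_3}) with $C$ independent of $h$.

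I do not expect a genuine obstacle here; the only point requiring a little care is the availability of a boundary-condition-preserving quasi-interpolation operator into $\textbf{X}_{0h}$ enjoying the optimal $\textbf{L}^{2}$-approximation bound, which is guaranteed by the shape-regularity of $\Theta_{h}$ and the polynomial degree $k_{Q}\geq 1$. It is worth emphasising that the inverse inequality (\ref{cont_SOL_prel_op_hat_pi_h_DEF3_ineg}) is \emph{not} needed for these two estimates — it enters \cite{CLBichir_bib_Bochev_Gunzburger2004} at the level of controlling the discrete Laplacian $-\triangle^{h}$ and the associated $\pi_{0h}$-stability in stronger norms, which is why the hypothesis is recorded in the surrounding text but is unused in the proof of this lemma.
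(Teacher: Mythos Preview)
Your argument is correct and is the standard duality argument underlying Theorem~5.1 in \cite{CLBichir_bib_Bochev_Gunzburger2004}. Note, however, that the paper does not supply its own proof of this lemma at all: it is stated purely as a quotation of the cited result, so there is nothing to compare your approach against beyond the external reference. Your sketch therefore goes further than the paper does, and your remark that the inverse inequality (\ref{cont_SOL_prel_op_hat_pi_h_DEF3_ineg}) is not actually used in establishing (\ref{e5_1_eroare_aprox_Poisson_HypH2_Td_2_STOKES_abc})--(\ref{e5_1_eroare_aprox_Poisson_HypH2_Td_2_STOKES_abc_3}) is a useful clarification that the paper leaves implicit.
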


In order to verify
(\ref{well_posed_e5_45p_conditia_q_dense_general_GAMA0_elliptic_restrictie})
and (\ref{well_posed_e5_45p_conditia_Cea_GAMA0_ini_restrictie}),
let us remember:

\begin{lem}
\label{lema_eroare_aprox_toate_pr_two_sets}
(\cite{CLBichir_bib_Sir_AM1978}) Consider two sets $A$, $B$
$\subseteq$ $\mathbb{R}$. Then,
\begin{equation}
\label{well_posed_e5_45p_conditia_q_dense_general_GAMA0_elliptic_restrictie_real}
   \inf (A+B) = \inf A + \inf B  \, ,
\end{equation}
\end{lem}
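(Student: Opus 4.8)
The plan is to prove the equality by a two-sided inequality, under the standing hypothesis (the one under which the lemma is actually invoked in the sequel, where $A$ and $B$ are sets of the form $\{ \| W - V_{h} \|_{\mathcal{X}} : V_{h} \in \mathcal{X}_{h} \}$) that $A$ and $B$ are nonempty and bounded below, so that $\alpha := \inf A$ and $\beta := \inf B$ are finite real numbers; throughout, $A + B := \{ a + b : a \in A, \ b \in B \}$ is the Minkowski sum.

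First I would check that $\alpha + \beta$ is a lower bound of $A + B$. For an arbitrary element $a + b \in A + B$ with $a \in A$ and $b \in B$ one has $a \geq \alpha$ and $b \geq \beta$, hence $a + b \geq \alpha + \beta$; since $a + b$ was arbitrary, this already gives $\inf (A + B) \geq \alpha + \beta$.

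For the opposite inequality I would use the defining approximation property of the infimum. Given $\varepsilon > 0$, the number $\alpha + \varepsilon / 2$ is not a lower bound of $A$, so there is $a_{\varepsilon} \in A$ with $a_{\varepsilon} < \alpha + \varepsilon / 2$, and likewise there is $b_{\varepsilon} \in B$ with $b_{\varepsilon} < \beta + \varepsilon / 2$. Then $a_{\varepsilon} + b_{\varepsilon} \in A + B$ and $a_{\varepsilon} + b_{\varepsilon} < \alpha + \beta + \varepsilon$, whence $\inf (A + B) \leq \alpha + \beta + \varepsilon$. Since $\varepsilon > 0$ is arbitrary, $\inf (A + B) \leq \alpha + \beta$. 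Combining the two inequalities yields $\inf (A + B) = \alpha + \beta = \inf A + \inf B$, which is the claim.

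There is no genuine difficulty here: the statement is the elementary additivity of the infimum with respect to Minkowski sums, and the argument above is essentially the only thing to write. The sole point deserving a remark is the degenerate situations (one of $A$, $B$ empty, or unbounded below), which are handled by the usual conventions in the extended real line and which do not occur in the use made of the lemma in this paper, where both $A$ and $B$ are nonempty sets of nonnegative reals; I would relegate this remark to a parenthetical so as not to obscure the main argument.
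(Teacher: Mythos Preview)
Your proof is correct and complete: the standard two-sided inequality via lower bounds and an $\varepsilon/2$ approximation is exactly the right (and essentially only) argument for this elementary identity. Note, however, that the paper itself does not supply a proof of this lemma at all; it is stated with a citation to Sire\c{t}chi's analysis text \cite{CLBichir_bib_Sir_AM1978} and used without further comment in the proof of the subsequent theorem, so there is nothing in the paper to compare your argument against.
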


\begin{thm}
\label{teorema5_1_partea2NOU} i) Consider the spaces
$\mathcal{Q}$, $\mathcal{X}$, $\mathcal{Y}$, $\mathcal{Z}$ and
$\mathcal{X}_{h}$ defined in Section
\ref{sectiunea_ecuatii_remarks}. In the definition of
$\mathcal{X}_{h}$, the spaces $X_{h}$ and $Y_{h}$ are those from
the present Section. Let $\widehat{\mathcal{A}}_{h}$ be the
operator defined by
(\ref{mathcal_P2_e1_6_ecStokes_ecPoisson_variational_dem_q_compl_p_dem11_cont_alt_izom_h_sist})
-
(\ref{e1_11_ecStokes_ecPoisson_variational_dem_grad_miu_ec_TAU_h_sist}).

Then, the conclusions of Theorem
\ref{well_posed_teorema5_1_Stokes_math_B_L2} and of Corollary
\ref{well_posed_corolarul_teorema5_1_Stokes_math_B_L2_Tdense} hold
for
(\ref{mathcal_P2_e1_6_ecStokes_ecPoisson_variational_dem_q_compl_p_dem11_cont_alt_izom_h_sist})
-
(\ref{e1_11_ecStokes_ecPoisson_variational_dem_grad_miu_ec_TAU_h_sist}).

ii) Assume that $\Omega$ is a bounded convex polygon in
$\mathbb{R}^{2}$ or a polyhedron in $\mathbb{R}^{3}$. Let $m \geq
1$ and $\textbf{u}$ $\in$ $\textbf{H}_{0}^{1}(\Omega) \cap
\textbf{H}^{m+1}(\Omega)$, $q$, $\hat{q}$ $\in$ $H_{0}^{1}(\Omega)
\cap H^{m+1}(\Omega)$, $\textbf{z}$, $\textbf{t}$ $\in$
$\textbf{H}^{m+1}(\Omega)$, $p$, $r$ $\in$ $H^{m+1}(\Omega)$. Let
$q_{\ast,1}$ be $q_{\ast}$ for some fixed $h = h_{1} \leq h_{0}$
such that $q_{\ast}$ $\leq$ $q_{\ast,1}$, $\forall$ $h < h_{1}$.
Then, there exists a constant $C_{0} > 0$ such that we have the
estimate
\begin{equation} \label{well_posed_e5_45p_U_Uh_eroare_h_h_h}
   \left\| \left[\begin{array}{c}
      x \\
      U
      \end{array}\right]
      -
      \left[\begin{array}{c}
      x_{h} \\
      U_{h}
      \end{array}\right] \right\|_{\widehat{\Gamma}}
   \leq
   (1-q_{\ast,1})^{-1}
   \| \widehat{\mathcal{A}}^{-1} \|_{L(\widehat{\Delta},\widehat{\Gamma})}
   C_{0} \cdot h^{m} \cdot ||| U ||| \, ,
\end{equation}
where $||| U |||$ $=$ $| \textbf{u} |_{m+1}$ $+$ $| q |_{m+1}$ $+$
$| \hat{q} |_{m+1}$ $+$ $\| \textbf{z} \|_{m+1}$ $+$ $\| p
\|_{m+1}$ $+$ $\| r \|_{m+1}$ $+$ $\| \textbf{t} \|_{m+1}$.
\end{thm}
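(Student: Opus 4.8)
The plan is to deduce both parts from the abstract machinery of Section~\ref{well_posed_sectiunea_general_theorem_on_the_well_posedness} together with the component-wise finite element facts already recorded; the statement is essentially an assembly result. For part~i) I would first observe that, by Theorem~\ref{teorema_izom_1} and Theorem~\ref{teorema_izom_1_T0}, $\widehat{\Phi}$ is an isomorphism of $\widehat{\Gamma}$ onto $\widehat{\Sigma}$ and $\mathcal{T}$ is an isomorphism of $\mathcal{Y}$ onto $\mathcal{X}$, so $\widehat{\mathcal{A}}=\widehat{\mathcal{T}}\widehat{\Phi}$ is an isomorphism of $\widehat{\Gamma}$ onto $\widehat{\Delta}$; this is the standing hypothesis of Theorem~\ref{well_posed_teorema5_1_Stokes_math_B_L2}, now for the present finite element choice $X_{h}=Y_{h}$. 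Next I would cite Lemma~\ref{lema_widehat_GAMA_dense_T_tabel} for the facts that $\mathcal{S}$ is dense in $\mathcal{X}$ and that the embedding $\mathcal{X}_{d}\subseteq\mathcal{X}$ is continuous with compact embedding operator $\imath_{0}$ --- the density and compactness hypotheses required in Corollary~\ref{well_posed_corolarul_teorema5_1_Stokes_math_B_L2_Tdense} and Corollary~\ref{well_posed_corolarul_teorema5_1_restrictie_dense}.

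The remaining input is condition (\ref{well_posed_e5_45p_conditia_q_restrictie_dense_lim_T}), which I would obtain through Corollary~\ref{well_posed_corolarul_teorema5_1_restrictie_dense}. Because $\mathcal{T}$, $\mathcal{T}_{h}$, the set $\mathcal{S}$ and the subspace $\mathcal{X}_{h}=\textbf{X}_{0h}\times Y_{0h}^{2}\times\textbf{X}_{h}\times M_{h}\times Y_{h}\times\textbf{X}_{h}$ all have a genuine product structure along the components of $\mathcal{X}$, the best-approximation requirement (\ref{well_posed_e5_45p_conditia_q_dense_general_GAMA0_elliptic_restrictie}) and the C\'ea-type requirement (\ref{well_posed_e5_45p_conditia_Cea_GAMA0_ini_restrictie}) reduce to the single-equation statements of Lemma~\ref{lema_teorema5_1_dense_prStokes_FEM}, with $\gamma_{0}=\max_{i}\gamma_{i}$; the fact that the infimum over the product space separates into the componentwise infima is exactly Lemma~\ref{lema_eroare_aprox_toate_pr_two_sets}. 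Having verified all hypotheses, Corollary~\ref{well_posed_corolarul_teorema5_1_Stokes_math_B_L2_Tdense} delivers an $h_{0}$ with $q_{\ast}<1$ for $h<h_{0}$, the uniform convergence $\|\mathcal{T}-\mathcal{T}_{h}\|_{L(\mathcal{Y},\mathcal{X})}\to0$, and the conclusions of Theorem~\ref{well_posed_teorema5_1_Stokes_math_B_L2}, including (\ref{well_posed_e5_45p_U_Uh_eroare}) and the convergences (\ref{well_posed_e5_45p_U_Uh_eroare_U_Uh_lim_y_yh_lim}); this is part~i).

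For part~ii), $\Omega$ being a bounded convex polygon or polyhedron, the scalar and vector Poisson solution operators $T_{PD}$, $T_{VPD}$ gain $H^{2}$ regularity, so the regularity hypotheses of Lemma~\ref{lema_eroare_aprox_toate_pr} hold for the given $U$. I would expand $\|(\mathcal{T}-\mathcal{T}_{h})\mathcal{T}^{-1}U\|_{\mathcal{X}}$ along components, writing $\mathcal{T}^{-1}U=(T_{VPD}^{-1}\textbf{u},T_{PD}^{-1}q,T_{PD}^{-1}\hat{q},\pi^{-1}\textbf{z},B_{1}^{-1}p,B_{1}^{-1}r,\pi^{-1}\textbf{t})$ by Theorem~\ref{teorema_izom_1_T0}, and bound each term by the matching estimate among (\ref{e5_1_eroare_aprox_Poisson_Dirichlet_Ta_STOKES})--(\ref{e5_1_eroare_aprox_Poisson_HypH3_L0_H_Y_VAR}): the $\textbf{u},q,\hat{q},p,r$ terms are $O(h^{m})$ and the $\textbf{z},\textbf{t}$ terms $O(h^{m+1})$. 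Shrinking $h_{0}$ so that $h_{0}\le1$ and absorbing $h^{m+1}\le h^{m}$, summation gives $\|(\mathcal{T}-\mathcal{T}_{h})\mathcal{T}^{-1}U\|_{\mathcal{X}}\le C_{0}h^{m}\,|||U|||$ with $C_{0}$ depending only on $C_{1},\ldots,C_{4}$ and the product-norm equivalence constant. Inserting this into (\ref{well_posed_e5_45p_U_Uh_eroare}) and using $q_{\ast}\le q_{\ast,1}$ for $h<h_{1}$, hence $(1-q_{\ast})^{-1}\le(1-q_{\ast,1})^{-1}$, yields (\ref{well_posed_e5_45p_U_Uh_eroare_h_h_h}).

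The step I expect to be the main obstacle is the bookkeeping of the product structure in part~i): one must check carefully that the abstract hypotheses of the corollaries of Section~\ref{well_posed_sectiunea_general_theorem_on_the_well_posedness} really do decouple into the component problems $\mathcal{M}_{h,i}$, that the relevant C\'ea constant and dense set are the products of the component ones, and that the infimum of a sum of sets behaves as in Lemma~\ref{lema_eroare_aprox_toate_pr_two_sets}. No new analytic difficulty arises beyond this; the rate $h^{m}$ in part~ii) is governed by the weakest ($H^{1}$-type) component estimates, the $\textbf{z}$- and $\textbf{t}$-terms being of higher order.
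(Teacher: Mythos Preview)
Your proposal is correct and follows essentially the same route as the paper: verify the hypotheses of Corollary~\ref{well_posed_corolarul_teorema5_1_restrictie_dense} by decoupling the product structure of $\mathcal{X}$, $\mathcal{S}$, $\mathcal{X}_{h}$, $\mathcal{T}$, $\mathcal{T}_{h}$ into the component problems $\mathcal{M}_{h,i}$ via Lemma~\ref{lema_teorema5_1_dense_prStokes_FEM} and Lemma~\ref{lema_eroare_aprox_toate_pr_two_sets}, take $\gamma_{0}=\max_{i}\gamma_{i}$, and then for part~ii) sum the componentwise bounds of Lemma~\ref{lema_eroare_aprox_toate_pr} inside~(\ref{well_posed_e5_45p_U_Uh_eroare}). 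The only cosmetic difference is how the extra power of $h$ on the $\textbf{z}$- and $\textbf{t}$-terms is absorbed: the paper sets $C_{0}=\sup\{C_{1},C_{2},C_{3},C_{4}h_{1}\}$ rather than shrinking $h_{0}$, and since the product norm on $\widehat{\Gamma}$ is the $\|\cdot\|_{(1)}$ sum norm no additional norm-equivalence constant is needed.
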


\begin{proof}

We use Theorem \ref{well_posed_teorema5_1_Stokes_math_B_L2},
Corollary
\ref{well_posed_corolarul_teorema5_1_Stokes_math_B_L2_Tdense},
Corollary \ref{well_posed_corolarul_teorema5_1_restrictie_dense}.

i) $\mathcal{S}$ $=$ $\mathcal{E}_{1}$ $\times$ $\mathcal{E}_{2}$
$\times$ $\mathcal{E}_{2}$ $\times$ $\mathcal{E}_{4}$ $\times$
$\mathcal{E}_{5}$ $\times$ $\mathcal{E}_{3}$ $\times$
$\mathcal{E}_{4}$ and it is also given in Section
\ref{sectiunea_ecuatii_remarks}.

Using an enumeration different from that for $\mathcal{E}_{k}$ and
for $\mathcal{M}_{h,k}$, let us write $V$ $=$ $(V_{1}$, $\ldots$,
$V_{7})$ $\in$ $\mathcal{X}$ $=$ $\mathcal{X}_{1}$ $\times$
$\ldots$ $\times$ $\mathcal{X}_{7}$, $\mathcal{S}$ $=$
$\mathcal{S}_{1}$ $\times$ $\ldots$ $\times$ $\mathcal{S}_{7}$,
$\mathcal{X}_{h}$ $=$ $\mathcal{X}_{h,1}$ $\times$ $\ldots$
$\times$ $\mathcal{X}_{h,7}$, $\mathcal{T}$ $=$
$(\mathcal{T}_{1}$, $\ldots$, $\mathcal{T}_{7})$,
$\mathcal{T}_{h}$ $=$ $(\mathcal{T}_{h,1}$, $\ldots$,
$\mathcal{T}_{h,7})$, and so on.

$\| V \|_{\mathcal{X}}$ $=$ $\| V_{1} \|_{\mathcal{X}_{1}}$ $+$
$\ldots$ $+$ $\| V_{7} \|_{\mathcal{X}_{7}}$.

Let us define the following sets of real numbers: $A_{j}$ $=$ $\{
\| V_{j} - V_{h,j} \|_{\mathcal{X}_{j}} | V_{h,j} \in
\mathcal{X}_{h,j} \}$, $A$ $=$ $\{ \| V - V_{h} \|_{\mathcal{X}} |
V_{h} \in \mathcal{X}_{h} \}$.

We have $A$ $=$ $\{ \| V_{1} - V_{h,1} \|_{\mathcal{X}_{1}} +
\ldots + \| V_{n} - V_{h,7} \|_{\mathcal{X}_{7}} |  V_{h,1} \in
\mathcal{X}_{h,1}, \ldots,  V_{h,7} \in \mathcal{X}_{h,7} \}$, so
$A$ $=$ $A_{1}$ $+$ $\ldots$ $+$ $A_{5}$ and $\inf (A_{1} + \ldots
+ A_{5}) = \inf A_{1} + \ldots + \inf A_{5}$ (we use
(\ref{well_posed_e5_45p_conditia_q_dense_general_GAMA0_elliptic_restrictie_real})),
that is, $\inf_{V_{h} \in \mathcal{X}_{h}} \| V-V_{h}
\|_{\mathcal{X}}$ $=$ $\sum_{j \: = 1}^{7} \inf_{V_{h,j} \in
\mathcal{X}_{h,j}} \| V_{j} - V_{h,j} \|_{\mathcal{X}_{j}}$.

Let us verify
(\ref{well_posed_e5_45p_conditia_q_dense_general_GAMA0_elliptic_restrictie}).
$\forall$ $V$ $\in$ $\mathcal{S}$, we have:

$\lim_{h \rightarrow 0} \inf_{V_{h} \in \mathcal{X}_{h}}$ $\|
V-V_{h} \|_{\mathcal{X}}$ $=$ $\sum_{j \: = 1}^{7} \lim_{h
\rightarrow 0} \inf_{V_{h,j} \in \mathcal{X}_{h,j}} \| V_{j} -
V_{h,j} \|_{\mathcal{X}_{j}}$

Using Lemma \ref{lema_teorema5_1_dense_prStokes_FEM} i), it
results
(\ref{well_posed_e5_45p_conditia_q_dense_general_GAMA0_elliptic_restrictie}).

Let us verify
(\ref{well_posed_e5_45p_conditia_Cea_GAMA0_ini_restrictie}). We
denote $\gamma_{0}$ $=$ $\sup_{i \in \{ 1, \ldots, 7
\}}\gamma_{i}$. Let us use
(\ref{e5_45p_conditia_q_dense_general_Cea}).

$\| W-W_{h} \|_{\mathcal{X}}$ $=$ $\sum_{j \: = 1}^{7} \| W_{j} -
W_{h,j} \|_{\mathcal{X}_{j}}$ $\leq$ $\sum_{j \: = 1}^{7}
\gamma_{j} \inf_{V_{h,j} \in \mathcal{X}_{h,j}} \| W_{j} - V_{h,j}
\|_{\mathcal{X}_{j}}$ $\leq$ $\gamma_{0} \sum_{j \: = 1}^{7}
\inf_{V_{h,j} \in \mathcal{X}_{h,j}} \| W_{j} - V_{h,j}
\|_{\mathcal{X}_{j}}$ $=$ $\gamma_{0} \inf_{V_{h} \in
\mathcal{X}_{h}}\| W-V_{h} \|_{\mathcal{X}}$, so
(\ref{well_posed_e5_45p_conditia_Cea_GAMA0_ini_restrictie}).

ii) We have
(\ref{well_posed_e5_45p_conditia_q_restrictie_dense_lim_T}). So we
can take $h_{1}$ and $q_{\ast,1}$. $q_{\ast}$ $<$ $q_{\ast,1}$
implies $(1-q_{\ast})^{-1}$ $<$ $(1-q_{\ast,1})^{-1}$.

$\forall$ $h < h_{1}$, let us use
(\ref{well_posed_e5_45p_U_Uh_eroare}) and the estimates from Lemma
\ref{lema_eroare_aprox_toate_pr}. We denote $C_{0}$ $=$ $\sup \{
C_{1}, C_{2}, C_{3}, C_{4}h_{1} \}$.

We have $\| (\mathcal{T} - \mathcal{T}_{h})\mathcal{T}^{-1}U
\|_{\mathcal{X}}$ $=$ $\sum_{j \: = 1}^{7} \| (\mathcal{T}_{j} -
\mathcal{T}_{h,j})\mathcal{T}_{j}^{-1}U_{j} \|_{\mathcal{X}_{j}}$
$\leq$ $C_{1} \cdot h^{m} \cdot | \textbf{u} |_{m+1}$ $+$ $C_{2}
\cdot h^{m} \cdot | q |_{m+1}$ $+$ $C_{2} \cdot h^{m} \cdot |
\hat{q} |_{m+1}$ $+$ $C_{4} \cdot h^{m+1} \cdot \| \textbf{z}
\|_{m+1}$ $+$ $C_{3} \cdot h^{m} \cdot \| p \|_{m+1}$ $+$ $C_{3}
\cdot h^{m} \cdot \| r \|_{m+1}$ $+$ $C_{4} \cdot h^{m+1} \cdot \|
\textbf{t} \|_{m+1}$  $<$ $C_{0} \cdot h^{m} \cdot ($ $|
\textbf{u} |_{m+1}$ $+$ $| q |_{m+1}$ $+$ $| \hat{q} |_{m+1}$ $+$
$\| \textbf{z} \|_{m+1}$ $+$ $\| p \|_{m+1}$ $+$ $\| r \|_{m+1}$
$+$ $\| \textbf{t} \|_{m+1}$ $)$.

\qquad
\end{proof}

\section*{Acknowledgments}
\label{sectiunea_Acknowledgment}

The author owes gratitude to Mr. Costache Bichir and Mrs. Georgeta
Bichir for their moral and financial support without which this
paper would not have existed.

Many thanks to Mrs. Georgeta Stoenescu for her advice with the
English language.

% ----------------------------------------------------------------
% \bibliographystyle{amsplain}
% \bibliography{bibliografie}

\end{document}